\newtheorem{theorem}{Theorem}
\newtheorem*{theorem*}{Theorem}
\newtheorem*{question*}{Question}
\newtheorem*{conjecture*}{Conjecture}
\newtheorem*{convention*}{Convention}
\newtheorem{assumption}[theorem]{Assumption}
\newtheorem*{assumption*}{Assumption}
\newtheorem{corollary}[theorem]{Corollary}
\newtheorem*{corollary*}{Corollary}
\newtheorem{proposition}[theorem]{Proposition}
\newtheorem*{proposition*}{Proposition}
\newtheorem{lemma}[theorem]{Lemma}
\newtheorem*{lemma*}{Lemma}
\newtheorem{fact}[theorem]{Fact}
\newtheorem*{fact*}{Fact}
\newtheorem*{claim*}{Claim}
\newcommand{\thistheoremname}{}
\newtheorem*{genericthm*}{\thistheoremname}
\newenvironment{namedthm*}[1]
{\renewcommand{\thistheoremname}{#1}%
\begin{genericthm*}}
{\end{genericthm*}}
\theoremstyle{definition}
\newtheorem*{example*}{Example}
\newtheorem{remark}[theorem]{Remark}
\newtheorem*{remark*}{Remark}
\newtheorem{definition}[theorem]{Definition}
\newtheorem*{definition*}{Definition}
\numberwithin{theorem}{section}
\numberwithin{claim}{section}
\numberwithin{equation}{section}
\DeclareMathOperator{\an}{an}
\DeclareMathOperator{\anexp}{an,exp}
\DeclareMathOperator{\dcl}{dcl}
\DeclareMathOperator{\e}{e}
\DeclareMathOperator{\Li}{Li}
\DeclareMathOperator{\RCF}{RCF}
\DeclareMathOperator{\re}{re}
\DeclareMathOperator{\res}{res}
\DeclareMathOperator{\rexp}{rexp}
\DeclareMathOperator{\rk}{rk}
\DeclareMathOperator{\Th}{Th}
\newcommand{\N}{\mathbb{N}}
\newcommand{\Q}{\mathbb{Q}}
\newcommand{\R}{\mathbb{R}}
\newcommand{\T}{\mathbb{T}}
\newcommand{\cC}{\mathcal C}
\newcommand{\cH}{\mathcal H}
\newcommand{\cL}{\mathcal L}
\newcommand{\cO}{\mathcal O}
\newcommand{\cR}{\mathcal R}
\newcommand{\upl}{\uplambda}
\newcommand{\upo}{\upomega}
\newcommand{\dclL}{\dcl_\cL}
\newcommand{\inv}{^{-1}}
\newcommand{\logeq}{\mathrel{\vcentcolon\Longleftrightarrow}}
\newcommand{\Ld}{\cL^\der}
\newcommand{\LdO}{\cL^{\cO,\der}}
\newcommand{\LO}{\cL^{\cO}}
\newcommand{\overbar}[1]{\mkern 2mu\overline{\mkern-2mu#1\mkern-2mu}\mkern 2mu}
\newcommand{\rkL}{\rk_{\mathcal{L}}}
\newcommand{\TdO}{T^{\cO,\der}}
\newcommand{\TO}{T^{\cO}}
\renewcommand{\preceq}{\preccurlyeq}
\renewcommand{\succeq}{\succcurlyeq}
\renewcommand{\geq}{\geqslant}
\renewcommand{\leq}{\leqslant}
\renewcommand{\epsilon}{\varepsilon}
\renewcommand{\k}{\boldsymbol{k}}
\renewcommand{\d}{\operatorname{d}}
\DeclareFontFamily{OMS}{smallo}{}
\DeclareFontShape{OMS}{smallo}{m}{n}{<->s*[.65]cmsy10}{}
\DeclareSymbolFont{smallo@m}{OMS}{smallo}{m}{n}
\DeclareMathSymbol{\smallo}{\mathord}{smallo@m}{79}
\DeclareFontFamily{U}{fsy}{}
\DeclareFontShape{U}{fsy}{m}{n}{<->s*[.9]psyr}{}
\DeclareSymbolFont{der@m}{U}{fsy}{m}{n}
\DeclareMathSymbol{\der}{\mathord}{der@m}{182}
\DeclareSymbolFont{der@m}{U}{fsy}{m}{n}
\DeclareMathSymbol{\derdelta}{\mathord}{der@m}{100}
\author{Elliot Kaplan}
\email{kaplae2@mcmaster.ca}
\title{Liouville closed $H_T$-fields.}
\subjclass[2020]{Primary 03C64. Secondary 12H05, 12J10}
\address{Department of Mathematics and Statistics, McMaster University, 1280 Main Street West, Hamilton, Ontario, L8S 4K1, Canada}
\date{\today}
\begin{document}
\maketitle

\begin{abstract}
Let $T$ be an o-minimal theory extending the theory of real closed ordered fields. An \emph{$H_T$-field} is a model $K$ of $T$ equipped with a $T$-derivation $\der$ such that the underlying ordered differential field of $(K,\der)$ is an $H$-field. We study $H_T$-fields and their extensions. Our main result is that if $T$ is power bounded, then every $H_T$-field $K$ has either exactly one or exactly two minimal Liouville closed $H_T$-field extensions up to $K$-isomorphism. The assumption of power boundedness can be relaxed to allow for certain exponential cases, such as $T = \Th(\R_{\anexp})$.
\end{abstract}

\setcounter{tocdepth}{1}
\tableofcontents

\section*{Introduction}
\noindent
In~\cite{AD02}, Aschenbrenner and van den Dries introduced the class of $H$-fields. This class consists of Hardy fields containing $\R$ as well as various ordered differential fields of formal power series, most notably the field $\T$ of \emph{logarithmic-exponential transseries}. In~\cite{ADH17}, Aschenbrenner, van den Dries, and van der Hoeven showed that the complete theory of $\T$ is essentially the model companion of the theory of $H$-fields.

\medskip\noindent
In this article, we consider $H$-fields equipped with additional o-minimal structure. Let $T$ be an o-minimal theory which extends the theory $\RCF$ of real closed ordered fields, and let $K$ be a model of $T$. Let $\der$ be a $T$-derivation on $K$ as defined in~\cite{FK19}, and let $C\coloneqq \ker(\der)$ be the constant field of $K$ (we recall the definition of a \emph{$T$-derivation} in Subsection~\ref{subsec:Tderiv} below). We say that $(K,\der)$ is an \textbf{$H_T$-field} if the following two conditions hold:
\begin{enumerate}
\item[(H1)] $\der f>0$ for all $f \in K$ with $f>C$, and
\item[(H2)] $\cO = C+ \smallo$, where $\cO$ is the convex hull of $C$ in $K$ and $\smallo$ is the unique maximal ideal of $\cO$.
\end{enumerate}
Axioms (H1) and (H2) are the axioms for $H$-fields, given in~\cite{AD02}, so an $H_T$-field is just a model of $T$ equipped with a $T$-derivation making it an $H$-field. In fact, the class of $H_{\RCF}$-fields coincides with the class of real closed $H$-fields.

\medskip\noindent
Our long-term goal is to show that the theory of $H_T$-fields has a model companion (under the assumption that $T$ is itself model complete). To this end, we examine various extensions of $H_T$-fields, often under the assumption that $T$ is \emph{power bounded}. Our investigation is based on the study of $H$-fields and their extensions, conducted in~\cite{ADH17}. The main results in this article are on the existence and uniqueness of \emph{$T$-Liouville closures}. Before giving an overview of the results, let us discuss two motivating examples: $\cR$-Hardy fields and expansions of the field $\T$.

\subsection*{$\cR$-Hardy fields}
Perhaps the most natural examples of $H_T$-fields are $\cR$-Hardy fields, introduced in~\cite{DMM94}. Let $\cR$ be an o-minimal expansion of the real field $\R$ in an appropriate language $\cL_{\cR}$ and let $T_{\cR}$ be the complete $\cL_{\cR}$-theory of $\cR$. By adding a function symbol to $\cL_{\cR}$ for each definable function, we may arrange that $T_{\cR}$ has quantifier elimination and a universal axiomatization. For convenience, we will also assume that $\cL_{\cR}$ contains a constant symbol for each $r \in \R$, so any model of $T_{\cR}$ contains $\cR$ as an elementary substructure.

\medskip\noindent
Recall that a \textbf{Hardy field} is an ordered differential field of germs at $+\infty$ of unary real-valued functions, where the germ of a function $f\colon \R\to\R$ at $+\infty$ is the equivalence class
\[
[f]\ \coloneqq \ \big\{g:f|_{(a,+\infty)}=g|_{(a,+\infty)}\text{ for some }a\in \R\big\}.
\]
An \textbf{$\cR$-Hardy field} is a Hardy field $\cH$ which is closed under all function symbols in $\cL_{\cR}$. That is, $\cH$ is an $\cR$-Hardy field if for every $n$-ary function symbol $F$ in $\cL_{\cR}$ and all germs $[f_1],\ldots,[f_n]\in \cH$, the germ $\big[F(f_1,\ldots,f_n)\big]$ is in $\cH$, where $F(f_1,\ldots,f_n)$ is the composite function $x\mapsto F\big(f_1(x),\ldots,f_n(x)\big)$. We view constant symbols as nullary functions, so we may identify $\R$ with a subfield of $\cH$ by identifying $r \in \R$ with the germ of the constant function $x\mapsto r$.

\medskip\noindent
If $\cH$ is an $\cR$-Hardy field, then we view $\cH$ as an $\cL_{\cR}$-structure as follows:
\begin{itemize}
\item If $F$ is an $n$-ary function symbol in $\cL_{\cR}$ and $[f_1],\ldots,[f_n]\in \cH$, then 
\[
F\big([f_1],\ldots,[f_n]\big)\ \coloneqq \ \big[F(f_1,\ldots,f_n)\big].
\]
\item If $R$ is an $n$-ary predicate in $\cL_{\cR}$ and $[f_1],\ldots,[f_n]\in \cH$, then
\[
\cH\models R\big([f_1],\ldots,[f_n]\big)\ \logeq\ \cR \models R\big(f_1(x),\ldots,f_n(x)\big)\text{ for all sufficiently large }x.
\]
\end{itemize}
By~\cite[Lemma 5.8]{DMM94}, $\cR$ is an elementary $\cL_{\cR}$-substructure of $\cH$. As a consequence of our assumption on $T_{\cR}$, each $\cL_{\cR}(\emptyset)$-definable function $F$ is given piecewise by terms, so the identity
\[
F\big([f_1],\ldots,[f_n]\big)\ \coloneqq \ \big[F(f_1,\ldots,f_n)\big]
\]
holds for arbitrary $\cL_{\cR}(\emptyset)$-definable functions $F$, not just for function symbols in $\cL_{\cR}$.

\medskip\noindent
Let $\cH$ be an $\cR$-Hardy field. As a Hardy field, $\cH$ admits a derivation $\der\colon \cH\to\cH$ given by $\der[f]\coloneqq [f']$. Using the chain rule from elementary calculus, it is easy to see that this is even a $T_{\cR}$-derivation. We claim that with this derivation, $\cH$ is an $H_{T_{\cR}}$-field. Note that the constant field $C = \ker(\der)$ of $\cH$ is equal to $\R$, so (H2) follows from Dedekind completeness of the reals. For $[f] \in \cH$, we have 
\[
[f]>\R\ \Longrightarrow\ \lim\limits_{x\to \infty} f(x) = \infty\ \Longrightarrow\ [f'] = \der[f]>0,
\]
so (H1) holds as well. 

\subsection*{Transseries}
Let $\R_{\an}$ be the expansion of the real field by restricted analytic functions, and let $\R_{\anexp}$ be the further expansion of $\R_{\an}$ by the unrestricted exponential function. Let $\R_{\re}$ be the expansion of the real field by only the restricted sine, cosine, and exponential functions (collectively, \emph{restricted elementary functions}). Let $T_{\re}$, $T_{\an}$, and $T_{\anexp}$ be the elementary theories of $\R_{\re}$, $\R_{\an}$ and $\R_{\anexp}$ respectively. By~\cite[Corollary 2.8]{DMM97}, the field $\T$ admits a canonical expansion to a model of $T_{\anexp}$, which we denote by $\T_{\anexp}$. Let $\T_{\an}$ and $\T_{\re}$ denote the corresponding reducts of $\T_{\anexp}$. 

\medskip\noindent
We claim that $\T_{\anexp}$, with its natural derivation, is an $H_{\anexp}$-field, where we write ``$H_{\anexp}$-field'' instead of ``$H_{T_{\anexp}}$-field'' for easier reading. It is well-known that the ordered differential field $\T$ is an $H$-field; the axiom (H1) is verified in~\cite[Proposition 4.3]{DMM01} and the axiom (H2) follows easily since the constant field of $\T$ is isomorphic to $\R$, which is Dedekind complete. Moreover, the derivation on $\T_{\anexp}$ is a $T_{\anexp}$-derivation, since it is compatible with all restricted analytic functions and the exponential function by~\cite[Corollaries 3.3 and 3.4]{DMM01}; see~\cite[Lemma 2.9]{FK19} for why this is sufficient. It follows that $\T_{\an}$ is an $H_{\an}$-field, and that $\T_{\re}$ is an $H_{\re}$-field.

\medskip\noindent
We conjecture that $\T_{\anexp}$ and $\T_{\an}$ are both model complete. We also conjecture that the theory of $\T_{\anexp}$ is essentially the model companion of the theory of $H_{\anexp}$-fields, and that the theory of $\T_{\an}$ is essentially the model companion of the theory of $H_{\an}$-fields. These conjectures are, of course, inspired by the corresponding results for $\T$ from~\cite{ADH17}. In~\cite[Chapter 8]{Ka21}, we show that $\T_{\re}$ is model complete, and that its theory is essentially the model companion of the theory of $H_{\re}$-fields. The proof of this result relies heavily on machinery from~\cite{ADH17}, as well as basic facts about restricted elementary functions. The methods used to investigate $\T_{\re}$ are almost surely too case-specific to handle the richer expansions $\T_{\an}$ and $\T_{\anexp}$.

\subsection*{Outline and overview of results}
In this article, we fix a complete, model complete o-minimal theory $T$ which extends the theory $\RCF$ of real closed ordered fields in some appropriate language $\cL\supseteq\{0,1,+,-,\cdot,<\}$. The class of $H_T$-fields is axiomatized in the language $\LdO\coloneqq \cL\cup\{\cO,\der\}$, where $\cO$ is a unary predicate interpreted as in (H2). By~\cite[Lemma 2.3]{FK19}, the constant field $C$ of an $H_T$-field $K$ is an underlying elementary $\cL$-substructure of $K$. It follows that the valuation ring $\cO$ in (H2) is \emph{$T$-convex}, in the sense of van den Dries and Lewenberg~\cite{DL95}. In studying $H_T$-fields, valuation theory plays a key role. Analogs of many classical results about valued fields have been proven in the o-minimal setting under the assumption of power boundedness (a generalization of polynomial boundedness introduced in~\cite{Mi96}). Accordingly, we will assume at various times that $T$ is \emph{power bounded}. Valuation-theoretic background is given in Section~\ref{sec:Tconvex}, along with some new technical results on $T$-convex valuation rings in the power bounded setting.

\medskip\noindent
Expansions of models of $T$ by a $T$-convex valuation ring and a continuous $T$-derivation were first studied in~\cite{Ka21B}. There, the focus was on immediate extensions. Our first result on $H_T$-fields is a corollary of the main result in~\cite{Ka21B}.

\begin{namedthm*}{Corollary~\ref{cor:asympHT}}
Suppose that $T$ is power bounded. Then every $H_T$-field has a spherically complete immediate $H_T$-field extension.
\end{namedthm*}

\noindent
In addition to studying $H_T$-fields, we consider the broader class of \emph{pre-$H_T$-fields}, which arise as substructures of $H_T$-fields. Most of the study of extensions of $H_T$-fields and pre-$H_T$-fields is carried out in Section~\ref{sec:HT}. Here is one such extension result from that section, which shows that every pre-$H_T$-field has an ``$H_T$-field hull.''

\begin{namedthm*}{Theorem~\ref{thm:HThull}}
Suppose that $T$ is power bounded and let $K$ be a pre-$H_T$-field. Then $K$ has an $H_T$-field extension $H_T(K)$ such that for any $H_T$-field extension $M$ of $K$, there is a unique $\LdO(K)$-embedding $H_T(K)\to M$.
\end{namedthm*}

\noindent
The extension theory developed Section~\ref{sec:HT} is applied in Section~\ref{sec:Liouville} to study \emph{$T$-Liouville closures}. An $H_T$-field $K$ is said to be \emph{Liouville closed} if every element in $K$ has an integral and an exponential integral in $K$, that is, if for all $y \in K$, there is $f \in K$ and $g \in K^\times$ with $\der f = \der g/g = y$. A \emph{$T$-Liouville closure} of an $H_T$-field $K$ is a Liouville closed $H_T$-field extension of $K$ which is built from $K$ by adjoining integrals and exponential integrals (a precise definition is given in Section~\ref{sec:Liouville}).

\medskip\noindent
In~\cite{AD02}, Aschenbrenner and van den Dries proved that every $H$-field has at least one and at most two Liouville closures up to isomorphism. They proved that \emph{grounded} $H$-fields have exactly one Liouville closure and that certain types of ungrounded $H$-fields have exactly two. Gehret later showed that the precise dividing line for ungrounded $H$-fields is the property of \emph{$\upl$-freeness}~\cite{Ge17C}. The terms ``grounded'' and ``$\upl$-free'' are defined in Section~\ref{sec:HTasymp}. In Section~\ref{sec:Liouville}, we show that when $T$ is power bounded, the number of $T$-Liouville closures of an $H_T$-field can likewise be characterized in terms of being grounded or $\upl$-free:

\begin{namedthm*}{Theorem~\ref{thm:oneortwotlclosures}}
Suppose that $T$ is power bounded and let $K$ be an $H_T$-field. If $K$ is grounded or if $K$ is ungrounded and $\upl$-free, then $K$ has exactly one $T$-Liouville closure up to $\LdO(K)$-isomorphism. If $K$ is ungrounded and not $\upl$-free, then $K$ has exactly two $T$-Liouville closures up to $\LdO(K)$-isomorphism. For any Liouville closed $H_T$-field extension $M$ of $K$, there is an $\LdO(K)$-embedding of some $T$-Liouville closure of $K$ into $M$.
\end{namedthm*}

\noindent
The assumption of power boundedness excludes some important o-minimal theories, such as $T_{\anexp}$. Fortunately, many of our results can be extended to o-minimal theories which are ``controlled'' by a power bounded base theory in the same way that $T_{\anexp}$ is ``controlled'' by its reduct $T_{\an}$, as shown in~\cite{DMM94}. In formalizing precisely what we mean, we use the framework from Foster's thesis~\cite{Fo10}. Suppose that $T$ is power bounded, that $T$ defines a restricted exponential function, and that the field of exponents of $T$ is cofinal in the prime model of $T$. Foster axiomatizes an extension $T^{\e}$ of $T$ in the language $\cL_{\log}\coloneqq \cL\cup\{\log\}$ whose models are expansions of models of $T$ by an unrestricted exponential function (which is compatible with the restricted exponential function and the power functions of $T$). This extension is also complete, model complete, and o-minimal. We give an overview of Foster's results, as well as some historical context, in Subsection~\ref{subsec:expsandlogs}. 

\medskip\noindent
In Section~\ref{sec:logHT} we show that many of the extension results proven in Section~\ref{sec:HT} under the assumption of power boundedness also go through for $H_{T^{\e}}$-fields and pre-$H_{T^{\e}}$-fields. Instead of working with (pre)-$H_{T^{\e}}$-fields directly, we work with a broader class of $\LdO_{\log}$-structures, called \emph{logarithmic (pre)-$H_T$-fields}, where the logarithm is not assumed to be surjective. In Section~\ref{sec:logLiouville}, we show that Theorem~\ref{thm:oneortwotlclosures} generalizes to the class of logarithmic $H_T$-fields:

\begin{namedthm*}{Theorem~\ref{thm:logoneortwotlclosures}}
Let $K$ be a logarithmic $H_T$-field. If $K$ is $\upl$-free, then $K$ has exactly one logarithmic $T$-Liouville closure up to $\LdO_{\log}(K)$-isomorphism. Otherwise, $K$ has exactly two logarithmic $T$-Liouville closures up to $\LdO_{\log}(K)$-isomorphism. For any Liouville closed logarithmic $H_T$-field extension $M$ of $K$, there is an $\LdO_{\log}(K)$-embedding of some logarithmic $T$-Liouville closure of $K$ into $M$.
\end{namedthm*}

\noindent
In the statement of Theorem~\ref{thm:logoneortwotlclosures}, a \emph{logarithmic $T$-Liouville closure} of $K$ is just a logarithmic $H_T$-field extension of $K$ which is also a $T$-Liouville closure of $K$. The ``grounded'' case in Theorem~\ref{thm:oneortwotlclosures} can not occur for logarithmic $H_T$-fields; see Corollary~\ref{cor:ungrounded}. 

\medskip\noindent
In~\cite{AD02}, Aschenbrenner and van den Dries showed that any $H$-field embedding of a Hardy field $\cH$ into $\T$ extends to the smallest Liouville closed Hardy field extension of $\cH$. As an application of Theorem~\ref{thm:logoneortwotlclosures}, we prove an analog of this embedding theorem for $\R_{\anexp}$-Hardy fields.

\begin{namedthm*}{Theorem~\ref{thm:Ranexpembedding}}
Let $\cH$ be an $\R_{\anexp}$-Hardy field. Then any $H_{\anexp}$-field embedding $\cH \to \T_{\anexp}$ extends to an $H_{\anexp}$-field embedding $\Li_{\anexp}(\cH)\to \T_{\anexp}$, where $\Li_{\anexp}(\cH)$ is the minimal Liouville closed $\R_{\anexp}$-Hardy field extension of $\cH$.
\end{namedthm*}

\noindent
Our final result is that every pre-$H_T$-field has a pre-$H_T$-field extension which satisfies the ``order 1 intermediate value property.'' An analog of this theorem was shown for pre-$H$-fields in~\cite{AD05} and for $\cR$-Hardy fields in~\cite{vdD00}. Unlike many of the other results, this requires no power boundedness assumptions on $T$.

\begin{namedthm*}{Theorem~\ref{thm:IVP}}
Let $K$ be a pre-$H_T$-field. Then $K$ has a pre-$H_T$-field extension $M$ with the following property: for any $\cL(M)$-definable continuous function $F\colon M\to M$ and any $b_1,b_2 \in M$ with 
\[
b_1'\ <\ F(b_1),\qquad b_2'\ >\ F(b_2),
\]
there is $a \in M$ between $b_1$ and $b_2$ with $a' = F(a)$. 
\end{namedthm*}

\subsection*{Acknowledgements}
Research for Sections~\ref{sec:Tconvex}--\ref{sec:Liouville} and Section~\ref{sec:IVP} was conducted at the University of Illinois Urbana-Champaign, and the results in these sections first appeared in my PhD thesis~\cite{Ka21}. The material in Sections~\ref{sec:logHT} and~\ref{sec:logLiouville} is based upon work supported by the National Science Foundation under Award No. 2103240. I would like to thank Allen Gehret and Lou van den Dries for helpful discussions and feedback.

\section{Preliminaries}\label{sec:prelims}

\subsection{Notation and conventions}\label{subsec:notation}
In this article, $k$, $m$, and $n$ always denote elements of $\N =\{0,1,2,\ldots\}$. By ``ordered set'' we mean ``totally ordered set.'' Let $S$ be an ordered set, let $a \in S$, and let $A \subseteq S$. We put 
\[
S^{>a}\ \coloneqq \ \{s\in S:s>a\};
\]
similarly for $S^{\geq a}$, $S^{<a}$, $S^{\leq a}$, and $S^{\neq a}$. We write ``$a>A$'' (respectively ``$a<A$'') if $a$ is greater (less) than each $s \in A$, and we let $A^{\downarrow}$ denote the \textbf{downward closure of $A$}. A \textbf{cut} in $S$ is a downward closed subset of $S$, and if $A$ is a cut in $S$, then an element $y$ in an ordered set extending $S$ is said to \textbf{realize the cut $A$} if $A< y< S\setminus A$. If $\Gamma$ is an ordered abelian group, then we set $\Gamma^{>}\coloneqq \Gamma^{>0}$, and we define $\Gamma^{\geq}$, $\Gamma^<$, $\Gamma^{\leq}$, and $\Gamma^{\neq}$ analogously. If $R$ is a ring, then $R^\times$ denotes the multiplicative group of units in $R$. A \textbf{well-indexed sequence} is a sequence $(a_\rho)$ whose terms are indexed by ordinals $\rho$ less than some infinite limit ordinal $\nu$.

\medskip\noindent
We always use $K$, $L$, and $M$ for models of $T$ (or expansions thereof). Let $A \subseteq K$ and let $D\subseteq K^n$. We say that $D$ is \textbf{$\cL(A)$-definable} if
\[
D\ =\ \varphi(K) \ \coloneqq \ \big\{y \in K^n:K \models \varphi(y)\big\}
\]
for some $\cL(A)$-formula $\varphi$. A function $F\colon D \to K$ is said to be $\cL(A)$-definable if its graph is a definable subset of $K^{n+1}$. Note that the domain of an $\cL(A)$-definable function is $\cL(A)$-definable.

\medskip\noindent
For $A \subseteq K$, let $\dclL(A)$ denote the $\cL$-definable closure of $A$ (in $K$, implicitly, but this doesn't change if we pass to elementary extensions of $K$). If $b \in \dclL(A)$, then $b = F(a)$ for some $\cL(\emptyset)$-definable function $F$ and some tuple $a$ from $A$. It is well-known that $(K,\dclL)$ is a pregeometry. 
A set $B \subseteq K$ is said to be \textbf{$\cL(A)$-independent} if $b\not\in \dclL\big(A\cup(B \setminus \{b\})\big)$ for all $b \in B$. A tuple $a=(a_i)_{i \in I}$ is said to be $\cL(A)$-independent if its set of components $\{a_i:i \in I\}$ is $\cL(A)$-independent and no components are repeated. Since $T$ has definable Skolem functions, any definably closed subset $A \subseteq K$ is an elementary $\cL$-substructure of $K$. Together with our assumption that $T$ is complete, this guarantees that $T$ has a prime model, which can be uniquely identified with $\dclL(\emptyset)$ in any model of $T$. 

\medskip\noindent
Let $M$ be a $T$-extension of $K$, that is, a model of $T$ which contains $K$ as an $\cL$-substructure. Given an $\cL(K)$-definable set $D \subseteq K^n$, we let $D^M$ denote the subset of $M^n$ defined by the same $\cL(K)$-formula as $D$. We sometimes refer to $D^M$ as the \textbf{natural extension of $D$ to $M$}. Since $T$ is assumed to be model complete, this natural extension does not depend on the choice of defining formula. If $F\colon D \to K$ is an $\cL(K)$-definable function, then let $F^M$ be the $\cL(K)$-definable function whose graph is the natural extension of the graph of $F$. Then the domain of $F^M$ is $D^M$, and we often drop the superscript and just write $F\colon D^M\to M^m$. 

\medskip\noindent
For $B \subseteq M$, let $K\langle B\rangle$ denote the $\cL$-substructure of $M$ with underlying set $\dclL(K\cup B)$. If $B = \{b_1,\ldots,b_n\}$, we write $K\langle b_1,\ldots,b_n\rangle$ instead of $K\langle B \rangle$. Note that $K\langle B \rangle$ is an elementary $\cL$-substructure of $M$. If $B$ is $\cL(K)$-independent and $M = K\langle B \rangle$, then $B$ is called a \textbf{basis for $M$ over $K$}. The \textbf{rank of $M$ over $K$}, denoted $\rkL(M|K)$, is the cardinality of a basis for $M$ over $K$ (this doesn't depend on the choice of basis). We say that $M$ is a \textbf{simple extension} of $K$ if $\rkL(M|K) = 1$. If $M$ is a simple extension of $K$, then $M = K\langle b\rangle$ for some $b \in M\setminus K$.

\medskip\noindent
Let $\cL^*\supseteq \cL$, let $T^*$ be an $\cL^*$-theory extending $T$, and let $K \models T^*$. We use the same conventions for $\cL^*$-definability as we do for $\cL$-definability. A \textbf{$T^*$-extension of $K$} is a model $M \models T^*$ which contains $K$ as an $\cL^*$-substructure. If $M$ is an \emph{elementary} $T^*$-extension of $K$ and $D \subseteq K^n$ is $\cL^*(K)$-definable, then let $D^M$ denote the subset of $M^n$ defined by the same formula as $D$.

\subsection{Power functions and power boundedness}\label{subsec:powers}
Here we mention some basic facts about power functions and exponentials. Proofs can be found in~\cite{Mi96} or~\cite[Section 2.2.4]{Fo10}. A \textbf{power function on $K$} is an $\cL(K)$-definable endomorphism of the multiplicative group $K^>$. Each power function $F$ is $\cC^1$ on $K^>$ and uniquely determined by $F'(1)$. Set
\[
\Lambda\ \coloneqq \ \big\{F'(1): F \text{ is a power function on }K\big\}.
\]
Then $\Lambda$ is a subfield of $K$, and it is called the \textbf{field of exponents of $K$}. For $a \in K^>$ and a power function $F$, we suggestively write $F(a)$ as $a^\lambda$, where $\lambda=F'(1)$. A straightforward computation tells us that the derivative of the power function $x \mapsto x^\lambda$ at $a$ is $\lambda a^{\lambda - 1}$.

\medskip\noindent
We say that $K$ is \textbf{power bounded} if for each $\cL(K)$-definable function $F\colon K^>\to K^>$, there is $\lambda$ in the field of exponents of $K$ with $|F(x)|<x^\lambda$ for all sufficiently large positive $x$. Note that $K$ is \textbf{polynomially bounded} (any unary $\cL(K)$-definable function is eventually bounded by $x^n$ for some $n$) if and only if $K$ is power bounded with archimedean field of exponents.

\medskip\noindent
An \textbf{exponential function on $K$} is an ordered group isomorphism from the additive group $K$ to the multiplicative group $K^>$. Any exponential function on $K$ grows more quickly than every power function on $K$. By~\cite{Mi96}, either $K$ is power bounded or $K$ defines an exponential function. Any definable exponential function on $K$ is everywhere differentiable, and if $K$ defines an exponential function, then it is fairly easy to see that there is a unique $\cL(\emptyset)$-definable exponential function which is equal to its own derivative. Thus, defining an exponential function is a property of the theory $T$ (we say that \textbf{$T$ defines an exponential}), and so power boundedness is a property of the theory $T$ as well (we say that \textbf{$T$ is power bounded}). If $T$ is power bounded, then each power function on $K$ is $\cL(\emptyset)$-definable, so we refer to the field of exponents $\Lambda$ as the \textbf{field of exponents of $T$}, as $\Lambda$ does not depend on $K$.

\subsection{Exponentials and logarithms} \label{subsec:expsandlogs}
Most of the results in Sections~\ref{sec:HTasymp},~\ref{sec:HT}, and~\ref{sec:Liouville} are proved under the assumption that $T$ is power bounded. In Sections~\ref{sec:logHT} and~\ref{sec:logLiouville}, we show that many of these results can be extended to theories which define an exponential, so long as these theories are essentially controlled by a power bounded reduct. The prototypical example of this is the theory $T_{\exp} = \Th(\R_{\exp})$. Let $T_{\rexp}$ be the (polynomially bounded) theory of the real field expanded by the restriction of the exponential function to the interval $[-1,1]$. Then $T_{\rexp}$ is model complete by Wilkie~\cite{Wi96}. In~\cite{Re93}, Ressayre demonstrated that $T_{\exp}$ can be axiomatized by extending $T_{\rexp}$ by the following axioms:
\begin{enumerate}
\item[(E1)] $\exp$ is an exponential function, as defined above,
\item[(E2)] $\exp$ agrees with its restricted counterpart on the interval $[-1,1]$, and
\item[(E3)] $\exp$ grows sufficiently fast, that is, if $x \geq n^2$ then $\exp x\geq x^n$ for each $n>0$.
\end{enumerate}
Ressayre also showed that if $T_{\rexp}$ eliminates quantifiers in some language $\cL^*$, then $T_{\exp}$ eliminates quantifiers in the language $\cL^*\cup\{\exp,\log\}$.

\medskip\noindent
Ressayre's method was expounded on in~\cite{DMM94}, where the authors took $T_{\an}$ as the polynomially bounded base theory and showed that $T_{\anexp}$ can be axiomatized by extending $T_{\an}$ by the axioms (E1), (E2), and (E3) above. They used this to show that $T_{\anexp}$ has quantifier elimination and a universal axiomatization in the language $\cL_{\an} \cup \{\exp,\log\}$, where $\cL_{\an}$ extends the language of ordered rings by function symbols for all restricted analytic functions, inversion away from zero, and $n^{\text{\tiny th}}$ roots for all $n>1$.

\medskip\noindent
In~\cite{DS00}, van den Dries and Speissegger further generalized this approach. Let $\cR$ be a polynomially bounded expansion of the real field which defines the restriction of the exponential function to $[-1,1]$ and let $\cL_{\cR}$ be a language in which $T_{\cR}$, the theory of $\cR$, has quantifier elimination and a universal axiomatization. Then the expansion of $\cR$ by the unrestricted exponential function is o-minimal and completely axiomatized by the theory $T_\cR^{\e}$, which expands $T_{\cR}$ by axioms (E1), (E2), and (E3) above along with an axiom scheme stating that $\exp(rx) = (\exp x)^r$ whenever the power function $x \mapsto x^r$ is definable in $\cR$~\cite[Theorem B]{DS00}. As with $T_{\anexp}$, the theory $T_\cR^{\e}$ has quantifier elimination and a universal axiomatization in the language $\cL_{\cR} \cup \{\exp,\log\}$. Note that the axiom expressing compatibility with the power function $x \mapsto x^r$ is really only necessary when $r$ is irrational; if $r \in \Q$, then compatibility follows from (E1).

\medskip\noindent
The most general application of this method is in Foster's thesis~\cite{Fo10}, where $T_{\cR}$ above is replaced by any complete power bounded o-minimal theory, subject to some natural constraints. We will use Foster's framework in Sections~\ref{sec:logHT} and~\ref{sec:logLiouville}, and we spend the remainder of this subsection describing his hypotheses and results.

\begin{definition}
A \textbf{restricted exponential on $K$} is a strictly increasing map $\e\colon K\to K$ which is zero outside of $[-1,1]$, which satisfies the identity
\[
\e(x + y)\ =\ \e(x)\e(y)
\]
for $x, y \in [-1, 1]$ with $|x+y|\leq 1$, and which is differentiable at $0$ with derivative 1. 
\end{definition}

\noindent
As is the case with total exponential functions, any $\cL(K)$-definable restricted exponential on $K$ is actually $\cL(\emptyset)$-definable, continuous on the interval $[-1,1]$, differentiable on the interval $(-1,1)$, and equal to its own derivative on that interval. Thus, we say that \textbf{$T$ defines a restricted exponential} to mean that any model of $T$ admits an $\cL(\emptyset)$-definable restricted exponential. For the remainder of this subsection, we assume that $T$ defines a restricted exponential $\e$. We also assume $T$ is power bounded and that $\Lambda$, the field of exponents of $T$, is cofinal in the prime model of $T$.

\begin{lemma}\label{lem:leqyminus1}
Let $y\in [-1,1]$. Then $y+1 \leq \e(y)$, with equality if and only if $y = 0$.
\end{lemma}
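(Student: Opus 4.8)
The plan is to reduce everything to the convexity of the restricted exponential, which is encoded in the fact that $\e$ equals its own derivative on $(-1,1)$. Concretely, I would set $g(y) \coloneqq \e(y) - y - 1$ on $[-1,1]$ and show $g(y) \geq 0$ with equality only at $0$. Since $\e$ is continuous on $[-1,1]$ and differentiable on $(-1,1)$ with $\e' = \e$ there, the function $g$ is continuous on $[-1,1]$ and differentiable on $(-1,1)$ with $g'(y) = \e(y) - 1$. The key subclaim is that $\e(y) > 1$ for $y \in (0,1]$ and $\e(y) < 1$ for $y \in [-1,0)$, i.e.\ that $g$ is strictly decreasing on $[-1,0]$ and strictly increasing on $[0,1]$; combined with $g(0) = \e(0) - 1 = 0$ (which follows from the multiplicativity identity $\e(0) = \e(0)^2$ together with $\e$ being strictly increasing, so $\e(0) \neq 0$, hence $\e(0) = 1$), this gives the result, since then $g$ attains a strict global minimum of $0$ at $y = 0$.

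First I would establish $\e(0) = 1$ as above. Next, the sign of $\e(y) - 1$: for $y \in (0,1]$, strict monotonicity gives $\e(y) > \e(0) = 1$, and for $y \in [-1,0)$ it gives $\e(y) < \e(0) = 1$. Thus $g'(y) = \e(y) - 1 > 0$ on $(0,1)$ and $g'(y) < 0$ on $(-1,0)$. Applying the mean value theorem (available since $T$ extends $\RCF$ and $g$ is definable, continuous on the relevant closed interval, and differentiable on its interior) on $[0,y]$ for $0 < y \leq 1$ yields $g(y) - g(0) = g'(\xi)\, y > 0$ for some $\xi \in (0,y)$, so $g(y) > 0$; symmetrically on $[y,0]$ for $-1 \leq y < 0$ we get $g(0) - g(y) = g'(\xi)(-y) < 0$, again $g(y) > 0$. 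Together with $g(0) = 0$ this is exactly the claim: $y + 1 \leq \e(y)$ for all $y \in [-1,1]$, with equality iff $y = 0$.

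I do not expect a serious obstacle here: the statement is the standard elementary inequality $1 + y \leq e^y$ restricted to $[-1,1]$, and every ingredient (strict monotonicity of $\e$, $\e' = \e$ on $(-1,1)$, the functional equation forcing $\e(0) = 1$, and the mean value theorem in an o-minimal expansion of $\RCF$) is either built into the definition of a restricted exponential or available from the ambient theory. The one point requiring a little care is that the mean value theorem is being used for an $\cL(\emptyset)$-definable function in an arbitrary model $K$ of $T$ rather than over $\R$; this is fine because $T$ extends $\RCF$ and $g$ is definable with a definable derivative, so the first-order statement of the mean value theorem for $g$ holds in $K$. If one prefers to avoid even that, one can instead argue purely order-theoretically: $\e(y) - (1+y)$ and its derivative $\e(y) - 1$ have the sign pattern described, and in an o-minimal structure a definable differentiable function with positive (resp.\ negative) derivative on an interval is strictly increasing (resp.\ decreasing) there, which again pins down the unique minimum at $0$.
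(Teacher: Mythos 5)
Your proof is correct and is essentially the paper's argument: both rest on the o-minimal mean value theorem together with $\e'=\e$ on $(-1,1)$ and strict monotonicity, the only cosmetic difference being that you apply the MVT to the auxiliary function $g(y)=\e(y)-y-1$ while the paper applies it directly to $\e$ to get $\e(y)-1=\e(u)y$ and then compares $\e(u)y$ with $y$. (The paper simply asserts $\e(0)=1$; your derivation of it from the functional equation is a harmless extra step.)
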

\begin{proof}
Since $\e(0) = 1$, so may assume that $y$ is nonzero. The o-minimal mean value theorem gives some $u$ between $y$ and $0$ with
\[
\e(y)-1\ = \ \e(y)-\e(0) \ = \ \e(u)y.
\]
Treating the cases $y>0$ and $y<0$ separately, we see that $\e(u)y>y$, so $\e(y)>y+1$.
\end{proof}

\noindent
For $y\in K$ with $\e(-1)\leq y \leq \e(1)$, let $\ln(y)$ be the unique element of $[-1,1]$ with $\e(\ln y) = y$. Then $\ln$ is $\cL(\emptyset)$-definable and continuous where defined. A straightforward computation gives that $\ln$ is differentiable at $y$ strictly between $\e(-1)$ and $\e(1)$ with derivative $y\inv$.

\begin{definition}
A \textbf{logarithm on $K$} is a function $\log\colon K^>\to K$ which satisfies the following axioms for all $a\in K^>$ and all $\lambda\in \Lambda$:
\begin{enumerate}
\item[(L1)] $\log$ is an ordered group embedding of the multiplicative group $K^>$ into the additive group $K$;
\item[(L2)] if $\e(-1)\leq a \leq \e(1)$, then $\log a = \ln a$;
\item[(L3)] if $\lambda >1$ and $a\geq \lambda^2$, then $a\geq \lambda\log a$;
\item[(L4)] $\log(a^\lambda)= \lambda \log a$.
\end{enumerate}
\end{definition}

\noindent
Let $\log$ be a logarithm on $K$. We set $\cL_{\log}\coloneqq \cL\cup \{\log\}$, and we view $K$ as an $\cL_{\log}$-structure, where $\log$ is interpreted to be identically zero on $K^{\leq}$. Let $\exp$ denote the compositional inverse of $\log$, where it is defined. Combining (L1) and (L2), we see that $\log$ is differentiable at any $y >0$ with derivative $y\inv$. Thus, $\exp$ is also differentiable where defined, and equal to its own derivative. 

\medskip\noindent
The axioms (L1)--(L4) are analogs of Foster's axioms (A1)--(A5)~\cite[Section 6.5]{Fo10}, though they are presented in terms of logarithms rather than exponentials. Axiom (L1) is an analog of Ressayre's axiom (E1), axiom (L2) corresponds to Ressayre's axiom (E2), and axiom (L3) is a version of (E3) which works for arbitrary (possibly non-archimedean) fields of exponents. Axiom (L4) is an analog of van den Dries and Speissegger's additional axiom. 

\medskip\noindent
We prove here a couple of lemmas for later use.

\begin{lemma}\label{lem:closeislog}
Let $\log$ be a logarithm on $K$, let $f,g \in K$ with $|f-g|\leq 1$, and suppose that $g \in \log(K^>)$. Then $f \in \log(K^>)$.
\end{lemma}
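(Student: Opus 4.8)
We want to show: if $\log$ is a logarithm on $K$, $f,g\in K$ with $|f-g|\le 1$, and $g\in\log(K^>)$, then $f\in\log(K^>)$. Write $g=\log b$ for some $b\in K^>$. The natural strategy is to produce an explicit element $a\in K^>$ with $\log a=f$, by modifying $b$ multiplicatively by something in the "restricted" range where $\log$ is known to be surjective via (L2). Since $|f-g|\le 1$, the difference $f-g$ lies in $[-1,1]$, which is exactly the interval on which $\ln$ is defined and on which (L2) tells us $\log=\ln$.

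First I would set $\delta:=f-g\in[-1,1]$. Since $\ln$ is a bijection from $[\e(-1),\e(1)]$ onto $[-1,1]$ (by construction of $\ln$ as the compositional inverse of $\e$ on that interval), there is a unique $c\in[\e(-1),\e(1)]$ with $\ln c=\delta$; note $c>0$. By (L2), $\log c=\ln c=\delta$. Now set $a:=bc\in K^>$. Using (L1), $\log$ is an ordered group embedding $K^>\to K$, so $\log a=\log b+\log c=g+\delta=g+(f-g)=f$. Hence $f\in\log(K^>)$, as desired.

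The only thing to be slightly careful about — and this is the sole "obstacle," though a mild one — is the existence of the element $c$: we need $\e(1)$ and $\e(-1)$ to actually bound an interval on which $\e$ (equivalently $\ln$) is a genuine bijection onto $[-1,1]$. This is guaranteed by the discussion immediately preceding the definition of a logarithm: $\ln(y)$ is defined precisely for $\e(-1)\le y\le\e(1)$ as the unique element of $[-1,1]$ with $\e(\ln y)=y$, so $\ln$ and $\e$ are mutually inverse bijections between $[\e(-1),\e(1)]$ and $[-1,1]$. (Lemma~\ref{lem:leqyminus1} also ensures $\e(-1)>0$, so $c>0$ is automatic.) Thus for any $\delta\in[-1,1]$ the required $c$ exists. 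No power boundedness or cofinality hypotheses beyond those standing in this subsection are needed, and axioms (L3), (L4) play no role — only (L1) and (L2).

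In short: reduce the defect $f-g$ into $[-1,1]$, realize it as $\log c=\ln c$ for a suitable $c$ in the restricted interval via (L2), and then multiply the given witness $b$ by $c$, using that $\log$ is a group homomorphism (L1) to conclude $\log(bc)=f$.
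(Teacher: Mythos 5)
Your proof is correct and is essentially the paper's argument: the paper sets $h\coloneqq\exp(g)\,\e(f-g)$, which is exactly your $a=bc$ with $b=\exp(g)$ a preimage of $g$ and $c=\e(f-g)$ the element of $[\e(-1),\e(1)]$ whose $\ln$ is $f-g$, and then concludes $\log h=f$ via (L1) and (L2). Your extra care about $\e$ and $\ln$ being mutually inverse bijections between $[-1,1]$ and $[\e(-1),\e(1)]$, and about $\e(-1)>0$, is a correct elaboration of what the paper leaves implicit.
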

\begin{proof}
Let $h\coloneqq \exp(g)\e(f-g)\in K^>$. Then $f = \log h$.
\end{proof}

The inequality in axiom (L3) is not strict (a useful formulation for verifying that it holds in certain situations). However, the strict version of (L3) follows:

\begin{lemma}\label{lem:powerstrictlog}
Let $\log$ be a logarithm on $K$, let $\lambda \in \Lambda$ with $\lambda > 1$, and let $a \in K$ with $a >\lambda^2$. Then $a > \lambda \log a$. 
\end{lemma}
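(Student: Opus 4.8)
The plan is to deduce the strict inequality from the non-strict axiom (L3) by a small multiplicative perturbation of $a$, using the equality case of Lemma~\ref{lem:leqyminus1} to control the error term. Note first that (L3) already gives $a \geq \lambda\log a$, so it suffices to rule out $a = \lambda\log a$; assume this equality, so that $\log a = a/\lambda$.

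Now I would pick an auxiliary scalar $r \in K$ with $\max\{\lambda^2/a,\ 1/\e(1)\} < r < 1$; such an $r$ exists because $K$ is real closed, hence densely ordered, and both bounds are genuinely less than $1$---the first since $a > \lambda^2$, and the second since $\e(1) > \e(0) = 1$. This $r$ is arranged to serve two purposes simultaneously. On the one hand, $ar > \lambda^2$, so (L3) applied to the element $ar$ (with exponent $\lambda$) gives $ar \geq \lambda\log(ar) = \lambda\log a + \lambda\log r = a + \lambda\log r$, where I use (L1) and the assumed equality; rearranging, $a(1-r) \leq -\lambda\log r$. On the other hand, $1 < r\inv < \e(1)$, so (L2) identifies $\log(r\inv)$ with $\ln(r\inv)$, and Lemma~\ref{lem:leqyminus1}, applied with $y = \ln(r\inv) \in [-1,1]$, gives $\ln(r\inv) \leq r\inv - 1$; since $\log r = -\log(r\inv)$ by (L1), this reads $-\log r \leq (1-r)/r$. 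Combining the two estimates yields $a(1-r) \leq \lambda(1-r)/r$, and dividing by $1-r > 0$ gives $ar \leq \lambda$, contradicting $ar > \lambda^2 > \lambda$. Hence $a \neq \lambda\log a$, so $a > \lambda\log a$.

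I do not anticipate any real obstacle: once $r$ is chosen the argument is a few lines of ordered-field arithmetic together with (L1), (L3), and the equality case of Lemma~\ref{lem:leqyminus1}, and the only step worth a moment's care is checking that the two constraints on $r$ are jointly satisfiable, which is immediate from $a > \lambda^2$ and $\e(1) > 1$. As an alternative, one could argue directly that the function $x \mapsto x - \lambda\log x$ is strictly increasing for $x > \lambda$ (its derivative being $1 - \lambda x\inv$, since $\log$ is differentiable with derivative $x\inv$) and is nonnegative at $\lambda^2$ by (L3), so that its value at $a > \lambda^2$ is positive; but that route needs some care about the ambient o-minimal structure in which the mean value theorem is invoked, so the perturbation argument seems preferable.
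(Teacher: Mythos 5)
Your proof is correct. The only points that needed checking all go through: $r$ exists because both $\lambda^2/a$ and $1/\e(1)$ are strictly less than $1$ and $K$ is densely ordered; (L3) applies to $ar$ since $ar>\lambda^2$ and the exponent $\lambda$ is unchanged; $r\inv$ lies in $[\e(-1),\e(1)]$ so (L2) applies; and the final contradiction $ar\leq\lambda<\lambda^2<ar$ is genuine. Your route differs from the paper's. The paper argues directly, with a case split on a scalar $\rho\in\Lambda$ satisfying $1<\rho^2\leq\e(1)$: when $a\geq\rho^2\lambda^2$ it applies (L3) with the \emph{enlarged exponent} $\rho\lambda$, and when $\lambda^2<a<\rho^2\lambda^2$ it writes $a=\lambda^2 u$ with $1<u<\e(1)$ and extracts strictness from the strict part of Lemma~\ref{lem:leqyminus1} (namely $u-1>\ln u$ for $u\neq 1$). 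You instead argue by contradiction and perturb the \emph{argument} of $\log$ multiplicatively by $r\in K$ close to $1$; this avoids the case split entirely, needs only the non-strict inequality $\ln(r\inv)\leq r\inv-1$ (despite your plan's mention of the equality case, the strictness in your conclusion comes from the gap between $\lambda$ and $\lambda^2$), and does not require choosing the perturbation inside $\Lambda$ (harmless for the paper too, since $\Lambda$ is a field containing $\Q$, but your version makes the independence from $\Lambda$ transparent). The paper's version is slightly more direct in that it exhibits the inequality rather than refuting its negation; yours is more uniform. Your remark that the mean-value-theorem alternative needs care is well taken, since $\log$ is not assumed $\cL$-definable.
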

\begin{proof}
Let $\rho\in \Lambda$ with $1<\rho^2\leq \e(1)$. For example, we may take $\rho = 4/3$, since $\e(1)>2$ by Lemma~\ref{lem:leqyminus1}. If $a\geq \rho^2\lambda^2$, then $a\geq \rho\lambda \log a> \lambda \log a$ by (L3), so we may assume that $\lambda^2<a<\rho^2\lambda^2$. Take $u \in K$ with $\lambda^2u = a$, so $1< u< \e(1)$. We have
\[
\lambda \log a\ =\ \lambda \log \lambda^2+ \lambda \ln u\ \leq \ \lambda^2+ \lambda^2\ln u,
\]
where the equality uses (L1) and (L2) and the inequality uses (L3) and our assumption that $\lambda > 1$. Lemma~\ref{lem:leqyminus1} gives $u -1= \e(\ln u)-1 > \ln u$, so $\lambda \log a < \lambda^2+ \lambda^2(u-1) = \lambda^2 u = a$.
\end{proof}

\noindent
Let $T^{\e}$ be the $\cL_{\log}$-theory which extends the axioms (L1)--(L4) by an additional axiom which states that $\log$ is surjective. Foster's main results on the theory $T^{\e}$ are as follows:

\begin{fact}[\cite{Fo10}, Theorems 6.5.2 and 6.6.9]\label{fact:expelim}\
\begin{enumerate}
\item $T^{\e}$ is complete, model complete, and o-minimal.
\item The prime model of $T$ admits a unique expansion to a model of $T^{\e}$.
\item If $T$ has quantifier elimination and a universal axiomatization, then so does $T^{\e}$ in the language $\cL_{\log}\cup \{\exp\}$, where $\exp$ is interpreted as the compositional inverse of $\log$.
\end{enumerate}
\end{fact}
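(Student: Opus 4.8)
The plan is to obtain all three parts from a single quantifier-elimination result, following the method of Ressayre, van den Dries--Speissegger, and Foster. The engine is part~(3), so I would prove it first; parts~(1) and~(2) then follow with modest extra work. Since completeness, model completeness, o-minimality, power boundedness, the presence of $\e$, and the cofinality of $\Lambda$ are all preserved when one passes to a definitional expansion of $\cL$, I would first replace $\cL$ by a language in which $T$ itself has quantifier elimination and a universal axiomatization (adjoining symbols for $\e$ and the power functions, which are $\cL(\emptyset)$-definable), prove quantifier elimination for $T^{\e}$ there, and only afterwards read off the remaining properties in the original language. The descent to $\cL_{\log}$ is harmless precisely because the added symbols are $\emptyset$-definable total functions: any $\cL_{\log}$-embedding of models of $T^{\e}$ canonically expands to an embedding of the corresponding definitional expansions, so model completeness (and, trivially, completeness) transfers back, while o-minimality passes to any reduct.

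For the quantifier elimination I would use the one-point substructure-extension criterion: let $M, N \models T^{\e}$ with $M$ being $|N|^+$-saturated, let $A \subsetneq N$ be an $(\cL_{\log}\cup\{\exp\})$-substructure, and let $\iota\colon A \to M$ be an embedding; it suffices to extend $\iota$ to a substructure containing one further element of $N$. Because $\cL_{\log}\supseteq \cL$, the set $A$ is closed under the $\cL$-operations and hence is a substructure of a model of $T$; using the universal axiomatization and quantifier elimination for $T$, the $\cL$-reduct of $\iota$ extends to $\dclL(A)$. The real content is to extend the partial logarithm, and I would split into cases according to how a new element $b$ meets the two ingredients of the structure: its $\cL$-definable closure over $A$, and the value of $\log$ (or $\exp$) at $b$. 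The decisive case is when $\log b$ is not yet determined; one must show that its quantifier-free type over $\iota(A)$ in $M$ is forced, so that the extension exists by saturation and is unique.

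Pinning down $\log b$ is the main obstacle, and it is exactly where the hypotheses on $T$ are used. The argument is valuation-theoretic. Working with the natural valuation whose valuation ring is the convex hull of $\Lambda$, axioms (L1)--(L2) tie $\log$ to the restricted logarithm $\ln$ on the bounded part, while the growth axiom (L3), sharpened in Lemma~\ref{lem:powerstrictlog}, controls the size of $\log a$ relative to $a$ and thereby sandwiches $\log b$ between values already known on $\iota(A)$. Power boundedness guarantees that, in the absence of $\log$, every definable function is dominated by a power, so that adjoining a total logarithm raises the growth of the structure in a single controlled step; and the cofinality of $\Lambda$ in the prime model ensures that the ``level-zero'' elements are handled entirely by $\ln$ together with (L4), furnishing the base case of an induction on levels. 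Lemma~\ref{lem:closeislog} is the technical device that moves a candidate value into the image of $\log$ once it lies within distance $1$ of a known value. Assembling these, the quantifier-free type of $\log b$ is determined, the one-point extension goes through, and quantifier elimination follows; interpreting $\exp$ as the inverse of $\log$ keeps the axiomatization universal.

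From quantifier elimination, model completeness is immediate, and completeness follows since the prime model embeds into every model, giving a common substructure. For o-minimality I would run the standard term analysis: every unary term of $\cL_{\log}\cup\{\exp\}$ defines a function that is piecewise continuous and piecewise monotone with finitely many pieces, because $\log$ and $\exp$ are monotone and the $\cL$-terms are handled by the o-minimality of $T$; hence every definable subset of the line is a finite union of points and open intervals. Finally, for part~(2), the cofinality of $\Lambda$ forces the prime model $P$ of $T$ to be closed under a total logarithm: for $a\in P^{>}$ one reduces, via the level decomposition and (L4), to computing $\ln$ of a bounded unit, so $\log a\in P$ and $\log$ is onto $P$, which exhibits the required expansion. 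Uniqueness is then a consequence of quantifier elimination together with (L2), which force any two logarithms on $P$ to agree on the bounded part and hence, by the group law (L1) and the sandwiching from (L3)--(L4), everywhere.
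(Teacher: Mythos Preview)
The paper does not prove this statement: it is recorded as a \emph{Fact}, with a bare citation to Foster's thesis (Theorems~6.5.2 and~6.6.9), and no argument is given in the text. So there is no ``paper's own proof'' to compare your proposal against.

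Your outline is, at the level of strategy, an accurate summary of the Ressayre--van den Dries--Speissegger--Foster method: reduce to a language where $T$ has QE and a universal axiomatization, run the one-point substructure-extension test, and control the type of $\log b$ valuation-theoretically using (L1)--(L4) together with power boundedness. As a plan it is fine. Be aware, though, that the ``decisive case'' you gloss over is where essentially all the work lies: Foster's Chapter~6 spends many pages on a level-by-level analysis of how $\log b$ sits relative to the already-constructed substructure, and the informal phrases ``sandwiches $\log b$'' and ``the quantifier-free type of $\log b$ is determined'' stand in for a genuinely delicate argument. Your sketch of part~(2) is also a bit circular: you speak of reducing ``$\log a$'' to an application of $\ln$, but on the prime model no logarithm yet exists---the content is to \emph{construct} one, and the cofinality of $\Lambda$ is what makes every positive element of the prime model a $\Lambda$-power times something in the domain of $\ln$, so that a total $\log$ can be \emph{defined} via (L4) and (L2). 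Uniqueness then follows from the embedding test you already ran, not from a separate sandwiching argument.
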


\begin{corollary}\label{cor:expterms}
Any $\cL_{\log}(\emptyset)$-definable function in any model of $T^{\e}$ is given piecewise by a composition of $\cL(\emptyset)$-definable functions, $\log$, and $\exp$. 
\end{corollary}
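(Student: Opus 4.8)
The plan is to reduce to the hypothesis of Fact~\ref{fact:expelim}(3) by replacing $T$ with a definitional expansion. First I would enlarge $\cL$ to a language $\cL^*$ by adjoining, for each $\cL(\emptyset)$-definable function $F\colon M^n\to M$ (extended by $0$ outside its domain, say), an $n$-ary function symbol $\hat F$, and let $T^*$ be the $\cL^*$-theory extending $T$ by the defining sentence of each $\hat F$. Then $T^*$ is a definitional expansion of $T$, hence complete, model complete, and o-minimal; it still defines the restricted exponential $\e$; it is power bounded, having the same definable sets as $T$; and its field of exponents is again $\Lambda$, which is cofinal in the prime model $\dcl_{\cL^*}(\emptyset)=\dcl_\cL(\emptyset)$ of $T^*$. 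Thus $T^*$ satisfies all the standing hypotheses of this subsection. Moreover, since $\cL^*$ is a definitional expansion of $\cL$, every $\cL^*$-formula is $T^*$-equivalent to an $\cL$-formula, which by model completeness of $T$ is $T$-equivalent to an existential $\cL$-formula, whose existential quantifiers can be removed using $\cL^*$-Skolem terms; hence $T^*$ has quantifier elimination. Any $\cL^*$-substructure $A$ of a model of $T^*$ is closed under all $\cL(\emptyset)$-definable functions, so $A=\dcl_\cL(A)$ is an elementary $\cL$-substructure and therefore a model of $T^*$; thus the models of $T^*$ are closed under substructure, so $T^*$ has a universal axiomatization by the {\L}o\'s--Tarski theorem. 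Finally, by construction every $\cL^*$-term is a composition of $\cL(\emptyset)$-definable functions.

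Next I would set $\cL^\dagger\coloneqq\cL^*_{\log}\cup\{\exp\}$ and apply Fact~\ref{fact:expelim} to $T^*$: the theory $(T^*)^\e$ is then complete, model complete, o-minimal, and has quantifier elimination and a universal axiomatization in $\cL^\dagger$, with $\exp$ the compositional inverse of $\log$. Since the axioms (L1)--(L4) and the surjectivity of $\log$ refer only to $\cL$-notions together with $\log$, $\e$, and $\ln$, all of which are unchanged in passing from $T$ to $T^*$, the theory $(T^*)^\e$ is a definitional expansion of $T^\e$; in particular every model of $T^\e$ expands uniquely to a model of $(T^*)^\e$, so every $\cL_{\log}(\emptyset)$-definable function in a model of $T^\e$ is $\cL^\dagger(\emptyset)$-definable in the corresponding model of $(T^*)^\e$. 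Note also that every $\cL^\dagger$-term is a composition of $\cL(\emptyset)$-definable functions, $\log$, and $\exp$.

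It then remains to show that if $T'$ is a model-complete $\cL'$-theory with a universal axiomatization, every $\cL'(\emptyset)$-definable function $G\colon D\to N$ (where $D\subseteq N^n$ and $N\models T'$) is given piecewise by $\cL'$-terms. For any $A\subseteq N$, the substructure $\langle A\rangle$ generated by $A$ is a model of $T'$, hence an elementary substructure of $N$ by model completeness; since any element of $\dcl_{\cL'}(A)$ is the unique realization in $N$ of some $\cL'(A)$-formula, and such a formula already has a realization in $\langle A\rangle$, this yields $\dcl_{\cL'}(A)=\langle A\rangle=\{t(\bar a): t\text{ an }\cL'\text{-term},\ \bar a\text{ a tuple from }A\}$. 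Taking $A$ to be a tuple $\bar a\in D$ shows $G(\bar a)=t(\bar a)$ for some $\cL'$-term $t$ depending on $\bar a$; a compactness argument --- if no finite set of $\cL'$-terms worked, one could pass to a model of $T'$ containing a tuple $\bar a$ with $G(\bar a)\notin\langle\bar a\rangle$, contradicting the displayed identity --- then gives finitely many $\cL'$-terms $t_1,\dots,t_k$ with $D=\bigcup_i\{\bar x\in D:G(\bar x)=t_i(\bar x)\}$, and disjointifying these definable sets exhibits $G$ as piecewise given by $t_1,\dots,t_k$. Taking $T'=(T^*)^\e$ and $\cL'=\cL^\dagger$ and combining this with the previous two paragraphs would give the corollary. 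The points I expect to require genuine care are the verification that $T^*$ inherits Foster's standing hypotheses along with quantifier elimination and a universal axiomatization, and the identification of $\dcl_{\cL'}(A)$ with the generated substructure; everything else is bookkeeping.
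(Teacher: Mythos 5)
Your proposal is correct and follows essentially the same route as the paper: pass to the definitional expansion $\cL^*$ of $\cL$ by all $\cL(\emptyset)$-definable functions, observe that $T$ then has quantifier elimination and a universal axiomatization (via definable Skolem functions), apply Fact~\ref{fact:expelim}(3) to get the same for $T^{\e}$ in $\cL^*_{\log}\cup\{\exp\}$, and conclude that definable functions are given piecewise by terms, which are exactly the stated compositions. You have simply written out the details (the Skolem-term quantifier elimination, the {\L}o\'s--Tarski argument, and the standard ``piecewise terms'' fact for model-complete universally axiomatized theories) that the paper leaves implicit.
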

\begin{proof}
Let $\cL^*$ be the extension of $\cL$ by function symbols for each $\cL(\emptyset)$-definable function. Then $T$ has quantifier elimination and a universal axiomatization in the language $\cL^*$, since $T$ has definable Skolem functions. Thus, $T^{\e}$ has quantifier elimination and a universal axiomatization in the language $\cL^*_{\log} \cup \{\exp\}$ by Fact~\ref{fact:expelim} (3) above. It follows that any $\cL^*_{\log}(\emptyset)$-definable function in any model of $T^{\e}$ is given piecewise by terms in the language $\cL^*_{\log} \cup \{\exp\}$. These terms are exactly compositions of $\cL(\emptyset)$-definable functions, $\log$, and $\exp$. 
\end{proof}

\subsection{$T$-derivations}\label{subsec:Tderiv}
Let $\der\colon K \to K$ be a map. For $a \in K$, we use $a'$ or $\der a$ in place of $\der(a)$, and we use $a^{(r)}$ in place of $\der^r(a)$. If $a\neq 0$, then we set $a^\dagger\coloneqq a'/a$. Given a set $A \subseteq K$, we set $\der A \coloneqq \{a':a \in A\}$. Given a tuple $b = (b_1, \ldots, b_n) \in K^n$, we use $\der b$ or $b'$ to denote the tuple $(b_1', \ldots, b_n')$.

\medskip\noindent
Let $F\colon U \to K$ be an $\cL(\emptyset)$-definable $\cC^1$-function with $U \subseteq K^n$ open. Let $\nabla F$ denote the gradient
\[
\nabla F\ \coloneqq \ \left(\frac{\partial F}{\partial Y_1},\ldots,\frac{\partial F}{\partial Y_n}\right),
\]
viewed as an $\cL(K)$-definable map from $U$ to $K^n$. If $n = 1$, then we write $F'$ instead of $\nabla F$. We say that $\der$ is \textbf{compatible with $F$} if
\[
F(u)'\ =\ \nabla F(u)\cdot u'
\]
for each $u\in U$, where $a\cdot b$ denotes the dot product $\sum_{i=1}^n a_i b_i$ for $a,b \in K^n$. We say that $\der$ is a \textbf{$T$-derivation on $K$} if $\der$ is compatible with every $\cL(\emptyset)$-definable $\cC^1$-function with open domain. 

\medskip\noindent
$T$-derivations were introduced in~\cite{FK19}. Compatibility with the functions $(x,y) \mapsto x+y$ and $(x,y) \mapsto xy$ gives that any $T$-derivation on $K$ is a \emph{derivation} on $K$, that is, a map satisfying the identities $(a+b)' = a' +b'$ and $(ab)' = a'b+ab'$ for $a,b \in K$. For the remainder of this subsection, we assume that $\der$ is a $T$-derivation on $K$, and we let $C\coloneqq \ker(\der)$ denote the constant field of $K$. By~\cite[Lemma 2.3]{FK19}, the constant field $C$ is the underlying set of an elementary $\cL$-substructure of $K$. We recall two facts from~\cite{FK19} for later use:

\begin{fact}[\cite{FK19}, Lemma 2.12]
\label{fact:basicTderivation2}
Let $U\subseteq K^n$ be open and let $F\colon U\to K$ be an $\cL(K)$-definable $\cC^1$-function. Then there is a (necessarily unique) $\cL(K)$-definable function $F^{[\der]}\colon U \to K$ such that
\[
F(u)'\ =\ F^{[\der]}(u)+\nabla F(u)\cdot u'
\]
for all $u \in U$.
\end{fact}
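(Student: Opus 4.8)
The plan is to strip the parameters off $F$ and invoke the defining property of a $T$-derivation, which only concerns $\cL(\emptyset)$-definable functions. Pick a finite tuple $a = (a_1,\dots,a_m)$ from $K$ such that $F$ is $\cL(a)$-definable, and, using that $T$ has definable Skolem functions, pick an $\cL(\emptyset)$-definable function $G$ with $G(u,a) = F(u)$ for all $u \in U$. \emph{Suppose for now that $G$ is $\cC^1$ on an open subset of $K^{n+m}$ containing $\{(u,a) : u \in U\}$.} Writing the variables of $G$ as $(Y,Z)$ with $Y = (Y_1,\dots,Y_n)$ and $Z = (Z_1,\dots,Z_m)$, compatibility of $\der$ with $G$ gives, for all $u \in U$,
\[
F(u)'\ =\ G(u,a)'\ =\ \sum_{i=1}^{n}\frac{\partial G}{\partial Y_i}(u,a)\,u_i'\ +\ \sum_{j=1}^{m}\frac{\partial G}{\partial Z_j}(u,a)\,a_j'.
\]
On the slice $Z = a$ we have $\frac{\partial G}{\partial Y_i}(u,a) = \frac{\partial F}{\partial Y_i}(u)$, so the first sum equals $\nabla F(u)\cdot u'$; the second sum, viewed as a function of $u$ with $a$ and $a'$ as parameters, is $\cL(K)$-definable, and I take it to be $F^{[\der]}(u)$. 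Uniqueness needs no work: if two $\cL(K)$-definable functions both satisfy the displayed identity, their difference vanishes identically on $U$, so they coincide.

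What remains, and this is the heart of the matter, is to justify the italicized assumption: a priori $G$ is only $\cC^1$ along the (possibly nowhere dense) slice $\{(u,a):u\in U\}$, whereas the $T$-derivation axiom applies only to an $\cL(\emptyset)$-definable $\cC^1$ function with \emph{open} domain. I would handle this locally, by proving that for every $u_0 \in U$ there are an open neighborhood $U_0 \ni u_0$, a finite tuple $b$ from $K$, and an $\cL(\emptyset)$-definable $\cC^1$ function on an open subset of $K^{n+|b|}$ containing $\{(u,b):u\in U_0\}$ which agrees with $F$ on the corresponding slice over $U_0$. This is pure o-minimality: starting from $G$ defined on an $\cL(\emptyset)$-definable set and choosing $a$ to be $\cL(\emptyset)$-independent (possible, since a minimal parameter tuple is independent), dimension theory for o-minimal structures (the fiber-dimension formula), applied to the $\cL(\emptyset)$-definable set where $G$ fails to be $\cC^1$, forces the open $\cC^1$-locus of $G$ to contain all but a subset of dimension $<n$ of the slice $\{(u,a):u\in U\}$; the remaining points are dealt with by passing to a $\cC^1$-cell decomposition of the $\cL(\emptyset)$-definable graph of $G$ near $(u_0,F(u_0),a)$ and reading off a fresh representative of $F$ near $u_0$, over a possibly enlarged parameter block $b$.

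Granting the local claim, the computation of the first paragraph yields, for each $u_0$, an $\cL(K)$-definable function on $U_0$ that equals $u \mapsto F(u)' - \nabla F(u)\cdot u'$ there; these local functions therefore agree on overlaps, and since $U$ is covered by finitely many such $\cL(K)$-definable $U_0$ (by o-minimality) they patch together to the desired $\cL(K)$-definable function $F^{[\der]}\colon U \to K$. I expect this local claim --- extracting, near each point of $U$, a jointly $\cC^1$ and $\cL(\emptyset)$-definable function of which $F$ is a slice --- to be the only genuine obstacle; once it is in hand, the differentiation, the splitting of the gradient, and the patching are all routine.
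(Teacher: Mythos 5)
The paper offers no proof of this statement---it is imported verbatim from \cite[Lemma 2.12]{FK19}---so there is nothing internal to compare against; I will assess your argument on its own terms. Your overall strategy is the natural one and surely the intended one: present $F$ as a slice $G(\cdot,a)$ of an $\cL(\emptyset)$-definable family, apply the compatibility axiom to $G$, split the gradient into the $Y$-part (which gives $\nabla F(u)\cdot u'$) and the $Z$-part (which gives the $\cL(K)$-definable candidate $F^{[\der]}(u)=\sum_j \partial_{Z_j}G(u,a)\,a_j'$). The uniqueness remark is fine, and your dimension count for generic points is correct: taking $a$ to be $\cL(\emptyset)$-independent (a minimal parameter tuple is independent), if the fibre over $a$ of the $\cL(\emptyset)$-definable non-$\cC^1$ locus of $G$ had dimension $n$, the fibre-dimension formula would force that locus to have dimension $n+m$, which is impossible.

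The genuine gap is exactly where you say ``the remaining points are dealt with by passing to a $\cC^1$-cell decomposition of the graph of $G$ \ldots and reading off a fresh representative.'' This is not an argument, and it is not clear it can be made into one. At a point $u_0\in U$ with $(u_0,a)$ in the singular locus, the failure of joint smoothness is a feature of the germ of the family at $(u_0,a)$; a cell of the graph of $G$ through $(u_0,F(u_0),a)$ need not project onto a neighbourhood of $u_0$ (it may be thin in the $Y$-directions), and when it does it captures only one ``branch'' of $G$ near the wall, not $F$ on a full neighbourhood. Enlarging the parameter block does not help by itself either: $u_0$ remains non-generic over any parameters from which it is definable, so the dimension argument never applies at $u_0$, and you would need to actually \emph{prove} that the $\cC^1$ germ of $F$ at $u_0$ admits some presentation as a jointly $\cC^1$ generic slice---a nontrivial claim in several variables for which you give no argument. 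A repair that works is different in kind: let $Z\subseteq U$ be the bad set (dimension $<n$), decompose it into $\cC^1$-cells, write each cell as the image of a $\cC^1$ section $\sigma$ over an open subset of $K^e$ with $e<n$, and compute $F(u)'-\nabla F(u)\cdot u'=(F\circ\sigma)^{[\der]}(s)-\nabla F(u)\cdot\sigma^{[\der]}(s)$ at $u=\sigma(s)$ by induction on the number of variables together with the ordinary chain rule. This also fixes your patching step, which as written is unjustified: you produce one neighbourhood $U_0$ per point with no uniformity, so ``$U$ is covered by finitely many such $U_0$ by o-minimality'' has no basis, whereas the good locus plus finitely many cells of $Z$ gives the finitely many definable pieces you need.
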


\begin{fact}[\cite{FK19}, Lemma 2.13]
\label{fact:transext}
Let $M$ be a $T$-extension of $K$, let $A$ be a $\dclL$-basis for $M$ over $K$, and let $s\colon A\to M$ be a map. Then there is a unique extension of $\der$ to a $T$-derivation on $M$ such that $a'= s(a)$ for all $a \in A$.
\end{fact}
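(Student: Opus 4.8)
The plan is to construct the extension by an explicit formula built from the correction operator $F\mapsto F^{[\der]}$ of Fact~\ref{fact:basicTderivation2}, and then to observe that the same formula is forced on any admissible extension, so that uniqueness comes for free. Since $A$ is a $\dclL$-basis for $M$ over $K$, each $b\in M$ has the form $b = F(a)$ for some tuple $a = (a_1,\dots,a_n)$ of distinct elements of $A$ and some $\cL(K)$-definable $F\colon U\to K$ with $U\subseteq K^n$ open and $a\in U^M$. As the $a_i$ are $\cL(K)$-independent, $a$ is generic in $U^M$ over $K$, so it avoids the $\cL(K)$-definable set of dimension $<n$ off which $F$ is $\cC^1$; shrinking $U$, I may take $F$ to be $\cC^1$. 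Writing $s(a)\coloneqq(s(a_1),\dots,s(a_n))$ and evaluating the $\cL(K)$-definable functions $F^{[\der]}$ and $\nabla F$ via their natural extensions to $M$, I set
\[
\der_M(b)\ \coloneqq\ F^{[\der]}(a) + \nabla F(a)\cdot s(a).
\]
For uniqueness, note that any $T$-derivation on $M$ extending $\der$ is compatible with the $\cL(\emptyset)$-definable function defining $F$; since it agrees with $\der$ on the $K$-parameters of $F$, unwinding this compatibility reproduces the defining identity of Fact~\ref{fact:basicTderivation2}, namely $\der_M(F(a)) = F^{[\der]}(a) + \nabla F(a)\cdot\der_M(a)$. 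If moreover $\der_M(a_i) = s(a_i)$, this equals the displayed value; as every element of $M$ is such an $F(a)$, the extension is unique.

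The main obstacle is to show that the displayed formula is well defined, and this is where o-minimality and the genericity of $A$ are essential. Suppose $b = F(a) = G(a^\ast)$ for two representations. Replacing both tuples by the tuple $\hat a$ whose components are those of $a$ and of $a^\ast$, and replacing $F,G$ by functions of $\hat a$ that ignore the surplus coordinates, changes neither formula: the suppressed coordinates enter $\nabla$ only through vanishing partials and leave the correction term unaffected. This reduces everything to a single common generic tuple $a$ with $F(a) = G(a)$. The $\cL(K)$-definable set $\{y:F(y)=G(y)\}$ contains $a$, hence has dimension $n$ at $a$ by genericity, hence contains an open box around $a$; on that box $F\equiv G$, so $\nabla F(a) = \nabla G(a)$, and the local uniqueness in Fact~\ref{fact:basicTderivation2} gives $F^{[\der]}(a) = G^{[\der]}(a)$. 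Thus the two formulas agree and $\der_M$ is well defined.

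It remains to check that $\der_M$ is a $T$-derivation with the required values. Taking $F$ to be a coordinate projection $y\mapsto y_i$ gives $\der_M(a_i) = s(a_i)$, and representing $b\in K$ by a constant function (for which $\nabla F = 0$ and $F^{[\der]} = \der b$) gives $\der_M|_K = \der$. For compatibility with an arbitrary $\cL(\emptyset)$-definable $\cC^1$-function $H$, I first record a chain rule for the correction operator: for $\cL(K)$-definable $\cC^1$ maps $F = (F_1,\dots,F_m)$, the ordinary chain rule gives $\nabla(H\circ F) = (\nabla H\circ F)\cdot\Jac F$, and differentiating $H\circ F$ in $K$ by compatibility of $\der$ with $H$ together with the identities for the $F_i^{[\der]}$, then subtracting this chain-rule term and invoking the uniqueness in Fact~\ref{fact:basicTderivation2}, yields
\[
(H\circ F)^{[\der]}\ =\ (\nabla H\circ F)\cdot F^{[\der]}.
\]
Both identities hold for $\cL(K)$-definable functions on $K$ and transfer to $M$ by model completeness. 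Now given $u$ in the domain of $H$ with coordinates $u_j = F_j(a)$ over a common generic tuple $a$, so that $H\circ F$ is $\cC^1$ near $a$, the definition of $\der_M$ and these two identities give
\[
H(u)'\ =\ (\nabla H\circ F)(a)\cdot\big(F^{[\der]}(a) + \Jac F(a)\cdot s(a)\big)\ =\ \nabla H(u)\cdot u',
\]
since the $j$-th entry of the parenthesized vector is $F_j^{[\der]}(a)+\nabla F_j(a)\cdot s(a) = \der_M(u_j) = u_j'$. Hence $\der_M$ is compatible with every $\cL(\emptyset)$-definable $\cC^1$-function, completing the proof.
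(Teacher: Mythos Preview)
The paper does not prove this statement; it is quoted as a fact from \cite{FK19}. Your proof is correct and is essentially the natural argument one would expect: define the extension by the forced formula $\der_M(F(a)) = F^{[\der]}(a) + \nabla F(a)\cdot s(a)$, use genericity of the basis tuple to show well-definedness, and verify compatibility via the chain rule for the correction operator. The only points worth tightening are purely expository: when you argue that $\{y:F(y)=G(y)\}$ contains an open set around $a$, you are really using that $a\in M^n$ is generic over $K$, so it lies in an open $\cL(K)$-definable cell of that set, and the equality $F^{[\der]}=G^{[\der]}$ on that cell transfers from $K$ to $M$ by model completeness; likewise for the chain-rule identity $(H\circ F)^{[\der]}=(\nabla H\circ F)\cdot F^{[\der]}$. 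You handle both implicitly, but making the transfer step explicit would remove any ambiguity.
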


\noindent
Let us also note for future use that if $x \mapsto x^\lambda\colon K^>\to K$ is a $\cL(\emptyset)$-definable power function on $K$, then $(y^\lambda)' = \lambda y ^{\lambda-1}y'$ for all $y \in K^>$.

\noindent\medskip
Let $s \in K$. An element $a$ with $a' = s$ is called an \textbf{integral of $s$} and a nonzero element $b$ with $b^\dagger = s$ is called an \textbf{exponential integral of $s$}. If $s \neq 0$, then an element $f$ with $f'= s^\dagger$ is called a \textbf{$\d$-logarithm of $s$}. In differential algebra, ``$\d$-logarithms'' are often just called ``logarithms'' (this is the case in~\cite{ADH17}). We include the ``$\d$'' since there may be actual logarithms present. Suppose that $T$ defines an exponential function with compositional inverse $\log$. Then for $s>0$, the $\d$-logarithms of $s$ are exactly the elements $\log(s)+c$ for some $c \in C$.

\medskip\noindent
We say that $K$ is \textbf{closed under integration} if every element of $K$ has an integral in $K$, and we say that $K$ is \textbf{closed under exponential integration} if every element of $K$ has an exponential integral in $K^\times$. We say that $K$ is \textbf{Liouville closed} if $K$ is closed under both integration and exponential integration. Finally, we say that $K$ is \textbf{closed under taking $\d$-logarithms} if every element of $K^\times$ has a $\d$-logarithm in $K$. Note that if $K$ is closed under integration, then $K$ is closed under taking $\d$-logarithms. Of course, if $T$ defines an exponential function, then $K$ is closed under taking $\d$-logarithms. 

\section{$T$-convex valuation rings}\label{sec:Tconvex}
\noindent
Following~\cite{DL95}, a subset $\cO \subseteq K$ is said to be a \textbf{$T$-convex valuation ring of $K$} if $\cO$ is nonempty and convex and if $F(\cO) \subseteq \cO$ for all $\cL(\emptyset)$-definable continuous functions $F\colon K \to K$. Let $\LO\coloneqq \cL\cup \{\cO\}$ be the extension of $\cL$ by a unary predicate $\cO$ and let $\TO$ be the $\LO$-theory which extends $T$ by axioms asserting that $\cO$ is a $T$-convex valuation ring. For the rest of this section, let $K = (K,\cO) \models \TO$. Unlike in~\cite{DL95}, we allow $\cO = K$, in which case $K$ is said to be \textbf{trivially valued}. The theory $\TO$ is \emph{weakly o-minimal}---every $\LO(K)$-definable subset of $K$ is a finite union of convex subsets of $K$~\cite[Corollary 3.14]{DL95}.

\medskip\noindent
Any $T$-convex valuation ring is a valuation ring, so $\cO$ has a unique maximal ideal, which we denote by $\smallo$. We let $\Gamma$ denote the value group of $K$, and we denote the (surjective) Krull valuation by $v\colon K^\times\to \Gamma$. If $T$ is power bounded with field of exponents $\Lambda$, then the value group $\Gamma$ naturally admits the structure of an ordered $\Lambda$-vector space by
\[
\lambda v(a)\ \coloneqq \ v(a^\lambda)
\]
for $a \in K^>$ (this does not depend on the choice of $a$). We set $\Gamma_\infty\coloneqq \Gamma\cup\{\infty\}$ where $\infty > \Gamma$, and we extend $v$ to all of $K$ by setting $v(0)\coloneqq \infty$. For $a,b \in K$, set
\[
a \preceq b\ \logeq\ va\ \geq\ vb,\qquad a \prec b\ \logeq\ va> vb,\qquad a\asymp b\ \logeq\ va =vb,\qquad a\sim b\ \logeq\ v(a - b)>va.
\]
Note that $\sim$ is an equivalence relation on $K^\times$, and that if $a\sim b$, then $a \asymp b$ and $a$ is positive if and only if $b$ is.

\medskip\noindent
Let $\res(K)\coloneqq \cO/\smallo$ denote the \textbf{residue field of $K$}, and for $a\in \cO$, let $\bar{a}$ denote the image of $a$ under the residue map $\cO \to \res K$. Under this map, $\res K$ admits a natural expansion to a $T$-model; see~\cite[Remark 2.16]{DL95} for details. A \textbf{lift of $\res K$} is an elementary $\cL$-substructure $\k$ of $K$ contained in $\cO$ such that the map $a\mapsto\bar{a}\colon\k\to \res K$ is an $\cL$-isomorphism. By~\cite[Theorem 2.12]{DL95}, one can always find a lift of $\res K$. For $D \subseteq K$, set $\overbar{D}\coloneqq \{\bar{a}:a \in D\cap \cO\} \subseteq \res(K)$.

\medskip\noindent
Let $M$ be a $\TO$-extension of $K$ with $T$-convex valuation ring $\cO_M$ and maximal ideal $\smallo_M$. We view $\Gamma$ as a subgroup of $\Gamma_M$ and $\res K$ as an $\cL$-substructure of $\res M$ in the obvious way. Let $v$ and $x \mapsto \bar{x}$ denote their extensions to functions $M^\times \to \Gamma_M$ and $\cO_M\to \res M$. If $\cO \neq K$, then $\cO_M\neq M$ and $M$ is an elementary $\TO$-extension of $K$ by~\cite[Corollary 3.13]{DL95}. If $\cO_M = M$, then $\cO = K$ so $M$ is again an elementary $\TO$-extension of $K$.

\begin{fact}[\cite{DL95}, Section 3]\label{fact:tworings}
Let $K\langle a \rangle$ be a simple $T$-extension of $K$. There are at most two $T$-convex valuation rings $\cO_1$ and $\cO_2$ of $K\langle a\rangle$ which make $K\langle a \rangle$ a $\TO$-extension of $K$:
\[
\cO_1\ \coloneqq \ \big\{y \in K\langle a \rangle: |y|<u\text{ for some }u \in \cO\big\},\qquad \cO_2\ \coloneqq \ \big\{y \in K\langle a \rangle: |y|<d\text{ for all }d \in K\text{ with }d>\cO\big\}.
\]
If there is $b \in K\langle a \rangle$ which realizes the cut $\cO^\downarrow$, then $b$ is contained in $\cO_2$ but not $\cO_1$, so $\cO_1\subsetneq \cO_2$. If there is no such $b$, then $\cO_1 = \cO_2$. 
\end{fact}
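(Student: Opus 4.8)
My plan is to split the statement into four parts and dispatch them in increasing order of difficulty. Write (a) for the claim that $\cO_1$ and $\cO_2$, as defined by the displayed formulas, are $T$-convex valuation rings of $K\langle a\rangle$ with $\cO_1\cap K=\cO_2\cap K=\cO$; (b) for the claim that every $T$-convex valuation ring $\cO'$ of $K\langle a\rangle$ with $\cO'\cap K=\cO$ satisfies $\cO_1\subseteq\cO'\subseteq\cO_2$; (c) for the claim that no such $\cO'$ lies strictly between $\cO_1$ and $\cO_2$; and (d) for the dichotomy in the last two sentences. Part (b) is immediate: a $T$-convex valuation ring is by definition convex, so from $\cO\subseteq\cO'$ we get that $\cO'$ contains the convex hull of $\cO$, which is exactly $\cO_1$; and since $\cO'\cap K=\cO$, no $d\in K$ with $d>\cO$ lies in $\cO'$, so by convexity every $y\in\cO'$ satisfies $|y|<d$ for all such $d$, i.e.\ $y\in\cO_2$.

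For part (a): $\cO_1$ is a valuation ring because $\cO$ is a subring containing $1$ (if $|y|>\cO$ then $|y|>1$, so $|y^{-1}|<1\in\cO$), and $\cO_2$ is one because $|y|\geq d_0$ for some $d_0\in K^{>\cO}$ forces $|y^{-1}|\leq 1/d_0<d$ for every $d\in K^{>\cO}$; both are convex with trace $\cO$ on $K$. For $T$-convexity one uses that an $\cL(\emptyset)$-definable continuous $F\colon K\langle a\rangle\to K\langle a\rangle$ maps $[-t,t]$ into $[-g(t),g(t)]$, where $g(t)\coloneqq\sup_{|x|\leq t}|F(x)|$ is $\cL(\emptyset)$-definable and nondecreasing. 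For $\cO_1$ this reduces to the standard fact that $\cO$, being $T$-convex, is closed under $\cL(\emptyset)$-definable monotone functions. For $\cO_2$ one argues by contradiction: if $|y|<d$ for all $d\in K^{>\cO}$ while $g(|y|)\geq d_0$ for some $d_0\in K^{>\cO}$, then the threshold $t_0\coloneqq\inf\{t:g(t)\geq d_0\}$ lies in $\dclL(\{d_0\})\subseteq\dclL(K)=K$; since $t_0\leq|y|$ it lies below $K^{>\cO}$, hence in $\cO$, and then any $u\in\cO$ with $u>t_0$ gives $d_0\leq g(u)\in\cO$, which is absurd.

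For part (d): unwinding the definition, and noting that the downward closure of $\cO$ inside $K$ is $K\setminus K^{>\cO}$, an element $b\in K\langle a\rangle$ realizes the cut $\cO^\downarrow$ exactly when $\cO<b<K^{>\cO}$; such a $b$ necessarily lies outside $K$, and for $b>0$ this is precisely the condition $b\in\cO_2\setminus\cO_1$. Hence if some $b$ realizes $\cO^\downarrow$ then $b\in\cO_2\setminus\cO_1$, so $\cO_1\subsetneq\cO_2$; and if none does, then any $y\in\cO_2$ with $y>0$ must already lie in $\cO_1$ — otherwise $\cO<y<K^{>\cO}$ with $y\notin K$, a realized cut — so $\cO_2=\cO_1$, both rings being stable under negation.

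The substance is part (c), and its last step is where I expect the main obstacle. Suppose $\cO'$ is $T$-convex with $\cO'\cap K=\cO$ and $\cO_1\subsetneq\cO'$; I want to conclude $\cO'=\cO_2$, and by (b) it suffices to show $\cO_2\subseteq\cO'$. Pick $y_0\in\cO'\setminus\cO_1$; replacing it by $|y_0|$ we may assume $y_0>0$, so $\cO<y_0<K^{>\cO}$ and in particular $y_0\notin K$. Since $(K\langle a\rangle,\dclL)$ is a pregeometry, the exchange property gives $a\in\dclL(K,y_0)$, hence $K\langle a\rangle=K\langle y_0\rangle$, and renaming we may assume $a=y_0\in\cO'$ with $\cO<a<K^{>\cO}$. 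As $\cO'$ is $T$-convex and contains $\cO\cup\{a\}$, it contains the $T$-convex hull $\cO^*$ of $\cO\cup\{a\}$ — the set of $y\in K\langle a\rangle$ with $|y|<F(a,u_1,\dots,u_m)$ for some $\cL(\emptyset)$-definable continuous $F$ and some $u_1,\dots,u_m\in\cO$ — and a $\sup$-argument as in (a) (using $a>\cO$ to absorb the $\cO$-parameters, together with the threshold-in-$K$ device) shows $\cO^*\subseteq\cO_2$. So everything reduces to the reverse inclusion $\cO_2\subseteq\cO^*$, i.e.\ to the claim that $\cO_2$ is the smallest $T$-convex valuation ring of $K\langle a\rangle$ containing $\cO$ and $a$. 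To prove this I would take $z\in\cO_2$, write $z=H(a)$ with $H$ an $\cL(K)$-definable function, invoke the monotonicity theorem to make $H$ continuous and strictly monotone on an interval around $a$ with one-sided limits in $K\cup\{\pm\infty\}$, and then use $\cO<a<K^{>\cO}$ and $z\in\cO_2$ to bound $|z|$ by $F(a,u_1,\dots,u_m)$ with $F$ over $\cL(\emptyset)$ and $u_1,\dots,u_m\in\cO$; concretely, one must argue that the parameters of $H$ that exceed $\cO$ cannot affect the value $H(a)$ in a way compatible with $z\in\cO_2$. The reason this works is that $a$ realizes the distinguished cut $\cO^\downarrow$ and that, by weak o-minimality of $\TO$ (\cite[Corollary 3.14]{DL95}), the $\LO(K)$-definable behaviour at a cut is tightly constrained; making this precise is the heart of the matter and is essentially the content of \cite[Section~3]{DL95}.
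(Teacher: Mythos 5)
The paper does not prove this statement at all: it is imported as a black-box Fact from \cite[Section 3]{DL95}, so there is no internal proof to compare your argument against. Judged on its own terms, your decomposition is sensible, and parts (a), (b) and (d) are essentially complete: the sandwiching $\cO_1\subseteq\cO'\subseteq\cO_2$ for any $T$-convex valuation ring $\cO'$ lying over $\cO$, the threshold trick for the $T$-convexity of $\cO_2$, and the translation of the dichotomy into the (non)existence of a realization of the cut $\cO^\downarrow$ are all correct. (Two small points to tidy: your $g(t)=\sup_{|x|\le t}|F(x)|$ must be checked to be \emph{continuous}, not merely monotone, since the definition of $T$-convexity only speaks of continuous unary functions; and the closure of a $T$-convex ring under multivariate $\cL(\emptyset)$-definable continuous functions, which you use to get $\cO^*\subseteq\cO_2$, is itself a lemma of \cite{DL95} rather than part of the definition.)

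The genuine gap is exactly where you locate it: the inclusion $\cO_2\subseteq\cO^*$, i.e.\ the claim that if $b$ realizes the cut $\cO^\downarrow$ then every $z\in K\langle b\rangle$ with $|z|<d$ for all $d\in K$ with $d>\cO$ satisfies $|z|\le F(b,u_1,\dots,u_m)$ for some $\cL(\emptyset)$-definable continuous $F$ and some $u_1,\dots,u_m\in\cO$. Everything else in the Fact is bookkeeping; this inclusion \emph{is} the theorem. Your sketch --- write $z=H(b)$, apply the monotonicity theorem, and argue that parameters of $H$ lying above $\cO$ ``cannot affect the value $H(b)$'' --- names the right obstacle without overcoming it: one must actually rule out an element $z$ with $F(b,u)<|z|<d$ for \emph{every} such $F$ and $u$ and every $d\in K$ with $d>\cO$, and neither the monotonicity theorem nor weak o-minimality of $\TO$ delivers this in one step. (Invoking \cite[Corollary 3.14]{DL95} here also risks circularity, since in that paper weak o-minimality is deduced \emph{from} the analysis of simple extensions that contains the present Fact.) Since the paper itself treats the whole statement as a citation, deferring this step to \cite{DL95} is defensible as scholarship; but as a proof, your part (c) is an accurate reduction to the cited result rather than an argument for it.
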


\subsection{The Wilkie inequality}\label{subsec:Wilkie}
In this subsection, we assume that $T$ is power bounded with field of exponents $\Lambda$. The following fact is an analog of the Abhyankar-Zariski inequality, and it is referred to in the literature as the \textbf{Wilkie inequality}.

\begin{fact}[\cite{vdD97}, Section 5]
\label{fact:wilkieineq}
Let $M$ be a $\TO$-extension of $K$ and suppose $\rkL(M|K)$ is finite. Then
\[
\rkL(M|K)\geq \rkL(\res M|\res K)+\dim_\Lambda(\Gamma_M/\Gamma).
\]
\end{fact}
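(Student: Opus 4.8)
The plan is to isolate a single ``independent lifting'' lemma and then deduce the inequality by counting. The lemma states: if $\bar c_1, \dots, \bar c_p \in \res M$ are $\dclL$-independent over $\res K$ and $vd_1, \dots, vd_q \in \Gamma_M$ are $\Lambda$-linearly independent over $\Gamma$, then for \emph{any} lifts $c_i \in \cO_M$ of the $\bar c_i$ and $d_j \in M^\times$ of the $vd_j$, the tuple $(c_1, \dots, c_p, d_1, \dots, d_q)$ is $\dclL$-independent over $K$. Granting this, the case $q = 0$ already shows that every $\dclL$-independent-over-$\res K$ tuple of residues has length at most $\rkL(M|K)$, so $\rkL(\res M|\res K)$ is finite; likewise $\dim_\Lambda(\Gamma_M/\Gamma)$ is finite. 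Choosing a maximal $\dclL$-independent-over-$\res K$ family of residues $\bar c_1, \dots, \bar c_p$ (so $p = \rkL(\res M|\res K)$) and a maximal $\Lambda$-independent-over-$\Gamma$ family of values $vd_1, \dots, vd_q$ (so $q = \dim_\Lambda(\Gamma_M/\Gamma)$), lifting them arbitrarily, and applying the lemma yields $p + q \le \rkL(M|K)$, which is exactly the Wilkie inequality.

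The whole content is thus in the lemma, which is the $T$-convex analogue of the Abhyankar--Zariski inequality and is the step I expect to be genuinely delicate; the counting above is formal. I would argue by contradiction: a nontrivial $\dclL$-dependence over $K$ exhibits one of the $c_i$ or $d_j$ as $F$ applied to the remaining entries, for some $\cL(K)$-definable function $F$, and one must show this contradicts either the $\dclL$-independence of the residues or the $\Lambda$-independence of the values. The inputs are o-minimality of $T$, which forces $F$ to be piecewise continuous and monotone with, by Miller's theorem~\cite{Mi96}, each unary $\cL(K)$-definable function eventually bounded by a power function $x \mapsto x^{\lambda}$; and the interaction of such $F$ with a $T$-convex valuation ring from~\cite{DL95} and~\cite{vdD97}, whose upshot is that an $\cL(K)$-definable function can neither carry an element with a new residue to one with a value outside $\Gamma$, nor relate two elements whose values are $\Lambda$-incommensurable over $\Gamma$. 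Power boundedness is used precisely here, both to equip the value groups with their ordered $\Lambda$-vector space structure and to supply the power-function domination behind these two impossibilities; this is why the hypothesis cannot simply be dropped.

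An alternative organization, closer to the classical proofs, bypasses the lemma and inducts on $n = \rkL(M|K)$, reducing to the case of a simple extension. For $a \in M\setminus K$, the valuation ring $\cO_M \cap K\langle a\rangle$ makes $K\langle a\rangle$ a $\TO$-extension of $K$, as the restriction of a $T$-convex valuation ring to an $\cL$-substructure is again $T$-convex; one then applies the inductive hypothesis to $M$ over $K\langle a\rangle$ together with the classification of simple $T$-convex extensions from~\cite{DL95, vdD97} (a simple $\TO$-extension is immediate, or adds one new residue with unchanged value group, or adds one new value with unchanged residue field and $\dim_\Lambda$ up by $1$), and adds the resulting inequalities using additivity of $\rkL$ along towers (a pregeometry fact), additivity of $\dim_\Lambda$ along towers of ordered $\Lambda$-vector spaces, and the additivity of $\rkL$ for residue fields, which form an elementary tower $\res K \preceq \res K\langle a\rangle \preceq \res M$ because $T$ is model complete. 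On either route the crux is the same, and it is the hard part: a single new element cannot simultaneously enlarge the residue field and the value group, nor enlarge the value group by more than one $\Lambda$-dimension.
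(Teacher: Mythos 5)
The paper offers no proof of this statement: it is imported wholesale from \cite{vdD97}, so the only meaningful comparison is with the argument there. Your architecture matches that argument. The counting deduction from the independent-lifting lemma is formal and correct (and the $q=0$ and $p=0$ cases do give finiteness of $\rkL(\res M|\res K)$ and $\dim_\Lambda(\Gamma_M/\Gamma)$ for free), and the alternative induction on $\rkL(M|K)$ through a tower of simple extensions, using additivity of $\rkL$, of $\dim_\Lambda$, and of residue-field rank (the residue fields do form an elementary tower since $T$ is model complete), is equally sound as a reduction. You have also correctly identified where power boundedness enters and why the hypothesis cannot be dropped: for $T$ defining an exponential, a single generator can blow up $\Gamma_M/\Gamma$ to infinite dimension.

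The gap is that the entire content of the theorem lives in the step you defer, and the ingredients you name do not close it. That a unary $\cL(K)$-definable function is piecewise continuous and monotone and eventually dominated by a power function (Miller's dichotomy, \cite{Mi96}), combined with $T$-convexity of $\cO_M$ from \cite{DL95}, does not by itself yield the two impossibilities you invoke --- that a simple extension cannot simultaneously add a residue and a value, and cannot add two $\Lambda$-independent values. Your second route makes the circularity visible: the ``classification of simple $T$-convex extensions'' you appeal to \emph{is} the Wilkie inequality in the rank-one case, and \cite{DL95} only establishes the residual and valuational cases separately under hypotheses on the generator, not the trichotomy for an arbitrary generator. What actually powers the proof in \cite{vdD97} is the valuation property (a preparation-type theorem: if $\Gamma_{K\langle a\rangle}\neq\Gamma$ then already $v(a-b)\notin\Gamma$ for some $b\in K$), proved there for polynomially bounded theories and extended to power bounded $T$ in Tyne's thesis \cite{Ty03}. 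That is a substantial theorem in its own right, not a formal consequence of piecewise monotonicity plus power-function domination, so as written your proposal reduces the Wilkie inequality to an unproved statement of essentially the same depth.
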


\noindent
We most frequently use the Wilkie inequality when $M$ is a simple extension of $K$. Here is a consequence of the Wilkie inequality:

\begin{lemma}\label{lem:uniqueS}
Let $S$ be a cut in $\Gamma$. Then there is a simple $\TO$-extension $K\langle f \rangle$ of $K$ where $f > 0$ and where $vf$ realizes the cut $S$. This extension is unique up to $\LO(K)$-isomorphism and is completely described as follows: $f$ realizes the cut 
\[
\{y \in K: y\leq 0 \text{ or } vy>S\}
\]
and $\cO_{K\langle f \rangle}$ is the convex hull of $\cO$ in $K\langle f \rangle$.
\end{lemma}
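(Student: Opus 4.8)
The plan is to build the extension by hand and then use the Wilkie inequality to pin down its structure and uniqueness. First I would dispose of trivial cases: if $S = \Gamma$ or $S = \emptyset$, the desired $vf$ either lies above all of $\Gamma$ or below all of it, and in the latter case we can instead adjoin $f\inv$; so it suffices to treat the case where $S$ is a proper nonempty cut, noting also that if $S$ already has a largest element or $\Gamma \setminus S$ a least element realized by some $v(a)$ with $a \in K^>$, we may rescale and assume $S$ is genuinely "new." In the main case, consider the cut $A \coloneqq \{y \in K : y \leq 0 \text{ or } vy > S\}$ in $K$. One checks $A$ is a cut (downward closed, using that $vy > S$ is preserved under passing to $\preceq$-larger elements among positive $y$, and that it is neither all of $K$ nor bounded above in a way that would make it realized inside $K$ — here I would want to know $S$ is not realized by any $v(a)$, which the reduction guarantees). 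By the theory of $T$-convex valuation rings and simple extensions (Fact~\ref{fact:tworings} and the weak o-minimality of $\TO$, \cite[Corollary 3.14]{DL95}), adjoining a realization $f$ of this cut yields a simple $T$-extension $K\langle f\rangle$; I equip it with $\cO_{K\langle f\rangle} \coloneqq$ the convex hull of $\cO$ in $K\langle f\rangle$, which is $T$-convex by van den Dries--Lewenberg, making $(K\langle f\rangle, \cO_{K\langle f\rangle})$ a $\TO$-extension of $K$. That $vf$ realizes the cut $S$ in $\Gamma$ follows directly from the choice of $A$ together with the fact that $\Gamma \subseteq \Gamma_{K\langle f\rangle}$.

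Next I would verify that this $K\langle f \rangle$ is the \emph{only} possibility up to $\LO(K)$-isomorphism. Suppose $K\langle g \rangle$ is another simple $\TO$-extension with $g > 0$ and $vg$ realizing $S$. Since $vg \notin \Gamma$, we have $\dim_\Lambda(\Gamma_{K\langle g\rangle}/\Gamma) \geq 1$, so by the Wilkie inequality (Fact~\ref{fact:wilkieineq}) applied to the simple extension $K\langle g\rangle$ we get $1 = \rkL(K\langle g\rangle | K) \geq \dim_\Lambda(\Gamma_{K\langle g\rangle}/\Gamma) + \rkL(\res K\langle g\rangle|\res K) \geq 1$, forcing equality throughout: the residue field does not grow and $\Gamma_{K\langle g\rangle} = \Gamma \oplus \Lambda\cdot vg$. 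In particular, the residue field not growing forces $\cO_{K\langle g\rangle}$ to be the convex hull of $\cO$ (if it were strictly larger — the ring $\cO_2$ of Fact~\ref{fact:tworings} — then by that fact $K\langle g\rangle$ would contain a realization of the cut $\cO^\downarrow$, and one checks this would add to the residue field or violate the rank count; here I would lean on Fact~\ref{fact:tworings} to see $\cO_2$ can only strictly contain $\cO_1$ when such a realization exists). Then I claim $g$ must realize the cut $A$ in $K$: if some $d \in K$ had $d < g$ with $d > A$, then $d > 0$ and $vd \leq S$, whence $vg \leq vd \leq S$ contradicts $vg$ realizing $S$ (it would not be strictly above $S$); symmetrically for $d > g$, $d > 0$, $vd > S$ giving $vg \geq vd > S$, again a contradiction unless $g \succeq$ such $d$, which forces $vg \leq S$ — so $g$ realizes exactly the cut $A$. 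A model-completeness / quantifier-elimination argument for $\TO$ (the extension is determined by the cut realized, since $\TO$ is weakly o-minimal and $K\langle g\rangle = \dclL(K \cup \{g\})$ with the valuation ring pinned down) then gives the unique $\LO(K)$-isomorphism $K\langle f\rangle \to K\langle g\rangle$ sending $f \mapsto g$.

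The main obstacle I anticipate is the bookkeeping around the reduction to the "genuinely new cut" case and, relatedly, confirming that the cut $A$ in $K$ is not already realized inside $K$ — i.e. that $K\langle f\rangle$ really is a proper simple extension with $vf \notin \Gamma$. If $S$ has a supremum in $\Gamma$ attained as $v(a)$, $a \in K^>$, then "realizing the cut $S$" with $vf$ could still be arranged by taking $vf$ infinitesimally above $v(a)$, but then the cut $A$ in $K$ degenerates and one must instead describe $f$ via $f/a$ realizing the cut of elements $\prec 1$ but $\succ \smallo$-style conditions; handling all the boundary subcases uniformly (left endpoint in $\Gamma$, right endpoint in $\Gamma$, neither, $S$ empty or all of $\Gamma$) without an unwieldy case split is the delicate part. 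I would try to phrase it so that the single cut $A = \{y \in K : y \leq 0 \text{ or } vy > S\}$ does all the work in every case, checking separately only that $A$ is downward closed (routine, since $0 < y_1 \leq y_2$ with $vy_2 > S$ need not give $vy_1 > S$ — wait, this is the subtle point: downward closure of $A$ requires that if $y_2 \in A$ and $0 < y_1 < y_2$ then $y_1 \in A$, which fails in general!). So in fact $A$ as written is \emph{not} downward closed, and the correct cut to adjoin is $\{y \in K : vy > S\}^{\downarrow} \cup K^{\leq 0}$, or more carefully the downward closure of $\{y \in K^> : vy > S\}$ together with $K^{\leq 0}$; getting this exactly right, and matching it to the statement's description of $f$, is precisely where the care is needed, and I would double-check it against Fact~\ref{fact:tworings} and the explicit form of $\cO_1, \cO_2$ there.
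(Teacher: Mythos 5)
Your construction and uniqueness argument follow essentially the same route as the paper: adjoin a realization of the cut $A=\{y\in K: y\leq 0 \text{ or } vy>S\}$, equip the simple extension with the convex hull of $\cO$, and rule out the second valuation ring of Fact~\ref{fact:tworings} via the Wilkie inequality (a new residue would force $\Gamma_{K\langle f\rangle}=\Gamma$, contradicting that $vf$ realizes $S$). That part is fine.

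However, your closing ``correction'' is itself the error. The set $A$ \emph{is} downward closed as written. If $0<y_1\leq y_2$ then $y_1/y_2\in(0,1]\subseteq\cO$, so $y_1\preceq y_2$, which by the paper's convention means $vy_1\geq vy_2$; hence $vy_2>S$ implies $vy_1\geq vy_2>S$, and $y_1\in A$. You have reversed the direction of the valuation: among positive elements, \emph{smaller} elements have \emph{larger} valuation, which is exactly why the positive elements of valuation above $S$ sit at the bottom of $K^>$ and form (together with $K^{\leq 0}$) a cut. Your proposed replacement --- the downward closure of $\{y\in K^>: vy>S\}$ together with $K^{\leq 0}$ --- coincides with $A$, so the construction is not actually damaged, but the claim ``$A$ as written is not downward closed'' is false and the doubt it introduces is unfounded. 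Relatedly, the preliminary reduction to a ``genuinely new'' cut is unnecessary: by the paper's definition, $vf$ realizing the cut $S$ means $S<vf<\Gamma\setminus S$, which already forces $vf\notin\Gamma$ in every case (including $S=\emptyset$, $S=\Gamma$, and cuts with endpoints in $\Gamma$), so the single cut $A$ handles all cases uniformly with no boundary bookkeeping.
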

\begin{proof}
Let $K\langle f \rangle$ be a simple extension of $K$ where $f$ realizes the cut 
\[
\{y \in K: y\leq 0 \text{ or } vy>S\},
\]
and let $\cO_{K\langle f \rangle}$ be the convex hull of $\cO$ in $K\langle f\rangle$. Then $K\langle f\rangle$ with this $T$-convex valuation ring is indeed a $\TO$-extension of $K$ by Fact~\ref{fact:tworings}, and $vf$ clearly realizes the cut $S$. It remains to show uniqueness. Let $\cO^*$ be another $T$-convex valuation ring of $K\langle f \rangle$ with $\cO^*\cap K = \cO$. If $\cO^* \neq \cO_{K\langle f \rangle}$, then by Fact~\ref{fact:tworings}, there is $g \in \cO^*$ with $g>\cO$. Then the residue field of $K\langle f \rangle$ with respect to $\cO^*$ is strictly bigger than $\res K$, as it contains the image of $g$. By the Wilkie inequality, the value group of $K\langle f \rangle$ with respect to $\cO^*$ is equal to $\Gamma$, so the valuation of $f$ with respect to $\cO^*$ can not realize the cut $S$.
\end{proof}

\noindent
In Proposition~\ref{prop:smallderivsim} below, we use the Wilkie inequality to bound the derivative of a unary $\cL(K)$-definable function. This proposition will be used a number of times in Section~\ref{sec:HT}. First, we need two lemmas.

\begin{lemma}\label{lem:smallderivval}
Let $M=K\langle a \rangle$ be a simple $\TO$-extension of $K$ with $a \succ 1$ and $va\not\in \Gamma$. Let $F\colon K\to K$ be an $\cL(K)$-definable function with $F(a) \preceq 1$. Then $F'(a) \prec a\inv$. 
\end{lemma}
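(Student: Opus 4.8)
The plan is to argue by contradiction, using the Wilkie inequality to pin down the value group and residue field of $M$, then reducing by the monotonicity theorem to an estimate that the mean value theorem and convexity deliver.

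Suppose toward a contradiction that $F'(a)\succeq a\inv$, i.e. $vF'(a)\leq v(a\inv)=-va$. Since $\rkL(M|K)=1$ and $va\notin\Gamma$, Fact~\ref{fact:wilkieineq} forces $\rkL(\res M|\res K)=0$ and $\dim_\Lambda(\Gamma_M/\Gamma)=1$, so $\res M=\res K$ and $\Gamma_M=\Gamma\oplus\Lambda\cdot va$ as ordered $\Lambda$-vector spaces; write $vF'(a)=\gamma+\lambda\cdot va$ with $\gamma\in\Gamma$, $\lambda\in\Lambda$. Next, using $\res M=\res K$: as $F(a)\in\cO_M$ there is $k\in K$ with $F(a)-k\prec1$, and replacing $F$ by $F-k$ (which has the same derivative) we may assume $F(a)\prec1$. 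By the monotonicity theorem applied to $F$, fix an interval $I=(c,d)\ni a$ with $c,d\in K\cup\{\pm\infty\}$ on which $F$ is $\cC^1$, $F$ is monotone, and $F'$ is monotone or constant; after replacing $a$ by $-a$ and/or $F$ by $-F$ (which changes none of $|F'(a)|$, $|a|$, $K\langle a\rangle$, $v a$, nor the hypotheses) and intersecting $I$ with $(0,\infty)$, we may assume $a>0$, $I\subseteq(0,\infty)$, and $F$ increasing, so $F'\geq0$ on $I$. Since $va\notin\Gamma$, the element $a$ realizes a proper cut of $K$ (a cut of the form $\{y\in K:y<k\}$ or $\{y\leq k\}$ would give $a\asymp k$), so both $I\cap K^{<a}$ and $I\cap K^{>a}$ are nonempty; moreover every $b\in I\cap K^{>0}$ has $b\prec a$ if $b<a$ and $b\succ a$ if $b>a$, and in the first case $v(a-b)=va$.

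Pick $b\in I\cap K^{>0}$ with $b<a$, taking $b$ to the right of the zero of $F$ on $I$ if $F$ changes sign there (that zero lies in $K$ because $F$ is injective on $I$ and defined by an $\cL(K)$-formula); then $0\leq F(b)<F(a)\prec1$, so $F(a)-F(b)\prec1$. If $F'$ is decreasing on $I$, then $F$ is concave on $I$ and the tangent line at $a$ gives $F'(a)(a-b)\leq F(a)-F(b)$, whence
\[
vF'(a)\ \geq\ v\big(F(a)-F(b)\big)-v(a-b)\ >\ 0-va\ =\ v(a\inv),
\]
contradicting $vF'(a)\leq v(a\inv)$. (The sign bookkeeping — $F$ negative on $I$, or $F(a)=0$ and the zero of $F$ sitting outside $K$ — is dealt with by the substitution $x\mapsto1/x$ about $a$, which turns a concave branch into a convex branch and a left comparison into a right comparison.)

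The remaining, and genuinely harder, case is $F'$ increasing on $I$, i.e. $F$ convex on $I$: here the comparison at $b<a$ only bounds $F'(b)\prec a\inv$ for the $K$-rational points $b<a$, and these never come within valuation $\succ a\inv$ of $a$, so this does not by itself bound $F'(a)$. Instead I would compare $F(a)$ with $F(c)$ for $c\in I\cap K^{>a}$: convexity gives $F(c)-F(a)\geq F'(a)(c-a)$, and since $F'(a)(c-a)$ has $v$-value $\leq -va+vc<0$ one gets $F(c)\succ1$ and $vF(c)\leq vF'(a)+vc$. Power boundedness of $T$ furnishes $\mu\in\Lambda$ with $F(x)\preceq x^{\mu}$ near the right end of $I$, and for a convex increasing $F$ one may take $\mu\geq1$; feeding $\mu\cdot vc\leq vF(c)\leq vF'(a)+vc$ into the decomposition $\Gamma_M=\Gamma\oplus\Lambda\cdot va$ (letting $c$ range, or iterating the comparison toward $a$) forces the $\Lambda\cdot va$-component $\lambda$ of $vF'(a)$ to be $\mu-1\geq0>-1$, so $vF'(a)>v(a\inv)$, the desired contradiction. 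The main obstacle throughout is exactly this convex subcase — extracting from "$F(a)\prec1$" a genuinely strict bound $F'(a)\prec a\inv$ rather than just $F'(a)\preceq a\inv$ — and making the power-boundedness/growth argument uniform in the size of $\Gamma$ (in particular when $K$ is trivially valued), where one replaces the comparison with $c>a$ by the assertion that a monotone $\cL(K)$-definable $F$ which is bounded at $a$ has $xF'(x)$ of the same order as the tail of its (integrable) derivative, so $aF'(a)\in\smallo_M$.
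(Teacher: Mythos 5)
Your reduction to $F(a)\prec 1$ via the Wilkie inequality, and your concave subcase via the tangent-line comparison at a point $b\in K$ with $b\prec a$, are essentially sound (modulo the sign bookkeeping you defer). But the convex subcase --- which you correctly identify as the crux --- is not a proof, and this is a genuine gap. Three concrete problems: (i) the inequality $\mu\, vc\leq vF'(a)+vc$, i.e.\ $(\mu-1)vc\leq vF'(a)$, is a one-sided order relation between an element of $\Gamma$ and the element $vF'(a)\in\Gamma\oplus\Lambda va$; such an inequality cannot ``force the $\Lambda va$-component of $vF'(a)$ to be $\mu-1$'' --- a component in a direct-sum decomposition of an ordered vector space is not determined by an inequality. (ii) Power boundedness bounds $F(x)$ by $x^\mu$ as $x\to+\infty$, but the right endpoint of your interval $I$ may be a finite element of $K$, where no such growth bound applies; and the claim that one may take $\mu\geq 1$ for a convex increasing $F$ is unjustified. (iii) The trivially valued case is only gestured at. The underlying difficulty is that you are trying to estimate $F'$ at the point $a$ itself, which lies outside $K$, so none of your $K$-rational comparison points get ``close enough'' to $a$.

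The paper's proof sidesteps all of this with elementarity. When $\cO\neq K$, $M$ is an elementary $\LO$-extension of $K$, and the condition $F'(y)\prec y\inv$ (equivalently $yF'(y)\in\smallo$) is $\LO(K)$-definable; hence it suffices to exhibit, inside each $\LO(K)$-definable convex set $A\subseteq K^>$ with $a\in A^M$, a single point $y\in K$ with $F'(y)\prec y\inv$. Since $va\notin\Gamma$ and $\Gamma$ is densely ordered, $A$ contains points $y_1\prec y_2$, and the o-minimal mean value theorem gives $y\in A$ between them with $F'(y)=\bigl(F(y_2)-F(y_1)\bigr)/(y_2-y_1)$, where the numerator is $\prec 1$ (after shrinking $A$ so that $F\prec 1$ on it) and the denominator is $\asymp y_2\succeq y$. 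No case split on the monotonicity of $F'$ is needed, and no direct estimate at $a$ is ever made. The trivially valued case is handled separately by a short limit computation: given $\epsilon,g\in K^>$, the mean value theorem on $[g,2g]$ produces $d\in(g,2g)$ with $d|F'(d)|\leq 4|F(g)|<\epsilon$. I would encourage you to redo the argument along these lines rather than trying to repair the convexity computation.
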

\begin{proof}
By replacing $a$ with $-a$ if need be, we may assume that $a> 0$. The Wilkie inequality gives $\res M = \res K$, so we may take $u \in \cO^\times$ with $F(a) - u \prec 1$. Replacing $F$ with $F-u$, we may assume that $F(a) \prec 1$. Note that this does not change $F'$. 

We first handle the case that $\cO = K$, so $a>K$ and $|F(a)|<K^>$. Phrased in terms of limits, we have
\[
\lim\limits_{x\to \infty}|F(x)| \ = \ 0,
\]
and we want to show that 
\[
\lim\limits_{x\to \infty}x|F'(x)| \ = \ 0.
\]
Let $\epsilon,g \in K^>$ be given. We need to find $d>g$ with $d|F'(d)|<\epsilon$. By increasing $g$, we may assume that $|F(g)|<\epsilon/4$ and that $|F|$ is decreasing and $\cC^1$ on a neighborhood of $[g,+\infty)$. The o-minimal mean value theorem gives $d\in (g,2g)$ with
\[
|F'(d)|\ =\ \Big|\frac{F(2g)-F(g)}{g}\Big|\ \leq\ \frac{2|F(g)|}{g}.
\]
Since $d<2g$ and $|F(g)|<\epsilon/4$ we have
\[
d|F'(d)|\ \leq\ d\frac{2|F(g)|}{g}\ <\ 4|F(g)|\ <\ \epsilon.
\]

Now suppose $\cO \neq K$. Then $M$ is an elementary $\TO$-extension of $K$, so it suffices to show that for any $\LO(K)$-definable set $A \subseteq K^>$ with $a \in A^M$, there is $y \in A$ with $F'(y) \prec y\inv$. Let $A$ be such a set. Since $\TO$ is weakly o-minimal, we may assume that $A$ is open and convex. By shrinking $A$, we arrange that $F$ is $\cC^1$ on $A$ and that $F(y) \prec 1$ for all $y \in A$. Since $va \not\in \Gamma$ and $\Gamma$ is densely ordered, the set $A$ contains elements $y_1, y_2$ with $y_1 \prec y_2$. The o-minimal mean value theorem gives
\[
F'(y)\ =\ \frac{F(y_2)-F(y_1)}{y_2-y_1}.
\]
for some $y \in A$ between $y_1$ and $y_2$. Since $F(y_2)-F(y_1)\prec 1$ and $y_2-y_1 \asymp y_2 \succeq y$, we have $F'(y)\prec y\inv$, as desired.
\end{proof}

\begin{lemma}\label{lem:smallderivres}
Let $M=K\langle a \rangle$ be a simple $\TO$-extension of $K$ with $a\asymp 1$ and $\bar{a} \not\in \res K$. Let $F\colon K\to K$ be an $\cL(K)$-definable function with $F(a) \preceq 1$. Then $F'(a) \preceq 1$.
\end{lemma}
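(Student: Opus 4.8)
The plan is to mimic the proof of Lemma~\ref{lem:smallderivval}, adapting it to the residue-field setting. The key structural fact is again the Wilkie inequality: since $M = K\langle a\rangle$ is a simple extension with $\bar a\notin\res K$, we have $\rkL(\res M|\res K)\geq 1$, and since $\rkL(M|K)=1$, the Wilkie inequality forces $\dim_\Lambda(\Gamma_M/\Gamma)=0$, i.e.\ $\Gamma_M=\Gamma$. As before, by subtracting a unit $u\in\cO^\times$ with $\overline{F(a)}=\bar u$ (available since the residue field of $M$ would otherwise grow, but here we only need a unit in $\cO$, which exists since $F(a)\asymp 1$ or $F(a)\prec 1$; in the latter case take $u$ to be any positive element of $\smallo$, or simply note $F(a)\preceq 1$ already) — actually the cleaner move is: replacing $F$ by $F-u$ for suitable $u\in\cO$ we may assume $F(a)\prec 1$, which does not change $F'$. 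One then also uses that $a\asymp 1$ means we may translate $a$ by an element of $\cO$ so that, say, $0<a<1$ with $a$ realizing a cut in $\cO$ whose both sides accumulate (since $\bar a\notin\res K$ and $\res K$ is densely ordered as a model of $T\supseteq\RCF$).

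The argument then splits exactly as in Lemma~\ref{lem:smallderivval}. First handle the trivially valued case $\cO=K$: here $a\asymp 1$ and $\bar a\notin\res K = K$ is impossible unless we interpret this correctly — when $\cO = K$, $\res K = K$ and there is no room, so this case is vacuous or handled by the hypothesis forcing $\cO\neq K$; more carefully, if $\cO = K$ then $\bar a\notin\res K$ cannot happen for $a\asymp 1$ in a $T$-extension unless $a\notin K$, but $\bar a$ would be the image of $a$ in $\res M$, which can indeed differ — so one should still run a limit argument: $a$ realizes a cut in $K$ bounded between two reals, and one invokes the o-minimal mean value theorem on a small interval. Second, the main case $\cO\neq K$: then $M$ is an elementary $\TO$-extension of $K$, so it suffices to show that every $\LO(K)$-definable convex open $A\subseteq K$ with $a\in A^M$ contains $y_1\prec' y_2$ — here the relevant relation is that $y_1 - c\prec y_2 - c$ for the appropriate center $c$, i.e.\ $A$ contains two points whose difference from a fixed point of $A$ have different valuations, or more simply $A$ contains points $y_1,y_2$ with $y_2 - y_1\asymp 1$ and $y_1,y_2\asymp 1$. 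Since $\bar a\notin\res K$ and $\res K$ is dense, after shrinking $A$ so that $F$ is $\cC^1$ on $A$ and $F(y)\prec 1$ throughout, the o-minimal mean value theorem gives $F'(y) = (F(y_2)-F(y_1))/(y_2-y_1)$ with numerator $\prec 1$ and denominator $\asymp 1$, hence $F'(y)\prec 1$, in particular $F'(y)\preceq 1$.

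The main obstacle is getting the bookkeeping right about what ``$\bar a\notin\res K$'' buys us inside a definable set: one needs that any $\LO(K)$-definable convex set containing $a$ (equivalently, whose natural extension contains $a$) must contain a sub-interval of $\cO$-length comparable to $1$ that also lies at valuation $0$, so that the mean value theorem produces the denominator $y_2-y_1\asymp 1$ rather than something smaller. This follows because $\overline A\subseteq\res K$ is a definable convex set in $\res K$ containing $\bar a$ in its natural extension to $\res M$, hence $\overline A$ has nonempty interior (it cannot be a single point, as $\bar a\notin\res K$), and by weak o-minimality we may assume $A$ itself is convex with $\overline A$ an open interval in $\res K$; picking $y_1,y_2\in A$ with $\bar y_1\neq\bar y_2$ then forces $y_2-y_1\asymp 1$. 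Once this is in place the rest is the same mean-value-theorem computation as in the previous lemma. I would also remark that, unlike Lemma~\ref{lem:smallderivval}, here we only get $F'(a)\preceq 1$ rather than a strict estimate, which is exactly what is claimed and is all that the denominator $\asymp 1$ (rather than $\succ$) allows.
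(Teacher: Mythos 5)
Your overall strategy --- extracting from $\bar a\notin\res K$ two points $y_1,y_2$ of any definable neighbourhood of $a$ with $y_2-y_1\asymp 1$ and running the o-minimal mean value theorem --- is exactly the paper's, but the normalization ``replacing $F$ by $F-u$ for suitable $u\in\cO$ we may assume $F(a)\prec 1$'' is a genuine gap, and it is precisely the point where this lemma differs from Lemma~\ref{lem:smallderivval}. There, the Wilkie inequality gave $\res M=\res K$, so $\overline{F(a)}=\bar u$ for some $u\in\cO^\times$ and the subtraction was legitimate. Here the residue field grows by hypothesis, so $\overline{F(a)}$ need not lie in $\res K$ and no such $u$ need exist: for $F=\id$ we have $F(a)=a$, and $a-u\prec 1$ would force $\bar a=\bar u\in\res K$. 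Worse, if your argument ran as written it would produce $F'(a)\prec 1$ (numerator $\prec 1$ over denominator $\asymp 1$), which is false for $F=\id$ since $F'(a)=1\asymp 1$; your closing remark that you ``only get $\preceq 1$'' is not what your computation actually yields, which should have been a warning sign.

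The repair is small. Either shrink instead to the $\LO(K)$-definable set $\{y:F(y)\in\cO\}$, which genuinely contains $a$ in its natural extension; then $F(y_2)-F(y_1)\in\cO$ gives numerator $\preceq 1$ and hence $F'(y)\preceq 1$, which is all you need (and all that is true). Or do what the paper does: fix a lift $\k$ of $\res K$ (so $\k\langle a\rangle$ lifts $\res M$), bound $|F(a)|<G(a)$ for an $\cL(\k)$-definable $G$, and choose $y_1<y_2$ in $I\cap\k$; then $|F(y_i)|<G(y_i)\in\k\subseteq\cO$ controls the numerator, and one gets $|F'(y)|<d$ for each $d\succ 1$. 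Separately, your discussion of the case $\cO=K$ trails off without a conclusion; that case is not vacuous (one can have $\cO=K$, $\cO_M$ the convex hull of $K$, and $a$ bounded but not infinitely close to any element of $K$), but it is dispatched in one line by monotonicity of $F'$ on an interval with endpoints in $K$ straddling $a$.
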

\begin{proof}
This is trivial if $\cO = K$, so we may assume that $K$ is nontrivially valued. Let $\k\subseteq \cO^\times$ be a lift of $\res K$, so $\k\langle a \rangle$ is a lift of $\res M$ by~\cite[Lemma 5.1]{DL95}. Take an $\cL(\k)$-definable function $G\colon K \to K$ with $|F(a)|<G(a)$. The Wilkie inequality gives that $\Gamma_M = \Gamma$, so since $\Gamma^<$ has no largest element, it suffices to show that $|F'(a)| < d$ for each $d \in K^>$ with $d \succ 1$. Let $d$ be given and let $I$ be an arbitrary subinterval of $K^>$ with $a \in I^M$. It suffices to find some $y \in I$ with $|F'(y)|< d$. By shrinking $I$, we arrange that $F$ is $\cC^1$ on $I$ and that $|F(y)|<G(y)$ for all $y \in I$. As $\bar{a} \in \overbar{I}^{\res M}$, we see that $\overbar{I}$ must be infinite, so $I \cap \k$ is infinite. Take $y_1,y_2 \in I\cap \k$ with $y_1<y_2$, so $y_2-y_1 \asymp 1$. Note that $G(y_i)\in \k$, so $|F(y_i)| < G(y_i) \prec d$ for $i = 1,2$. The o-minimal mean value theorem gives
\[
F'(y)\ =\ \frac{F( y_2)-F( y_1)}{y_2-y_1}\ \prec\ d
\]
for some $y \in I$ between $ y_1$ and $ y_2$. In particular, $|F'(y)|<d$. 
\end{proof}

\begin{proposition}\label{prop:smallderivsim}
Let $M=K\langle a \rangle$ be a simple $\TO$-extension of $K$ with $a\not\sim f$ for all $f \in K$ and let $F\colon K\to K$ be an $\cL(K)$-definable function. Then $F'(a) \preceq a\inv F(a)$.
\end{proposition}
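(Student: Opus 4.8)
The plan is to reduce, via the Wilkie inequality, to one of the two preceding lemmas after renormalizing $F$ by an element of $K$ times a power function. First, by o-minimality $F$ is $\cC^1$ off a finite $\cL(K)$-definable set; since $a \notin \dclL(K) = K$, the point $a$ avoids this set, so $F'(a)$ is well-defined, and the asserted inequality $F'(a) \preceq a\inv F(a)$ is equivalent to $aF'(a) \preceq F(a)$. We may also assume $F(a) \neq 0$: if $F(a) = 0$, then the $\cL(K)$-definable zero set of $F$ contains the non-isolated point $a \notin \dclL(K)$, hence contains an interval about $a$, so $F' \equiv 0$ there and $aF'(a) = 0 = F(a)$. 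Now split on whether $va \in \Gamma$.

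\emph{Case 1: $va \notin \Gamma$.} Replacing $a$ by $-a$, and then by $a\inv$ if necessary --- operations that preserve $va \notin \Gamma$ and transform the target inequality into the corresponding one for the new generator and a modified $\cL(K)$-definable function --- we may assume $a > 0$ and $a \succ 1$. Applying the Wilkie inequality (Fact~\ref{fact:wilkieineq}) to the rank-one extension $M = K\langle a\rangle$ gives $\dim_\Lambda(\Gamma_M/\Gamma) = 1$ with $\Gamma_M = \Gamma \oplus \Lambda\,va$ as $\Lambda$-vector spaces, so $v(F(a)) = \gamma + \lambda\,va$ for unique $\gamma \in \Gamma$, $\lambda \in \Lambda$. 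Pick $c \in K^\times$ with $vc = \gamma$ and set $\tilde F(x) \coloneqq c\inv x^{-\lambda} F(x)$, an $\cL(K)$-definable function since $T$ is power bounded; then $v(\tilde F(a)) = 0$, so $\tilde F(a) \preceq 1$, and Lemma~\ref{lem:smallderivval} gives $\tilde F'(a) \prec a\inv$. Since $\tilde F'(x) = c\inv x^{-\lambda-1}\big(xF'(x) - \lambda F(x)\big)$, this rearranges to $aF'(a) - \lambda F(a) \prec ca^\lambda \asymp F(a)$, whence $aF'(a) \preceq F(a)$.

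\emph{Case 2: $va \in \Gamma$.} Choose $c \in K^\times$ with $vc = va$ and put $b \coloneqq a/c$, so $b \asymp 1$ and $M = K\langle b\rangle$. The hypothesis that $a \not\sim f$ for all $f \in K$ forces $b \not\sim u$ for all $u \in K$, hence $\bar b \notin \res K$. By the Wilkie inequality applied to $M = K\langle b\rangle$, $\res M$ is then a proper simple $T$-extension of $\res K$ and $\Gamma_M = \Gamma$; in particular $v(F(a)) \in \Gamma$. Pick $c_1 \in K^\times$ with $vc_1 = v(F(a))$ and set $G(x) \coloneqq c_1\inv F(cx)$, an $\cL(K)$-definable function with $v(G(b)) = 0$, so $G(b) \preceq 1$. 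Lemma~\ref{lem:smallderivres} gives $G'(b) \preceq 1$, and since $G'(x) = c_1\inv c\,F'(cx)$ this reads $cF'(a) \preceq c_1 \asymp F(a)$, i.e.\ $F'(a) \preceq a\inv F(a)$.

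The main obstacle is that $v(F(a))$ a priori lies in the possibly larger value group $\Gamma_M$, not in $\Gamma$, so it is not obvious one can cancel it against anything from $K$. The Wilkie inequality is precisely the tool that resolves this: in a rank-one extension it pins down how $\Gamma$ and $\res K$ may grow and forbids both from growing at once, which both puts $v(F(a))$ in a shape cancellable by an element of $K$ times a power function of $a$, and rules out, in Case 2, the immediate sub-case where $\bar b \in \res K$ and Lemma~\ref{lem:smallderivres} would be inapplicable. The remaining content is routine: the sign/inversion reductions of Case 1 and the two derivative computations.
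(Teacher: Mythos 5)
Your proof is correct and follows essentially the same route as the paper's: reduce via the Wilkie inequality and a renormalization by an element of $K$ times a power of the generator, then invoke Lemma~\ref{lem:smallderivval} when $va\notin\Gamma$ (using inversion to reduce $a\prec 1$ to $a\succ 1$, which the paper treats as a separate case) and Lemma~\ref{lem:smallderivres} when $va\in\Gamma$. The only addition is your explicit treatment of $F(a)=0$, which the paper leaves implicit.
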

\begin{proof}
First, suppose $a \succ 1$ and $va \not\in \Gamma$. The Wilkie inequality gives $\Gamma_M = \Gamma\oplus\Lambda va$, so take $d \in K^>$ and $\lambda \in \Lambda$ with $F(a) \asymp da^\lambda$. Then $d\inv a^{-\lambda} F(a) \preceq 1$ and, applying Lemma~\ref{lem:smallderivval} to the function $y\mapsto d\inv y^{-\lambda} F(y)$, we get 
\[
d\inv a^{-\lambda} F'(a)-\lambda d\inv a^{-\lambda-1} F(a)\ \prec\ a\inv.
\]
Since $-\lambda d\inv a^{-\lambda-1} F(a)\preceq a\inv$, we see that $d\inv a^{-\lambda} F'(a)\preceq a\inv$, so 
\[
F'(a)\ \preceq\ a\inv d a^{\lambda}\ \asymp\ a\inv F(a).
\]
Now, suppose $a \prec 1$ and $va\not\in \Gamma$. Let $G\colon K \to K$ be the function given by
\[
G(y)\ =\ \left\{
\begin{array}{ll}
F(y\inv) & \mbox{ if }y \neq0 \\
0 & \mbox{ if }y=0.
\end{array}
\right.
\]
Then $F(a) = G(a\inv)$. By applying the previous case to $G$ and $a\inv\succ 1$, we get
\[
F'(a) \ =\ -G'(a\inv)a^{-2}\ \preceq\ aG(a\inv)a^{-2}\ =\ a\inv F(a).
\]
Finally, suppose $va \in \Gamma$ and take $b \in K$ with $b \asymp a$, so $b\inv a \asymp 1$. Note that $\overbar{b\inv a} \not\in \res K$, for otherwise we would have $a \sim bu$ for some $u \in \cO^\times$, contradicting our assumption on $a$. The Wilkie inequality gives $\Gamma_M = \Gamma$, so take $d \in K^>$ with $F(a) \asymp d$. Applying Lemma~\ref{lem:smallderivres} with $b\inv a$ in place of $a$ and with the function $y\mapsto d\inv F(by)$ in place of $F$, we see that
\[
d\inv b F'(a)\ \preceq\ 1,
\]
so $F'(a) \preceq b\inv d \asymp a\inv F(a)$.
\end{proof}

\noindent
Note that our standing assumption of power boundedness is necessary for Proposition~\ref{prop:smallderivsim}, as the proposition clearly fails when $a$ is infinite and $F$ is an exponential function with $F' = F$. Our assumption that $a \not\sim f$ for all $f \in K$ is also necessary. To see this, suppose $a \sim f\in K$ and let $F(Y) = Y-f$. Then $F(a) \prec a$ so $a\inv F(a) \prec 1$, but $F'(a) = 1$. Here is an application of Proposition~\ref{prop:smallderivsim} for use in the proof of Lemma~\ref{lem:gapgoesup}.

\begin{corollary}\label{cor:wholethingsmall}
Suppose $\cO = K$, let $b \in K^n$ be an $\cL(\emptyset)$-independent tuple, and let $K \langle a \rangle$ be a simple $\TO$-extension of $K$ with $a \prec 1$. Let $G\colon K^{1+n}\to K$ be an $\cL(\emptyset)$-definable function with $G(a,b) \prec 1$ and let $d= (d_0,\ldots,d_n) \in K^{1+n}$. Then $\nabla G(a,b)\cdot d \prec a\inv$.
\end{corollary}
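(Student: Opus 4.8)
The plan is to reduce the claim about the multivariable function $G$ to the single-variable Proposition~\ref{prop:smallderivsim} by freezing the components of $b$ and composing with a suitable parametrization. First I would set up the single-variable function $F\colon K\to K$ defined by $F(y) \coloneqq G(y,b)$; this is $\cL(K)$-definable (indeed $\cL(b)$-definable), and by hypothesis $F(a) = G(a,b) \prec 1$. Since $a\prec 1$ and $\cO = K$, the element $a$ is infinitesimal over $K$, and in particular $a\not\sim f$ for any $f \in K$ (as every nonzero $f\in K$ has $f\not\prec 1$, while $a\prec 1$ forces $v(a)>v(f) = 0$ for $f\asymp 1$, and $a - 0 = a$ shows $a\not\sim f$ when $f\succ 1$ too). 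Thus Proposition~\ref{prop:smallderivsim} applies to $F$ and $a$, giving $F'(a)\preceq a\inv F(a) \preceq a\inv$.

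The point then is that $F'(a)$ is exactly the first component $\partial G/\partial Y_0$ evaluated at $(a,b)$, so $\partial G(a,b)/\partial Y_0 \prec a\inv$ — note we even get the strict inequality because $F(a)\prec 1$ gives $a\inv F(a) \prec a\inv$, sharpening the $\preceq$ from Proposition~\ref{prop:smallderivsim} in the displayed chain. For the remaining partials $\partial G(a,b)/\partial Y_i$ with $1\leq i\leq n$, I would argue that each of these is already $\preceq 1$, hence $\prec a\inv$ since $a\prec 1$. To see $\partial G(a,b)/\partial Y_i \preceq 1$, fix $i$ and consider the function $y\mapsto G(a, b_1,\ldots,b_{i-1},y,b_{i+1},\ldots,b_n)$, which is $\cL(K\langle a\rangle)$-definable; since $\cO = K$ and hence $\cO_{K\langle a\rangle} = K\langle a\rangle$ by Fact~\ref{fact:tworings} (the convex hull of $K$ in $K\langle a\rangle$ is everything, as $a$ is bounded), the valuation on $K\langle a\rangle$ is still trivial and every element of $K\langle a\rangle$ is $\preceq 1$, so trivially $\partial G(a,b)/\partial Y_i \preceq 1$. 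Once all $1+n$ partial derivatives are $\prec a\inv$, the dot product $\nabla G(a,b)\cdot d = \sum_{j=0}^n (\partial G/\partial Y_j)(a,b)\, d_j$ is a finite sum of elements each of which is $\prec a\inv$ (using $d_j\in K$, $d_j\preceq 1$), hence $\prec a\inv$.

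The main obstacle I anticipate is the bookkeeping around triviality of the valuation: the hypothesis $\cO = K$ means the whole setup is in the "trivially valued" regime, so I must be careful that statements like Lemma~\ref{lem:smallderivval} and Proposition~\ref{prop:smallderivsim} are genuinely being used in the $\cO = K$ case, where $M$ need \emph{not} be an elementary extension of $K$ and the limit-based argument in the first part of the proof of Lemma~\ref{lem:smallderivval} is what does the work. I would double-check that $va\not\in\Gamma$ holds in the relevant sub-case: since $\Gamma = \{0\}$ here and $a\prec 1$ gives $va > 0$, indeed $va\notin\Gamma$, so the first case of the proof of Proposition~\ref{prop:smallderivsim} ("$a\succ 1$ and $va\notin\Gamma$" applied to $a\inv$, or directly the "$a\prec 1$" case) is exactly the one that applies. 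A secondary subtlety is making sure the partial derivative notation matches: $\nabla G$ has $1+n$ components indexed $Y_0,\ldots,Y_n$, and freezing all but the $j$-th variable produces a single-variable function whose ordinary derivative is the $j$-th partial of $G$ — this is a standard fact but worth stating cleanly so the reduction to Proposition~\ref{prop:smallderivsim} is transparent.
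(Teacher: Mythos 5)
Your treatment of $\partial G/\partial Y_0$ matches the paper's: apply Proposition~\ref{prop:smallderivsim} to $y\mapsto G(y,b)$, noting that $a\not\sim f$ for all $f\in K$ because $K$ is trivially valued while $a\prec 1$. But your argument for the remaining partials rests on a false premise. You claim that $\cO_{K\langle a\rangle}=K\langle a\rangle$, so that the valuation on $K\langle a\rangle$ is trivial and every element of $K\langle a\rangle$ is $\preceq 1$. That is incompatible with the hypothesis $a\prec 1$: it means $va>0$, hence $a\inv\succ 1$ and $a\inv\notin\cO_{K\langle a\rangle}$. (In the notation of Fact~\ref{fact:tworings} with $\cO=K$, the ring $\cO_2$ is indeed all of $K\langle a\rangle$, but the valuation ring compatible with $a\prec 1$ is $\cO_1$, the convex hull of $K$, which omits $a\inv$.) So elements of $K\langle a\rangle$ such as $a\inv$ or $a^{-2}$ are not $\preceq 1$, and the bound $\frac{\partial G}{\partial Y_i}(a,b)\preceq 1$ for $i\geq 1$ is not free --- it is the real content of the corollary in those coordinates. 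A telltale sign that a step is being skipped is that your proof never uses the hypothesis that $b$ is $\cL(\emptyset)$-independent.

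The paper closes this gap as follows (for $i=1$; the other coordinates are symmetric). Set $E\coloneqq\dcl(b_2,\ldots,b_n)$, a trivially valued elementary substructure of $K$. Since $b$ is $\cL(\emptyset)$-independent, $b_1\notin E$, and since $va\notin\Gamma_E=\{0\}$ the Wilkie inequality gives $\res E\langle a\rangle=\res E$, so $b_1\not\sim f$ for every $f\in E\langle a\rangle$. One then applies Proposition~\ref{prop:smallderivsim} a second time, now over the base field $E\langle a\rangle$ with $b_1$ playing the role of the adjoined element and $y\mapsto G(a,y,b_2,\ldots,b_n)$ as the definable function, obtaining $\frac{\partial G}{\partial Y_1}(a,b)\preceq b_1\inv G(a,b)\prec 1\prec a\inv$. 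This second, ``rotated'' application of Proposition~\ref{prop:smallderivsim} in the variable $Y_1$ is the idea missing from your write-up.
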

\begin{proof}
Viewing $G$ as a function of the variables $Y_0,\ldots,Y_n$, we have
\[
\nabla G(a,b)\cdot d \ =\ \frac{\partial G}{\partial Y_0}(a,b)d_0+ \frac{\partial G}{\partial Y_1}(a,b)d_1+\cdots +\frac{\partial G}{\partial Y_n}(a,b)d_n.
\]
Since $d_i\preceq 1$ for $i = 0,\ldots n$, it suffices to show that $\frac{\partial G}{\partial Y_i}\prec a\inv$ for each $i$. For $i = 0$, we apply Proposition~\ref{prop:smallderivsim} to the function $y \mapsto G(y,b)$ to get $\frac{\partial G}{\partial Y_0}(a,b)\preceq a\inv G(a,b) \prec a\inv$. For $i >0$, we will again use Proposition~\ref{prop:smallderivsim}, but doing so requires a bit of an argument. By symmetry, it suffices to show that $\frac{\partial G}{\partial Y_1}(a,b) \prec a\inv$. Let $E \coloneqq \dcl(b_2,\ldots,b_n)$ and view $E$ as an elementary $\LO$-substructure of $K$ with $\cO_E = \cO \cap E = E$. Then $b_1 \not\in E$, so $b_1 \not\sim f$ for any $f \in E$, since $E$ is trivially valued. Viewing $E\langle a \rangle$ as a $\TO$-extension of $E$ with $va \not\in \Gamma_E = \{0\}$, the Wilkie inequality gives $\res E\langle a \rangle = \res E$, so $b_1 \not\sim f$ for any $f \in E\langle a \rangle$. Thus, we may apply Proposition~\ref{prop:smallderivsim} with $E\langle a \rangle$ in place of $K$, with $b_1$ in place of $a$, and with the function $y \mapsto G(a,y,b_2,\ldots,b_n)$ in place of $F$ to get $\frac{\partial G}{\partial Y_1}(a,b)\preceq b_1\inv G(a,b)$. Since $b_1 \inv \in K$ and $G(a,b) \prec 1$, this gives $\frac{\partial G}{\partial Y_1}(a,b)\prec 1 \prec a\inv$.
\end{proof}

\subsection{Immediate extensions and pseudocauchy sequences}
In this subsection, let $M$ be a $\TO$-extension of $K$. If $\Gamma_M = \Gamma$ and $\res M = \res K$, then $M$ is said to be an \textbf{immediate extension of $K$}. If $M$ is an immediate extension of $K$, then $M$ is an elementary $\TO$-extension of $K$. Note that $M$ is an immediate extension of $K$ if and only if for all $a \in M^\times$ there is $b \in K^\times$ with $a \sim b$. The following fact from~\cite{Ka21B}, is useful for studying how $\LO(K)$-definable functions behave in immediate extensions. 

\begin{fact}[\cite{Ka21B}, Corollary 1.5]
\label{fact:Lflat}
Suppose $M$ is an immediate extension of $K$, let $F\colon A\to K$ be an $\LO(K)$-definable function, and let $a \in A^M$. Then there is an $\cL(K)$-definable cell $D \subseteq A$ with $a \in D^M$ such that either $F(y) = 0$ for all $y\in D^M$ or $F(y) \sim F(a)$ for all $y \in D^M$.
\end{fact}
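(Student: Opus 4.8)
The plan is to reduce the statement to a purely valuation-theoretic fact about $\LO$-definable sets and then establish that fact using the weak o-minimality of $\TO$ from~\cite{DL95}. First I would dispose of the trivially valued case: if $\cO = K$ then $\Gamma = \{0\}$, so an immediate extension satisfies $\Gamma_M = \{0\}$ and $\res M = \res K = K$, which forces $M = K$; here $a \in A$ and one takes $D \coloneqq \{a\}$. So assume $\cO \neq K$, so that $M$ is an elementary $\TO$-extension of $K$. If $F(a) = 0$, put $S \coloneqq \{y \in A : F(y) = 0\}$; if $F(a) \neq 0$, then since $M$ is immediate over $K$ there is $c \in K^\times$ with $F(a) \sim c$ (by the characterization of immediate extensions recalled above), and put $S \coloneqq \{y \in A : F(y) \sim c\}$. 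In both cases $S$ is an $\LO(K)$-definable set with $a \in S^M$, and it will suffice to find an $\cL(K)$-definable cell $D \subseteq S$ with $a \in D^M$: then $D \subseteq A$, and either $F$ vanishes on all of $D^M$, or $F(y) \sim c \sim F(a)$ for all $y \in D^M$ by transitivity of $\sim$.

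The heart of the argument is then the claim that, for an immediate $\TO$-extension $M$ of a nontrivially valued $K$, every $\LO(K)$-definable $S \subseteq K$ with $a \in S^M$ contains an $\cL(K)$-definable cell $D$ with $a \in D^M$. To prove this I would first invoke weak o-minimality~\cite[Corollary 3.14]{DL95} to decompose $S$ into its finitely many definable convex components and replace $S$ by the one whose natural extension contains $a$; thus we may assume $S$ is convex. If $a \in K$, then $a \in S^M \cap K = S$ and $D \coloneqq \{a\}$ works, so assume $a \notin K$. If there exist $p, q \in S$ with $p < a < q$, then $D \coloneqq (p, q)$ is an $\cL(K)$-definable cell, $D \subseteq S$ by convexity, and $a \in D^M$, and we are done. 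Such $p$, $q$ can fail to exist only if $a$ is an endpoint of the convex set $S^M$, say $a = \min S^M$ (the case $a = \max S^M$ being symmetric). The decisive point is to rule this out, i.e.\ to show that an immediate extension of $K$ realizes no cut of $K$ as an endpoint of an $\LO(K)$-definable convex set. For this I would use the description of one-variable $\LO(K)$-definable sets coming from~\cite{DL95} — finite unions of $\cL(K)$-cells refined by finitely many valuation conditions $v(f(y)) \geq v(g(y))$, $v(f(y)) > v(g(y))$ with $f, g$ $\cL(K)$-definable — together with the fact that an $\cL(K)$-definable cell in $M$ has its endpoints in $K \cup \{\pm\infty\}$ (so $a \notin K$ cannot be such an endpoint): at the putative boundary point $a$ one of the valuation conditions must degenerate, so $v(f(a)) = v(g(a))$ for a relevant pair $f, g$, but immediacy forces $v(f(a)), v(g(a)) \in \Gamma$ and the residue $\overline{f(a)/g(a)} \in \res K$, and then the density of $K$ as an ordered field produces an element of $S \cap K$ lying strictly below $a$ — contradicting $a = \min S^M$.

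I expect this last step — ruling out that $a$ occurs exactly at a boundary of $S^M$ — to be the main obstacle; everything else is bookkeeping. It is precisely here that one uses that the extension is immediate and not merely elementary, through the interplay of the value group, the residue field, and the density of $K$. If pinning down the explicit normal form for $\LO$-definable sets turns out to be awkward, a safer alternative is to prove the claim by transfinite induction along a construction of $M$ from $K$ by successive pseudo-Cauchy pseudo-limit extensions (cf.\ the immediate-extension machinery of~\cite{Ka21B}), reducing at each step to the case $M = K\langle a\rangle$ with $a$ the pseudo-limit of a divergent pseudo-Cauchy sequence from $K$ — a case in which the non-realizability of boundary cuts can be checked directly.
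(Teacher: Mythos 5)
The paper does not actually prove this statement; it is imported verbatim as a Fact from~\cite{Ka21B}, so there is no internal proof to compare against. Judged on its own terms, your reduction is the right one: pass to the nontrivially valued case, replace the target set by the $\LO(K)$-definable convex set $S$ (either $\{y\in A: F(y)=0\}$ or $\{y \in A: F(y)\sim c\}$ with $c\in K^\times$, $c \sim F(a)$, using immediacy), and then the whole problem is to show that $S$ contains points of $K$ on both sides of $a$. But the argument you offer for that crux does not work as written. First, the failure of such $p,q$ does not mean $a$ is an endpoint of $S^M$; it means all points of $S=S^M\cap K$ lie on one side of $a$, which is weaker since $K$ need not be order-dense in $M$ (this is cosmetic, but the statement you then try to contradict should be the weaker one). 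More seriously, the claim that ``one of the valuation conditions must degenerate, so $v(f(a))=v(g(a))$'' is unjustified and in general false: $v\circ f$ is $\Gamma$-valued and there is no intermediate-value principle forcing equality of valuations at the cut of $a$; and even granting it, ``density of $K$ as an ordered field'' does not by itself produce a point of $S$ on the missing side of $a$ --- the cut of $a$ could a priori still be approached definably from only one side. Finally, reducing a boolean combination of conditions $v(f_i(y))\geq v(g_i(y))$ to a single one requires first cell-decomposing so the $f_i,g_i$ are monotone near $a$ and then arguing that finitely many convex conditions can be simultaneously witnessed by $K$-points arbitrarily close to $a$ on both sides; none of this bookkeeping is set up, and parts of it are dangerously close to the statement being proved.

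The gap can be closed, and immediacy enters exactly twice. (i) If $h$ is $\cL(K)$-definable, monotone on an interval around $a$, and $h(a)\in\cO_M$, then since $\cO_M$ is the convex hull of $\cO$ (immediacy), there is $c\in\cO$ with $|h(a)|<c$, so the valuation condition is implied near $a$ by the \emph{$\cL(K)$-definable} condition $|h(y)|<c$; by o-minimality the convex component of $\{|h(y)|<c\}$ containing $a$ is an interval with endpoints in $K\cup\{\pm\infty\}$. (ii) No element of an immediate extension realizes a cut of the form $K^{\leq r}$ with $r\in K$: if it did, $v(a-r)$ would exceed $\Gamma$ (take $e\in K^{>0}$ with $e\sim a-r$; then $r+e/2\in K$ lies strictly between $r$ and $a$). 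Together these rule out the one-sided case for a single condition, and a careful induction over the boolean combination finishes. Alternatively --- and more cleanly --- note that since no element of $K\langle a\rangle$ realizes the cut $\cO^{\downarrow}$ (same computation as in (ii)), Fact~\ref{fact:tworings} shows the $T$-convex valuation ring of $K\langle a\rangle$ over $\cO$ is unique, so $\tp_{\cL}(a/K)$ together with the elementary $\LO$-diagram of $K$ determines $\tp_{\LO}(a/K)$; compactness then produces an $\cL(K)$-definable $E\subseteq S$ with $a\in E^M$, and cell decomposition of $E$ yields $D$. This route avoids the quantifier-elimination normal form entirely and is the one I would recommend.
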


\noindent
If $K\langle a \rangle$ is a simple immediate $\TO$-extension of $K$, then $v(a-K)\coloneqq \big\{v(a-y):y \in K\big\}$ is a downward closed subset of $\Gamma$ without a greatest element. For each $b \in K\langle a \rangle$, the set $v(b-K)$ can be expressed as a translate of $v(a-K)$:

\begin{lemma}\label{lem:shift}
Let $K\langle a \rangle$ be a simple immediate $\TO$-extension of $K$ and let $b \in K\langle a \rangle\setminus K$. Then 
\[
v(b-K) \ =\ \gamma+v(a-K)
\]
for some $\gamma \in \Gamma$.
\end{lemma}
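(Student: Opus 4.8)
The plan is to exploit Fact~\ref{fact:Lflat} to show that near $a$, the difference $b - y$ (for $y \in K$) is controlled by $a - y'$ for a suitable $y' \in K$, and then transport the description of $v(b - K)$ accordingly. Write $b = G(a)$ for some $\cL(K)$-definable function $G\colon A \to K$ with $a \in A^M$; since $K\langle a\rangle$ is a simple extension of $K$, every element of $K\langle a \rangle$ has this form. The goal is to produce $\gamma \in \Gamma$ with $v(b - K) = \gamma + v(a - K)$, i.e.\ to show that the cut $v(b-K)$ in $\Gamma$ is a translate of the cut $v(a-K)$.

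First I would fix an arbitrary $y \in K$ and apply Fact~\ref{fact:Lflat} to the $\LO(K)$-definable function $z \mapsto G(z) - y$ at the point $a$: there is an $\cL(K)$-definable cell $D \subseteq A$ with $a \in D^M$ such that either $G(z) - y = 0$ for all $z \in D^M$ or $G(z) - y \sim G(a) - y = b - y$ for all $z \in D^M$. The first alternative would force $b = y \in K$, contradicting $b \notin K$, so the second holds. Now pick any $w \in D \cap K$ (which is nonempty since $D^M$ contains $a$ and $D$ is an $\cL(K)$-definable cell containing the point realizing the appropriate cut — more carefully, since the extension is immediate, $D$ must be a nondegenerate interval-like cell, so $D \cap K \neq \emptyset$); applying the mean value theorem to $G - y$ on $D$, or more directly using Proposition~\ref{prop:smallderivsim}, lets me compare $b - y = G(a) - y$ with $G(w) - y \in K$ and with $a - w$. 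The key intermediate claim I would aim for is: there is a single $\cL(K)$-definable function $H$ and a single element $c \in K$ such that $b - y \sim H'(c)(a - y^*)$ or similar — but cleanly, the right formulation is to show that the function $G$ is, near $a$, "affine up to $\sim$" in the sense that $G(a) - G(w) \sim G'(w)(a - w)$ for $w \in K$ close enough to $a$, with $G'(w) \asymp \delta$ for a fixed $\delta \in K^\times$.

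Here is the structure I would actually carry out. Since the extension $K\langle a\rangle|K$ is immediate, $v(a - K)$ has no greatest element; choose a well-indexed sequence $(w_\rho)$ in $K$ with $v(a - w_\rho)$ strictly increasing and cofinal in $v(a-K)$, so $(w_\rho)$ is a pseudo-Cauchy sequence pseudoconverging to $a$. Apply Fact~\ref{fact:Lflat} to $G$ itself at $a$ to get an $\cL(K)$-definable cell $D$ with $a \in D^M$ and $G(z) \sim b$ on $D^M$ (the zero alternative is impossible as $b \neq 0$ may be arranged by translating, or handled directly). Shrinking $D$, assume $G$ is $\cC^1$ on $D$ and, by Fact~\ref{fact:basicTderivation2} applied in reverse, that $G'$ has constant valuation $\delta := vG'(w)$ on $D \cap K$ — this uses weak o-minimality of $\TO$ to cut $D\cap K$ into finitely many pieces on which $v G'$ is constant and then pass to the piece whose natural extension contains $a$. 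For $y \in K$ and $w \in D \cap K$ with $v(a-w)$ large, the o-minimal mean value theorem on $D$ gives $G(a) - G(w) = G'(\xi)(a-w)$ for some $\xi \in D^M$ between, and $vG'(\xi) = \delta$ by model completeness; hence $v\big((b - G(w)) \big) = \delta + v(a - w)$. Combined with $v(b - y) = \min\{v(b - G(w)), v(G(w) - y)\}$ reasoning as $w$ ranges over the pseudo-Cauchy sequence, one concludes $v(b - K) = \delta + v(a - K)$, giving $\gamma := \delta$.

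**The main obstacle** will be the bookkeeping around the mean value estimate: I need $vG'(\xi) = \delta$ for the point $\xi \in D^M$ produced by the o-minimal MVT, which requires knowing that $vG'$ is constant not just on $D \cap K$ but on all of $D^M$ — this follows because $vG' = \delta$ is an $\LO(K)$-definable condition on $D$ that holds on a cofinal-and-coinitial (indeed cofinite-complement) part of $D$, hence on $D^M$ by elementarity once $M$ is an elementary extension (which it is, being immediate). A secondary subtlety is ruling out the degenerate possibilities in Fact~\ref{fact:Lflat} ($G \equiv 0$ or $G \equiv y$ on the cell) and ensuring $D \cap K$ is "large enough" to contain elements $w$ with $v(a-w)$ arbitrarily high — this is exactly where immediacy of the extension is essential, since it guarantees $v(a - K)$ has no maximum and that $a$ is not separated from $K$ by the cell structure. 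Once these points are pinned down, the translate identity $v(b - K) = \delta + v(a-K)$ drops out, and the fact that $v(b-K)$ is itself downward closed without greatest element (as $b \notin K$ and the extension is immediate) is automatic.
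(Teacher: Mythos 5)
Your overall skeleton --- mean value theorem, constancy of $vG'$ near $a$, then a cofinality argument --- is the same as the paper's, but the step on which everything hinges is not justified correctly. You claim that weak o-minimality of $\TO$ lets you ``cut $D\cap K$ into finitely many pieces on which $vG'$ is constant.'' Weak o-minimality controls definable \emph{subsets} of $K$; it says nothing about a $\Gamma$-valued function such as $y\mapsto vG'(y)$ taking only finitely many values, and in general it does not (take $G(y)=y^2/2$, so $G'(y)=y$, whose valuation is certainly not piecewise constant on an interval). The appeal to Fact~\ref{fact:basicTderivation2} ``in reverse'' is also not meaningful here, since that fact concerns the derivation, not valuations of definable derivatives; and your fallback via ``elementarity once the condition holds on a cofinite-complement part of $D$'' presupposes the very constancy you have not established. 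The correct move, and the one the paper makes, is to apply Fact~\ref{fact:Lflat} to the function $G'$ itself: since $b=G(a)\notin K$ we have $G'(a)\neq 0$, so there is an $\cL(K)$-definable cell $D$ with $a\in D^M$ on which $G'(y)\sim G'(a)$ for all $y\in D^M$; then $\gamma\coloneqq vG'(a)$ lies in $\Gamma$ because the extension is immediate, and the point $\xi\in D^M$ produced by the mean value theorem automatically satisfies $vG'(\xi)=\gamma$.

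There is a second, smaller gap at the end. The mean value computation gives $v\bigl(b-G(u)\bigr)=\gamma+v(a-u)$ for $u\in D$, which shows that a cofinal subset of $\gamma+v(a-K)$ lies in $v(b-K)$; since both sets are downward closed cuts without largest element, this yields $\gamma+v(a-K)\subseteq v(b-K)$. For the reverse inclusion, your ``$\min$'' reasoning breaks down exactly when $v\bigl(G(w)-y\bigr)=v\bigl(b-G(w)\bigr)$ for every $w$ in your cofinal family, i.e.\ when $\bigl(G(w_\rho)\bigr)$ pseudoconverges to some $y\in K$ with $v(b-y)>\gamma+v(a-K)$; this possibility has to be excluded. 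The paper does so by observing that $G(D)$ contains an interval around $b$ (local invertibility, as $G'\neq 0$ on $D$), so every $y\in K$ with $v(b-y)$ sufficiently large is of the form $G(u)$ with $u\in D$, whence $v(b-y)=\gamma+v(a-u)\in\gamma+v(a-K)$. Some such argument needs to be supplied before the translate identity ``drops out.''
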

\begin{proof}
Let $F\colon K\to K$ be an $\cL(K)$-definable function with $F(a) = b$. Take an open interval $I \subseteq K$ with $a \in I^{K\langle a \rangle}$ such that $F$ is $\cC^1$ on $I$. Since $b\not\in K$, we have $F'(a)\neq 0$, so we may use Fact~\ref{fact:Lflat} to shrink $I$ and arrange that $F'(y) \sim F'(a)$ for all $y \in I^{K\langle a \rangle}$. Set $\gamma\coloneqq vF'(a) \in \Gamma$ and let $u\in I$. By the o-minimal mean value theorem, we have
\[
F(a)- F(u)\ =\ F'(c)(a-u)
\]
for some $c\in K\langle a \rangle$ between $a$ and $u$. Then $vF'(c)=\gamma$ since $c\in I^{K\langle a \rangle}$, so 
\[
v\big(b-F(u)\big) \ =\ v\big(F(a)-F(u)\big)\ =\ \gamma+v(a-u).
\]
The set $\big\{v(a-u):u\in I\big\}$ is cofinal in $v(a-K)$ and, since $\big\{F(u):u \in I\big\}$ contains an interval around $b$, the set $\big\{v\big(b-F(u)\big):u\in I\big\}$ is cofinal in $v(b-K)$. This gives $v(b-K) = \gamma+v(a-K)$, as desired.
\end{proof}

\noindent
Recall that a \textbf{pseudocauchy sequence (pc-sequence) in $K$} is a well-indexed sequence $(a_\rho)$ in $K$ such that
\[
a_\tau-a_\sigma\ \prec\ a_\sigma-a_\rho
\]
for all $\tau>\sigma>\rho$ greater than some index $\rho_0$. Let $(a_\rho)$ be a pc-sequence. An element $a$ in a $\TO$-extension of $K$ is said to be a \textbf{pseudolimit of $(a_\rho)$} if for some index $\rho_0$, we have
\[
a-a_\sigma\ \prec\ a-a_\rho
\]
for all $\sigma>\rho>\rho_0$. In this case, we say that \textbf{$(a_\rho)$ pseudoconverges to $a$}, and we write $a_\rho\leadsto a$. The pc-sequence $(a_\rho)$ is said to be \textbf{divergent} if it has no pseudolimit in $K$. Suppose $(a_\rho)$ is divergent with pseudolimit $a$ in some $\TO$-extension of $K$. Given $y \in K$, we have $a-a_\rho\prec a-y$ for all sufficiently large $\rho$; otherwise, we would have $a_\rho \leadsto y$.

\medskip\noindent
Under the assumption of power boundedness, pc-sequences are related to immediate extensions as follows:

\begin{lemma}
\label{lem:immediateext2}
Suppose that $T$ is power bounded and let $K\langle a \rangle$ be a simple $\TO$-extension of $K$. The following are equivalent:
\begin{enumerate}
\item $K\langle a \rangle$ is an immediate extension of $K$;
\item $v(a-K)$ has no largest element;
\item there is a divergent pc-sequence in $K$ which pseudoconverges to $a$.
\end{enumerate}
\end{lemma}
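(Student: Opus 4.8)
The plan is to prove the cycle of implications $(1)\Rightarrow(2)\Rightarrow(3)\Rightarrow(1)$, since $(1)\Rightarrow(2)$ and $(3)\Rightarrow(1)$ are essentially formal and do not even need power boundedness, while $(2)\Rightarrow(3)$ is where the real work lies.

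\medskip\noindent
First, $(1)\Rightarrow(2)$: if $K\langle a\rangle$ is immediate and $v(a-K)$ had a largest element $v(a-y)$ with $y\in K$, then $a-y\in K\langle a\rangle^\times$ would, by immediacy, satisfy $a-y\sim c$ for some $c\in K^\times$; but then $a-(y+c)\prec a-y$, contradicting maximality. (The case $a\in K$ is excluded because $K\langle a\rangle$ properly extends $K$, so $v(a-K)\subseteq\Gamma$ makes sense.) Next, $(3)\Rightarrow(1)$: if $(a_\rho)$ is a divergent pc-sequence in $K$ with $a_\rho\leadsto a$, then for any $b\in K\langle a\rangle^\times$ I must produce $c\in K^\times$ with $b\sim c$. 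Write $b=F(a)$ for an $\cL(K)$-definable $F$; shrinking an interval $I\ni a$ so that $F$ is $\cC^1$ and (using the o-minimal mean value theorem together with the definition of pc-sequence) the increments $F(a_\tau)-F(a_\sigma)$ are eventually strictly $v$-decreasing, one checks $(F(a_\rho))$ is itself a pc-sequence with $F(a_\rho)\leadsto b$; since $b\notin K$ this sequence is divergent, and then $b-F(a_\rho)\prec b-F(a_{\rho_0})$ for large $\rho$ forces $b\sim F(a_{\rho_0})\in K^\times$ once $\rho_0$ is chosen past the point where $F(a_\rho)\not\asymp 0$. Thus $K\langle a\rangle$ is immediate. (One should be slightly careful if $F(a_\rho)$ is eventually $0$ or eventually constant, but those cases put $b$ in $K$, contradicting $b\notin K$.)

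\medskip\noindent
The heart of the matter is $(2)\Rightarrow(3)$. Assume $v(a-K)$ has no largest element. The goal is to build a well-indexed sequence $(a_\rho)$ in $K$, indexed by ordinals below some limit ordinal $\nu$, with $a_\rho\leadsto a$ and $v(a-a_\rho)$ strictly increasing and cofinal in $v(a-K)$; such a sequence is automatically a pc-sequence (from $a-a_\sigma\prec a-a_\rho$ for $\rho<\sigma$ one gets $a_\tau-a_\sigma\prec a_\sigma-a_\rho$), and it is divergent because any pseudolimit $y\in K$ would give $v(a-y)>v(a-a_\rho)$ for all $\rho$, contradicting cofinality in $v(a-K)$. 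The construction is by transfinite recursion: having chosen $(a_\rho)_{\rho<\mu}$ with $v(a-a_\rho)$ strictly increasing, if the values $v(a-a_\rho)$ are already cofinal in $v(a-K)$ we stop (and $\mu$ is our $\nu$, which is a limit ordinal since $v(a-K)$ has no largest element); otherwise there is $\gamma\in v(a-K)$ strictly above all of them, so pick $y\in K$ with $v(a-y)\geq\gamma$ and set $a_\mu\coloneqq y$. This terminates because $v(a-K)$ is a set. Here is where power boundedness enters — but actually, looking more carefully, the implication $(2)\Rightarrow(3)$ as I have just sketched it does \emph{not} use power boundedness at all; power boundedness is presumably needed for the \emph{converse direction bundling}, i.e.\ to know that these equivalent conditions are the ones that matter, or it is used in an alternate, cleaner proof of $(2)\Rightarrow(3)$ via Fact~\ref{fact:Lflat} and Lemma~\ref{lem:shift}.

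\medskip\noindent
So the main obstacle I anticipate is not $(2)\Rightarrow(3)$ but rather making $(3)\Rightarrow(1)$ fully rigorous: controlling the behavior of an arbitrary $\cL(K)$-definable $F$ along the pc-sequence. The clean way is to invoke Fact~\ref{fact:Lflat} and Lemma~\ref{lem:shift}: given $b=F(a)\in K\langle a\rangle\setminus K$, Lemma~\ref{lem:shift} gives $v(b-K)=\gamma+v(a-K)$ for some $\gamma\in\Gamma$, which has no largest element precisely because $v(a-K)$ doesn't; then choosing $c\in K$ with $v(b-c)$ large but $v(b-c)\in v(b-K)$ (so $v(b-c)$ is not maximal, hence there is $c'$ with $v(b-c')>v(b-c)$, forcing $b\sim c$... wait, this needs $v(b-c)>v(c)$). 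The correct statement: pick $c\in K$ with $v(b-c)>v(b)$; such $c$ exists since $v(b-K)$ has no largest element and $v(b)=v(b-0)\in v(b-K)$ is therefore not maximal, so some $v(b-c)>v(b)=v(b-0)\geq$... hmm, one actually needs $v(b-c)>vb$, and since $vb\in v(b-K)$ is not the max there is $c$ with $v(b-c)>vb$, i.e.\ $b\sim c$, and $c\neq 0$ since $vc=vb\neq\infty$. This shows every $b\in K\langle a\rangle^\times$ is $\sim$ to an element of $K^\times$, hence $\Gamma_{K\langle a\rangle}=\Gamma$ and $\res K\langle a\rangle=\res K$, i.e.\ $(1)$. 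I expect the write-up to lean on Lemma~\ref{lem:shift} for exactly this, since it was evidently stated for this purpose.
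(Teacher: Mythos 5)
Your implications $(1)\Rightarrow(2)$ and $(2)\Rightarrow(3)$ are correct, and you are right that neither uses power boundedness; the paper's own proof of $(2)\Leftrightarrow(3)$ is exactly your cofinal-sequence argument (plus the observation that divergence gives $a-a_\rho\prec a-y$ eventually, which yields $(3)\Rightarrow(2)$ directly). The genuine gap is in $(3)\Rightarrow(1)$, which is the hard direction and the only place where power boundedness is actually needed. Your ``clean way'' is circular: both Fact~\ref{fact:Lflat} and Lemma~\ref{lem:shift} carry the hypothesis that the extension is \emph{already immediate}, so neither may be invoked to prove immediacy. Your fallback mean value theorem argument has the same hole in a different spot: to conclude that $\big(F(a_\rho)\big)$ is a pc-sequence pseudoconverging to $F(a)$ you must control $v\big(F'(c)\big)$ for $c$ between $a_\sigma$ and $a_\tau$, i.e.\ you need $F'$ to have eventually constant valuation along the sequence. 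That is precisely the nontrivial content, and it genuinely fails without power boundedness: take $F=\exp$ and a divergent pc-sequence with $a-a_\rho$ positive infinite; then $\exp(a)-\exp(a_\rho)=\exp(a)\big(1-\exp(a_\rho-a)\big)\sim\exp(a)$ for all such $\rho$, so $\exp(a_\rho)\not\leadsto\exp(a)$ and $v(\exp a)$ may lie outside $\Gamma$. So $(3)\Rightarrow(1)$ is not ``essentially formal,'' and your remark that power boundedness is ``presumably'' needed elsewhere mislocates its role.

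For comparison, the paper does not prove $(1)\Leftrightarrow(2)$ from scratch: it cites results from Tyne's thesis~\cite{Ty03} (see~\cite[Lemma 1.9]{Ka21B}), where the power-bounded analysis of definable functions at cuts is carried out, and then supplies only the elementary arguments for $(2)\Rightarrow(3)$ and $(3)\Rightarrow(2)$. If you want to keep your cycle, the honest fix is to prove $(3)\Rightarrow(2)$ as above and then cite the literature for $(2)\Rightarrow(1)$; a self-contained proof of that last step is substantially more work than an MVT estimate.
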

\begin{proof}
The equivalence of (1) and (2) follows from results in Tyne's thesis~\cite{Ty03}; see~\cite[Lemma 1.9]{Ka21B} for a full proof. Assume (2) holds and let $(a_\rho)$ be a well-indexed sequence in $K$ such that $v(a-a_\rho)$ is strictly increasing and cofinal in $v(a-K)$. One easily verifies that $(a_\rho)$ is a divergent pc-sequence in $K$ which pseudoconverges to $a$. Now, assume (3) holds and let $(a_\rho)$ be a pc-sequence witnessing this. Then for $y\in K$, we may take $\rho$ with $a-a_\rho \prec a-y$, proving (2). 
\end{proof}

\medskip\noindent
As a corollary, we get that any divergent pc-sequence in $K$ has a pseudolimit in an immediate extension of $K$.

\begin{corollary}
\label{cor:immediateext4}
Let $T$ be power bounded and let $(a_\rho)$ be a divergent pc-sequence in $K$. Then there is an immediate $\TO$-extension $K\langle a \rangle$ of $K$ with $a_\rho\leadsto a$. If $b$ is an element of a $\TO$-extension $M$ of $K$ with $a_\rho \leadsto b$, then there is a unique $\LO(K)$-embedding $K\langle a \rangle\to M$ sending $a$ to $b$.
\end{corollary}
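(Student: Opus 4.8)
The plan is to reduce everything to Lemma~\ref{lem:immediateext2}: first realize \emph{some} pseudolimit of $(a_\rho)$ in a $\TO$-extension of $K$, cut it down to a simple extension $K\langle a\rangle$, and check that this extension is immediate; then derive the universal property from a cut/type computation together with the observation that an immediate extension is forced to carry the convex-hull valuation.

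\emph{Existence.} Pass to a sufficiently saturated elementary $\LO$-extension $K^*$ of $(K,\cO)$, indexing over $\rho$ past the threshold where $(a_\rho)$ has the pc-property (so $a_{\rho+1}-a_\rho\neq 0$ there). The family of $\LO(K)$-formulas $(x-a_\rho)/(a_{\rho+1}-a_\rho)\in\cO$, which expresses $v(x-a_\rho)\geq v(a_{\rho+1}-a_\rho)$, is a partial type over $K$: it is finitely satisfiable in $K$, since for indices $\rho_1<\cdots<\rho_k$ the element $a_{\rho_k}$ works, as the pc-inequality gives $v(a_{\rho_k}-a_{\rho_i})=v(a_{\rho_i+1}-a_{\rho_i})$ for $i<k$. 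Let $a\in K^*$ realize this type. Then $a\notin K$ (otherwise $(a_\rho)$ would converge in $K$), and writing $a-a_\rho=(a-a_\sigma)+(a_\sigma-a_\rho)$ and comparing valuations for $\sigma>\rho$ large shows that $v(a-a_\rho)=v(a_{\rho+1}-a_\rho)$ is strictly increasing, i.e. $a_\rho\leadsto a$. Now let $K\langle a\rangle\subseteq K^*$ have underlying set $\dclL(K\cup\{a\})$ and valuation ring $\cO_{K^*}\cap K\langle a\rangle$; this restriction is $T$-convex, so $K\langle a\rangle$ is a simple $\TO$-extension of $K$. By the remark preceding Lemma~\ref{lem:immediateext2}, $a-a_\rho\prec a-y$ for all large $\rho$ and every $y\in K$, so $v(a-K)$ has no largest element, and Lemma~\ref{lem:immediateext2} gives that $K\langle a\rangle$ is immediate over $K$.

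\emph{The universal property.} Fix $b$ in a $\TO$-extension $M$ of $K$ with $a_\rho\leadsto b$. Uniqueness of an $\LO(K)$-embedding sending $a\mapsto b$ is immediate, since $K\langle a\rangle=\dclL(K\cup\{a\})$ forces such a map to send $F(a)\mapsto F(b)$ for every $\cL(K)$-definable $F$. For existence, I claim $a$ and $b$ realize the same cut over $K$: given $y\in K$, choose $\rho$ large with $a-a_\rho\prec a-y$ and $b-a_\rho\prec b-y$; then $a-y\sim a_\rho-y$ and $b-y\sim a_\rho-y$ (in each case the difference of the two terms has strictly larger valuation than the term), so $a-y$, $a_\rho-y$, $b-y$ are all nonzero of the same sign, whence $a$ and $b$ lie on the same side of $y$. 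Since $K\models T$ is a model of the o-minimal theory $T$, it follows that $a$ and $b$ have the same $\cL$-type over $K$, so $F(a)\mapsto F(b)$ defines an $\cL(K)$-embedding $\iota\colon K\langle a\rangle\to M$ with image $\dclL(K\cup\{b\})=:K\langle b\rangle$. The same argument used for $a$ shows $v(b-K)$ has no largest element, so $K\langle b\rangle$ (with valuation ring $\cO_M\cap K\langle b\rangle$) is immediate over $K$ as well, by Lemma~\ref{lem:immediateext2}. In any immediate extension of $K$ the valuation ring is exactly the convex hull of $\cO$: if $vy\geq 0$ then $vy\in\Gamma$, so $y\asymp z$ for some nonzero $z\in\cO$ and hence $|y|$ is bounded above by an element of $\cO$. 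Since $\iota$ fixes $K$ pointwise and preserves $\leq$, it carries the convex hull of $\cO$ in $K\langle a\rangle$ onto the convex hull of $\cO$ in $K\langle b\rangle$, that is, $\iota(\cO_{K\langle a\rangle})=\cO_M\cap K\langle b\rangle$. Hence $\iota$ is an $\LO(K)$-embedding $K\langle a\rangle\to M$ sending $a$ to $b$, as desired.

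\emph{Main obstacle.} The substantive input is Lemma~\ref{lem:immediateext2} (and with it the power-boundedness hypothesis); the remainder is assembly. The one point requiring care is the compatibility of $\iota$ with $\cO$, which hinges entirely on recognizing that an immediate extension must carry the convex-hull valuation. Everything else is routine manipulation of the pc-inequality and a standard o-minimal fact about types over models being determined by cuts.
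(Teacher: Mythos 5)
Your proof is correct in outline and takes a genuinely different route from the paper's at the one nontrivial point, namely showing that the order-embedding $\iota$ respects $\cO$. The paper instead invokes Fact~\ref{fact:Lflat}: for an $\cL(K)$-definable $F$ with $F(a)\neq 0$ it finds an $\cL(K)$-definable interval $I$ on which $F(y)\sim F(a)$, notes that $M$ is an \emph{elementary} $\TO$-extension of $K$ (since $K$, having a proper immediate extension, is nontrivially valued), and concludes $F(b)\sim F(a)$, whence $F(a)\in\cO_{K\langle a\rangle}\Leftrightarrow F(b)\in\cO_M$. Your argument --- both $K\langle a\rangle$ and $K\langle b\rangle$ are immediate over $K$ by Lemma~\ref{lem:immediateext2}, an immediate extension is forced to carry the convex-hull valuation, and an order-isomorphism over $K$ matches up convex hulls --- avoids Fact~\ref{fact:Lflat} and the elementarity of $M$ over $K$ altogether, and is arguably more transparent. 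The existence half (saturation/compactness plus the remark preceding Lemma~\ref{lem:immediateext2}) is essentially the paper's, just spelled out in more detail.

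One justification is off as written, though the claim it supports is true. You assert that for $y\in\cO_L$ with $L$ immediate over $K$, the fact that $vy\in\Gamma$ gives $y\asymp z$ for some nonzero $z\in\cO$, ``hence'' $|y|$ is bounded above by an element of $\cO$. That implication uses only the value-group half of immediacy and is false at that level of generality: an element realizing the cut $\cO^\downarrow$ (lying in the ring $\cO_2$ of Fact~\ref{fact:tworings}) satisfies $y\asymp 1$ yet exceeds every element of $\cO$, and by the Wilkie inequality such an extension has $\Gamma_L=\Gamma$. What you actually need is the residue-field half: since $\res L=\res K$, there is $u\in\cO$ with $y-u\prec 1$, so $|y|<|u|+1\in\cO$. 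With that one-line repair the convex-hull claim, and hence the whole argument, goes through.
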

\begin{proof}
Let $K\langle a \rangle$ be a simple $\TO$-extension of $K$ with $a_\rho\leadsto a$ (such an extension exists by compactness). Lemma~\ref{lem:immediateext2} gives that $K\langle a \rangle$ is an immediate extension of $K$. Let $b$ be an element of a $\TO$-extension $M$ of $K$ with $a_\rho \leadsto b$. We claim that $a$ and $b$ realize the same cut in $K$. Let $y \in K$ and take $a_\rho$ with $a-a_\rho \prec a-y$ and $b-a_\rho \prec b-y$. Then 
\[
a-y\ \sim\ (a-y)-(a-a_\rho)\ =\ a_\rho-y\ =\ (b-y)-(b-a_\rho)\ \sim\ b-y,
\]
so $a < y$ if and only if $b<y$. This gives us a unique $\cL(K)$-embedding $\iota\colon K\langle a \rangle\to M$ sending $a$ to $b$. To get that $\iota$ is an $\LO(K)$-embedding, let $F\colon K\to K$ be $\cL(K)$-definable. We need to show that $F(a) \in \cO_{K\langle a \rangle}$ if and only if $F(b) \in \cO_M$. We assume that $F(a) \neq 0$, and we will show that $F(a) \sim F(b)$. Using Fact~\ref{fact:Lflat}, take an interval $I \subseteq K$ with $a \in I^{K\langle a \rangle}$ such that $F(y) \sim F(a)$ for all $y \in I^{K\langle a \rangle}$. Since $K$ has a proper immediate extension, it is not trivially valued, so $M$ is an \emph{elementary} $\TO$-extension of $K$. Thus, $F(b) \sim F(y) \sim F(a)$, since $b \in I^M$.
\end{proof}

\section{$H_T$-asymptotic fields}\label{sec:HTasymp}
In this section, we introduce the class of $H_T$-asymptotic fields. These fields are o-minimal analogs of the $H$-asymptotic fields from~\cite[Chapter 9]{ADH17}. We will collect a few facts from~\cite{ADH17} for later use, and then we will discuss the immediate extensions of $H_T$-asymptotic fields. As we will see in Section~\ref{sec:HT}, all (pre)-$H_T$-fields are $H_T$-asymptotic. Before proceeding, let us fix some notation. Set $\LdO\coloneqq \cL\cup\{\cO,\der\}$, and let $\TdO$ be the $\LdO$-theory which extends $\TO$ by axioms stating that $\der$ is a $T$-derivation. We also make the following standing assumption:

\begin{assumption}\label{ass:TdO}
For the remainder of this article, let $K = (K,\cO,\der) \models \TdO$. We will continue to use the notation introduced in Subsection~\ref{subsec:Tderiv} and Section~\ref{sec:Tconvex}. In particular, we write $\smallo$ for the maximal ideal of $\cO$, $\Gamma$ for the value group of $(K,\cO)$, and $C$ for the constant field of $(K,\der)$.
\end{assumption}

\begin{definition}
$K$ is an \textbf{$H_T$-asymptotic field} if for all $g \in K$ with $g\succ 1$, we have
\begin{enumerate}
\item[(HA1)] $g^\dagger > 0$,
\item[(HA2)] $g^\dagger \succ f'$ for all $f \in \smallo$, and 
\item[(HA3)] $g^\dagger \succeq f'$ for all $f \in \cO^\times$.
\end{enumerate}
\end{definition}

\noindent
The definition above differs slightly from the definition of an $H$-asymptotic field given in~\cite{ADH17}, though we claim that every $H_T$-asymptotic field is $H$-asymptotic. Indeed, (HA2) and (HA3) along with~\cite[Proposition 9.1.3]{ADH17} imply that every $H_T$-asymptotic field $K$ is \emph{asymptotic}, that is, $f\prec g\Longleftrightarrow f' \prec g'$ for all $f,g \in K^\times$ with $f,g\prec 1$. To see that each $H_T$-asymptotic field $K$ is \emph{$H$-asymptotic} in the sense of~\cite{ADH17}, let $f,g \in K^\times$ with $f\prec g \prec 1$. We need to show that $f^\dagger \succeq g^\dagger$. Applying condition (HA1) to $g\inv$ and $g/f$, we have
\[
g^\dagger\ =\ -(g\inv)^\dagger\ < \ 0,\qquad g^\dagger - f^\dagger\ =\ (g/f)^\dagger \ >\ 0,
\]
so $f^\dagger <g^\dagger < 0$. In particular, $f^\dagger \succeq g^\dagger$, as desired. Conversely, if $K$ is asymptotic, then $K$ satisfies (HA2) and (HA3) by~\cite[Proposition 9.1.3]{ADH17}. However, the $H$-asymptotic fields in~\cite{ADH17} are not necessarily ordered, and even convexly ordered $H$-asymptotic fields need not satisfy (HA1). 

\medskip\noindent
For the remainder of this section, we assume that $K$ is an $H_T$-asymptotic field. Note that if $\cO \neq K$, then the derivation on $K$ is nontrivial by (HA1). Indeed, (HA1) ensures that the constant field of any $H_T$-asymptotic field is contained in the valuation ring. 

\begin{fact}[\cite{ADH17}, Corollary 9.1.4]\label{fact:asymsim}
Let $f,g \in K$ with $g \not\asymp 1$. If $f \prec g$, then $f'\prec g'$. If $f \preceq g$, then $f \sim g \Longleftrightarrow f' \sim g'$.
\end{fact}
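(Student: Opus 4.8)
The plan is to observe first that no genuinely new work is needed: the discussion preceding the statement shows that every $H_T$-asymptotic field, in particular $K$, is $H$-asymptotic in the sense of~\cite{ADH17}, and the two assertions are then exactly~\cite[Corollary 9.1.4]{ADH17}. For a reader who prefers a self-contained argument, I would derive the statement directly from the defining property of an asymptotic field --- for nonzero $f,g\prec 1$ one has $f\prec g\iff f'\prec g'$, hence also $f\preceq g\iff f'\preceq g'$ and $f\asymp g\iff f'\asymp g'$ (a nonzero element $\prec 1$ is not a constant, so its derivative is nonzero) --- together with (HA1)--(HA3) and the identity $g^\dagger=g'/g$.

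Here is how that direct argument would be organized, by cases on $g$ and $f$. If $g\prec 1$, then every $f$ with $f\preceq g$ is itself $\prec 1$, so both claims follow at once from the asymptotic relation (for the $\sim$-statement, write $f=g+(f-g)$ and compare valuations of $f-g$ with $vg$). If $g\succ 1$ and $f\preceq 1$, then $f\prec g$ automatically, and by (HA2) or (HA3) we get $f'\preceq g^\dagger\prec g'$, so $f'\prec g'$; here $f\not\asymp g$ and $f'\not\sim g'$, so the $\sim$-statement is vacuous. If $g\succ 1$ and $f\succ 1$, pass to $g\inv,f\inv\prec 1$: from $(g\inv)'=-g'/g^2$ and $(f\inv)'=-f'/f^2$ one reads off that $f\preceq g\iff g\inv\preceq f\inv\iff vg'-2vg\geq vf'-2vf$, and since $f\preceq g$ forces $vf\geq vg$ this yields $vf'\geq vg'$, i.e.\ $f'\preceq g'$, with the strict version transferring identically. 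Finally, for $g\succ 1$ and $f\asymp g$ (so $f\succ 1$) set $h\coloneqq f-g$: if $f\sim g$ then $h\prec g$, and whether $h\preceq 1$ (use (HA2)/(HA3)) or $h\succ 1$ (use the previous case) one gets $h'\prec g'$, i.e.\ $f'\sim g'$; conversely, if $f\not\sim g$ then $vh\leq vf=vg$ forces $g\preceq h$ with $h\succ 1$, whence $g'\preceq h'$ by the previous case, so $vh'\leq vg'$ and $f'\not\sim g'$.

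I expect the only step with real content to be the sub-case $g\succ 1$, $f\succ 1$: inverting changes the valuations by $2vg$ and $2vf$, and one must invoke the hypothesis $vf\geq vg$ to absorb the discrepancy $2(vf-vg)\geq 0$ and recover the clean comparison between $vf'$ and $vg'$. Every other step is a direct appeal to the asymptotic relation, to (HA1)--(HA3), or to valuation bookkeeping, so no obstacle beyond that bookkeeping is anticipated.
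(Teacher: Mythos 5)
Your primary route --- note that $K$ is $H$-asymptotic in the sense of~\cite{ADH17} (which the paper establishes in the discussion just before the statement) and then invoke~\cite[Corollary 9.1.4]{ADH17} --- is exactly what the paper does: the statement is a cited Fact with no proof supplied, so on that route there is nothing further to verify.

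Your optional self-contained argument, however, has a genuine gap in the sub-case $f,g\succ 1$. From the asymptotic relation applied to $f\inv,g\inv\prec 1$ you correctly obtain $vg'-2vg\geq vf'-2vf$, i.e.\ $vg'-vf'\geq 2(vg-vf)$; but $f\preceq g$ gives $vf\geq vg$, so the right-hand side is $\leq 0$, and you have only produced a \emph{nonpositive lower bound} for $vg'-vf'$. That says nothing about whether $vg'-vf'\leq 0$, which is the inequality $vf'\geq vg'$ you need. As a pure matter of inequalities the step is false: $vf=-1$, $vg=-2$, $vf'=0$, $vg'=5$ satisfies both hypotheses and violates the conclusion. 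So the restriction of the asymptotic relation to $\smallo$ cannot carry this case by itself; the missing input is (HA2), equivalently $\Psi<(\Gamma^>)'$. A correct argument: for $0\neq f\prec g$ with $g\succ 1$, put $h\coloneqq f/g\in\smallo$, so $f'=h'g+hg'$; here $hg'\prec g'$ since $h\prec 1$, and $h'g\prec g^\dagger g=g'$ because $h'\prec g^\dagger$ by (HA2); hence $f'\prec g'$. (Equivalently, one needs that $\gamma\mapsto\gamma'$ is strictly increasing on all of $\Gamma^{\neq}$, which is~\cite[Lemma 6.5.4]{ADH17} and uses the full asymptotic-couple axioms, not just monotonicity of $\gamma\mapsto\gamma'$ on $\Gamma^>$.) Your sub-case $f\asymp g\succ 1$ of the $\sim$-statement appeals to the broken case and so inherits the gap, though it is repairable once the above fix is in place (the branch $h\asymp g$ in your converse direction also needs (HA3) to conclude $h'\succeq g'$). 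The cases $g\prec 1$ and $g\succ 1\succeq f$ are fine as you wrote them.
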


\noindent 
Let $f \in K^\times$ with $f \not\asymp 1$. As $K$ is asymptotic, the values $v(f')$ and $v(f^\dagger)$ only depend on $vf$, so for $\gamma =vf$, we set
\[
\gamma^\dagger\ \coloneqq \ v(f^\dagger),\qquad \gamma' \ \coloneqq \ v(f')\ =\ \gamma+ \gamma^\dagger.
\]
This gives us a map
\[
\psi\colon \Gamma^{\neq}\to \Gamma,\qquad \psi(\gamma) \ \coloneqq \ \gamma^\dagger.
\]
Following Rosenlicht~\cite{Ro81}, we call the pair $(\Gamma,\psi)$ the \textbf{asymptotic couple of $K$}. We have the following important subsets of $\Gamma$:
\begin{align*}
(\Gamma^<)'\ &\coloneqq \ \{\gamma': \gamma \in \Gamma^<\},& (\Gamma^>)'\ &\coloneqq \ \{\gamma': \gamma \in \Gamma^>\},\\
(\Gamma^{\neq})'\ &\coloneqq \ (\Gamma^<)' \cup (\Gamma^>)',& \Psi\ &\coloneqq \ \psi(\Gamma^{\neq}) \ =\ \{\gamma^\dagger: \gamma \in \Gamma^{\neq}\}.
\end{align*}
It is always the case that $(\Gamma^<)' < (\Gamma^>)'$ and that $\Psi< (\Gamma^>)'$. If there is $\beta \in \Gamma$ with $\Psi <\beta< (\Gamma^>)'$, then we call $\beta$ a \textbf{gap in $K$}. There is at most one such $\beta$, and if $\Psi$ has a largest element, then there is no such $\beta$. If $K$ has trivial valuation, then the four subsets above are empty and $0$ is a gap in $K$. We say that \textbf{$K$ is grounded} if $\Psi$ has a largest element, and we say that \textbf{$K$ is ungrounded} otherwise. Finally, we say that \textbf{$K$ has asymptotic integration} if $\Gamma = (\Gamma^{\neq})'$. If $\beta$ is a gap in $K$ or if $\beta = \max \Psi$, then $\Gamma\setminus (\Gamma^{\neq})' = \{\beta\}$. We have the following trichotomy for the structure of $H_T$-asymptotic fields:

\begin{fact}[\cite{ADH17}, Corollary 9.2.16]\label{fact:trich}
Exactly one of the following holds:
\begin{enumerate}
\item $K$ has asymptotic integration;
\item $K$ has a gap;
\item $K$ is grounded.
\end{enumerate}
\end{fact}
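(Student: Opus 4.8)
The plan is to prove the trichotomy (Fact~\ref{fact:trich}) directly from the definitions, using only basic facts about the asymptotic couple $(\Gamma,\psi)$. First I would dispose of the trivially valued case: if $\Gamma = \{0\}$, then $\Psi = \varnothing$ has no largest element so $K$ is ungrounded, and $\Gamma \setminus (\Gamma^{\neq})' = \{0\}$ is a singleton, so $0$ is a gap and case (2) holds. Moreover asymptotic integration fails (as $(\Gamma^{\neq})' = \varnothing \neq \Gamma$), and grounded fails, so exactly one case holds. So assume from now on that $\Gamma \neq \{0\}$.

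Next I would establish the mutual exclusivity of the three cases. If $K$ has asymptotic integration then $\Gamma = (\Gamma^{\neq})'$, so there is no $\beta$ with $\Psi < \beta < (\Gamma^{>})'$ (such a $\beta$ would lie outside $(\Gamma^{\neq})'$ since $(\Gamma^{<})' < \beta$ and $\beta < (\Gamma^{>})'$), ruling out (2); and if $\Psi$ had a maximum $\beta = \max\Psi$ then $\beta \in \Psi \subseteq \Gamma$ but I would argue $\beta \notin (\Gamma^{\neq})'$ using the standard fact that $\psi(\gamma) < \gamma'$ for $\gamma \in \Gamma^{>}$ and $\psi(\gamma) \leq (\Gamma^{>})'$-bounds — contradicting asymptotic integration, so (3) is ruled out. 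A gap $\beta$ and $\max\Psi$ cannot coexist since a gap satisfies $\Psi < \beta$ strictly while $\max\Psi \in \Psi$. This gives pairwise incompatibility.

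The substantive part is showing that at least one case holds, i.e., if $K$ has neither a gap nor is grounded, then $\Gamma = (\Gamma^{\neq})'$. Here I would use the key structural inequalities, all of which are consequences of the $H_T$-asymptotic axioms (HA1)--(HA3) via Fact~\ref{fact:asymsim}: the sets $(\Gamma^{<})'$ and $(\Gamma^{>})'$ are convex, $(\Gamma^{<})' < \Psi < (\Gamma^{>})'$, and $(\Gamma^{<})'$ is the downward closure of $\Psi$ in a suitable sense, while $(\Gamma^{>})'$ is determined as $\{\psi(\gamma) + \gamma : \gamma \in \Gamma^{>}\}$. The argument: take any $\gamma \in \Gamma$; I want to write $\gamma = \delta'$ for some $\delta \in \Gamma^{\neq}$. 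Split on where $\gamma$ sits relative to $\Psi$. If $\gamma \in (\Gamma^{<})' \cup (\Gamma^{>})'$ we are done. Otherwise either $\Psi < \gamma$ (below $(\Gamma^{>})'$, else we'd be in $(\Gamma^{>})'$ by convexity and the fact that $(\Gamma^{>})'$ has no least element when ungrounded) — but not being a gap means no such $\gamma$ strictly between $\Psi$ and $(\Gamma^{>})'$ exists, so $\gamma \in (\Gamma^{>})'$, contradiction; or $\gamma$ lies in the region $\{\delta \in \Gamma : \delta \leq \psi(\varepsilon) \text{ for arbitrarily small } \varepsilon\}$, i.e. below all of $\Psi$ cofinally, in which case one shows $\gamma \in (\Gamma^{<})'$ using that $\delta \mapsto \delta'$ maps $\Gamma^{<}$ onto the appropriate final segment. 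The cleanest route is to cite the relevant lemmas from \cite[Chapter 9]{ADH17} on asymptotic couples, since $K$ is $H$-asymptotic in the sense of \cite{ADH17} as established earlier in this section; indeed the statement is quoted verbatim as \cite[Corollary 9.2.16]{ADH17}, so the honest approach is to verify the hypotheses of that corollary (namely that $K$ is $H$-asymptotic) and invoke it.

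The main obstacle is the ``at least one holds'' direction, specifically controlling the cut occupied by an arbitrary $\gamma \in \Gamma$ relative to $(\Gamma^{<})'$, $\Psi$, and $(\Gamma^{>})'$ and showing these three sets together with a possible gap exhaust $\Gamma$ — this requires the finer structure theory of $H$-asymptotic couples (convexity of $(\Gamma^{>})'$, the behavior of $\psi$ under the group operation, and the characterization of $(\Gamma^{\neq})'$). Since all of this is developed in \cite[Chapter 9]{ADH17} and $K$ has been shown to be $H$-asymptotic, I expect the proof to reduce to: (i) recall $K$ is $H$-asymptotic, (ii) apply \cite[Corollary 9.2.16]{ADH17}, (iii) separately note the trivially valued case fits into the ``gap'' alternative.
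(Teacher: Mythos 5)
Your proposal is correct and matches the paper's treatment: this statement is imported as a Fact with no proof given, the only work being the verification (done earlier in Section~\ref{sec:HTasymp}) that every $H_T$-asymptotic field is $H$-asymptotic in the sense of \cite{ADH17}, after which \cite[Corollary 9.2.16]{ADH17} applies directly, with the trivially valued case falling under the gap alternative exactly as you note. Your supplementary direct-proof sketch is not needed (and its exclusivity argument for ``gap vs.\ grounded'' would require the fact that $\Gamma\setminus(\Gamma^{\neq})'$ has at most one element, which you do not fully establish), but since you ultimately defer to the citation this does not affect correctness.
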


\noindent
It follows that $K$ is ungrounded if and only if $\Psi \subseteq (\Gamma^<)'$. This allows us to define the \textbf{contraction map} $\chi\colon \Gamma^{\neq} \to \Gamma^<$ for ungrounded $K$ as follows: let $\gamma \in \Gamma^{\neq}$, so $\gamma^\dagger \in \Psi \subseteq (\Gamma^<)'$. Let $\chi(\gamma)$ be the unique element of $\Gamma^<$ with $\chi(\gamma)' = \gamma^\dagger$. We extend $\chi$ to a map $\Gamma \to \Gamma^{\leq}$ by setting $\chi(0)\coloneqq 0$. 

\subsection{Spherically complete immediate extensions of $H_T$-asymptotic fields}
In this subsection, we will use the results in~\cite{Ka21B} to prove the following theorem:

\begin{theorem}
\label{thm:asymp}
Suppose that $T$ is power bounded. Then $K$ has an immediate $H_T$-asymptotic field extension which is spherically complete.
\end{theorem}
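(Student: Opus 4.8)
The plan is to construct the desired extension by iterating the adjunction of pseudolimits of divergent pc-sequences, using the machinery already set up in the excerpt together with the main result of~\cite{Ka21B}. The key point is that Corollary~\ref{cor:immediateext4} lets us adjoin a pseudolimit of any divergent pc-sequence inside an immediate $\TO$-extension; what remains is (a) to upgrade this to an $\LdO$-extension, i.e.\ to extend the $T$-derivation, and (b) to verify that the $H_T$-asymptotic axioms (HA1)--(HA3) are preserved; then a maximality/cardinality argument finishes the job.

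First I would recall that an immediate extension $M$ of $K$ is spherically complete precisely when $K$ admits no proper immediate $\TO$-extension inside $M$, equivalently when every pc-sequence in $M$ has a pseudolimit in $M$; so it suffices to produce \emph{some} immediate $H_T$-asymptotic extension $K^*$ of $K$ that has no proper immediate $H_T$-asymptotic extension. Starting from $K$, if $K$ is not already spherically complete, pick a divergent pc-sequence $(a_\rho)$ in $K$. By Corollary~\ref{cor:immediateext4} there is a simple immediate $\TO$-extension $K\langle a\rangle$ with $a_\rho\leadsto a$. The derivation must then be extended: here I would invoke the main theorem of~\cite{Ka21B} on immediate extensions of $\TdO$-models (this is exactly the cited result behind Corollary~\ref{cor:asympHT} in the introduction), which guarantees that the $T$-derivation on $K$ extends to a $T$-derivation on $K\langle a\rangle$ making it a $\TdO$-extension, and moreover an $H_T$-asymptotic one — the verification of (HA1)--(HA3) for the extended structure is the content one imports from~\cite{Ka21B} rather than redoing by hand. (Alternatively one extends $\der$ via Fact~\ref{fact:transext} after replacing $a$ by a suitable element and then checks the asymptotic axioms directly using Fact~\ref{fact:asymsim} and Lemma~\ref{lem:shift}; but leaning on~\cite{Ka21B} is cleaner since that is precisely its purpose.)

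Then I would set up a transfinite iteration: build an increasing chain $(K_\lambda)$ of immediate $H_T$-asymptotic extensions of $K$, where at successor stages we adjoin the pseudolimit of a divergent pc-sequence as above, and at limit stages we take unions (a directed union of immediate $H_T$-asymptotic extensions is again one, since each of (HA1)--(HA3) is a statement about finitely many elements, and immediacy passes to unions because $v$ and the residue map are determined locally). The chain must stabilize by a cardinality bound: each proper step strictly enlarges the underlying set, but an immediate extension of $K$ cannot have cardinality exceeding, say, $2^{|K|}$ — more carefully, one bounds $\rkL$ or uses that an immediate extension embeds into a fixed spherical completion of the underlying valued field — so the chain reaches a stage $K^*$ admitting no proper immediate $H_T$-asymptotic extension. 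If $K^*$ had a proper immediate $\TO$-extension it would, by Lemma~\ref{lem:immediateext2}, contain a divergent pc-sequence over $K^*$, which we could use to extend further, a contradiction; hence $K^*$ is spherically complete.

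The main obstacle is step (a): ensuring that the $T$-derivation extends to each simple immediate extension \emph{and} that the extension stays $H_T$-asymptotic. The delicate issue is that Fact~\ref{fact:transext} extends $\der$ freely once a transcendence basis is chosen, but for an immediate extension $K\langle a\rangle$ with $a$ a pseudolimit of $(a_\rho)$, the value $\der a$ is not free — it is constrained by the requirement that $(K\langle a\rangle,\der)$ remain asymptotic (indeed the ``right'' choice of $\der a$ is forced by the pc-sequence, via $\der a \leadsto$ the pc-sequence $(\der a_\rho)$ or a related one). This is exactly the technical heart of~\cite{Ka21B}, so in the write-up I would state precisely which corollary of that paper delivers ``every divergent pc-sequence in an $H_T$-asymptotic field has a pseudolimit in an immediate $H_T$-asymptotic extension'' and cite it, rather than reproving it; modulo that citation, the remaining chain-and-cardinality argument is routine and can be compressed to a few lines.
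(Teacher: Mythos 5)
Your overall strategy (transfinitely adjoin pseudolimits of divergent pc-sequences, then stop by a cardinality bound) is a legitimate alternative to the paper's argument, and the chain-and-maximality part is routine as you say. But the step you yourself flag as ``the technical heart'' is a genuine gap, and the citation you propose to fill it with does not exist in the form you need. The result you want to import --- ``every divergent pc-sequence in an $H_T$-asymptotic field has a pseudolimit in an immediate $H_T$-asymptotic extension'' --- is not a statement of~\cite{Ka21B}: that paper knows nothing about $H_T$-asymptotic fields. Its Theorem~6.3 is about \emph{strict} immediate extensions of $T$-convex $T$-differential fields with \emph{continuous} derivation, and (under a hypothesis on the set $S(\der)$) it already produces a spherically complete immediate strict extension in one shot, so no iteration is needed at all. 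What cannot be outsourced, and what your proposal omits entirely, is the bridge between that framework and the present one: (i) the derivation of an $H_T$-asymptotic field is continuous for the valuation topology (Lemma~\ref{lem:ctsderiv}, using (HA2)); (ii) the sets $\Gamma(\der)$ and $S(\der)$ can be computed, and in particular $S(\der)=\{0\}$, which is the hypothesis needed to invoke~\cite[Theorem~6.3]{Ka21B} (Lemma~\ref{lem:GammaandPsi}); and (iii) an immediate $\TdO$-extension of an $H_T$-asymptotic field is strict if and only if it is $H_T$-asymptotic (Corollary~\ref{cor:asympiffstrict}, which rests on the test in Lemma~\ref{lem:testforsmall}). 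Point (iii) is exactly the verification of (HA1)--(HA3) that you hoped to avoid ``redoing by hand.''

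Your parenthetical fallback --- extend $\der$ via Fact~\ref{fact:transext} and check the axioms directly using Fact~\ref{fact:asymsim} and Lemma~\ref{lem:shift} --- would also not go through as stated: Fact~\ref{fact:transext} lets you assign $a'$ freely, but for an arbitrary divergent pc-sequence there is no obvious choice of $a'$ (one needs $a'$ to pseudoconverge along with $F(a)'$ for every $\cL(K)$-definable $F$, which is the content of Proposition~\ref{prop:HT-simimm} and only applies when the sequence satisfies $a_\rho'-G(a_\rho)\leadsto 0$ for a suitable $G$). Identifying the right $G$ for a general divergent pc-sequence is precisely the difficulty that the ``strict extension'' machinery of~\cite{Ka21B} is built to handle, so the direct route is not a minor variant. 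In short: either cite~\cite[Theorem~6.3]{Ka21B} correctly, in which case the iteration disappears but you must supply (i)--(iii) above, or carry out the iteration, in which case you must prove the simple-extension step yourself rather than attribute it to a nonexistent corollary.
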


\noindent
The analog of Theorem~\ref{thm:asymp} for $H$-asymptotic fields was established in~\cite{ADH18B}. Theorem~\ref{thm:asymp} is a fairly immediate consequence of~\cite[Theorem 6.3]{Ka21B}, but we will provide some additional detail. Let us begin with a test for whether an immediate extension of $K$ is $H_T$-asymptotic:

\begin{lemma}\label{lem:testforsmall}
Let $M$ be an immediate $\TdO$-extension of $K$. If $f' \prec g^\dagger$ for all $f \in \smallo_M$ and all $g \in K$ with $g \succ 1$, then $M$ is an $H_T$-asymptotic field.
\end{lemma}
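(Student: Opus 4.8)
The goal is to verify axioms (HA1)--(HA3) for $M$. The hypothesis directly handles the hardest part of (HA2)--(HA3), namely the interaction between $\der$ and the maximal ideal, but via elements of $K$ rather than general elements of $M$; the plan is to reduce everything back to $K$ using that $M$ is an immediate extension. First I would record the key reduction: since $M$ is immediate over $K$, for every $h \in M^\times$ there is $b \in K^\times$ with $h \sim b$, hence $h \asymp b$ and $h$, $b$ have the same sign. In particular every $g \in M$ with $g \succ 1$ satisfies $g \asymp b$ for some $b \in K$ with $b \succ 1$, and Fact~\ref{fact:asymsim} (applied in $M$, which is legitimate since $M \models \TdO$ and the valuation is nontrivial once $K$ has a proper immediate extension) gives $g^\dagger = b^\dagger + (g/b)^\dagger$ with $g/b \asymp 1$; I would want a clean statement that $g^\dagger \asymp b^\dagger$ or at least $g^\dagger \succeq$ everything we need, which should follow because $(g/b)^\dagger = (g/b)'/(g/b)$ with $g/b \asymp 1$ and $g/b - u \prec 1$ for a suitable $u \in \cO^\times$, so $g/b - u \in \smallo_M$ and the hypothesis controls its derivative.

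Next I would check (HA1): given $g \in M$ with $g \succ 1$, pick $b \in K^\times$ with $g \sim b$, so $b \succ 1$ and $b > 0$ iff $g > 0$; write $g = b(1 + \epsilon)$ with $\epsilon \prec 1$, i.e. $\epsilon \in \smallo_M$. Then $g^\dagger = b^\dagger + \epsilon'/(1+\epsilon)$. By (HA1) for $K$ we have $b^\dagger > 0$, and in fact $b^\dagger \succeq f'$ for all $f \in \cO^\times$ fails to be what we need directly — instead I want $b^\dagger \succ \epsilon'$, which is exactly the hypothesis of the lemma applied with $f = \epsilon \in \smallo_M$ and $g = b$ (or $g = b^{-1}$ if $b \prec 1$, but here $b \succ 1$). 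Hence $\epsilon'/(1+\epsilon) \prec b^\dagger$, so $g^\dagger \sim b^\dagger > 0$, giving (HA1). The same computation simultaneously shows $g^\dagger \asymp b^\dagger$.

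Now (HA2) and (HA3) for $M$: let $g \in M$ with $g \succ 1$ and let $f \in \smallo_M$ (resp. $f \in \cO_M^\times$). We must show $g^\dagger \succ f'$ (resp. $g^\dagger \succeq f'$). For (HA2) this is immediate from the hypothesis once we know $g \succ 1$ and $f \in \smallo_M$ — that is literally the assumption. For (HA3), take $f \in \cO_M^\times$; by immediacy choose $u \in \cO^\times \subseteq K^\times$ with $f \sim u$, so $f - u \in \smallo_M$ and $f' = u' + (f-u)'$. By (HA3) for $K$, $g'^\dagger_{?}$ — more precisely, by the previous paragraph $g^\dagger \asymp b^\dagger$ for some $b \in K$ with $b \succ 1$, and (HA3) in $K$ gives $b^\dagger \succeq u'$, while the hypothesis gives $b^\dagger \succ (f-u)'$; combining, $g^\dagger \asymp b^\dagger \succeq u' + (f-u)' = f'$ (using the ultrametric inequality $v(u' + (f-u)') \geq \min$ of the two). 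This yields (HA3).

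\textbf{Main obstacle.} The one subtlety I expect to need care is passing from the hypothesis, which is phrased with $f \in \smallo_M$ but $g$ ranging only over $K$, to the conclusions, where $g$ ranges over $M$; the bridge is the identity $g^\dagger = b^\dagger + (g/b)^\dagger$ together with the fact that the correction term $(g/b)^\dagger$ is dominated by $b^\dagger$ precisely because $g/b - u \in \smallo_M$ for suitable $u$ — so the hypothesis is being used twice, once for $f$ and once implicitly for the unit-adjustment of $g$. I would also note explicitly that $M$ is an elementary $\TO$-extension of $K$ (immediate extensions are elementary, and $K$ is nontrivially valued since it admits a proper immediate extension), so that Fact~\ref{fact:asymsim} and the basic asymptotic identities are available in $M$; everything else is routine valuation arithmetic with $\preceq$, $\asymp$, $\sim$.
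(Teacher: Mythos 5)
Your proof is correct and follows essentially the same route as the paper: reduce a general $g \succ 1$ in $M$ to some $b \in K$ with $g \sim b$, use the hypothesis on $\epsilon = g/b - 1 \in \smallo_M$ to get $g^\dagger \sim b^\dagger$, and then verify (HA1)--(HA3) by splitting elements of $\cO_M$ into a $K$-part (handled by $K$ being $H_T$-asymptotic) and a $\smallo_M$-part (handled by the hypothesis). The appeal to Fact~\ref{fact:asymsim} in your first paragraph is unnecessary (the identity $g^\dagger = b^\dagger + (g/b)^\dagger$ is just multiplicativity of the logarithmic derivative, and you cannot yet invoke facts about asymptotic fields for $M$), but your actual computation never uses it, so nothing is lost.
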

\begin{proof}
Let $h \in M$ with $h \succ 1$ and take $g \in K$ with $h \sim g$. For $\epsilon \in \smallo_M$ with $h = g(1+\epsilon)$, we have
\[
h^\dagger - g^\dagger \ =\ (h/g)^\dagger \ =\ (1+\epsilon)^\dagger\ =\ \frac{\epsilon'}{1+\epsilon} \ \sim \ \epsilon'.
\] 
By assumption, $\epsilon' \prec g^\dagger$, so $h^\dagger \sim g^\dagger$. As $K$ is $H_T$-asymptotic, we have $g^\dagger >0$, so $h^\dagger > 0$ as well. Additionally, we have $h^\dagger \sim g^\dagger \succ f'$ for all $f \in \smallo_M$, by assumption. Now let $f \in \cO_M^\times$. Take $u \in K$ and $\delta \in \smallo_M$ with $f = u+\delta$, so $f' = u'+\delta'$. We have $\delta' \prec g^\dagger$ by assumption and $u' \preceq g^\dagger$, since $K$ is $H_T$-asymptotic and $u,g \in K$, so $f' \preceq g^\dagger \sim h^\dagger$.
\end{proof}

\noindent
The main objects of study in~\cite{Ka21B} are \emph{$T$-convex $T$-differential fields}: models of $\TdO$ in which the $T$-derivation which is continuous with respect to the valuation topology. Thus, the next step for us is to verify that this continuity assumption holds.

\begin{lemma}\label{lem:ctsderiv}
The derivation on $K$ is continuous with respect to the valuation topology.
\end{lemma}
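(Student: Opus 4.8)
The plan is to show that for every $\gamma \in \Gamma$, there is $\delta \in \Gamma$ such that $v(a) > \delta$ implies $v(a') > \gamma$; this is precisely continuity of $\der$ at $0$ with respect to the valuation topology, and since $\der$ is additive, continuity at $0$ gives continuity everywhere. First I would dispose of the trivially valued case, where $\cO = K$ and the valuation topology is discrete, so continuity is automatic. So assume $K$ is nontrivially valued, hence $\Gamma \neq \{0\}$. Fix $\gamma \in \Gamma$; I want to find $\delta$ with $v(a) > \delta \Rightarrow v(a') > \gamma$.

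The key tool is the $H_T$-asymptotic axioms together with Fact~\ref{fact:asymsim}. Pick any $g \in K$ with $g \succ 1$ (possible since $K$ is nontrivially valued), and let $\beta \coloneqq v(g^\dagger) \in \Psi$. By (HA2), $g^\dagger \succ f'$ for all $f \in \smallo$, i.e. $v(f') > \beta$ for all $f \prec 1$. Now given $\gamma$, I would like to choose $g$ more carefully so that $\beta \geq \gamma$, or else iterate. A cleaner route: given $\gamma$, it suffices to find a single element $h \in K^\times$ with $h \prec 1$ and $v(h') \geq \gamma$ — because then for any $a$ with $a \prec h$ we have, by Fact~\ref{fact:asymsim} (applied with $g = h \not\asymp 1$ and $f = a$), that $a \prec h \Rightarrow a' \prec h'$, hence $v(a') > v(h') \geq \gamma$; so $\delta \coloneqq v(h)$ works. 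To produce such an $h$: if $\gamma \leq v(f')$ for some particular small $f$ we're done, and in general the set $\{v(f') : f \in \smallo^{\neq}\} = (\Gamma^{\ne})' \cap$ (values above $0$)$\,\supseteq (\Gamma^{<})'$ — more carefully, $v(f')$ for $f \prec 1$ ranges over $(\Gamma^{<})' \cup \{0'\text{-ish}\}$, and I should check this set is cofinal in $\Gamma$. This cofinality is where the main work lies: I expect to use that for $f \in \smallo$, $v(f') = v(f) + \psi(v(f))$ with $\psi(v(f)) > 0$ when... hmm, actually $\psi$ values need not be positive. Let me instead argue directly: for $f \prec 1$ we have $f' \prec f$ is false in general, but $f \preceq f^{\dagger - 1}$... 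The honest approach is: given $\gamma$, pick $f \in K$ with $0 < v(f)$ and $v(f)$ large; then $v(f') = v(f) + \psi(v(f))$, and since $\psi(\Gamma^{<}) = \Psi$ is bounded above by any element of $(\Gamma^{>})'$, for $v(f)$ sufficiently large (larger than $\gamma$ minus a fixed lower bound on $\psi$ near that region) we get $v(f') > \gamma$. The subtlety is that $\psi$ can take arbitrarily negative values near $0^+$, so I must take $v(f)$ bounded away from $0$, say $v(f) \geq \beta$ for a fixed $\beta \in \Psi$, and then use that $\psi$ restricted to $\Gamma^{\geq \beta}$-type regions is better behaved — concretely $\psi$ is constant or increasing appropriately by standard asymptotic-couple facts, so $v(f') \geq v(f) + \text{const} \to \infty$.

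So the skeleton is: (1) handle trivial valuation; (2) reduce to continuity at $0$ via additivity; (3) fix $\gamma \in \Gamma$ and, using the asymptotic couple $(\Gamma,\psi)$ and the bound $\Psi < (\Gamma^{>})'$, locate a half-line $\Gamma^{> \delta}$ on which $v(f') > \gamma$; (4) conclude via Fact~\ref{fact:asymsim} that $v(a) > \delta$ forces $v(a') > \gamma$. I would phrase step (3) to avoid delicate $\psi$-analysis by the following trick: by Fact~\ref{fact:trich}, either $K$ has asymptotic integration, a gap, or is grounded. In the grounded case $\Psi$ has a largest element $\psi(\beta_0) = \max\Psi$, and then $\Gamma \setminus (\Gamma^{\ne})' = \{\max\Psi + \text{something}\}$ is a single point, so $(\Gamma^{\ne})'$ is cofinal; in the gap or asymptotic-integration cases similarly $(\Gamma^{\ne})'$ is cofinal in $\Gamma$ (it misses at most one point). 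Since for $f \prec 1$ with $v(f)$ near a cofinal set we can realize cofinally many values of $v(f')$, given $\gamma$ we may pick such $f$ with $v(f') > \gamma$ and $f \prec 1$, then set $h \coloneqq f$.

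The main obstacle I anticipate is precisely the behavior of $\psi$ near $0$: naively one wants ``$a$ very small $\Rightarrow a'$ very small,'' but $v(a') = v(a) + \psi(v(a))$ and $\psi$ diverges to $-\infty$ as its argument approaches $0$ from within $\Gamma^{>}$, so a given tolerance $\gamma$ cannot be met just by making $v(a)$ moderately large — one genuinely needs $v(a)$ large enough that $v(a) + \psi(v(a)) > \gamma$, which requires knowing $\psi(v(a))$ is eventually bounded below relative to its argument (equivalently, $\gamma \mapsto \gamma + \psi(\gamma)$ is eventually increasing and cofinal). This is exactly the content of $K$ having ``asymptotic integration or a gap or grounded'' combined with $\Psi < (\Gamma^{>})'$, so I expect the clean proof to invoke Fact~\ref{fact:trich} and the cofinality of $(\Gamma^{\ne})'$ in $\Gamma$ rather than manipulating $\psi$ by hand, and then to transfer from values of $f'$ for specific $f$ to all sufficiently small $a$ via the monotonicity in Fact~\ref{fact:asymsim}.
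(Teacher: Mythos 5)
Your argument is correct, but it takes a different route from the paper. The paper's proof is two lines: it quotes \cite[Lemma 4.4.7]{ADH17}, which says that continuity of the derivation is \emph{equivalent} to the existence of a single $\phi \in K^\times$ with $\der\smallo \subseteq \phi\smallo$, and then observes that (HA2) hands you such a $\phi$, namely $\phi = g^\dagger$ for any $g \succ 1$. All of the $\epsilon$--$\delta$ work you do by hand is thus outsourced to that cited lemma. Your version proves continuity directly from the definition: reduce to continuity at $0$ by additivity, show $(\Gamma^>)'$ is cofinal in $\Gamma$ (via Fact~\ref{fact:trich} and the fact that $\Gamma \setminus (\Gamma^{\neq})'$ has at most one element, together with $(\Gamma^<)' < (\Gamma^>)'$ so that the lower half $(\Gamma^<)'$ is bounded above), pick $h \prec 1$ with $v(h') > \gamma$, and transfer to all $a \prec h$ via Fact~\ref{fact:asymsim}. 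That skeleton is sound and self-contained, which is what it buys you over the citation; the cost is that you are essentially re-proving the content of the ADH lemma in the $H$-asymptotic case. Two small corrections to your exploratory middle paragraph, neither of which affects the final argument: for $H$-asymptotic couples $\psi$ is \emph{decreasing} on $\Gamma^>$, so $\psi(\gamma)$ is largest (close to $\sup\Psi$) as $\gamma \to 0^+$ and can only become very negative for large $\gamma$ --- the opposite of what you wrote; and for $f \in \smallo^{\neq}$ the value $v(f') = (vf)'$ lies in $(\Gamma^>)'$, not $(\Gamma^<)'$. The genuine point you correctly isolate is that one cannot conclude ``$v(a)$ large $\Rightarrow$ $v(a')$ large'' without knowing $\gamma \mapsto \gamma'$ is cofinal, and your appeal to the trichotomy settles exactly that.
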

\begin{proof}
If $\cO = K$, then the valuation topology on $K$ is the discrete topology, and the derivation on $K$ is trivially continuous, so we may assume that $\cO \neq K$. Take $g \in K$ with $g\succ 1$. Then $g^\dagger \succ f'$ for all $f \in \smallo$ by (HA2), so $\der \smallo \subseteq g^\dagger \smallo$. By~\cite[Lemma 4.4.7]{ADH17}, continuity of the derivation is equivalent to the existence of $\phi \in K^\times$ with $\der\smallo \subseteq \phi \smallo$, so $\der$ is indeed continuous. 
\end{proof}

\noindent
Central in~\cite{Ka21B} are the following subsets of $\Gamma$, which were first introduced in~\cite{ADH18B}:
\[
\Gamma(\der)\ \coloneqq \ \big\{v \phi :\phi \in K^\times \text{ and } \der\smallo\subseteq \phi\smallo\big\},\qquad S(\der)\ \coloneqq \ \big\{\gamma\in \Gamma:\Gamma(\der)+\gamma = \Gamma(\der)\}.
\]
Using that $K$ is $H_T$-asymptotic, these subsets can be described explicitly:

\begin{lemma}\label{lem:GammaandPsi}
$\Gamma(\der) = \Gamma \setminus (\Gamma^>)'$ and $S(\der) = \{0\}$.
\end{lemma}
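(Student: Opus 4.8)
The plan is to compute both sets directly from the definitions, using the $H_T$-asymptotic axioms to control the valuation of derivatives. Recall that $\Gamma(\der)$ consists of all $v\phi$ with $\der\smallo\subseteq \phi\smallo$. If $\cO = K$ then $\smallo = \{0\}$, so $\Gamma(\der) = \Gamma = \{0\}$ and $(\Gamma^>)' = \0$, so both claimed equalities are immediate; hence I may assume $K$ is nontrivially valued.

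First I would show $\Gamma(\der) \subseteq \Gamma\setminus(\Gamma^>)'$. Suppose $v\phi \in \Gamma(\der)$, and fix $g \in K$ with $g \succ 1$. By (HA2) we have $g^\dagger \succ f'$ for all $f \in \smallo$, i.e. $\der\smallo \subseteq g^\dagger\smallo$; more to the point, for the ``$\supseteq$'' direction I will want a lower bound. The key observation is that for $f \in \smallo^{\neq}$ with $f \succ$ (something), $v(f') = (vf)'$, and as $f$ ranges over $\smallo$ the values $v(f')$ range over a cofinal-in-$(\Gamma^>)'$-from-below portion; more precisely, $\{v(f') : f \in \smallo^{\neq}, f \not\asymp 1\}$ is exactly $(\Gamma^>)'$ intersected with the relevant cone. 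The condition $\der\smallo\subseteq\phi\smallo$ says $v(f') > v\phi$ for all $f\in\smallo^{\neq}$, i.e. $v\phi < (\Gamma^>)'$ (using that $(\Gamma^>)' = \{(vf)' : f\in\smallo^{\neq}\}$ when $K$ is nontrivially valued, together with the fact that $v\phi$ must also be below $v(\epsilon')$ for $\epsilon\asymp 1$, $\epsilon$ non-constant units, which by (HA3) gives nothing stronger). Conversely, if $\gamma \in \Gamma$ with $\gamma < (\Gamma^>)'$, I need $\phi$ with $v\phi = \gamma$ and $\der\smallo\subseteq\phi\smallo$: take any $\phi$ with $v\phi=\gamma$; then for $f\in\smallo^{\neq}$, $v(f') \in (\Gamma^>)'$ (if $f\not\asymp 1$) and $v(f')=v(f)+v(f^\dagger) > \gamma$ since $v(f) > 0 > $ ... — here I need to be careful, the cleanest route is: $v(f') \in (\Gamma^>)'$ for $f \prec 1$ non-constant, hence $v(f')>\gamma$ by choice of $\gamma$; and for $f\in\smallo$ a constant, $f'=0\in\phi\smallo$ trivially; and $\smallo$ has no units, so these cases exhaust $\smallo$. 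Thus $\der\smallo\subseteq\phi\smallo$, giving $\gamma\in\Gamma(\der)$. This establishes $\Gamma(\der) = \Gamma\setminus(\Gamma^>)'$.

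For $S(\der) = \{0\}$: clearly $0 \in S(\der)$. Conversely, if $\gamma\in S(\der)$ then $\Gamma(\der)+\gamma = \Gamma(\der)$, so $\Gamma\setminus(\Gamma^>)'$ is invariant under translation by $\gamma$; since $\Gamma(\der) = \Gamma\setminus(\Gamma^>)'$ is a proper nonempty downward-closed-ish subset (it is a cut: $(\Gamma^>)'$ is an up-set because $\psi$ is ``decreasing'' in the appropriate sense and $(\Gamma^>)'$ is closed upward by the standard asymptotic-couple facts, cf.\ \cite[Chapter 9]{ADH17}), the only group element fixing it is $0$. Concretely: if $\gamma > 0$ and $\delta$ is any element with $\delta < (\Gamma^>)'$ but $\delta+\gamma$ close to the boundary, translating pushes $\delta+\gamma$ into $(\Gamma^>)'$; if $\gamma < 0$, translate the other way. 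Either way $\Gamma(\der)+\gamma\neq\Gamma(\der)$, a contradiction.

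The main obstacle is pinning down the exact description $(\Gamma^>)' = \{v(f') : f\in\smallo^{\neq}, f\not\asymp 1\}$ and confirming that no element of $\smallo$ contributes a derivative-value that is $\leq\gamma$ for $\gamma<(\Gamma^>)'$ — essentially verifying that the set $\Gamma(\der)$ really has $(\Gamma^>)'$ (and not, say, $(\Gamma^{\neq})'$ or $\Psi$) as its complement. This requires carefully separating the contributions of $f\prec 1$, $f\asymp 1$ (non-constant units, handled by (HA3)), and constants, and using that $\cO^\times$ elements and $\smallo$ elements behave differently. The asymptotic-couple bookkeeping — that $(\Gamma^{<})' < \Psi < (\Gamma^{>})'$ and that $\der\smallo$ is governed precisely by the $(\Gamma^>)'$ side — is exactly what (HA1)–(HA3) were designed to give, so once those inequalities are invoked the computation is routine.
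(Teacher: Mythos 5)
Your computation of $\Gamma(\der)$ is essentially the paper's: both arguments reduce to the facts that $\{v(f'):f\in\smallo^{\neq}\}=(\Gamma^>)'$ and that every element of $\Gamma\setminus(\Gamma^>)'$ lies below $(\Gamma^>)'$. Your worry about units $\epsilon\asymp 1$ in $\smallo$ is a non-issue ($\smallo$ is the maximal ideal, so it contains no units), but that only adds clutter, not error.

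The proof of $S(\der)=\{0\}$, however, has a genuine gap. You assert that the only group element fixing a proper nonempty cut under translation is $0$, and your ``concretely'' paragraph just restates this. That claim is false in non-archimedean ordered abelian groups: in $\Lambda\times_{\mathrm{lex}}\Lambda$, the downward closed set $\{(a,b):a\leq 0\}$ is invariant under translation by $(0,1)$. In general the stabilizer of a cut is a convex subgroup that may well be nontrivial --- indeed $S(\der)$ is \emph{defined} as this stabilizer, and the entire content of the second assertion is that this convex subgroup is trivial. There is no a priori reason why, for $\gamma>0$, one can find $\delta\in\Gamma(\der)$ with $\delta+\gamma\in(\Gamma^>)'$; this is exactly what must be exhibited. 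The paper does so with a specific witness: since $S(\der)$ is a convex subgroup, it suffices to rule out $\gamma\in\Gamma^>$, and for such $\gamma$ one has $\gamma^\dagger\in\Psi\subseteq\Gamma\setminus(\Gamma^>)'=\Gamma(\der)$ while $\gamma^\dagger+\gamma=\gamma'\in(\Gamma^>)'$, so $\Gamma(\der)+\gamma\neq\Gamma(\der)$. Your argument never produces such a witness, so the second half of the lemma is not proved.
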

\begin{proof}
For $\phi \in K^\times$, we have 
\[
v\phi \not\in (\Gamma^>)'\ \Longleftrightarrow\ v\phi < (\Gamma^>)'\ \Longleftrightarrow\ f' \prec \phi\text{ for all }f \in \smallo\ \Longleftrightarrow\ v\phi \in \Gamma(\der),
\]
proving the first equality. For the second, note that $S(\der)$ is a convex subgroup of $\Gamma$, so it suffices to show that $S(\der) \cap \Gamma^> = \emptyset$. Let $\gamma \in \Gamma^>$, so 
\[
\gamma^\dagger \ \in\ \Psi\ \subseteq\ \Gamma \setminus (\Gamma^>)',\qquad \gamma^\dagger + \gamma\ =\ \gamma' \ \in\ (\Gamma^>)'.
\]
Since $\Gamma(\der) = \Gamma \setminus (\Gamma^>)'$, this tells us that $\gamma^\dagger \in \Gamma(\der)$, but $\gamma^\dagger+\gamma\not\in \Gamma(\der)$, so $\gamma \not\in S(\der)$.
\end{proof}

\noindent
An immediate $\TdO$-extension $M$ of $K$ is said to be a \textbf{strict extension of $K$} if $\der_M \smallo_M \subseteq \phi \smallo_M$ for each $\phi \in K^\times$ with $v\phi \in \Gamma(\der)$. We note that this is \emph{not} the definition of a strict extension given in~\cite{Ka21B} or in~\cite{ADH18B}; however, it is equivalent under the assumption that $M$ is an immediate $\TO$-extension of $K$ by~\cite[Lemma 1.5]{ADH18B}. If $M$ is an immediate strict extension of $K$, then $\der_M$ is automatically continuous.

\begin{corollary}\label{cor:asympiffstrict}
Let $M$ be an immediate $\TdO$-extension of $K$. Then $M$ is a strict extension of $K$ if and only if $M$ is an $H_T$-asymptotic field.
\end{corollary}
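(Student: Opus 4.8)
The plan is to prove both implications by relating the "strict extension" condition to the hypothesis of Lemma~\ref{lem:testforsmall}, using the explicit description of $\Gamma(\der)$ from Lemma~\ref{lem:GammaandPsi}. First I would spell out what strictness means here: by Lemma~\ref{lem:GammaandPsi} we have $\Gamma(\der) = \Gamma \setminus (\Gamma^>)'$, so $M$ is a strict extension of $K$ precisely when $\der_M \smallo_M \subseteq \phi\smallo_M$ for every $\phi \in K^\times$ with $v\phi < (\Gamma^>)'$; equivalently (taking $\phi$ with $v\phi$ cofinal in $\Gamma\setminus(\Gamma^>)'$ among those realized in $K$), this says $f' \prec \phi$ for all $f\in\smallo_M$ and all such $\phi$. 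On the other hand, the elements $g^\dagger$ for $g\in K$ with $g\succ 1$ are exactly the elements of $\Psi$, and $\Psi \subseteq \Gamma\setminus(\Gamma^>)'$, so "$f' \prec g^\dagger$ for all $f\in\smallo_M$, $g\in K^{\succ 1}$" is the special case of strictness where $\phi$ is restricted to have valuation in $\Psi$.

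For the direction ($M$ strict $\Rightarrow$ $M$ is $H_T$-asymptotic): if $M$ is strict, then in particular $f' \prec \phi$ whenever $v\phi \in \Psi$, i.e. $f' \prec g^\dagger$ for all $f\in\smallo_M$ and all $g\in K$ with $g\succ 1$. This is exactly the hypothesis of Lemma~\ref{lem:testforsmall}, which then yields that $M$ is an $H_T$-asymptotic field. For the converse ($M$ is $H_T$-asymptotic $\Rightarrow$ $M$ strict): suppose $M$ is $H_T$-asymptotic and let $\phi \in K^\times$ with $v\phi \in \Gamma(\der) = \Gamma\setminus(\Gamma^>)'$, so $v\phi < (\Gamma^>)'$. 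I must show $\der_M\smallo_M \subseteq \phi\smallo_M$, i.e. $f' \prec \phi$ for every $f\in\smallo_M$. Since $M$ is itself $H_T$-asymptotic and $f\in\smallo_M$, axiom (HA2) applied in $M$ gives $f' \prec h^\dagger$ for any $h\in M$ with $h\succ 1$; more directly, since $M$ is asymptotic, $vf' \in (\Gamma_M^{>0})'$ when $f\neq 0$ — but I need to compare with $v\phi \in \Gamma$. Here the point is that $\Gamma_M = \Gamma$ (immediate extension), so $vf' \in (\Gamma^{>0})'$... wait, $f$ may lie in $\smallo_M$ with $vf\notin\Gamma^{>0}$ if $vf$ could be negative — but $f\in\smallo_M$ forces $vf>0$, hence $vf\in\Gamma^{>0}$ (using $\Gamma_M=\Gamma$), so $vf' = (vf)' \in (\Gamma^>)'$, and therefore $vf' > v\phi$ since $v\phi < (\Gamma^>)'$. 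Thus $f' \prec \phi$, as required, and $M$ is a strict extension.

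The main subtlety to handle carefully is the back-and-forth between the "abstract" definition of strictness (quantifying over all $\phi\in K^\times$ with $v\phi\in\Gamma(\der)$) and the "concrete" condition in Lemma~\ref{lem:testforsmall} (quantifying over $g^\dagger$ with $g\in K^{\succ 1}$): one must check that $\{v\phi : v\phi \in \Gamma(\der)\} = \Gamma\setminus(\Gamma^>)'$ has the same "cofinal behavior" as $\Psi$ for the purpose of the estimate $f'\prec\phi$. Since $\Psi$ is cofinal in $\Gamma\setminus(\Gamma^>)'$ precisely when $K$ is not grounded, while in the grounded case $\max\Psi$ is the largest element of $\Gamma\setminus(\Gamma^>)'$ anyway, in both cases "$f'\prec g^\dagger$ for all $g\in K^{\succ 1}$" is equivalent to "$f'\prec\phi$ for all $\phi$ with $v\phi\in\Gamma(\der)$". (If $K$ is trivially valued the statement is vacuous on both sides.) I would phrase the proof so as to invoke Lemma~\ref{lem:testforsmall} for one direction and the explicit computation $vf'=(vf)'\in(\Gamma^>)'$ together with Lemma~\ref{lem:GammaandPsi} for the other, making the cofinality remark only where it is genuinely needed. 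I do not expect any deep obstacle here; it is essentially an unwinding of definitions once Lemmas~\ref{lem:testforsmall} and~\ref{lem:GammaandPsi} are in hand.
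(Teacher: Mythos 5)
Your proposal is correct and follows essentially the same route as the paper: one direction via $\Psi\subseteq\Gamma(\der)$ and Lemma~\ref{lem:testforsmall}, the other via Lemma~\ref{lem:GammaandPsi} and the observation that $v(f')\in(\Gamma^>)'$ for $f\in\smallo_M^{\neq}$. The only cosmetic difference is that the paper derives the latter by picking $g\in\smallo$ with $f\sim g$ and applying Fact~\ref{fact:asymsim}, whereas you compute directly in the (shared) asymptotic couple using $\Gamma_M=\Gamma$; your closing worry about cofinality of $\Psi$ in $\Gamma\setminus(\Gamma^>)'$ is not actually needed, since the forward direction uses only the containment $\Psi\subseteq\Gamma(\der)$.
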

\begin{proof}
Suppose that $M$ is a strict extension of $K$ and let $f \in \smallo_M$. As $\Psi \subseteq \Gamma(\der)$ by Lemma~\ref{lem:GammaandPsi}, we have $f'\prec \phi$ for all $\phi \in K^\times$ with $v\phi \in \Psi$. Thus, $M$ is an $H_T$-asymptotic field by Lemma~\ref{lem:testforsmall}. Conversely, suppose that $M$ is $H_T$-asymptotic, let $\phi \in K^\times$ with $v\phi \in \Gamma(\der)$, and let $f \in \smallo_M$. We need to show that $f' \prec \phi$. Take $g \in \smallo$ with $f \sim g$. Then $f' \sim g'\prec \phi$ by Fact~\ref{fact:asymsim}.
\end{proof}

\begin{proof}[Proof of Theorem~\ref{thm:asymp}]
Since $S(\der) = \{0\}$, there is an immediate strict $T$-convex $T$-differential field extension $M$ of $K$ which is spherically complete by~\cite[Theorem 6.3]{Ka21B} (this uses that $T$ is power bounded). By Corollary~\ref{cor:asympiffstrict}, $M$ is an $H_T$-asymptotic field. 
\end{proof}

\subsection{Simple immediate extensions of $H_T$-asymptotic fields}
In this subsection, we turn our focus to certain types of simple immediate extensions. We assume throughout this subsection that $T$ is power bounded.

\begin{proposition}\label{prop:HT-simimm}
Let $G\colon K\to K$ be an $\cL(K)$-definable function, let $(a_\rho)$ be a divergent pc-sequence in $K$, and suppose $a_\rho'-G(a_\rho) \leadsto 0$. Then $K$ has an immediate $H_T$-asymptotic field extension $K\langle a \rangle$ with $a_\rho\leadsto a$ and $a' = G(a)$. If $b$ is a pseudolimit of $(a_\rho)$ in an $H_T$-asymptotic field extension $M$ of $K$ with $b' = G(b)$, then there is a unique $\LdO(K)$-embedding $K\langle a \rangle \to M$ sending $a$ to $b$.
\end{proposition}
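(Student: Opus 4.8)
\textbf{Proof proposal for Proposition~\ref{prop:HT-simimm}.}

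The plan is to first build $K\langle a\rangle$ as a simple $\TO$-extension realizing the right cut, then equip it with a $T$-derivation via Fact~\ref{fact:transext}, then verify it is an immediate $H_T$-asymptotic extension, and finally prove the embedding property. For the construction: by compactness (as in Corollary~\ref{cor:immediateext4}) there is a simple $\TO$-extension $K\langle a\rangle$ of $K$ with $a_\rho\leadsto a$; by Lemma~\ref{lem:immediateext2} it is an immediate extension of $K$, so $a$ is transcendental over $K$ in the $\cL$-sense and $\{a\}$ is a $\dclL$-basis of $K\langle a\rangle$ over $K$. By Fact~\ref{fact:transext} there is a unique $T$-derivation $\der$ on $K\langle a\rangle$ extending that of $K$ with $a' = G(a)$, where here $G$ means $G^{K\langle a\rangle}$ acting on $a$. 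This makes $K\langle a\rangle$ a model of $\TdO$.

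Next I would check $K\langle a\rangle$ is $H_T$-asymptotic. The key tool is Lemma~\ref{lem:testforsmall}: it suffices to show $f' \prec g^\dagger$ for all $f\in\smallo_{K\langle a\rangle}$ and all $g\in K$ with $g\succ 1$. Since $K\langle a\rangle$ is an immediate extension, every such $f$ satisfies $f\sim c$ for some $c\in\smallo$, and by Fact~\ref{fact:asymsim} we would get $f'\sim c'\prec g^\dagger$ provided the derivation is ``asymptotic'' on $K\langle a\rangle$ — but asymptoticity is exactly part of what we are trying to establish, so this is circular. The clean route is instead: take $F\colon K\to K$ with $F(a)=f$, use Fact~\ref{fact:Lflat} to shrink to an $\cL(K)$-definable cell $D$ with $a\in D^{K\langle a\rangle}$ and $F(y)\sim F(a)$ for $y\in D^{K\langle a\rangle}$ (so $F(a)\prec 1$ forces $F\prec 1$ on $D$, hence $F$ maps into $\smallo$), then compute $f' = F^{[\der]}(a) + F'(a)\cdot a' = F^{[\der]}(a) + F'(a)G(a)$ using Fact~\ref{fact:basicTderivation2}. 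Now $F^{[\der]}$ is $\cL(K)$-definable and built from the data of $\der$ on $K$, so its values are controlled by $\Gamma(\der) = \Gamma\setminus(\Gamma^>)'$ from Lemma~\ref{lem:GammaandPsi}; the term $F'(a)G(a)$ I would bound using Proposition~\ref{prop:smallderivsim}, which gives $F'(a)\preceq a\inv F(a)$, combined with control on $v(G(a))$ coming from the hypothesis $a_\rho' - G(a_\rho)\leadsto 0$ and the fact that $a$ realizes $v(a-K)$ with no largest element. Chasing these valuations against $\Psi\subseteq\Gamma(\der)$ yields $f'\prec g^\dagger$. Equivalently — and perhaps more cleanly — one shows $M:=K\langle a\rangle$ is a \emph{strict} extension of $K$ in the sense preceding Corollary~\ref{cor:asympiffstrict} and invokes that corollary; strictness says $\der_M\smallo_M\subseteq\phi\smallo_M$ whenever $v\phi\in\Gamma(\der)$, which is again the estimate above.

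For the embedding property: let $b$ be a pseudolimit of $(a_\rho)$ in an $H_T$-asymptotic extension $M$ of $K$ with $b'=G(b)$. As in the proof of Corollary~\ref{cor:immediateext4}, $a$ and $b$ realize the same cut over $K$ (the pc-sequence argument: for $y\in K$, pick $\rho$ with $a-a_\rho\prec a-y$ and $b-a_\rho\prec b-y$, so $a-y\sim a_\rho-y\sim b-y$), giving a unique $\cL(K)$-embedding $\iota\colon K\langle a\rangle\to M$ with $\iota(a)=b$; Fact~\ref{fact:Lflat} upgrades it to an $\LO(K)$-embedding exactly as in Corollary~\ref{cor:immediateext4}. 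It remains to see $\iota$ commutes with $\der$: since $\{a\}$ is a $\dclL$-basis of $K\langle a\rangle$ over $K$ and $\iota$ is an $\cL(K)$-embedding, both $\der$ and $\iota\inv\circ\der_M\circ\iota$ are $T$-derivations on $K\langle a\rangle$ extending $\der|_K$ and sending $a\mapsto G(a)$ (using $b'=G(b)$ and that $G^{K\langle a\rangle}$, $G^M$ agree under $\iota$ by model completeness), so they coincide by the uniqueness clause of Fact~\ref{fact:transext}. Hence $\iota$ is an $\LdO(K)$-embedding, and it is unique since it is already unique as an $\cL(K)$-embedding.

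\textbf{Main obstacle.} The crux is the valuation estimate showing $K\langle a\rangle$ is strict / $H_T$-asymptotic, i.e.\ that $f'\prec g^\dagger$ for $f\in\smallo_{K\langle a\rangle}$. One must avoid circularity (not assuming asymptoticity of the new derivation), and the bookkeeping interlocking $F^{[\der]}(a)$, $F'(a)G(a)$, the hypothesis on $a_\rho'-G(a_\rho)$, Proposition~\ref{prop:smallderivsim}, and the explicit descriptions $\Gamma(\der)=\Gamma\setminus(\Gamma^>)'$, $S(\der)=\{0\}$ from Lemma~\ref{lem:GammaandPsi} is the technically delicate part; everything else follows the template already established for immediate extensions in Section~\ref{sec:Tconvex}.
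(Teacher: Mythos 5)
Your construction of $K\langle a\rangle$ and your proof of the embedding property are correct and follow the same route as the paper. The gap is in the step you yourself flag as the crux: the verification that $f'\prec g^\dagger$ for $f\in\smallo_{K\langle a\rangle}$. Your plan is to bound the two terms of $f' = F^{[\der]}(a)+F'(a)G(a)$ separately, using Proposition~\ref{prop:smallderivsim} for $F'(a)$ and ``control by $\Gamma(\der)$'' for $F^{[\der]}(a)$. Neither piece works. Proposition~\ref{prop:smallderivsim} requires $a\not\sim f$ for all $f\in K$, but here $K\langle a\rangle$ is an \emph{immediate} extension, so $a\sim f$ for some $f\in K$; the paper explicitly remarks right after that proposition that this hypothesis is necessary, with the counterexample $F(Y)=Y-f$, where $F'(a)=1\not\preceq a\inv F(a)$. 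And there is no a priori bound on $F^{[\der]}(a)$ in terms of $\Gamma(\der)$: $F^{[\der]}$ is just some $\cL(K)$-definable function whose value at $a$ can be large; indeed $F^{[\der]}(a)$ and $F'(a)G(a)$ can each be large with cancellation between them, so any attempt to bound them separately is doomed. Relatedly, your sketch never uses the hypothesis $a_\rho'-G(a_\rho)\leadsto 0$ in a concrete way, and without that hypothesis the statement is false.

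The paper's argument has a different structure: it is a proof by contradiction along the pc-sequence. Assuming $F(a)'\succeq g^\dagger$ (so in particular $F(a)\notin K$ and $F'(a)\neq 0$), set $H(Y)=F^{[\der]}(Y)+F'(Y)G(Y)$, so that $H(a)=F(a)'\succeq g^\dagger$, and use Fact~\ref{fact:Lflat} to shrink to an interval $I$ on which $F\sim F(a)\prec 1$, $H\sim H(a)$, and $F'\sim F'(a)$. For all sufficiently large $\rho$ the points $a_\rho$ lie in $I\cap K$, where $K$ is already known to be $H_T$-asymptotic, so $F(a_\rho)'\prec g^\dagger\preceq H(a_\rho)$; subtracting gives $H(a_\rho)\sim H(a_\rho)-F(a_\rho)' = F'(a_\rho)\bigl(G(a_\rho)-a_\rho'\bigr)$, hence $G(a_\rho)-a_\rho'\sim H(a)/F'(a)$ for all large $\rho$, contradicting $a_\rho'-G(a_\rho)\leadsto 0$. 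This is exactly where the hypothesis enters, and it controls the combination $F^{[\der]}+F'\cdot G$ rather than each summand. You would need to replace your valuation-chasing step with an argument of this kind.
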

\begin{proof} Let $a$ be a pseudolimit of $(a_\rho)$ in an immediate $\TO$-extension of $K$; such an extension exists by Corollary~\ref{cor:immediateext4}. Using Fact~\ref{fact:transext}, extend the derivation on $K$ to a $T$-derivation on $K\langle a\rangle$ with $a' = G(a)$. We claim that $K\langle a \rangle$ is $H_T$-asymptotic. By Lemma~\ref{lem:testforsmall}, it suffices to show that $f' \prec g^\dagger$ for all $f \in M$ and all $g \in K$ with $f \prec 1 \prec g$. Let $F\colon K \to K$ be an $\cL(K)$-definable function with $F(a) \prec 1$, let $g \in K$ with $g \succ 1$, and suppose towards contradiction that $F(a)' \succeq g^\dagger$. Then $F(a) \not\in K$, so $F'(a)\neq 0$. Take an open interval $I \subseteq K$ with $a \in I^{K\langle a \rangle}$ such that $F$ is $\cC^1$ on $I$, and let $H\colon I\to K$ be the $\cL(K)$-definable function $H(Y) = F^{[\der]}(Y)+F'(Y)G(Y)$ where $F^{[\der]}$ is as defined in Fact~\ref{fact:basicTderivation2}. Then
\[
H(a)\ =\ F^{[\der]}(a)+F'(a)G(a)\ =\ F(a)'\ \succeq\ g^\dagger. 
\]
Using Fact~\ref{fact:Lflat}, we may shrink $I$ to arrange that 
\[
F(y) \sim F(a) \prec 1,\qquad H(y) \sim H(a) \succeq g^\dagger,\qquad F'(y) \sim F'(a)
\]
for all $y \in I^{K\langle a \rangle}$. Let $\rho$ be a sufficiently large index, so $a_\rho \in I$. Since $F(a_\rho) \prec 1$ and $K$ is $H_T$-asymptotic, we have $F(a_\rho)' \prec g^\dagger \preceq H(a_\rho)$. Thus,
\[
H(a_\rho)\ \sim \ H(a_\rho) - F(a_\rho)' \ = \ \big(F^{[\der]}(a_\rho)+F'(a_\rho)G(a_\rho)\big)- \big(F^{[\der]}(a_\rho)+F'(a_\rho)a_\rho'\big) \ =\ F'(a_\rho)\big(G(a_\rho) - a_\rho'\big).
\]
Since $H(a_\rho) \sim H(a)$ and $F'(a_\rho) \sim F'(a) \neq 0$, we have
\[
G(a_\rho) - a_\rho'\ \sim\ \frac{H(a)}{F'(a)}. 
\]
Since this holds for all sufficiently large $\rho$, we have $G(a_\rho) - a_\rho' \sim G(a_\sigma) - a_\sigma'$ for $\sigma,\rho$ sufficiently large, contradicting our assumption that $a_\rho'-G(a_\rho) \leadsto 0$. Thus, $K \langle a \rangle$ is an $H_T$-asymptotic field, as claimed. The embedding property follows from Fact~\ref{fact:transext} and Corollary~\ref{cor:immediateext4}.
\end{proof}

\begin{corollary}
\label{cor:smallint0}
Let $s \in K$ with $vs \in (\Gamma^>)'$ and $s \not\in \der\smallo$, and suppose $v(s-\der\smallo)$ has no largest element. Then $K$ has an immediate $H_T$-asymptotic field extension $K\langle a \rangle$ with $a \prec 1$ and $a' = s$ such that for any $H_T$-asymptotic field extension $M$ of $K$ with $s \in \der\smallo_M$, there is a unique $\LdO(K)$-embedding $K\langle a \rangle \to M$.
\end{corollary}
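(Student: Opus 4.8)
The plan is to invoke Proposition~\ref{prop:HT-simimm} with the constant function $G\colon K\to K$, $G(Y)=s$. For this I must construct a divergent pc-sequence $(a_\rho)$ in $K$ with $a_\rho'-s\leadsto 0$; its pseudolimit $a$ will then satisfy $a'=G(a)=s$. To build it: since $0\in\der\smallo$, the set $v(s-\der\smallo)\subseteq\Gamma$ is nonempty, and by hypothesis it has no largest element, so its cofinality $\nu$ is an infinite limit ordinal. I would choose a strictly increasing cofinal sequence $(\gamma_\rho)_{\rho<\nu}$ in $v(s-\der\smallo)$ and, for each $\rho$, an $a_\rho\in\smallo$ with $v(s-a_\rho')=\gamma_\rho$. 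For $\sigma>\rho$ one has $a_\sigma'-a_\rho'=(s-a_\rho')-(s-a_\sigma')$, hence $v(a_\sigma'-a_\rho')=\gamma_\rho$; since $K$ is asymptotic and the differences $a_\sigma-a_\rho$ lie in $\smallo$ and are nonzero (their derivatives have valuation $\gamma_\rho$), the asymptotic equivalence $f\prec g\Leftrightarrow f'\prec g'$ for $f,g\prec 1$ gives $a_\tau-a_\sigma\prec a_\sigma-a_\rho$ whenever $\tau>\sigma>\rho$, so $(a_\rho)$ is a pc-sequence. The same estimates show that $(a_\rho'-s)$ is a pc-sequence with pseudolimit $0$.

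Next I would check $(a_\rho)$ is divergent. Suppose $a_\rho\leadsto y$ with $y\in K$. Then $y\prec 1$ (otherwise $y-a_\rho\asymp y$ for all large $\rho$, contradicting $y-a_\sigma\prec y-a_\rho$), so $y\in\smallo$ and $y'\in\der\smallo$; since $s\notin\der\smallo$ we get $y'\neq s$, so $v(s-y')\in v(s-\der\smallo)$ and, by cofinality, $v(s-a_\rho')>v(s-y')$ for all large $\rho$, whence $v(y'-a_\rho')=v(y'-s)$ is eventually constant. But $K$ asymptotic together with $y-a_\sigma\prec y-a_\rho\prec 1$ for $\sigma>\rho$ large forces $v(y'-a_\rho')$ strictly increasing — a contradiction. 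Now Proposition~\ref{prop:HT-simimm} yields an immediate $H_T$-asymptotic extension $K\langle a\rangle$ with $a_\rho\leadsto a$ and $a'=s$. Fixing a large index $\rho_1$, the relation $a-a_{\rho_2}\prec a-a_{\rho_1}$ for $\rho_2>\rho_1$ gives $v(a-a_{\rho_1})=v(a_{\rho_2}-a_{\rho_1})>0$, so $a=a_{\rho_1}+(a-a_{\rho_1})\in\smallo_{K\langle a\rangle}$; that is, $a\prec 1$.

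For the embedding statement, let $M$ be an $H_T$-asymptotic extension of $K$ with $s\in\der\smallo_M$ and pick $b\in\smallo_M$ with $b'=s$. Then $b\neq a_\rho$ for every $\rho$ (else $s\in\der\smallo$), and since $M$ is asymptotic, $(b-a_\sigma)'=s-a_\sigma'\prec s-a_\rho'=(b-a_\rho)'$ gives $b-a_\sigma\prec b-a_\rho$, so $a_\rho\leadsto b$; as $b'=G(b)$, Proposition~\ref{prop:HT-simimm} provides a unique $\LdO(K)$-embedding $K\langle a\rangle\to M$ sending $a$ to $b$. To conclude that this is the \emph{only} $\LdO(K)$-embedding $K\langle a\rangle\to M$, I would observe that any such embedding $\iota$ sends $a$ to an element of $\smallo_M$ with derivative $s$; but in any asymptotic field a nonzero $c\in\smallo$ satisfies $c^2\prec c$, hence $(c^2)'\prec c'$ and thus $c'\neq 0$, so $b$ is the unique element of $\smallo_M$ with derivative $s$. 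Hence $\iota(a)=b$, and $\iota$ is the embedding already produced. The main obstacle is the valuation bookkeeping showing $(a_\rho)$ is a genuine and divergent pc-sequence — precisely where the hypotheses $s\notin\der\smallo$ and ``$v(s-\der\smallo)$ has no largest element'' enter, through repeated use of the asymptotic equivalence.
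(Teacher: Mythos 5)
Your proposal is correct and follows essentially the same route as the paper: build a pc-sequence $(a_\rho)$ in $\smallo$ with $v(s-a_\rho')$ strictly increasing and cofinal in $v(s-\der\smallo)$, feed it to Proposition~\ref{prop:HT-simimm} with $G$ the constant function $s$, and use asymptoticity (Fact~\ref{fact:asymsim}) to see that any $b\in\smallo_M$ with $b'=s$ is also a pseudolimit. The only differences are cosmetic: you verify directly that $(a_\rho)$ is a divergent pc-sequence where the paper cites the proof of \cite[Lemma 10.2.4]{ADH17}, and for uniqueness you rederive the fact that a nonzero element of $\smallo_M$ has nonzero derivative, whereas the paper invokes $C_M^\times\subseteq\cO_M^\times$ --- these are equivalent.
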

\begin{proof}
Let $(a_\rho)$ be a well-indexed sequence in $\smallo$ such that $v(s-a_\rho')$ is strictly increasing as a function of $\rho$ and cofinal in $v(s-\der\smallo)$. The proof of~\cite[Lemma 10.2.4]{ADH17} gives that $(a_\rho)$ is a divergent pc-sequence in $K$. We apply Proposition~\ref{prop:HT-simimm} where $G$ is the constant function $s$ to get an immediate $H_T$-asymptotic field extension $K\langle a \rangle$ of $K$ with $a_\rho\leadsto a$ and $a' = s$. Let $M$ be an $H_T$-asymptotic field extension of $K$ and let $b\in \smallo_M$ with $b' = s$. Then for $\rho<\sigma$, we have
\[
(b-a_\rho)'\ =\ s-a_\rho'\ \sim\ (a_\sigma-a_\rho)'.
\]
Since $b-a_\rho,a_\sigma - a_\rho \prec 1$, Fact~\ref{fact:asymsim} gives us that $b-a_\rho \sim a_\sigma-a_\rho$, so $a_\rho\leadsto b$. Proposition~\ref{prop:HT-simimm} gives an $\LdO(K)$-embedding $\imath\colon K\langle a \rangle\to M$ sending $a$ to $b$. For uniqueness, let $\jmath\colon K\langle a\rangle \to M$ be an arbitrary $\LdO(K)$-embedding. Then $\jmath(a) - b \in C_M$ since $\jmath(a)' = s = b'$. Since $\jmath(a),\, b \prec 1$ and $C_M^\times \subseteq \cO_M^\times$, we must have $\jmath(a) = b$. This shows that $\jmath = \imath$.
\end{proof}

\begin{corollary}
\label{cor:bigint0}
Let $s \in K$ with $v(s-\der K)< (\Gamma^>)'$ and suppose $v(s-\der K)$ has no largest element. Then $K$ has an immediate $H_T$-asymptotic field extension $K\langle a \rangle$ with $a' = s$ such that for any $H_T$-asymptotic field extension $M$ of $K$ and $b \in M$ with $b' = s$, there is a unique $\LdO(K)$-embedding $K\langle a \rangle \to M$ sending $a$ to $b$.
\end{corollary}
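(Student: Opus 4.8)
The plan is to run the argument of Corollary~\ref{cor:smallint0} with $\der K$ in place of $\der\smallo$ and the roles of ``$\prec 1$'' and ``$\succ 1$'' interchanged. Since $v(s-\der K)$ has no largest element, I would first choose a well-indexed sequence $(a_\rho)$ in $K$ with $v(s-a_\rho')$ strictly increasing and cofinal in $v(s-\der K)$. As $v(s-\der K)<(\Gamma^>)'$ while $\Gamma\setminus(\Gamma^{\neq})'$ has at most one element (Fact~\ref{fact:trich}), discarding at most one term lets me arrange that $\gamma_\rho\coloneqq v(s-a_\rho')\in(\Gamma^<)'$ for all $\rho$; fix $g_\rho\in K$ with $g_\rho\succ 1$ and $v(g_\rho')=\gamma_\rho$. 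This is the crucial normalization: for $\rho<\sigma$ one has $(a_\sigma-a_\rho)'=(s-a_\rho')-(s-a_\sigma')$, hence $v\big((a_\sigma-a_\rho)'\big)=\gamma_\rho\in(\Gamma^<)'$, and this excludes $a_\sigma-a_\rho\prec 1$ (whose derivative has value in $(\Gamma^>)'$, disjoint from $(\Gamma^<)'$) as well as $a_\sigma-a_\rho\asymp 1$ (by (HA3) its derivative would be $\preceq g_\rho^\dagger$, yet $v(g_\rho^\dagger)>v(g_\rho')=\gamma_\rho$ as $g_\rho\succ 1$); so $a_\sigma-a_\rho\succ 1$. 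Granting this, one checks with Fact~\ref{fact:asymsim} and the identity $v(f')=(vf)'$ (for $f\not\asymp 1$) that $(a_\rho)$ is a divergent pc-sequence in $K$ with $a_\rho'-s\leadsto 0$, exactly as in the proof of Corollary~\ref{cor:smallint0}. (Note $K$ is nontrivially valued, since otherwise $v(s-\der K)$ would have a largest element.) Proposition~\ref{prop:HT-simimm}, applied with $G$ the constant function $s$, then produces an immediate $H_T$-asymptotic field extension $K\langle a\rangle$ of $K$ with $a_\rho\leadsto a$ and $a'=s$.

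For the embedding clause, let $M$ be an $H_T$-asymptotic field extension of $K$ and $b\in M$ with $b'=s$; by the embedding part of Proposition~\ref{prop:HT-simimm} it is enough to prove $a_\rho\leadsto b$. First $b\notin K$, since $b\in K$ would give $s=b'\in\der K$, contradicting that $v(s-\der K)$ has no largest element. Now $b-a_\rho$ has derivative $s-a_\rho'$ of value $\gamma_\rho\in(\Gamma^<)'\subseteq(\Gamma_M^<)'$; rerunning the dichotomy above inside $M$ (using $g_\rho\succ 1$ in $M$ and that $M$ is $H_T$-asymptotic) shows $b-a_\rho\succ 1$, so $b-a_\rho\not\asymp 1$. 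Since $(b-a_\rho)'-(a_\sigma-a_\rho)'=(b-a_\sigma)'=s-a_\sigma'$ has value $\gamma_\sigma>\gamma_\rho$, we get $(b-a_\rho)'\sim(a_\sigma-a_\rho)'$; as $b-a_\rho$ and $a_\sigma-a_\rho$ are both $\not\asymp 1$, Fact~\ref{fact:asymsim} forces $b-a_\rho\asymp a_\sigma-a_\rho$ and then $b-a_\rho\sim a_\sigma-a_\rho$. Hence $v(b-a_\sigma)=v\big((b-a_\rho)-(a_\sigma-a_\rho)\big)>v(b-a_\rho)$, i.e. $b-a_\sigma\prec b-a_\rho$, which is precisely $a_\rho\leadsto b$. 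Proposition~\ref{prop:HT-simimm} then supplies the unique $\LdO(K)$-embedding $K\langle a\rangle\to M$ sending $a$ to $b$.

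The genuinely delicate point is this control of archimedean classes. In Corollary~\ref{cor:smallint0} all elements in sight are $\prec 1$, so Fact~\ref{fact:asymsim} applies for free; here one must instead actively argue that $a_\sigma-a_\rho$ and $b-a_\rho$ are infinite, and that is exactly what the hypothesis $v(s-\der K)<(\Gamma^>)'$ buys — it makes the normalization $\gamma_\rho\in(\Gamma^<)'$ possible and powers the case analysis ruling out $\prec 1$ and (via (HA3)) $\asymp 1$. The only bookkeeping nuisance is the possible single exceptional index at which $\gamma_\rho$ would equal the gap of $K$ or $\max\Psi$; discarding that term is harmless, as pseudoconvergence depends only on the tail of $(a_\rho)$.
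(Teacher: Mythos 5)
Your proposal is correct and follows essentially the same route as the paper: choose $(a_\rho)$ with $v(s-a_\rho')$ strictly increasing and cofinal in $v(s-\der K)$, verify it is a divergent pc-sequence, apply Proposition~\ref{prop:HT-simimm} with $G\equiv s$, and for the embedding show $b-a_\rho\succ 1$ so that Fact~\ref{fact:asymsim} yields $a_\rho\leadsto b$. The only difference is cosmetic: where the paper normalizes $s-a_\rho'\prec s$ and defers the pc-sequence verification to the proof of \cite[Lemma 10.2.6]{ADH17}, you normalize $v(s-a_\rho')\in(\Gamma^<)'$ and carry out the (correct) case analysis on archimedean classes directly.
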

\begin{proof}
Let $(a_\rho)$ be a well-indexed sequence in $K$ such that $v(s-a_\rho')$ is strictly increasing as a function of $\rho$ and cofinal in $v(s-\der K)$ and such that $s- a_\rho'\prec s$ for each $\rho$. The proof of~\cite[Lemma 10.2.6]{ADH17} gives that $(a_\rho)$ is a divergent pc-sequence in $K$. We apply Proposition~\ref{prop:HT-simimm} where $G$ is the constant function $s$ to get an immediate $H_T$-asymptotic field extension $K\langle a \rangle$ of $K$ with $a_\rho\leadsto a$ and $a' = s$. Let $M$ be an $H_T$-asymptotic field extension of $K$ and let $b\in M$ with $b' = s$. For $\rho<\sigma$, we have
\[
(b-a_\rho)'\ =\ s-a_\rho'\ \sim\ (a_\sigma-a_\rho)',
\]
so $v(b-a_\rho)' \in (\Gamma^<)'$ and $b-a_\rho\succ 1$. Fact~\ref{fact:asymsim} gives $b-a_\rho \sim a_\sigma-a_\rho$, so $a_\rho\leadsto b$ and Proposition~\ref{prop:HT-simimm} gives an $\LdO(K)$-embedding $\imath \colon K\langle a \rangle\to M$ sending $a$ to $b$.
\end{proof}

\subsection{$\upl$-freeness and $\upo$-freeness}
In this subsection, let $K$ be an ungrounded $H_T$-asymptotic field with $\cO \neq K$. A \textbf{logarithmic sequence in $K$} is a well-indexed sequence $(\ell_\rho)$ in $K$ such that:
\begin{enumerate}
\item $\ell_\rho \succ \ell_\sigma\succ 1$ for all $\sigma>\rho$ and $(v\ell_\rho)$ is cofinal in $\Gamma^<$;
\item $v\ell_{\rho+1} = \chi(v\ell_\rho)$ for all $\rho$.
\end{enumerate}
Logarithmic sequences can be constructed by transfinite recursion. Note that if $M$ is an $H_T$-asymptotic field extension of $K$ with $\Gamma^<$ cofinal in $\Gamma^<_M$, then any logarithmic sequence in $K$ is a logarithmic sequence in $M$.

\medskip\noindent
A \textbf{$\upl$-sequence in $K$} is a sequence $(\upl_\rho)$ where $\upl_\rho = -\ell_\rho^{\dagger\dagger}$ for some logarithmic sequence $(\ell_\rho)$ in $K$. By~\cite[Proposition 11.5.3]{ADH17}, any two $\upl$-sequences in $K$ are equivalent as pc-sequences (they have the same pseudolimits in every extension of $K$). We say that $K$ is \textbf{$\upl$-free} if no $\upl$-sequence in $K$ has a pseudolimit in $K$. 

\medskip\noindent
An \textbf{$\upo$-sequence in $K$} is a sequence $(\upo_\rho)$ where $\upo_\rho =-(2\upl_\rho'+\upl_\rho^2)$ for some $\upl$-sequence $(\upl_\rho)$ in $K$. We say that $K$ is \textbf{$\upo$-free} if no $\upo$-sequence in $K$ has a pseudolimit in $K$. If $\upl_\rho \leadsto \upl \in K$, then the corresponding $\upo$-sequence $(\upo_\rho)$ has pseudolimit $-(2\upl'+\upl^2) \in K$, so $\upo$-freeness implies $\upl$-freeness. The property of $\upo$-freeness plays a much larger role than $\upl$-freeness in~\cite{ADH17}, but in this article, $\upl$-freeness is the more central concept. Even so, $\upo$-freeness makes an appearance in Corollary~\ref{cor:omegaconstruction} and Proposition~\ref{prop:upofreeext} below, with an eye towards future work. 

\begin{lemma}\label{lem:unionupl}
If $K$ is an increasing union of $\upl$-free $H_T$-asymptotic fields, then $K$ is $\upl$-free. If $K$ is an increasing union of $\upo$-free $H_T$-asymptotic fields, then $K$ is $\upo$-free.
\end{lemma}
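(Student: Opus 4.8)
The plan is to prove the two statements in tandem, since they have the same structure: in each case we are given that $K = \bigcup_{i} K_i$ is an increasing union of $H_T$-asymptotic subfields (each $\upl$-free, respectively $\upo$-free), and we want to rule out a pseudolimit in $K$ of a $\upl$-sequence (respectively $\upo$-sequence). First I would reduce to the case $\cO \neq K$: if $K$ is trivially valued then $\Gamma^<$ is empty, there are no logarithmic sequences, and the statement is vacuous. So assume $\cO \neq K$, and note that then $\cO_{K_i} \neq K_i$ for $i$ large (since $\Gamma$ is the increasing union of the $\Gamma_{K_i}$, and $\Gamma \neq \{0\}$), so we may pass to a cofinal subfamily and assume each $K_i$ is nontrivially valued. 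Also each $K_i$ is ungrounded and has $\Gamma^<_{K_i}$ cofinal in $\Gamma^<$: indeed $\Psi = \bigcup_i \Psi_{K_i}$, so if $K$ is ungrounded then for any $\gamma \in \Gamma^<$ we can find, in some $K_j$ with $j$ large enough, an element of $\Psi_{K_j}$ below $\psi(\gamma)$, which forces $K_j$ to be ungrounded and forces $\chi_{K_j}$-iterates to be cofinal below $\gamma$ — this is where I'd invoke the discussion of logarithmic sequences and the remark that a logarithmic sequence in $K_j$ remains logarithmic in any extension with cofinal $\Gamma^<$.

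Next, the key point: fix a logarithmic sequence $(\ell_\rho)$ built \emph{inside} some $K_{i_0}$ (possible once $\Gamma^<_{K_{i_0}}$ is cofinal in $\Gamma^<$ and $K_{i_0}$ is ungrounded, by transfinite recursion within $K_{i_0}$). Then $(\ell_\rho)$ is simultaneously a logarithmic sequence in every $K_i \supseteq K_{i_0}$ and in $K$, by the remark quoted above. The associated $\upl$-sequence $\upl_\rho = -\ell_\rho^{\dagger\dagger}$ then lies in $K_{i_0}$ and serves as a $\upl$-sequence for all the $K_i$ and for $K$. Now suppose toward a contradiction that some $\upl$-sequence in $K$ has a pseudolimit $a \in K$. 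By \cite[Proposition 11.5.3]{ADH17} (cited in the excerpt as the statement that any two $\upl$-sequences are equivalent as pc-sequences, hence share pseudolimits in every extension), our fixed sequence $(\upl_\rho)$ also pseudoconverges to $a$. But $a \in K = \bigcup_i K_i$, so $a \in K_i$ for some $i \geq i_0$. Then $(\upl_\rho)$ is a $\upl$-sequence in $K_i$ with a pseudolimit in $K_i$, contradicting $\upl$-freeness of $K_i$. The $\upo$-case is verbatim the same, using the $\upo$-sequence $\upo_\rho = -(2\upl_\rho' + \upl_\rho^2)$ attached to $(\upl_\rho)$ — it lies in $K_{i_0}$, is a $\upo$-sequence in each $K_i$ and in $K$, and any two $\upo$-sequences are equivalent as pc-sequences (same proposition), so a pseudolimit in $K$ lands in some $K_i$ and contradicts $\upo$-freeness there.

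The main obstacle I anticipate is the bookkeeping needed to justify that a single logarithmic sequence can be chosen coherently across the whole tower — i.e. that $\Gamma^<_{K_i}$ is eventually cofinal in $\Gamma^<$ and that the $K_i$ are eventually ungrounded — together with the clean invocation that equivalence of $\upl$-sequences (resp.\ $\upo$-sequences) as pc-sequences is an absolute statement preserved under the embeddings $K_i \hookrightarrow K$. Everything else is formal: the argument is really just "a pseudolimit of a canonical pc-sequence in a directed union must already appear at some finite stage." No power-boundedness hypothesis is needed, since we only use the structure theory of $H_T$-asymptotic fields and the cited facts from \cite{ADH17}.
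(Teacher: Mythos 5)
Your argument hinges on the claim that some (indeed, eventually every) $K_i$ has $\Gamma^<_{K_i}$ cofinal in $\Gamma^<$, so that a single logarithmic sequence chosen inside one $K_{i_0}$ is simultaneously a logarithmic sequence for all later $K_i$ and for $K$. You flagged this as the main obstacle, and it is in fact a genuine gap: the claim does not follow from the hypotheses and is false in general. Nothing prevents the sets $\Psi_{K_i}$ from increasing cofinally along the union, each stage contributing elements of $\Psi$ lying strictly above all of $\Psi_{K_j}$ for earlier $j$; in that case no $\Psi_{K_i}$ is cofinal in $\Psi$, hence (by the argument used to prove part (5) of Lemma~\ref{lem:bigexpint}) no $\Gamma^<_{K_i}$ is cofinal in $\Gamma^<$, and no logarithmic sequence of $K$ lives in any single stage. (For a concrete picture, take $K_n$ generated over $\R$ by transfinitely iterated logarithms $\ell_\alpha$, $\alpha<\omega(n+1)$, inside a field of logarithmic hyperseries: each $K_n$ is $\upl$-free, but $v\ell_{\omega n}$ lies above every element of $\Gamma^<_{K_{n-1}}$.) Your justification of the cofinality claim — producing elements of $\Psi_{K_j}$ \emph{below} $\psi(\gamma)$ — does not address what would be needed, namely elements of $\Psi_{K_j}$ above any prescribed element of $\Psi$. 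Without the claim, the final step collapses: a pseudolimit $a\in K_i$ of a $\upl$-sequence of $K$ need not be a pseudolimit of any $\upl$-sequence of $K_i$, since the two kinds of sequences need not be equivalent as pc-sequences when $\Psi_{K_i}$ fails to be cofinal in $\Psi$, so $\upl$-freeness of $K_i$ yields no contradiction.

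The paper sidesteps all of this by quoting the characterizations in \cite[Corollaries 11.6.1 and 11.7.8]{ADH17}: $K$ is $\upl$-free if and only if for every $s\in K$ there is $g\in K$ with $g\succ1$ and $s-g^{\dagger\dagger}\succeq g^\dagger$, and similarly for $\upo$-freeness. These are $\forall\exists$ conditions evaluated identically in $K_i$ and in $K$ (the valuation and derivation of $K$ restrict to those of $K_i$), so they pass to increasing unions immediately. Your route does work in the special case where some $\Gamma^<_{K_{i_0}}$ is cofinal in $\Gamma^<$ — which happens to cover every application of the lemma in this paper, since the towers there use only immediate extensions and extensions with cofinal $\Psi$ — but it does not prove the lemma as stated. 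The fix is to replace the "canonical pc-sequence" argument with a union-stable characterization of $\upl$- and $\upo$-freeness such as the one above.
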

\begin{proof}
By~\cite[Corollary 11.6.1]{ADH17}, $K$ is $\upl$-free if and only if for all $s \in K$, there is $g \in K$ with $g \succ 1$ and $s-g^{\dagger\dagger} \succeq g^\dagger$. By~\cite[Corollary 11.7.8]{ADH17}, $K$ is $\upo$-free if and only if for all $f \in K$, there is $g \in K$ with $g \succ 1$ and 
\[
f-2(g^{\dagger\dagger})'+(g^{\dagger\dagger})^2 \succeq (g^\dagger)^2.
\]
Both of these equivalent conditions are preserved by increasing unions.
\end{proof}

\noindent
For the remainder of this section, let $(\ell_\rho)$ be a logarithmic sequence in $K$ with corresponding $\upl$-sequence $(\upl_\rho)$. Nothing here will depend on the specific choice of $(\ell_\rho)$. The following facts about $\upl$-sequences and $\upl$-freeness are from~\cite{ADH17}:

\begin{fact}[\cite{ADH17}, Lemma 11.5.2 and Corollary 11.6.1]\label{fact:upl->asymp}
If $K$ is $\upl$-free, then $K$ has asymptotic integration. If $K$ is $\upl$-free and $\upl$ is a pseudolimit of $(\upl_\rho)$ in an $H_T$-asymptotic field extension of $K$, then $v(\upl-K) = \Psi^{\downarrow}$. 
\end{fact}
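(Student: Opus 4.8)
The plan is to reduce both assertions to the corresponding results for $H$-asymptotic fields in~\cite{ADH17}, the point being that neither assertion involves any of the $\cL$-structure beyond the underlying ordered valued differential field. As observed just after the definition of $H_T$-asymptotic field, the ordered valued differential field underlying $K$ is $H$-asymptotic in the sense of~\cite[Chapter~9]{ADH17}, and likewise any $H_T$-asymptotic field extension $M$ of $K$ is in particular an $H$-asymptotic field extension. Moreover, every notion appearing in the statement --- the asymptotic couple $(\Gamma,\psi)$, ungroundedness, gaps, asymptotic integration, logarithmic sequences, the $\upl$-sequence $\upl_\rho=-\ell_\rho^{\dagger\dagger}$, $\upl$-freeness, and the set $v(\upl-K)$ inside the value group of $M$ --- is defined purely in terms of this $H$-asymptotic reduct and its valuation, and is computed identically whether one works in $K$ (resp.\ in $M$) or in its reduct. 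Thus~\cite[Lemma~11.5.2]{ADH17} yields the first assertion and~\cite[Corollary~11.6.1]{ADH17} the second, with no further work.

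For orientation I would also recall the shape of the arguments in~\cite{ADH17}. For the first assertion one uses the trichotomy (Fact~\ref{fact:trich}): since $K$ is ungrounded it has asymptotic integration unless it has a gap $\beta$, and one must rule out the latter. The key step is a computation with the recursion $v\ell_{\rho+1}=\chi(v\ell_\rho)$ and the asymptotic-couple identities for $\chi$ and $\psi$, showing that for $\sigma>\rho$ one has $v(\upl_\sigma-\upl_\rho)=\psi(v\ell_{\rho+1})$, with these values strictly increasing and cofinal in $\Psi$; in the presence of a gap $\beta$ one then deduces that $(\upl_\rho)$ has a pseudolimit in $K$, contradicting $\upl$-freeness. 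For the second assertion the same computation shows that a pseudolimit $\upl$ of $(\upl_\rho)$ in $M$ satisfies $v(\upl-\upl_\rho)=\psi(v\ell_{\rho+1})$ for all sufficiently large $\rho$, whence $\Psi^{\downarrow}\subseteq v(\upl-K)$; conversely, if some $y\in K$ satisfied $v(\upl-y)>\Psi$, then a short argument with the pc-sequence $(\upl_\rho)$ would give $\upl_\rho\leadsto y$ with $y\in K$, again contradicting $\upl$-freeness, so $v(\upl-K)=\Psi^{\downarrow}$.

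The only thing that really needs care here is the reduction itself: checking that each quantity occurring in the statement genuinely lives in, and is unchanged under passage to, the $H$-asymptotic reduct, so that nothing beyond~\cite{ADH17} is needed. The genuine mathematical content --- the asymptotic-couple combinatorics underlying the computation of $v(\upl_\sigma-\upl_\rho)$ and the handling of the gap case --- is not reproved here but quoted from~\cite{ADH17}.
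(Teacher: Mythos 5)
Your proposal is correct and matches the paper's treatment: the paper states this as a Fact imported directly from~\cite{ADH17} (Lemma~11.5.2 and Corollary~11.6.1), relying on the earlier observation that every $H_T$-asymptotic field is $H$-asymptotic in the sense of~\cite{ADH17} and that all the notions involved depend only on that reduct. Your additional sketch of the internal arguments from~\cite{ADH17} is accurate but not needed, since the paper gives no proof beyond the citation.
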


\begin{fact}[\cite{ADH17}, Lemma 11.5.13]\label{fact:uplminusdaggers}
Suppose $K$ has asymptotic integration and let $\upl \in K$ be a pseudolimit of $(\upl_\rho)$. Then $v\big(\upl + (K^\times)^\dagger\big)$ is a cofinal subset of $\Psi^\downarrow$.
\end{fact}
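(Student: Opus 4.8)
The plan is to reduce the statement to the single identity linking the $\upl$-sequence to genuine logarithmic derivatives from $K$. Since $\upl_\rho = -\ell_\rho^{\dagger\dagger} = -(\ell_\rho^\dagger)^\dagger$ and $\ell_\rho^\dagger \in K^\times$, for every $\rho$ we have
\[
\upl + (\ell_\rho^\dagger)^\dagger \ =\ \upl - \upl_\rho ,
\]
and more generally, for any $g \in K^\times$,
\[
\upl + g^\dagger \ =\ (\upl - \upl_\rho) + \bigl(g/\ell_\rho^\dagger\bigr)^\dagger
\]
for every $\rho$, using $\upl_\rho + g^\dagger = g^\dagger - (\ell_\rho^\dagger)^\dagger = (g/\ell_\rho^\dagger)^\dagger$. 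So the proof splits into two parts: first, showing that $\{v(\upl - \upl_\rho):\rho\}$ is cofinal in $\Psi^{\downarrow}$; second, showing that $v(\upl + g^\dagger) \in \Psi^{\downarrow}$ for every $g \in K^\times$.

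For the first part, since $(\upl_\rho)$ is a pc-sequence with pseudolimit $\upl$, standard properties of pc-sequences give $v(\upl - \upl_\rho) = v(\upl_{\rho+1} - \upl_\rho)$ for all sufficiently large $\rho$. A short computation in the asymptotic couple does the rest: from $v\ell_{\rho+1} = \chi(v\ell_\rho)$ one gets $\psi(v\ell_{\rho+1}) = \psi(v\ell_\rho) - v\ell_{\rho+1}$, hence $v(\ell_{\rho+1}^\dagger/\ell_\rho^\dagger) = -v\ell_{\rho+1} > 0$; combined with $\upl_{\rho+1} - \upl_\rho = -(\ell_{\rho+1}^\dagger/\ell_\rho^\dagger)^\dagger$ this yields $v(\upl_{\rho+1} - \upl_\rho) = \psi(v\ell_{\rho+1}) \in \Psi$. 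As $(v\ell_\rho)$ is cofinal in $\Gamma^<$ and $\psi$ is weakly increasing on $\Gamma^<$, the values $\psi(v\ell_\rho)$ are cofinal in $\Psi$, hence in $\Psi^{\downarrow}$. This proves cofinality, and in particular shows the set in question is contained in $\Gamma$, so it remains only to verify the inclusion in $\Psi^{\downarrow}$.

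For the second part, fix $g \in K^\times$. If $v(\upl + g^\dagger) \leq v(\upl - \upl_\rho)$ for some $\rho$ we are done, since the right-hand side lies in $\Psi$. Otherwise $v(\upl + g^\dagger) > v(\upl - \upl_\rho)$ for every $\rho$, so the second displayed identity forces $v\bigl((g/\ell_\rho^\dagger)^\dagger\bigr) = v(\upl - \upl_\rho)$ for all $\rho$, a quantity that is strictly increasing and cofinal in $\Psi$; equivalently, setting $h = g^{-1} \in K^\times$, the logarithmic derivative $h^\dagger$ is a pseudolimit of $(\upl_\rho)$. The main obstacle is to rule this out under the hypothesis that $K$ has asymptotic integration: one computes $h^\dagger - \upl_\rho = (h\,\ell_\rho^\dagger)^\dagger$ with $v(h\,\ell_\rho^\dagger) = vh + \psi(v\ell_\rho)$, and since $\psi(v\ell_\rho)$ is strictly increasing and cofinal in $\Psi$ while $vh$ is fixed, the monotonicity of $\psi$ on $\Gamma^{\neq}$ (weakly increasing on $\Gamma^<$, weakly decreasing on $\Gamma^>$, with $\Psi$ having no largest element) together with asymptotic integration forces $\psi\bigl(vh + \psi(v\ell_\rho)\bigr)$ to be eventually non-increasing or else bounded above by a fixed element of $\Psi$, contradicting that it is strictly increasing and cofinal in $\Psi$. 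This last point is where the real work lies; in \cite{ADH17} it is dispatched using the structure theory of asymptotic couples with asymptotic integration developed in their Section~11.5. The cofinality statement and the reduction above are routine.
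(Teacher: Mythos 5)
The paper does not prove this statement; it imports it verbatim as \cite[Lemma 11.5.13]{ADH17}, so your argument has to be judged on its own terms. Your reduction is correct and the cofinality half is complete: the identities $\upl+(\ell_\rho^\dagger)^\dagger=\upl-\upl_\rho$ and $\upl+g^\dagger=(\upl-\upl_\rho)+(g/\ell_\rho^\dagger)^\dagger$ are right, the computation $v(\upl_{\rho+1}-\upl_\rho)=\psi(v\ell_{\rho+1})\in\Psi$ is right, and cofinality of $\{\psi(v\ell_\rho)\}$ in $\Psi$ follows from cofinality of $(v\ell_\rho)$ in $\Gamma^<$. Likewise, the reduction of the containment $v(\upl+g^\dagger)\in\Psi^\downarrow$ to the claim that no $h^\dagger$ with $h\in K^\times$ is a pseudolimit of $(\upl_\rho)$ is correct (and this also disposes of the degenerate possibility $\upl=-g^\dagger$).

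The gap is in the final step, which is the only place the hypothesis of asymptotic integration is used and hence the mathematical heart of the lemma. Your asserted dichotomy --- that $\psi\bigl(vh+\psi(v\ell_\rho)\bigr)$ is ``eventually non-increasing or else bounded above by a fixed element of $\Psi$'' --- does not follow from monotonicity of $\psi$ alone: in the case where $vh+\psi(v\ell_\rho)$ stays negative and is cofinal in $\Gamma^<$, the sequence $\psi\bigl(vh+\psi(v\ell_\rho)\bigr)$ is weakly increasing and cofinal in $\Psi$, so neither horn holds, and this is exactly the case that asymptotic integration must exclude. Concretely: the eventually-positive case contradicts strict increase because $\psi$ is weakly decreasing on $\Gamma^>$; the case where $vh+\psi(v\ell_\rho)\leq\delta$ for a fixed $\delta\in\Gamma^<$ caps the values at $\psi(\delta)$, contradicting cofinality since $\Psi$ has no maximum; and in the remaining case one finds that $\Psi<-vh$ while $\Psi$ is cofinal in $\Gamma^{<-vh}$, so $-vh$ would be the least element of $\Gamma\setminus\Psi^\downarrow=(\Gamma^>)'$ --- impossible, since if $\gamma+\psi(\gamma)$ with $\gamma>0$ were least in $(\Gamma^>)'$ then $\tfrac12\gamma+\psi(\tfrac12\gamma)=\tfrac12\gamma+\psi(\gamma)$ would be a smaller element of $(\Gamma^>)'$ still exceeding $\Psi$. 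Writing out this trichotomy would close the proof; as submitted, deferring the decisive step to ``the structure theory of Section 11.5 of \cite{ADH17}'' is circular, since the statement being proved is itself a lemma of that section.
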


\noindent
For us, the importance of $\upl$-freeness comes from its relation to gaps:

\begin{lemma}\label{lem:uplgapcreator}
Suppose that $T$ is power bounded with field of exponents $\Lambda$ and that $K$ has asymptotic integration. Let $s \in K$ and let $M=K\langle f \rangle$ be an $H_T$-asymptotic field extension of $K$ with $f \neq 0$ and $f^\dagger = s$. Then $vf$ is a gap in $M$ if and only $\upl_\rho\leadsto -s$.
\end{lemma}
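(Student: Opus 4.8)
The statement is the $H_T$-analog of \cite[Lemma 11.5.14]{ADH17}, and the plan is to run that argument in the $T$-convex setting, with the Wilkie inequality (Fact~\ref{fact:wilkieineq}) replacing the Abhyankar--Zariski inequality. Since $K$ has asymptotic integration it is ungrounded (Fact~\ref{fact:trich}) and nontrivially valued, so a logarithmic sequence and its $\upl$-sequence $(\upl_\rho)$ exist and $\Psi$ has no largest element. First I would record a convenient reformulation of the right-hand side: combining \cite[Proposition 11.5.3 and Corollary 11.6.1]{ADH17} with Fact~\ref{fact:uplminusdaggers}, the relation $\upl_\rho\leadsto -s$ holds if and only if $-s$ is a pseudolimit of $(\upl_\rho)$, if and only if $v\big(-s+(K^\times)^\dagger\big)$ is cofinal in $\Psi^\downarrow$, if and only if $s-g^{\dagger\dagger}\prec g^\dagger$ for every $g\in K^\times$ with $g\succ 1$. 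I would use whichever of these is most convenient at each step.

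Next I would translate ``$vf$ is a gap in $M$'' into the asymptotic couple of $M$: it means $\Psi_M<vf<(\Gamma_M^>)'$, equivalently $f\prec g^\dagger$ for all $g\in M^\times$ with $g\not\asymp 1$ together with $g'\prec f$ for all $g\in M$ with $0\neq g\prec 1$. Write $\gamma:=vf$ and apply the Wilkie inequality to the simple extension $M=K\langle f\rangle$. If $\gamma\in\Gamma$ then, since $K$ has asymptotic integration, $\gamma\in(\Gamma^{\neq})'\subseteq(\Gamma_M^{\neq})'$, so $\gamma$ is not a gap in $M$, and it remains only to rule out $\upl_\rho\leadsto -s$. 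Choosing $g_0\in K^\times$ with $vg_0=\gamma$, the element $f/g_0\asymp 1$ is not in $\ker(\der)$, and the $H$-asymptotic inequalities pin $v(s-g_0^\dagger)=v\big((f/g_0)^\dagger\big)$ strictly below $\Psi$, whereas (via the reformulation) $\upl_\rho\leadsto -s$ would force $v(s-g_0^\dagger)$ into $\Psi$ --- a contradiction.

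In the case $\gamma\notin\Gamma$, the Wilkie inequality gives $\Gamma_M=\Gamma\oplus\Lambda\gamma$ and $\res M=\res K$, and the asymptotic couple of $M$ is then determined by $(\Gamma,\psi)$ together with $\psi_M(\gamma)=v(f^\dagger)=vs$: for $u\in K^\times$ and $q\in\Lambda^{\neq}$ one has $(uf^q)^\dagger=u^\dagger+qs$, so every element of $\Psi_M$ has the form $v(u^\dagger+qs)$ (the $q=0$ contributions recovering $\Psi$), and $(\Gamma_M^>)'$ is computed in the same way. One then checks that $\gamma$ satisfies the two gap inequalities above precisely when no element $u^\dagger+qs$ with $u\in K^\times$, $q\in\Lambda^{\neq}$ has valuation $\geq\gamma$, while $\gamma<(\Gamma_M^>)'$ holds. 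Rewriting $u^\dagger+qs=q\big((u^{1/q})^\dagger+s\big)$ using the power function $u\mapsto u^{1/q}$ (available by power boundedness, after arranging $u>0$), this condition on the ``cancellation valuations'' says exactly that $v\big(-s+(K^\times)^\dagger\big)$ is cofinal in $\Psi^\downarrow$, i.e.\ that $\upl_\rho\leadsto -s$. Assembling the two cases yields the desired equivalence.

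The main obstacle is this last computation: showing that the ``new'' elements of $\Psi_M$ --- the valuations $v(u^\dagger+qs)$ --- stay strictly below $\gamma$ exactly when $\upl_\rho\leadsto -s$, and dually that the condition $\gamma<(\Gamma_M^>)'$ never fails on its own. This is where the uniformity in \cite[Corollary 11.6.1]{ADH17} is needed, to pass from the logarithmic sequence to arbitrary $g\succ 1$, and where power boundedness enters twice: through the $\Lambda$-vector-space structure on $\Gamma_M$ used by the Wilkie inequality, and through the power functions used to recognize $v(u^\dagger+qs)$ as a logarithmic-derivative approximation to $-s$.
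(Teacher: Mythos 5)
Your high-level architecture is close to the paper's: the paper also proves the direction ``$\upl_\rho\leadsto -s$ implies $vf$ is a gap'' by first showing $vf\notin\Gamma$ via an element $u=f/g_0\asymp 1$, then using the Wilkie inequality to write $\Gamma_M=\Gamma\oplus\Lambda vf$ and computing $\psi_M(\delta+\lambda vf)=v(s-z^\dagger)$ with $z=y^{-1/\lambda}$ a power-function substitution, concluding $\Psi_M\subseteq\Psi^\downarrow$ from Fact~\ref{fact:uplminusdaggers}. (The converse direction the paper simply cites from~\cite[Lemma 11.5.12]{ADH17}, which applies since $M$ is in particular $H$-asymptotic; you do not need to re-derive it from the couple computation.) However, there are two genuine gaps. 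First, in your $vf\in\Gamma$ sub-case the inequalities are reversed: for $u=f/g_0\asymp 1$ one has $v(s-g_0^\dagger)=v(u^\dagger)=v(u')\geq\Psi$, hence $>\Psi$ since $\Psi$ has no maximum, i.e.\ the valuation lies strictly \emph{above} $\Psi$, not below; and $\upl_\rho\leadsto -s$ forces $v(s-g_0^\dagger)$ into $\Psi^\downarrow$, not into $\Psi$. As written, ``strictly below $\Psi$'' and ``in $\Psi^\downarrow$'' are compatible, so your claimed contradiction does not close; with the directions corrected it does, and this is exactly the paper's argument.

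Second, and more seriously, the final step of your $vf\notin\Gamma$ case --- ``one then checks that \dots this condition on the cancellation valuations says exactly that $v\big(-s+(K^\times)^\dagger\big)$ is cofinal in $\Psi^\downarrow$'' --- is asserted rather than proved, and it is precisely the hard part. Knowing $\Psi_M\subseteq\Psi^\downarrow$ does not by itself give $vf>\Psi_M$, and it certainly does not give $vf<(\Gamma_M^>)'$; you acknowledge the latter as ``the main obstacle'' but never resolve it. The paper's resolution is a separate contradiction argument: since $\Psi_M$ is cofinal in $\Psi^\downarrow$ it has no maximum, so if $vf$ were not a gap then $vf=\beta'$ for some $\beta\in\Gamma_M^{\neq}$; choosing $y\in K$ with $\beta^\dagger<vy\in\Psi$ and invoking~\cite[Lemma 11.5.6(iii)]{ADH17} to get $s-y^\dagger\prec y$, one computes $\psi_M(vf-vy)=v(s-y^\dagger)>vy$, contradicting~\cite[Lemma 9.2.2]{ADH17}. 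Some such argument (or an equally careful substitute) is needed; without it the equivalence you claim between the gap inequalities and the cofinality condition is not established. You also lean on the cofinality characterization of $\upl_\rho\leadsto -s$ as an ``if and only if,'' whereas the paper only uses the one direction recorded in Fact~\ref{fact:uplminusdaggers}; if you keep the iff you should justify the converse direction from~\cite{ADH17} explicitly.
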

\begin{proof}
One direction is by~\cite[Lemma 11.5.12]{ADH17}: if $vf$ is a gap in $M$, then $\upl_\rho\leadsto -s$. For the other direction, suppose $\upl_\rho\leadsto -s$. We first note that $vf \not \in \Gamma$. Indeed, suppose towards contradiction that $f \asymp y$ for some $y \in K^\times$ and take $u \in M$ with $f = uy$. Then 
\[
v(s-y^\dagger)\ =\ v(f^\dagger - y^\dagger)\ = \ v(u^\dagger)\ =\ v(u')\ >\ \Psi,
\]
contradicting that $v\big(s-(K^\times)^\dagger\big)$ is a cofinal subset of $\Psi^\downarrow$ by Fact~\ref{fact:uplminusdaggers}. 

Now we claim that $\Psi_M\subseteq \Psi^\downarrow$. We have established that $vf \not\in \Gamma$, so the Wilkie inequality gives $\Gamma_M = \Gamma\oplus \Lambda vf$. Thus, we fix $\gamma \in \Gamma$ and $\lambda \in \Lambda$ with $\lambda \neq 0$, and we need to show that $\psi(\gamma+\lambda vf) \in \Psi^\downarrow$. Take $y \in K^>$ with $vy = \gamma$ and set $z\coloneqq y^{-1/\lambda}$, so $y^\dagger = -\lambda z^\dagger$. We have
\[
\psi(\gamma+\lambda vf)\ =\ v(yf^\lambda)^\dagger\ =\ v(y^\dagger +\lambda s)\ =\ v(\lambda s - \lambda z^\dagger)\ =\ v(s-z^\dagger),
\]
so $\psi(\gamma+\lambda vf) \in \Psi^\downarrow$ by Fact~\ref{fact:uplminusdaggers}, proving the claim.

Finally, suppose toward contradiction that $vf$ is not a gap in $M$. Then $\Psi_M$, being a cofinal subset of $\Psi^\downarrow$, has no maximum and so $vf \in (\Gamma^{\neq}_M)'$. Take $\beta \in \Gamma^{\neq}_M$ with $\beta' = vf$ and take $y \in K$ with $\beta^\dagger < vy \in \Psi$. Our assumption that $\upl_\rho\leadsto -s$ along with~\cite[Lemma 11.5.6 (iii)]{ADH17} gives $s-y^\dagger \prec y$, so 
\[
\psi(vf- vy) \ =\ v(f/y)^\dagger \ =\ v(s-y^\dagger)\ >\ vy,
\]
contradicting~\cite[Lemma 9.2.2]{ADH17} with $\alpha= vf$ and $\gamma = vy$.
\end{proof}

\noindent
In~\cite{Ge17C}, Gehret defines a property---the yardstick property---which allows us to check whether $\upl$-freeness is preserved in various extensions. Let $S$ be a nonempty convex subset of $\Gamma$ without a largest element. 
\begin{enumerate}
\item We say that $S$ has the \textbf{yardstick property} if there is $\beta \in S$ such that $\gamma - \chi(\gamma) \in S$ for all $\gamma \in S^{>\beta}$.
\item We say that $S$ is \textbf{jammed} if for every nontrivial convex subgroup $\{0\}\neq \Delta \subseteq \Gamma$, there is $\beta \in S$ such that $\gamma - \beta \in \Delta$ for all $\gamma \in S^{>\beta}$.
\end{enumerate}
Note that if $S$ is jammed, then so is $\gamma+S$ for any $\gamma \in \Gamma$. Being jammed and having the yardstick property are incompatible, except in the following case:

\begin{fact}[\cite{Ge17C}, Lemma 3.17]\label{fact:jamyardstick}
Let $S$ be a nonempty convex subset of $\Gamma$ without a largest element which has the yardstick property. Then $S$ is jammed if and only if $S^\downarrow = \Gamma^<$.
\end{fact}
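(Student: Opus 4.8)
The statement asserts that for a nonempty convex $S \subseteq \Gamma$ without a largest element which has the yardstick property, $S$ is jammed if and only if $S^\downarrow = \Gamma^<$. Since this is a ``Fact'' attributed to Gehret~\cite{Ge17C}, the proof will be a short argument unpacking the two definitions. The plan is to treat the two implications separately.

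For the direction ($\Leftarrow$), I would assume $S^\downarrow = \Gamma^<$ and verify directly that $S$ is jammed. Fix a nontrivial convex subgroup $\{0\} \neq \Delta \subseteq \Gamma$. Since $S^\downarrow = \Gamma^<$, the set $S$ is cofinal in $\Gamma^<$, and in particular $S$ is unbounded below in $\Gamma^<$ while having no largest element; the key point is that $\Gamma^< \setminus \Delta^<$ is bounded below by $-\delta$ for any $\delta \in \Delta^>$ (because $\Delta$ is convex and contains $0$), so the complement of $\Delta$ in $\Gamma^<$ is a bounded-below subset. Thus there is $\beta \in S$ with $\beta < -\delta$ for a chosen $\delta \in \Delta^>$... more carefully: since $\Delta$ is convex and nontrivial, for $\gamma, \beta \in \Gamma^<$ with $\beta$ sufficiently negative and $\gamma > \beta$, if both lie in a ``small enough window'' near $-\infty$ relative to $\Delta$ we get $\gamma - \beta \in \Delta$. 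The cleanest route: pick any $\beta_0 \in S$; then for $\gamma \in S^{>\beta_0}$ we have $0 \leq \gamma - \beta_0$, and I want $\gamma - \beta_0 \in \Delta$, which holds precisely when $\gamma - \beta_0 < \delta$ for all $\delta\in\Delta^>$ is too strong — instead one uses that $S^{>\beta_0}$ can be made to lie within $\beta_0 + \Delta$ by choosing $\beta_0$ far enough out, using cofinality of $S$ in $\Gamma^<$ together with the fact that $\Gamma^< \setminus (\beta_0 + \Delta)$ for $\beta_0$ very negative forces elements above $\beta_0$ to stay in $\beta_0 + \Delta$ once $\beta_0$ is below $-\Delta^{>}$. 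This is exactly the elementary convex-subgroup computation; I expect it to be a few lines.

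For the direction ($\Rightarrow$), I would argue the contrapositive: assume $S^\downarrow \neq \Gamma^<$ and, using the yardstick property, produce a nontrivial convex subgroup $\Delta$ witnessing that $S$ is not jammed. Since $S^\downarrow \subsetneq \Gamma^<$ (note $S^\downarrow \subseteq \Gamma^<$ always, as $S$ has no largest element and we may assume $S\subseteq\Gamma^{<}$ or handle $0\in S$ trivially via groundedness conventions), the set $S$ is bounded below in $\Gamma^<$, i.e. there is $\alpha \in \Gamma^<$ with $\alpha < S$. Let $\Delta$ be the smallest convex subgroup of $\Gamma$ containing the interval $[\alpha, 0]$, equivalently the convex hull of $\Z\alpha$; this is nontrivial since $\alpha \neq 0$. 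The point is that $S \subseteq \Delta$, because $S$ lies in $[\alpha,0]$ (it is bounded below by $\alpha$ and above by $0$). Now suppose toward a contradiction that $S$ is jammed. Applying the definition of ``jammed'' to a convex subgroup $\Delta' \subsetneq \Delta$ strictly smaller (one exists because $\Delta$ properly contains, say, the trivial subgroup, and in a densely-structured group we can interpose — or, if $\Delta$ has no proper nontrivial convex subgroup, apply jammedness to $\Delta$ itself and use the yardstick property to derive that $\gamma - \chi(\gamma)$ forces elements of $S$ to be comparable in a way contradicting boundedness), we would get $\beta \in S$ with $\gamma - \beta \in \Delta'$ for all $\gamma \in S^{>\beta}$, so $S^{>\beta} \subseteq \beta + \Delta'$ is bounded, contradicting either that $S$ has no largest element or is cofinal in $[\alpha,0]$-sense. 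Combining the yardstick property's $\beta$ with the jammedness $\beta$ (take the max) and tracking $\gamma - \chi(\gamma)$ staying in $S$ while $\gamma - \beta$ shrinks into $\Delta'$ yields the contradiction.

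The main obstacle I anticipate is the precise interplay in the ($\Rightarrow$) direction between the yardstick witness and the jammedness witness: one must choose the convex subgroup $\Delta'$ correctly and use that $\chi(\gamma) \in \Gamma^<$ moves $\gamma$ by an amount that, combined with jammedness confining $S^{>\beta}$ to a translate of a small convex subgroup, contradicts the yardstick condition $\gamma - \chi(\gamma) \in S$ for cofinally many $\gamma$. Getting the quantifier order right — choosing $\Delta'$ after seeing that $S$ is bounded, then feeding it to jammedness, then comparing with the fixed yardstick $\beta$ — is the delicate bookkeeping; everything else is routine manipulation of convex subgroups of ordered abelian groups. Since this is cited as an established lemma, I would keep the write-up brief and refer to \cite{Ge17C} for the full details if the bookkeeping becomes lengthy.
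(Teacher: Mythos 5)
This statement is quoted from Gehret~\cite{Ge17C} and is not proved in the present article, so there is no in-paper argument to compare against; judged on its own, your attempt has genuine gaps in both directions, stemming largely from a reversed reading of the condition $S^\downarrow = \Gamma^<$. That condition says $S\subseteq \Gamma^<$ and $S$ is cofinal in $\Gamma^<$ \emph{from below}, i.e.\ $\sup S = 0$; it does not say $S$ is ``unbounded below.'' In the ($\Leftarrow$) direction the correct witness is an element of $S$ lying \emph{inside} $\Delta$: pick $\delta\in\Delta$ with $\delta<0$, then $\beta\in S$ with $\delta\leq\beta<0$, so $\beta\in\Delta$ by convexity of $\Delta$, and then $0<\gamma-\beta<-\beta\in\Delta$ for every $\gamma\in S^{>\beta}$. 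You instead ask for $\beta$ ``very negative''/``below $-\Delta^>$,'' which is exactly the wrong choice: for such $\beta$ the set $S^{>\beta}$ need not lie in $\beta+\Delta$ at all. The two-line argument is recoverable from your sketch, but as written the inequalities do not close.

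The ($\Rightarrow$) direction is where the content lies, and your plan does not contain a workable argument for it. First, $S\subseteq\Gamma^<$ is not automatic and is part of what must be shown (in the paper's application $S=v(a-K)$ may contain non-negative values); granting it, the negation of $S^\downarrow=\Gamma^<$ produces $\alpha\in\Gamma^<$ with $S<\alpha<0$, i.e.\ $S$ is bounded \emph{above} away from $0$, not ``bounded below by $\alpha$,'' so your subgroup generated by $[\alpha,0]$ does not contain $S$ and the setup collapses. Second, your proposed contradiction --- ``$S^{>\beta}\subseteq\beta+\Delta'$ is bounded, contradicting that $S$ has no largest element'' --- is not a contradiction: a subset of a coset of a convex subgroup can perfectly well lack a largest element. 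Third, the yardstick property never concretely enters your argument, yet it is essential; the natural way the two hypotheses interact is that for $\gamma\in S$ beyond both the yardstick witness and the jamming witness $\beta_\Delta$, the elements $\gamma$ and $\gamma-\chi(\gamma)$ both lie in $S^{>\beta_\Delta}$, whence
\[
-\chi(\gamma)\ =\ \big((\gamma-\chi(\gamma))-\beta_\Delta\big)-\big(\gamma-\beta_\Delta\big)\ \in\ \Delta
\]
for \emph{every} nontrivial convex subgroup $\Delta$, and one must then play this off against properties of the contraction map and the bound $S<\alpha$ to reach a contradiction. None of that bookkeeping appears in your plan, and you ultimately defer to~\cite{Ge17C}; as it stands the forward implication is asserted rather than proved.
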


\noindent
The lemma below is an analog of~\cite[Proposition 6.19]{Ge17C} with virtually the same proof; only minor modifications and substitutions are required.

\begin{lemma}\label{lem:yardstickpreserve}
Let $K\langle a\rangle$ be a simple immediate $H_T$-asymptotic field extension of $K$. Suppose that $K$ is $\upl$-free and that $v(a-K) \subseteq \Gamma$ has the yardstick property. Then $K\langle a \rangle$ is $\upl$-free. 
\end{lemma}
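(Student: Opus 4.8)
The plan is to follow the proof of~\cite[Proposition 6.19]{Ge17C} and argue by contradiction, so suppose $K\langle a\rangle$ is not $\upl$-free. Since $K\langle a\rangle$ is an immediate extension of $K$, the two fields share the value group $\Gamma$ and the asymptotic couple $(\Gamma,\psi)$, hence also $\Psi$, the contraction map $\chi$, and the notion of a logarithmic sequence; in particular $K\langle a\rangle$ is again ungrounded, and since $K$ is $\upl$-free, Fact~\ref{fact:upl->asymp} gives that $K$, and therefore also $K\langle a\rangle$, has asymptotic integration. Fix a logarithmic sequence $(\ell_\rho)$ in $K$ with associated $\upl$-sequence $(\upl_\rho)$; this is also a $\upl$-sequence in $K\langle a\rangle$, so our assumption produces a pseudolimit $\upl$ of $(\upl_\rho)$ in $K\langle a\rangle$. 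Since $K$ is $\upl$-free, $\upl\in K\langle a\rangle\setminus K$.

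The next step is to pin down the width of the cut of $\upl$ over $K$ in two ways. On one hand, $K$ is $\upl$-free and $\upl$ is a pseudolimit of $(\upl_\rho)$ in the $H_T$-asymptotic field extension $K\langle a\rangle$, so the second part of Fact~\ref{fact:upl->asymp} gives $v(\upl-K)=\Psi^\downarrow$. On the other hand, $K\langle a\rangle$ is a simple immediate extension of $K$ and $\upl\in K\langle a\rangle\setminus K$, so Lemma~\ref{lem:shift} gives $\gamma_0\in\Gamma$ with $v(\upl-K)=\gamma_0+v(a-K)$. Combining these, $v(a-K)=-\gamma_0+\Psi^\downarrow$.

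It remains to use the hypothesis that $v(a-K)$ has the yardstick property, and this is the heart of the argument. The key claim is that $\Psi^\downarrow$ is jammed -- which I expect to follow from standard properties of the $\psi$-map (its constancy on $\Lambda$-rays, and the fact that restricted to a final segment its values do not spread outside a single coset of a given nontrivial convex subgroup) -- and hence that the translate $v(a-K)=-\gamma_0+\Psi^\downarrow$ is jammed as well, since being jammed is translation-invariant. Then $v(a-K)$ is a nonempty convex subset of $\Gamma$ with no largest element which is both jammed and has the yardstick property, so Fact~\ref{fact:jamyardstick} forces $v(a-K)=\bigl(v(a-K)\bigr)^\downarrow=\Gamma^<$, and therefore $\Psi^\downarrow=\gamma_0+\Gamma^<$. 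Consequently $\gamma_0$ is an upper bound of $\Psi$ with $\gamma_0\notin\Psi$. Now the contradiction: since $\Psi<(\Gamma^>)'$, every element of $(\Gamma^>)'$ is $\ge\gamma_0$; by asymptotic integration $\gamma_0=\beta'$ for some $\beta\in\Gamma^{\neq}$, and $\beta\notin\Gamma^<$ (otherwise $\gamma_0=\beta+\psi(\beta)$ with $\psi(\beta)\in\Psi$ and $\psi(\beta)\le\gamma_0$ would give $\beta\ge 0$); so $\beta\in\Gamma^>$ and $\gamma_0=\beta'=\min(\Gamma^>)'$. Since $\gamma\mapsto\gamma'$ is strictly increasing on $\Gamma^>$, this makes $\beta$ a least element of $\Gamma^>$, which is impossible because $\Gamma$ is a nontrivial divisible ordered abelian group (as $K$ is real closed, $\cO\neq K$).

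I expect the main difficulty to be the verification that $\Psi^\downarrow$ is jammed, together with the bookkeeping needed to identify exactly which facts about asymptotic couples must be substituted for the corresponding facts in~\cite{Ge17C}; everything else is routine manipulation of asymptotic couples and of cuts in $\Gamma$.
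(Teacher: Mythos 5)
Your argument follows the paper's proof essentially step for step: assume a pseudolimit $\upl\in K\langle a\rangle$ of a $\upl$-sequence, use Fact~\ref{fact:upl->asymp} to get $v(\upl-K)=\Psi^\downarrow$, use Lemma~\ref{lem:shift} to write this as a translate of $v(a-K)$, deduce that $v(a-K)$ is jammed, and invoke Fact~\ref{fact:jamyardstick} to force $v(a-K)=\Gamma^<$ and hence a supremum for $\Psi^\downarrow$, contradicting asymptotic integration. The one ingredient you leave as an expectation --- that $\Psi^\downarrow$ is jammed when $K$ has asymptotic integration --- is exactly \cite[Lemma 3.11]{Ge17C}, which the paper cites at that point; your parenthetical gesture at why it holds is not itself a proof, but the claim is a known lemma rather than a gap in the strategy. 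Your endgame (producing a least element of $\Gamma^>$ from $\min(\Gamma^>)'$) is just a more explicit rendering of the paper's terser observation that $\Psi^\downarrow$ having a supremum is incompatible with asymptotic integration.
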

\begin{proof}
Suppose toward contradiction that $K\langle a \rangle$ is not $\upl$-free and take $\upl \in K\langle a \rangle$ with $\upl_\rho \leadsto \upl$. Since $\upl\not\in K$, Lemma~\ref{lem:shift} gives $\gamma \in \Gamma$ with $v(\upl-K) = \gamma+v(a-K)$. Fact~\ref{fact:upl->asymp} gives $v(\upl-K) = \Psi^\downarrow$, so $v(\upl-K)$ is jammed by~\cite[Lemma 3.11]{Ge17C}. Thus, $v(a-K)$ is jammed as well, so $v(a-K) = \Gamma^<$ by Fact~\ref{fact:jamyardstick}. In particular, $v(a-K)$ has a supremum in $\Gamma$, so $v(\upl-K) = \Psi^\downarrow$ also has a supremum in $\Gamma$. This implies that $K$ does not have asymptotic integration, contradicting Fact~\ref{fact:upl->asymp}.
\end{proof}

\section{$H_T$-fields and pre-$H_T$-fields}\label{sec:HT}
\noindent
In this section, we introduce the main objects of study. Recall our standing assumption that $K = (K,\cO,\der)$ is a model of $\TdO$.

\begin{definition}
$K$ is a \textbf{pre-$H_T$-field} if for all $g \in K$ with $g\succ 1$, we have
\begin{enumerate}
\item[(PH1)] $g^\dagger > 0$, and
\item[(PH2)] $g^\dagger \succ f'$ for all $f \in \cO$.
\end{enumerate}
\end{definition}

\noindent
Every pre-$H_T$-field is $H_T$-asymptotic, and if $K$ is $H_T$-asymptotic and $g^\dagger \succ f'$ for all $f,g \in K$ with $g\succ f\asymp1$, then $K$ is a pre-$H_T$-field. As with $H_T$-asymptotic fields, every pre-$H_T$-field is a pre-$H$-field, as defined in~\cite{ADH17}. In the case of pre-$H_T$-fields, the converse also holds: If $K$ is a pre-$H$-field, then $K$ is a pre-$H_T$-field. To see this, use~\cite[Lemma 10.1.1]{ADH17} and note that (PH1) is equivalent to the condition that $g' > 0$ for all $g \in K$ with $g >\cO$. If $K$ is an $\LdO$-substructure of a pre-$H_T$-field, then $K$ is itself a pre-$H_T$-field. 

\begin{lemma}\label{lem:logvals}
Suppose that $K$ is a pre-$H_T$-field, let $a \in K^\times$, and let $b\in K$ be a $\d$-logarithm of $a$.
\begin{enumerate}
\item[(i)] If $a \not\asymp 1$, then $b \succ 1$ and $vb = \chi(va)$.
\item[(ii)] If $a \asymp 1$, then $b \preceq 1$.
\end{enumerate}
\end{lemma}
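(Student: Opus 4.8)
The plan is to recall what a $\d$-logarithm is: $b$ is a $\d$-logarithm of $a \in K^\times$ means $b' = a^\dagger$. We then analyze the valuation $vb$ by relating $b'$ to the asymptotic couple, distinguishing the two cases according to whether $a \asymp 1$.

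\emph{Case (i): $a \not\asymp 1$.} Set $\gamma \coloneqq va \in \Gamma^{\neq}$. By definition of the asymptotic couple, $v(a^\dagger) = \psi(\gamma) = \gamma^\dagger \in \Psi$. So $vb' = \gamma^\dagger \in \Psi$. Now I want to conclude $b \succ 1$ and $vb = \chi(\gamma)$. First I would rule out $b \asymp 1$ and $b \prec 1$: if $b \preceq 1$ then, writing $b = u + \epsilon$ with $u \in \cO^\times$ (or $u = 0$) and $\epsilon \prec 1$ — actually more simply, if $b \preceq 1$ then by (PH2) applied with any $g \succ 1$ we get $b' \prec g^\dagger$ for all such $g$, i.e.\ $vb' > \Psi$; but $vb' \in \Psi$, a contradiction. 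Hence $b \succ 1$. Then $vb \in \Gamma^{<}$, so $b^\dagger$ is defined and $vb' = vb + v(b^\dagger) = vb + \psi(vb) = (vb)'$. Thus $(vb)' = vb' = \gamma^\dagger$. Since $K$ is a pre-$H_T$-field (hence $H_T$-asymptotic), and since $vb \in \Gamma^{<}$ with $(vb)' = \gamma^\dagger \in \Psi \subseteq (\Gamma^{<})'$ — here I should note that $K$ need not be ungrounded, so $\chi$ may not be globally defined, but $\gamma^\dagger \in \Psi$ does lie in $(\Gamma^{<})'$ whenever the relevant element of $\Gamma^{<}$ exists; more carefully, the map $\delta \mapsto \delta' = \delta + \psi(\delta)$ is injective on $\Gamma^{<}$ (a standard fact about asymptotic couples, e.g.\ via $\psi$ being strictly increasing on $\Gamma^{<}$ would give... actually $\delta \mapsto \delta'$ is strictly increasing on $\Gamma^{<}$ by \cite[Lemma 9.2.?]{ADH17}), so from $(vb)' = \gamma^\dagger = \chi(\gamma)'$ we get $vb = \chi(\gamma)$, provided $\chi(\gamma)$ is defined. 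If $K$ is grounded, $\chi$ is not defined and the statement of (i) should be read with the implicit understanding that we are in the ungrounded case or that the conclusion " $vb = \chi(va)$ " is vacuous/interpreted appropriately; I expect the intended reading is that $K$ is ungrounded here, or one simply records $vb \in \Gamma^{<}$ with $(vb)' = (va)^\dagger$, which pins down $vb$ uniquely by injectivity of $\delta \mapsto \delta'$.

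\emph{Case (ii): $a \asymp 1$.} Write $a = c(1 + \epsilon)$ with $c \in C^\times$ (using axiom (H2)-type behavior) — more precisely, since $a \asymp 1$ and $K$ is a pre-$H_T$-field we need not have $a$ congruent to a constant, so instead I argue directly: $a^\dagger = a'/a$ with $a \asymp 1$, so $v(a^\dagger) = v(a')$. Now $a' = a \cdot a^\dagger$... this is circular, so better: take $u \in K^\times$ with $u \asymp 1$; either $a \in \cO^\times$, and then by (PH2), $a' \prec g^\dagger$ for all $g \succ 1$, hence $va' > \Psi$, hence $v(a^\dagger) = va' > \Psi$ (since $va = 0$). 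Then $vb' = v(a^\dagger) > \Psi$. Suppose toward a contradiction $b \succ 1$. Then as in case (i), $(vb)' = vb' > \Psi$, i.e.\ $vb$ is a gap or $vb \notin (\Gamma^{\neq})'$; but $vb \in \Gamma^{<}$ and $(vb)' > \Psi$ would mean $(vb)'$ sits above $\Psi$ while being of the form $\delta'$ for $\delta \in \Gamma^{<}$ — this contradicts $\Psi < (\Gamma^{>})'$ and the structure of the asymptotic couple, specifically that $(\Gamma^{<})' < \Psi^{\downarrow}$... wait, $(\Gamma^{<})'$ can exceed $\Psi$. Let me instead just use: if $b \succ 1$ then $-b^\dagger = (b^{-1})^\dagger > 0$ by (HA1) applied to $b^{-1} \prec 1$... no. The clean route: $b \succ 1 \Rightarrow b^\dagger > 0$ by (HA1), and then $vb' = vb + v b^\dagger$; since both $vb < 0$ and we'd need to compare. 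I think the cleanest is to invoke Fact~\ref{fact:asymsim} or the explicit description of $(\Gamma^{\neq})'$: $vb' = (vb)'$ and $(\Gamma^{<})' < (\Gamma^{>})'$ with $\Psi < (\Gamma^{>})'$; since $vb' > \Psi$ and also $vb' \in (\Gamma^{<})'$ (as $vb \in \Gamma^{<}$)... these are not immediately contradictory, so I must use more: if $K$ has a gap $\beta$, then $(\Gamma^{<})' < \beta < (\Gamma^{>})'$, so $(vb)' < \beta$, but we need $vb' > \Psi$, and $\Psi$ can be all of $(-\infty, \beta)$, fine. Hmm — so actually $b \succ 1$ with $vb'$ just above $\Psi$ is consistent with $b$ being a ``logarithm'' sitting at the gap.

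\emph{The main obstacle.} The delicate point in case (ii) is exactly ruling out $b \succ 1$: I expect the paper handles this by noting that $v(a^\dagger) > \Psi$ for $a \asymp 1$ forces $vb' \notin (\Gamma^{<})'$, because elements of $(\Gamma^{<})'$ of the form $\delta'$ with $\delta$ close to $0^-$ are cofinal in $\Psi^{\downarrow}$ but capped by... — more likely the argument is: were $b \succ 1$, then $b$ would be a $\d$-logarithm of the $\asymp 1$ element $a$, and one shows $(b^{-1})'$ is then an ``integral'' forcing a gap or contradicting asymptotic structure; alternatively, and I think this is it, use that $b^\dagger$ would satisfy $b^\dagger \succeq f'$ for all $f \in \cO^\times$ by (HA3) — in particular $b^\dagger \succeq a' $ — but $b^\dagger = a^\dagger/b \prec a^\dagger \preceq a'$ (since $b \succ 1$ and $va = 0$), giving $a^\dagger/b \prec a' = a^\dagger \cdot a \asymp a^\dagger$, so $b^\dagger \prec a^\dagger \preceq b^\dagger$, contradiction. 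That last chain — $b \succ 1 \Rightarrow b^\dagger = a^\dagger / b \prec a^\dagger$, combined with (HA3) giving $b^\dagger \succeq a'= a^\dagger \cdot a \asymp a^\dagger$ — is the crux, and I would make that the heart of the proof of (ii): it directly contradicts $b \succ 1$, so $b \preceq 1$. So overall I'd structure the write-up as: handle (i) via $vb' = \psi(va) \in \Psi$ plus the contradiction-from-(PH2) to get $b \succ 1$, then injectivity of $\delta \mapsto \delta'$ on $\Gamma^{<}$; handle (ii) via the $b^\dagger = a^\dagger/b$ computation against (HA3).
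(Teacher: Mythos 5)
Your final write-up plan is correct, and part (i) matches the paper's proof: use (PH2) to rule out $b\preceq 1$ (since $v(b')=\psi(va)\in\Psi$ while $b\in\cO$ would force $v(b')>\Psi$), then identify $vb$ as the unique $\delta\in\Gamma^<$ with $\delta'=(va)^\dagger$, which is $\chi(va)$. For part (ii) you end up on a different but valid route: assuming $b\succ 1$, (HA3) gives $b^\dagger\succeq a'$ (as $a\in\cO^\times$), while $b^\dagger=a^\dagger/b\prec a^\dagger\asymp a'$, a contradiction; you should just add a word for the degenerate case $a^\dagger=0$, where $b\in C\subseteq\cO$ trivially. The paper instead argues directly that $v(b')=v(a')>(\Gamma^<)'$, so $b\not\succ 1$.

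The one point worth correcting is your claim, mid-exploration, that ``$vb'>\Psi$ and $vb'\in(\Gamma^<)'$ are not immediately contradictory.'' They are: for any $\delta\in\Gamma^<$ one has $\delta'=\delta+\psi(\delta)<\psi(\delta)\in\Psi$, so every element of $(\Gamma^<)'$ lies strictly below some element of $\Psi$ and hence cannot be an upper bound for $\Psi$. This is exactly why the paper's one-line argument for (ii) works, and it would have saved you the detour through (HA3) (though what you have is fine). Your worry about $\chi$ being undefined when $K$ is grounded is a reasonable quibble about the statement, but pointwise $\chi(va)$ makes sense here because the argument itself shows $(va)^\dagger=(vb)'\in(\Gamma^<)'$.
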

\begin{proof}
If $a \succ 1$, then since $b' = a^\dagger$, we must have $b \succ 1$ by (PH2). It follows immediately that $vb= \chi(va)$. This holds more generally for $a \not\asymp 1$, since $-b' = (a\inv)^\dagger$. On the other hand, if $a \asymp 1$, then
\[
v(b')\ =\ v(a^\dagger) \ =\ v(a')\ >\ (\Gamma^<)',
\]
so $b \preceq 1$.
\end{proof}

\begin{corollary}\label{cor:expungrounded}
Suppose that $K$ is a pre-$H_T$-field. If every element in $K^>$ has a $\d$-logarithm in $K$, then $K$ is ungrounded. In particular, if $T$ defines an exponential function, then $K$ is ungrounded.
\end{corollary}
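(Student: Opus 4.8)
The plan is to derive a contradiction from the assumption that $K$ is grounded, using Lemma~\ref{lem:logvals} to show that $\d$-logarithms always push valuations strictly down in the $\Psi$-scale. Suppose $K$ is grounded, so $\Psi$ has a largest element $\psi(\gamma_0)$ for some $\gamma_0 \in \Gamma^{\neq}$. Pick any $a \in K^\times$ with $va = \gamma_0$; then $a \not\asymp 1$. By hypothesis $a$ (or $a\inv$, whichever is $\succ 1$) has a $\d$-logarithm $b \in K$, i.e. $b' = a^\dagger$. By Lemma~\ref{lem:logvals}(i) we get $b \succ 1$ and $vb = \chi(va) = \chi(\gamma_0) \in \Gamma^<$. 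Now apply the hypothesis again to $b$ itself: since $b \succ 1$, $b$ has a $\d$-logarithm $c \in K$, so $vc = \chi(vb) = \chi(\chi(\gamma_0))$, and $c \succ 1$ as well. Iterating, we produce elements whose valuations march strictly downward through $\Gamma^<$, and at each stage the associated $\psi$-value strictly exceeds the previous one — this is the point I need to nail down precisely.

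The key computation is the following: if $g \succ 1$ and $h$ is a $\d$-logarithm of $g$ (so $h \succ 1$ and $vh = \chi(vg)$), then $\psi(vh) > \psi(vg)$. Indeed $h^\dagger$ and $g^\dagger$ are related by $h' = g^\dagger$, so $vh + \psi(vh) = v(h') = v(g^\dagger) = \psi(vg)$; since $vh = \chi(vg) < 0$ we get $\psi(vh) = \psi(vg) - vh > \psi(vg)$. (Here I am using that $vh < 0$ because $h \succ 1$, together with the defining relation $\gamma' = \gamma + \psi(\gamma)$ from Subsection~\ref{sec:HTasymp}.) Thus starting from $\gamma_0$ with $\psi(\gamma_0) = \max\Psi$, the element $b$ above has $\psi(vb) > \psi(\gamma_0) = \max\Psi$, and $vb \in \Gamma^{\neq}$, contradicting maximality of $\psi(\gamma_0)$ in $\Psi$.

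So in fact the first step already suffices: I do not need to iterate. From $K$ grounded with $\psi(\gamma_0) = \max\Psi$, choose $a \in K^\times$ with $va = \gamma_0$, take a $\d$-logarithm $b$ of $a$ (available since $a \not\asymp 1$ means exactly one of $a, a\inv$ is $\succ 1$, and that one has a $\d$-logarithm, whose negative is a $\d$-logarithm of the other), note $vb = \chi(\gamma_0) \in \Gamma^<$ by Lemma~\ref{lem:logvals}(i), and compute $\psi(vb) = v(b') - vb = v(a^\dagger) - \chi(\gamma_0) = \psi(\gamma_0) - \chi(\gamma_0) > \psi(\gamma_0)$, since $\chi(\gamma_0) < 0$. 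This contradicts $\psi(\gamma_0) = \max\Psi$, so $K$ is ungrounded. For the final sentence: if $T$ defines an exponential function, then $K$ is closed under taking $\d$-logarithms (as noted at the end of Subsection~\ref{subsec:Tderiv}), so in particular every element of $K^>$ has a $\d$-logarithm in $K$, and the first part applies.

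The main obstacle — really the only subtlety — is making sure the hypothesis gives a $\d$-logarithm of the *right* element: the hypothesis only asserts $\d$-logarithms of elements of $K^>$, and $\d$-logarithms behave multiplicatively under inversion ($-b'$ is a $\d$-logarithm of $a\inv$ when $b'$ is one of $a$), so one must check that $\gamma_0$ can be taken with a positive representative — which is automatic since $va = \gamma_0$ for $a > 0$ is always arrangeable — and that Lemma~\ref{lem:logvals}(i) applies regardless of whether $a \succ 1$ or $a \prec 1$, which it does, as its proof explicitly handles the case $a \not\asymp 1$ via $-b' = (a\inv)^\dagger$.
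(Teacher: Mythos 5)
Your proof is correct and rests on the same key ingredient as the paper's: Lemma~\ref{lem:logvals}(i) applied to a $\d$-logarithm $b$ of a positive representative $a\not\asymp 1$ of an element of $\Psi$, which forces $b\succ 1$. The only (immaterial) difference is packaging: the paper reads off $\gamma = v(b')\in(\Gamma^<)'$ for each $\gamma\in\Psi$ and invokes the characterization of ungroundedness as $\Psi\subseteq(\Gamma^<)'$, whereas you assume a maximum $\psi(\gamma_0)$ of $\Psi$ and contradict it via the identity $\psi(vb)=\psi(\gamma_0)-vb>\psi(\gamma_0)$.
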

\begin{proof}
Suppose that every element in $K^>$ has a $\d$-logarithm in $K$, let $\gamma \in \Psi$, and take $a \in K^>$ with $a\not\asymp 1$ and $va^\dagger = \gamma$. Let $b$ be a $\d$-logarithm of $a$, so $b \succ 1$ by Lemma~\ref{lem:logvals} and $\gamma = vb' \in (\Gamma^<)'$. Thus, $\Psi \subseteq (\Gamma^<)'$.
\end{proof}

\noindent
Recall from the introduction that $K$ is an \textbf{$H_T$-field} if
\begin{enumerate}
\item[(H1)] $f'>0$ for all $f \in K$ with $f>\cO$, and
\item[(H2)] $\cO = C+ \smallo$.
\end{enumerate}
Note that if $K$ is an $H_T$-field, then $C$ is a lift of $\res K$.

\begin{lemma}\label{lem:equivHpreH}
The following are equivalent:
\begin{enumerate}
\item $K$ is a pre-$H_T$-field and $\cO = C+\smallo$;
\item $K$ is an $H_T$-asymptotic field and $\cO = C+\smallo$;
\item $K$ is an $H_T$-field.
\end{enumerate}
\end{lemma}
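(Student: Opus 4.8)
All three conditions contain the clause $\cO=C+\smallo$, which is exactly axiom (H2), so in each implication only the ``asymptotic'' part of the definitions needs to be transported. The plan is to prove the cycle $(1)\Rightarrow(2)\Rightarrow(3)\Rightarrow(1)$. The implication $(1)\Rightarrow(2)$ requires nothing new: it was noted above that every pre-$H_T$-field is $H_T$-asymptotic, and the clause $\cO=C+\smallo$ is common to (1) and (2).

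For $(2)\Rightarrow(3)$ I would check (H1) by hand. Given $f\in K$ with $f>\cO$: since $0\in\cO$ we have $f>0$, and since $f\notin\cO$ we have $vf<0$, that is, $f\succ 1$; then (HA1) gives $f^\dagger>0$, and hence $f'=f\cdot f^\dagger>0$. Together with the clause $\cO=C+\smallo$, which is (H2), this shows $K$ is an $H_T$-field. For $(3)\Rightarrow(1)$ I would use the fact recorded in the introduction, going back to~\cite{AD02}, that the underlying ordered valued differential field of an $H_T$-field is an $H$-field; by the structure theory of $H$-fields (\cite{AD02}; see also~\cite[Chapter 10]{ADH17}) every $H$-field is a pre-$H$-field, and it was recorded earlier that every pre-$H$-field is a pre-$H_T$-field (which is legitimate since (PH1) and (PH2) involve only the valued ordered differential field structure). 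Hence $K$ is a pre-$H_T$-field, and with (H2) this gives (1).

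The only genuine obstacle is the step invoked in $(3)\Rightarrow(1)$: recovering the asymptotic relations --- in particular small derivation and the analogues of (HA2) and (HA3) --- from the purely order-theoretic axioms (H1) and (H2). Everything else is routine. A self-contained alternative would be to reprove the small-derivation lemma directly: for nonzero $\epsilon\in\smallo$ and $g\succ 1$, apply (H1) to suitable positive rescalings of $g\epsilon$ and $g/\epsilon$ that lie above $\cO$ in order to locate $v(\epsilon')$ relative to $v(g^\dagger)$, yielding (HA2) and (HA3); but citing the known result from~\cite{AD02} is cleaner.
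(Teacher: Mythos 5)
Your proof is correct and follows essentially the same route as the paper: $(1)\Rightarrow(2)$ is immediate, $(2)\Rightarrow(3)$ is verified exactly as you do via (HA1), and for $(3)\Rightarrow(1)$ the paper's argument is precisely the one you cite — it explicitly notes that (PH2) for $H$-fields is~\cite[Lemma 10.5.1]{ADH17} and merely repeats that proof inline (using $g(c+f)>\cO$ and $g(c-f)>\cO$ for small $c\in C^>$ after reducing to $f\prec 1$ via (H2)) rather than citing it. So the only difference is self-containedness versus citation; both are legitimate, given that the paper has already recorded that a pre-$H$-field structure on a model of $\TdO$ is automatically a pre-$H_T$-field.
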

\begin{proof}
It is immediate that (1) implies (2). Suppose (2) holds and let $f \in K$ with $f>\cO$. Then $f \succ 1$ so $f^\dagger >0$ by (HA1). As $f>0$, this gives $f'>0$, so (H1) is satisfied. Of course (H2) is satisfied by assumption, so (3) holds. To see that (3) implies (1), we assume that $K$ is an $H_T$-field, and we will verify that (PH1) and (PH2) hold. For (PH1), let $f\in K$ with $f \succ 1$. Then $|f|>\cO$, so $|f|'>0$. Since $f^\dagger = |f|^\dagger$, this gives $f^\dagger > 0$. Now for (PH2), let $f,g \in K$ with $g\succ 1$ and $f \preceq 1$. We need to show that $g^\dagger \succ f'$. This is shown in~\cite[Lemma 10.5.1]{ADH17}, but we repeat the proof here. First, by replacing $g$ with $-g$ if need be, we may assume that $g>0$. As $\cO = C + \smallo$, we may subtract a constant from $f$ to arrange that $f \prec 1$. Let $c \in C^>$, so $0<c+f,c-f\asymp 1$. This gives $g(c+f),g(c-f)>\cO$, so $g'(c+f)+gf',g'(c-f)-gf'> 0$ by (H1), yielding
\[
g'(c-f)\ >\ gf'\ >\ -g'(c+f).
\]
Dividing by $g$ gives
\[
g^\dagger(c-f)\ >\ f'\ >\ -g^\dagger(c+f).
\]
As $f \prec 1$ and $c\in C^>$ can be taken to be arbitrarily small, we see that $f' \prec g^\dagger$ as desired.
\end{proof}

\begin{corollary}\label{cor:HTres}
Let $K$ be an $H_T$-field and let $M$ be an $H_T$-asymptotic field extension of $K$ with $\res M = \res K$. Then $M$ is an $H_T$-field with $C_M = C$.
\end{corollary}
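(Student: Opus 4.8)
The plan is to reduce the statement to Lemma~\ref{lem:equivHpreH} together with the fact, noted just after the definition of an $H_T$-field, that the constant field of an $H_T$-field is a lift of its residue field. Since $M$ is assumed to be $H_T$-asymptotic, by Lemma~\ref{lem:equivHpreH} it suffices to prove the single equality $\cO_M = C_M + \smallo_M$ in order to conclude that $M$ is an $H_T$-field; the identification $C_M = C$ will then follow from the hypothesis $\res M = \res K$.

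First I would check that $C$ is also a lift of $\res M$. Since $K$ is an $H_T$-field we have $C \subseteq \cO \subseteq \cO_M$, and since $\smallo_M \cap K = \smallo$, the residue map $\cO_M \to \res M$ restricts on $C$ to the residue map $C \to \res K$, which is an $\cL$-isomorphism onto $\res K = \res M$; hence $C$ is a lift of $\res M$. Now for $\subseteq$ in the target equality: given $a \in \cO_M$, there is $c \in C$ with $a - c \in \smallo_M$, so $a \in C + \smallo_M \subseteq C_M + \smallo_M$. The inclusion $\supseteq$ holds because $C_M \subseteq \cO_M$ by (HA1) and $\smallo_M \subseteq \cO_M$. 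Thus $\cO_M = C_M + \smallo_M$, and Lemma~\ref{lem:equivHpreH} gives that $M$ is an $H_T$-field.

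It remains to see $C_M = C$, for which $C \subseteq C_M$ is clear. Since $c, c^{-1} \in C_M \subseteq \cO_M$ for each $c \in C_M^\times$, we get $C_M^\times \subseteq \cO_M^\times$, so $C_M \cap \smallo_M = \{0\}$ and the residue map is injective on $C_M$. Its image $\overbar{C_M}$ satisfies $\overbar{C} \subseteq \overbar{C_M} \subseteq \res M = \res K = \overbar{C}$, so $\overbar{C_M} = \overbar{C}$; since $C \subseteq C_M$ and the residue map is injective on $C_M$, this forces $C = C_M$. I do not expect a genuine obstacle here --- it is essentially bookkeeping with Lemma~\ref{lem:equivHpreH} and basic valuation theory --- the one point deserving a line of justification is that $C$ stays a lift of the residue field upon passing to $M$, which is exactly where the hypothesis $\res M = \res K$ is used.
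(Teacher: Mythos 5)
Your proof is correct and follows essentially the same route as the paper: both reduce to Lemma~\ref{lem:equivHpreH} after observing that $C$ remains a lift of $\res M = \res K$ inside $\cO_M$. The only difference is that the paper gets $C = C_M$ and $\cO_M = C + \smallo_M$ in one stroke by citing the maximality of lifts among elementary $\cL$-substructures contained in $\cO_M$ (van den Dries--Lewenberg), whereas you verify these two facts by direct computation with the residue map; both are fine.
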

\begin{proof}
We have $C\subseteq C_M$ and by (HA1), we have $C_M \subseteq \cO_M$. As $C$ is a lift of $\res K = \res M$, it is maximal among the elementary $\cL$-substructures of $M$ contained in $\cO_M$, so $C = C_M$ and $\cO_M = C+\smallo_M$; see~\cite[Remark 2.11 and Theorem 2.12]{DL95}. We conclude that $M$ is an $H_T$-field by Lemma~\ref{lem:equivHpreH}.
\end{proof}

\begin{lemma}\label{lem:asympisHT}
Let $K$ be a pre-$H_T$-field and let $M$ be an immediate $H_T$-asymptotic field extension of $K$. Then $M$ is a pre-$H_T$-field. If $K$ is an $H_T$-field, then $M$ is as well.
\end{lemma}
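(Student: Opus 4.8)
The plan is to reduce everything to the characterizations of pre-$H_T$-fields and $H_T$-fields established earlier, so that essentially all the work is in checking the asymptotic condition (PH2) for elements of $M$ in terms of the corresponding condition for $K$. First I would observe that $M$ is already known to be $H_T$-asymptotic by hypothesis, so (HA1)--(HA3) are available for $M$; in particular $g^\dagger>0$ for all $g\in M$ with $g\succ 1$, which is exactly (PH1). The content is therefore (PH2): given $f,g\in M$ with $g\succ 1$ and $f\preceq 1$, show $g^\dagger\succ f'$. By~\cite[Lemma 10.1.1]{ADH17} it would suffice to check this only for $f\asymp 1$, or alternatively I can argue directly. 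Since $M$ is an immediate extension of $K$, there is $u\in K^\times$ with $g\sim u$, hence $g^\dagger\sim u^\dagger$ by the computation in the proof of Lemma~\ref{lem:testforsmall} (writing $g=u(1+\epsilon)$ with $\epsilon\in\smallo_M$ and using that $\epsilon'\prec u^\dagger$ because $M$ is $H_T$-asymptotic and $u\succ 1$). Likewise, since $f\preceq 1$ and $M$ is immediate over $K$, take $w\in K$ with $f-w\prec 1$, and then pick $w_0\in K^\times$ with $f-w\sim w_0$ when $f\neq w$; by Fact~\ref{fact:asymsim} we get $(f-w)'\sim w_0'$, and $w_0\prec 1$ forces $w_0'\prec u^\dagger$ since $K$ is a pre-$H_T$-field. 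Thus $f'=(f-w)'+w'$, with $(f-w)'\prec u^\dagger\sim g^\dagger$ and $w'\preceq u^\dagger$ (again because $w\preceq 1$ and $K$ is pre-$H_T$), so $f'\preceq g^\dagger$, and in fact $f'\prec g^\dagger$ after absorbing a constant as in the proof of Lemma~\ref{lem:testforsmall}. Hence $M$ satisfies (PH1) and (PH2), i.e.\ $M$ is a pre-$H_T$-field.

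For the second assertion, suppose in addition that $K$ is an $H_T$-field. Then by Corollary~\ref{cor:HTres}, since $M$ is an $H_T$-asymptotic field extension of $K$ with $\res M=\res K$ (immediacy), we conclude at once that $M$ is an $H_T$-field with $C_M=C$. So this half is immediate from the already-available corollary, and no further argument is needed.

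The main obstacle is really just bookkeeping: making sure the decomposition $f=w+(f-w)$ together with $g\sim u$ transfers the inequality $g^\dagger\succ f'$ from $K$ to $M$ cleanly, handling the degenerate cases $f=w$ and the passage from $\preceq$ to $\prec$ (which, as in Lemma~\ref{lem:equivHpreH} and Lemma~\ref{lem:testforsmall}, uses that one may shift $f$ by an element of $C$ to arrange $f\prec 1$, valid since $M$ is a pre-$H_T$-field and hence $\cO_M = C_M + \smallo_M$ once we know $C_M$ is a lift). In fact, the slickest route may be to cite Lemma~\ref{lem:testforsmall} directly: it suffices to show $f'\prec g^\dagger$ for all $f\in\smallo_M$ and $g\in K$ with $g\succ 1$, and this follows because $f\sim g_0$ for some $g_0\in\smallo$ (immediacy) gives $f'\sim g_0'\prec g^\dagger$ by Fact~\ref{fact:asymsim} and the pre-$H_T$-field property of $K$ — this is exactly the argument in Corollary~\ref{cor:asympiffstrict}, and it shows $M$ is $H_T$-asymptotic; combined with the pre-$H_T$ refinement above (or with Corollary~\ref{cor:HTres} in the $H_T$-field case) it finishes the proof.
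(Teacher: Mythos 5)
Your proof is correct and follows essentially the same route as the paper: decompose $f=w+(f-w)$ with $w\in K$ using $\res M=\res K$, compare $g$ to an element of $K$ using $\Gamma_M=\Gamma$, and bound the two pieces of $f'$ separately (the paper bounds $(f-w)'$ directly via (HA2) in $M$ rather than pulling it back to $K$ through a $\sim$-representative, but both work), then invokes Corollary~\ref{cor:HTres} for the $H_T$-field case exactly as you do. One small correction: (PH2) for $K$ already gives the \emph{strict} estimate $w'\prec u^\dagger$ (not merely $\preceq$), so you get $f'\prec g^\dagger$ immediately and the ``absorb a constant'' step is unnecessary; this is fortunate, because your justification for it --- that a pre-$H_T$-field satisfies $\cO_M=C_M+\smallo_M$ --- is false in general (that is axiom (H2), which is exactly what separates $H_T$-fields from pre-$H_T$-fields).
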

\begin{proof}
Let $f,g \in M$ with $g\succ f\asymp1$. We need to show that $g^\dagger \succ f'$. Using that $\Gamma_M = \Gamma$, take $a \in K$ with $g\asymp a$, so $g^\dagger \asymp a^\dagger$. Using that $\res M = \res K$, take $b\in K$ with $f- b\prec 1$, so $(f - b)' \prec a^\dagger$, as $M$ is $H_T$-asymptotic. As $K$ is a pre-$H_T$-field, we also have $b' \prec a^\dagger$, so 
\[
f'\ =\ (f - b)'+b'\ \prec\ a^\dagger\ \asymp\ g^\dagger.
\]
If $K$ is an $H_T$-field, then Corollary~\ref{cor:HTres} gives that $M$ is an $H_T$-field as well.
\end{proof}

\noindent
Using Lemma~\ref{lem:asympisHT}, we have the following consequence of Theorem~\ref{thm:asymp}:

\begin{corollary}
\label{cor:asympHT}
Suppose that $T$ is power bounded. Then every pre-$H_T$-field has a spherically complete immediate pre-$H_T$-field extension and every $H_T$-field has a spherically complete immediate $H_T$-field extension.
\end{corollary}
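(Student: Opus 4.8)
The plan is to deduce this directly from Theorem~\ref{thm:asymp} together with Lemma~\ref{lem:asympisHT}; essentially all of the substantive work has already been carried out. Let $K$ be a pre-$H_T$-field. As recorded in the remarks following the definition of pre-$H_T$-field, every pre-$H_T$-field is $H_T$-asymptotic, so Theorem~\ref{thm:asymp} (which is where power boundedness of $T$ enters) applies to $K$ and produces an immediate $H_T$-asymptotic field extension $M$ of $K$ that is spherically complete. It then remains only to upgrade ``$H_T$-asymptotic'' to ``pre-$H_T$-field'' (respectively ``$H_T$-field''), and this is exactly the content of Lemma~\ref{lem:asympisHT}: since $M$ is an immediate $H_T$-asymptotic field extension of the pre-$H_T$-field (respectively $H_T$-field) $K$, it is itself a pre-$H_T$-field (respectively an $H_T$-field). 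Thus the same $M$ witnesses both halves of the statement, and the two assertions are proved by one argument.

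The remaining points are bookkeeping rather than obstacles. One should note that the extension $M$ furnished by Theorem~\ref{thm:asymp} is a $\TdO$-extension of $K$ in the sense of our conventions, that is, it carries a $T$-convex valuation ring and a $T$-derivation restricting to those of $K$; but this is built into the meaning of ``$H_T$-asymptotic field extension,'' so nothing is to be checked. Likewise, ``immediate'' and ``spherically complete'' are precisely the two valuation-theoretic properties demanded in the conclusion, with no further hypotheses to discharge; in the degenerate case where $K$ is trivially valued one may simply take $M = K$.

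Accordingly, I do not expect any genuine difficulty at the level of this corollary: the argument is a two-line combination of the structural theorem (Theorem~\ref{thm:asymp}) and the transfer lemma (Lemma~\ref{lem:asympisHT}). The real work lies upstream, in the proof of Theorem~\ref{thm:asymp}, which relies on the maximality result~\cite[Theorem 6.3]{Ka21B} for immediate strict $T$-convex $T$-differential field extensions together with the computation $S(\der) = \{0\}$ for $H_T$-asymptotic fields from Lemma~\ref{lem:GammaandPsi}.
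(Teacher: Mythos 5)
Your proposal is correct and is exactly the paper's argument: the corollary is stated in the paper as an immediate consequence of Theorem~\ref{thm:asymp} combined with Lemma~\ref{lem:asympisHT}, just as you describe. Nothing further is needed.
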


\begin{assumption}
For the remainder of this section, we assume that $T$ is power bounded with field of exponents $\Lambda$ and that $K$ is a pre-$H_T$-field.
\end{assumption}

\subsection{Adjoining integrals} \label{subsec:ints}
To begin this subsection, let us use Corollaries~\ref{cor:smallint0} and~\ref{cor:bigint0} to say something about immediate extensions of $K$ by integrals.

\begin{corollary}\label{cor:smallint}
Let $s \in K$ with $vs \in (\Gamma^>)'$ and $s \not\in \der\smallo$. Then $K$ has an immediate pre-$H_T$-field extension $K\langle a \rangle$ with $a \prec 1$ and $a' = s$ such that for any $H_T$-asymptotic field extension $M$ of $K$ with $s \in \der\smallo_M$, there is a unique $\LdO(K)$-embedding $K\langle a \rangle \to M$. If $K$ is ungrounded and $\upl$-free, then so is $K\langle a \rangle$.
\end{corollary}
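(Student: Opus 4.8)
The plan is to reduce everything to the already-proven immediate-extension machinery for $H_T$-asymptotic fields, and then upgrade ``$H_T$-asymptotic'' to ``pre-$H_T$'' using Lemma~\ref{lem:asympisHT}, and preserve $\upl$-freeness using Lemma~\ref{lem:yardstickpreserve}. First I would check that the hypotheses of Corollary~\ref{cor:smallint0} are met: we are given $vs \in (\Gamma^>)'$ and $s \notin \der\smallo$, so the only extra thing to verify is that $v(s-\der\smallo)$ has no largest element. Since $K$ is a pre-$H_T$-field, it is $H_T$-asymptotic, so this should follow from the structure of $\der\smallo$ exactly as in the $H$-field case — the argument that $v(s - \der\smallo)$ is ``cofinal-closed'' under the hypothesis $vs \in (\Gamma^>)'$ is the one encoded in the proof of~\cite[Lemma 10.2.4]{ADH17}. (If it genuinely could have a largest element, then $s$ itself would lie in $\der\smallo$ after a correction, contradicting $s \notin \der\smallo$ together with $vs \in (\Gamma^>)'$; I would spell this out but expect it to be short.) Granting this, Corollary~\ref{cor:smallint0} hands us an immediate $H_T$-asymptotic field extension $K\langle a\rangle$ with $a \prec 1$, $a' = s$, and the stated universal embedding property into any $H_T$-asymptotic field extension $M$ of $K$ with $s \in \der\smallo_M$.

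Next I would invoke Lemma~\ref{lem:asympisHT}: $K$ is a pre-$H_T$-field and $K\langle a\rangle$ is an immediate $H_T$-asymptotic extension of $K$, so $K\langle a\rangle$ is a pre-$H_T$-field. The embedding property transfers verbatim since it was already stated for $H_T$-asymptotic $M$, which includes all pre-$H_T$-field extensions. That finishes the first two sentences of the corollary.

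For the last sentence, assume $K$ is ungrounded and $\upl$-free. By Fact~\ref{fact:upl->asymp}, $\upl$-freeness gives that $K$ has asymptotic integration, so in particular $\Gamma \setminus (\Gamma^{\neq})' = \emptyset$; combined with $vs \in (\Gamma^>)'$ this is consistent with $s$ having no integral in $K$ but an integral of valuation $vs - (\text{something in }\Gamma^<)'$, i.e.\ $va \in \Gamma^<$, wait — more carefully, $a \prec 1$ so $va \in \Gamma^{>0}$. The point is that $K\langle a\rangle$ is a \emph{simple immediate} $H_T$-asymptotic extension of the $\upl$-free field $K$, so by Lemma~\ref{lem:yardstickpreserve} it suffices to show $v(a - K) \subseteq \Gamma$ has the yardstick property. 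Here I would use that $a$ is a pseudolimit of the pc-sequence $(a_\rho)$ from the proof of Corollary~\ref{cor:smallint0}, with $v(s - a_\rho')$ strictly increasing and cofinal in $v(s - \der\smallo)$; since $a - a_\rho \prec 1$ and $(a - a_\rho)' = s - a_\rho'$, Fact~\ref{fact:asymsim} gives that $v(a - a_\rho)$ is determined by $v(s - a_\rho')$ via the inverse of $\gamma \mapsto \gamma'$ on $\Gamma^{>0}$, i.e.\ $v(a - a_\rho) = \chi(\text{stuff})$-type expressions; and the cut $v(a-K)$ is then a translate/image of a cut in $\Psi^\downarrow$-like territory. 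The yardstick property for such a cut — that $\gamma - \chi(\gamma)$ stays in the cut for $\gamma$ large in the cut — should follow from the fact that $v(s - \der\smallo) \subseteq (\Gamma^>)'$ and the relation between $\der\smallo$-values and $\Psi$; this is the $\upl$-free analog of the computation Gehret does, and I expect it to reduce to~\cite[Lemma 11.5.6]{ADH17} or a direct asymptotic-couple manipulation.

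The main obstacle is precisely this last step: verifying the yardstick property of $v(a-K)$. Everything else is bookkeeping on top of Corollaries~\ref{cor:smallint0}, Lemma~\ref{lem:asympisHT}, and Lemma~\ref{lem:yardstickpreserve}. The subtlety is that $v(a-K)$ is a cut in $\Gamma$ (not in $\Gamma^<$, since $a \prec 1$ forces the relevant values to be positive), and one must correctly identify it — via the pc-sequence $(a_\rho)$ and Fact~\ref{fact:asymsim} — as $\chi\big(v(s-\der\smallo)\big)$ up to the exact normalization, and then read off the yardstick property from the hypothesis $v(s - \der\smallo) \subseteq (\Gamma^>)'$ together with the shape of $\Psi$ in an ungrounded field with asymptotic integration. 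I would handle this by first writing $v(a - a_\rho)$ explicitly in terms of $v(s - a_\rho')$, deducing the cofinal shape of $v(a-K)$, and then checking yardstickness directly against the definition. If a cleaner route is available, it is to cite the fact that $\upl$-freeness is preserved when adjoining an integral of an element whose valuation lies in $(\Gamma^>)'$, which is implicit in~\cite{ADH17} and~\cite{Ge17C}; but I would prefer the self-contained yardstick argument to keep the $T$-convex setting honest.
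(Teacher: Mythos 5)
Your proposal follows the paper's proof step for step: verify that $v(s-\der\smallo)$ has no largest element, feed this into Corollary~\ref{cor:smallint0}, upgrade to a pre-$H_T$-field via Lemma~\ref{lem:asympisHT}, and obtain $\upl$-freeness from the yardstick property of $v(a-K)$ together with Lemma~\ref{lem:yardstickpreserve}. Two remarks on the steps you deferred. First, the claim that $v(s-\der\smallo)$ has no largest element is precisely where the pre-$H_T$-field axiom (PH2) is used, and your attribution of it to $H_T$-asymptoticity points at the wrong property: given $y\in\smallo$, one writes $s-y'=ub'$ with $b\in\smallo$ and $u\in\cO^\times$, and then $s-(y+ub)'=-u'b$; to conclude $-u'b\prec b'\asymp s-y'$ one needs $u'\prec b^\dagger$, which is (PH2), whereas (HA3) only gives $u'\preceq b^\dagger$ and hence no strict improvement. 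Your parenthetical fallback (``$s$ would lie in $\der\smallo$ after a correction'') is not the right argument; the paper's proof is exactly the short computation just described. Second, for the yardstick property the paper simply cites~\cite[Lemma 8.5]{Ge17C}, which applies verbatim because the cut $v(a-K)$ is determined by the underlying valued differential field; your proposed direct verification would also work, but note that the inverse of $\gamma\mapsto\gamma'$ on $\Gamma^>$ is the integration map, not the contraction $\chi$ (which is defined by $\chi(\gamma)'=\gamma^\dagger$), so the identification of $v(a-K)$ must be written with the former.
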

\begin{proof}
Let $S \coloneqq v(s-\der\smallo)\subseteq (\Gamma^>)'$. We claim that $S$ has no largest element. Let $y \in \smallo$ and take $b \in \smallo$ with $s-y' \asymp b'$. Take $u \in \cO^\times$ with $s-y' = ub'$. Then (PH2) gives $u' \prec b^\dagger$, so $u'b\prec b'$, and 
\[
s- (y+ub)' \ =\ s-y' - ub'-u'b\ =\ -u'b\ \prec\ b'\ \asymp\ s-y'.
\]
Thus, $S$ has no largest element as claimed, and Corollary~\ref{cor:smallint0} gives an immediate $H_T$-asymptotic field extension $K\langle a \rangle$ of $K$ with $a \prec 1$, $a' = s$, and the desired embedding property. By Lemma~\ref{lem:asympisHT}, $K\langle a \rangle$ is itself a pre-$H_T$-field. By~\cite[Lemma 8.5]{Ge17C}, the set $v(a-K)$ has the yardstick property, so if $K$ is ungrounded and $\upl$-free, then $K\langle a \rangle$ is as well by Lemma~\ref{lem:yardstickpreserve}.
\end{proof}

\begin{corollary}\label{cor:bigint}
Let $s \in K$ with $v(s-\der K)\subseteq (\Gamma^<)'$. Then $K$ has an immediate pre-$H_T$-field extension $K\langle a \rangle$ with $a' = s$ such that for any $H_T$-asymptotic field extension $M$ of $K$ and $b \in M$ with $b' = s$, there is a unique $\LdO(K)$-embedding $K\langle a \rangle \to M$ sending $a$ to $b$. If $K$ is ungrounded and $\upl$-free, then so is $K\langle a \rangle$.
\end{corollary}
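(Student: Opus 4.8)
The plan is to mimic the proof of Corollary~\ref{cor:smallint}, but now feeding Corollary~\ref{cor:bigint0} instead of Corollary~\ref{cor:smallint0}. First I would set $S \coloneqq v(s - \der K) \subseteq (\Gamma^<)'$ and verify the two hypotheses of Corollary~\ref{cor:bigint0}: that $v(s-\der K) < (\Gamma^>)'$ (immediate, since $(\Gamma^<)' < (\Gamma^>)'$ always holds) and that $S$ has no largest element. For the latter, take $y \in K$ with $s - y' \ne 0$; since $v(s-y') \in (\Gamma^<)'$ we have $s - y' \not\asymp 1$ and in fact $s-y' \asymp b'$ for some $b \in K$ with $b \succ 1$. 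Writing $s - y' = u b'$ with $u \in \cO^\times$, axiom (PH2) gives $u' \prec b^\dagger$, hence $u'b \prec b'$, and then
\[
s - (y + ub)' \ =\ s - y' - ub' - u'b\ =\ -u'b\ \prec\ b'\ \asymp\ s - y',
\]
so $S$ has no largest element. (One should double-check the case where $s - \der K$ might contain $0$, i.e. $s$ already has an integral in $K$; but then the corollary is vacuous or one takes $K\langle a\rangle = K$, so we may assume $s \notin \der K$, and then the displayed computation is valid for every $y$.)

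With both hypotheses in hand, Corollary~\ref{cor:bigint0} produces an immediate $H_T$-asymptotic field extension $K\langle a \rangle$ of $K$ with $a' = s$ and the stated universal embedding property: for any $H_T$-asymptotic field extension $M$ of $K$ and any $b \in M$ with $b' = s$, there is a unique $\LdO(K)$-embedding $K\langle a \rangle \to M$ sending $a$ to $b$. Since $K\langle a\rangle$ is an immediate $H_T$-asymptotic extension of the pre-$H_T$-field $K$, Lemma~\ref{lem:asympisHT} upgrades it to a pre-$H_T$-field. This gives everything except the preservation of $\upl$-freeness.

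For the last sentence, assume $K$ is ungrounded and $\upl$-free. By Lemma~\ref{lem:yardstickpreserve} it suffices to show that $v(a-K) \subseteq \Gamma$ has the yardstick property; note $\upl$-freeness already gives asymptotic integration (Fact~\ref{fact:upl->asymp}), so $v(a-K)$ is indeed a nonempty convex subset of $\Gamma$ with no largest element (it equals a translate of $v(s - \der K)$ by Lemma~\ref{lem:shift}, or one argues directly). The yardstick property here should follow from the relevant lemma in Gehret's paper—presumably the companion of~\cite[Lemma 8.5]{Ge17C} for the ``big'' (antiderivative of something with value in $(\Gamma^<)'$) case; I would cite that result to conclude $v(a-K)$ has the yardstick property, and then Lemma~\ref{lem:yardstickpreserve} yields that $K\langle a\rangle$ is $\upl$-free.

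The only genuine obstacle is locating and correctly invoking the yardstick-property input for this ``big integral'' case: in the small-integral case one quotes~\cite[Lemma 8.5]{Ge17C}, and one must make sure the analogous statement (for $v(a-K)$ when $a$ is an integral of an element with $v(s-\der K) \subseteq (\Gamma^<)'$) is available in the same source, or else prove the yardstick property by hand from the description of $v(a-K)$ as a translate of $v(s-\der K)$ together with the identity $\gamma - \chi(\gamma) = \gamma^\dagger$-type computation. Everything else is a routine transcription of the proof of Corollary~\ref{cor:smallint}.
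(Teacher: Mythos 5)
Your proposal is correct and follows essentially the same route as the paper: verify that $v(s-\der K)$ has no largest element via the computation $s-(y+ub)' = -u'b \prec b' \asymp s-y'$, invoke Corollary~\ref{cor:bigint0} for the extension and embedding property, upgrade to a pre-$H_T$-field by Lemma~\ref{lem:asympisHT}, and preserve $\upl$-freeness via the yardstick property and Lemma~\ref{lem:yardstickpreserve}. The missing citation you flag is indeed available: the paper quotes~\cite[Lemma 9.6]{Ge17C} for the yardstick property of $v(a-K)$ in this ``big integral'' case.
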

\begin{proof}
Let $S \coloneqq v(s-\der K)\subseteq (\Gamma^<)'$. Again, we claim that $S$ has no largest element. Let $y \in K$ and take $b \succ 1$ and $u \asymp 1$ with $s-y' =ub'$. As in the proof of Corollary~\ref{cor:smallint}, we see that $s- (y+ub)' \prec s-y'$, as desired. Corollary~\ref{cor:bigint0} gives an immediate $H_T$-asymptotic field extension $K\langle a \rangle$ of $K$ with $a' = s$ and the desired embedding property. By Lemma~\ref{lem:asympisHT}, $K\langle a \rangle$ is itself a pre-$H_T$-field. By~\cite[Lemma 9.6]{Ge17C}, the set $v(a-K)$ has the yardstick property, so if $K$ is ungrounded and $\upl$-free, then $K\langle a \rangle$ is as well by Lemma~\ref{lem:yardstickpreserve}.
\end{proof}

\noindent
Now we turn to the case that $K$ has a gap. First, we give a useful test for whether a simple extension of $K$ is a pre-$H_T$-field.

\begin{lemma}
\label{lem:simprehext}
Let $K$ be a pre-$H_T$-field and let $M = K\langle a \rangle$ be a $\TdO$-extension of $K$ with $va\not\in\Gamma$. Suppose that for all $g \in K^\times$ and $\lambda \in \Lambda$ with $ga^\lambda \succ 1$, we have 
\begin{enumerate}
\item[(i)] $(ga^\lambda)^\dagger >0$,
\item[(ii)] $(ga^\lambda)^\dagger \succ f'$ for all $f \in K$ with $f \preceq 1$,
\item[(iii)] $(ga^\lambda)^\dagger \succ F(a)'$ for all $\cL(K)$-definable functions $F\colon K\to K$ with $F(a) \prec 1$ and $F(a)\not\in K$.
\end{enumerate}
Then $M$ is a pre-$H_T$-field. If $K$ is an $H_T$-field, then so is $M$. 
\end{lemma}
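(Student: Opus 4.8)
The plan is to verify directly that $M$ satisfies (PH1) and (PH2), using the assumption $va \notin \Gamma$ together with the Wilkie inequality to reduce an arbitrary infinite element of $M$ to the form controlled by hypotheses (i)--(iii). First I would recall that since $va \notin \Gamma$, the Wilkie inequality (Fact~\ref{fact:wilkieineq}) gives $\Gamma_M = \Gamma \oplus \Lambda va$ and $\res M = \res K$. Hence every $h \in M^\times$ satisfies $h \asymp g a^\lambda$ for some $g \in K^\times$ and $\lambda \in \Lambda$; writing $h = u g a^\lambda$ with $u \in \cO_M^\times$ and $\bar u \in \res M = \res K$, we may further multiply $g$ by a lift of $\bar u$ and arrange $h = (1+\epsilon) g a^\lambda$ with $\epsilon \prec 1$. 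Since $h^\dagger = (g a^\lambda)^\dagger + (1+\epsilon)^\dagger$ and $(1+\epsilon)^\dagger = \epsilon'/(1+\epsilon) \sim \epsilon'$, the key point is that the ``error'' $\epsilon'$ is dominated by $(g a^\lambda)^\dagger$ whenever $h \succ 1$; this is exactly what hypotheses (ii) and (iii) are designed to give, once we decompose $\epsilon = c + F(a)$ where $c \in \cO \cap K$ accounts for the $K$-part and $F(a) \prec 1$ is the genuinely new part. (Here I would use that $\res M = \res K$ to split off a $K$-residue, so that the remaining piece can be taken $\prec 1$ and of the form $F(a)$ for an $\cL(K)$-definable $F$; if $F(a) \in K$ we are in case (ii), otherwise case (iii).)

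With this reduction in hand, the verification of (PH1) is short: given $h \in M$ with $h \succ 1$, we get $h^\dagger \sim (ga^\lambda)^\dagger$ from the previous paragraph (since $\epsilon' \prec (ga^\lambda)^\dagger$), and $(ga^\lambda)^\dagger > 0$ by hypothesis (i), so $h^\dagger > 0$. For (PH2) we must show $h^\dagger \succ f'$ for every $f \in \cO_M$. Again split $f = d + G(a)$ with $d \in \cO \cap K$ and $G(a) \preceq 1$ (possible since $\res M = \res K$); then $f' = d' + G(a)'$. Now $d' \prec (ga^\lambda)^\dagger$ by hypothesis (ii), and $G(a)'$ is handled by hypotheses (ii) or (iii) according to whether $G(a) \in K$ or not — although here I should be a little careful, since (iii) as stated requires $G(a) \prec 1$ rather than $\preceq 1$; if $G(a) \asymp 1$ I would first subtract a suitable constant or $K$-element of the same residue to reduce to the strict case, or note that $G(a)'$ with $G(a) \asymp 1$ is still $\prec (ga^\lambda)^\dagger$ because its valuation lies above $(\Gamma^<)'$ and $(ga^\lambda)^\dagger \in \Psi_M \subseteq$ the relevant cut. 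Combining, $f' \prec (ga^\lambda)^\dagger \sim h^\dagger$, which is (PH2).

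The main obstacle I anticipate is the bookkeeping in the reduction step: getting every element of $M$ into the normal form $(1+\epsilon)ga^\lambda$ with $\epsilon = c + F(a)$, $c \in K$, $F(a) \prec 1$, cleanly and uniformly, and making sure the $\cL(K)$-definable function $F$ genuinely exists (this uses $\res M = \res K$ and that $M = K\langle a\rangle$ is a simple $\cL$-extension, so every element of $M$ is $F(a)$ for some $\cL(K)$-definable $F$). A secondary subtlety is reconciling the strict inequality $F(a) \prec 1$ demanded in (iii) with the possibility that the natural decomposition produces a term that is only $\preceq 1$; I expect this to be resolved by an extra subtraction of a constant, as in the proof of Lemma~\ref{lem:equivHpreH}. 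Finally, for the last sentence of the lemma, once $M$ is known to be a pre-$H_T$-field, the fact that $\res M = \res K$ (from the Wilkie inequality, since $va \notin \Gamma$) together with Corollary~\ref{cor:HTres} immediately upgrades ``$K$ is an $H_T$-field'' to ``$M$ is an $H_T$-field'' — alternatively one invokes Lemma~\ref{lem:equivHpreH} after checking $\cO_M = C_M + \smallo_M$, which again follows from $C_M = C$ being a lift of $\res M = \res K$.
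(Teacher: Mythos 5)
Your proposal is correct and follows essentially the same route as the paper's proof: invoke the Wilkie inequality to get $\res M = \res K$ and $\Gamma_M = \Gamma\oplus\Lambda va$, normalize $h\sim ga^\lambda$, show $h^\dagger\sim(ga^\lambda)^\dagger$ via $(1+\epsilon)^\dagger\sim\epsilon'$ and hypotheses (ii)/(iii), deduce (PH1) from (i), and for (PH2) subtract a $K$-element of the same residue to land in the strict case of (iii) before concluding with Corollary~\ref{cor:HTres}. The only caveat is that your parenthetical alternative for handling $G(a)\asymp 1$ (appealing to $\Psi_M$ and $(\Gamma^<)'$ in $M$) would be circular, since those facts presuppose what is being proved; your primary resolution by subtraction is the right one and is exactly what the paper does.
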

\begin{proof}
Let $h \in M$ with $h \succ 1$ and take $g \in K^\times$ and $\lambda \in \Lambda$ with $h \asymp ga^\lambda$. By the Wilkie inequality, we have $\res M= \res K$, so by multiplying $g$ with an element in $\cO^\times$, we may even assume that $h \sim ga^\lambda$. Take $\epsilon \in \smallo_M$ with $h = ga^\lambda(1+\epsilon)$. Then
\[
h^\dagger - (ga^\lambda)^\dagger \ =\ (1+\epsilon)^\dagger\ =\ \frac{\epsilon'}{1+\epsilon} \ \sim \ \epsilon'.
\] 
We have $\epsilon' \prec (ga^\lambda)^\dagger$ by (ii) and (iii), so $h^\dagger \sim (ga^\lambda)^\dagger$. In particular, $h^\dagger > 0$ by (i), so (PH1) holds. For (PH2), let $f \in \cO_M$. We need to show that $h^\dagger \succ f'$. Since $h^\dagger \sim (ga^\lambda)^\dagger$, it suffices to show that $(ga^\lambda)^\dagger \succ f'$. This follows from (ii) if $f \in K$, so we may assume $f \not\in K$. As $\res M = \res K$, we may take $u \in \cO$ with $f-u\prec 1$. Take an $\cL(K)$-definable function $F\colon K\to K$ with $F(a) = f-u$. Then $f' = u' +F(a)' \prec (ga^\lambda)^\dagger$ by (ii) and (iii). Finally, if $K$ is an $H_T$-field, then $M$ is as well by Corollary~\ref{cor:HTres}, since $\res M = \res K$.
\end{proof}

\noindent
The next lemma shows that if $K$ has a gap, then this gap has an integral in some pre-$H_T$-field extension of $K$.

\begin{lemma}\label{lem:gapgoesup}
Let $s \in K$ and suppose $vs$ is a gap in $K$. Then $K$ has a pre-$H_T$-field extension $K\langle a \rangle$ with $a \prec 1$ and $a' = s$ such that for any $H_T$-asymptotic field extension $M$ of $K$ with $s \in \der\smallo_M$, there is a unique $\LdO(K)$-embedding $K\langle a \rangle\to M$. The pre-$H_T$-field $K\langle a \rangle$ is grounded with 
\[
\res K\langle a \rangle\ =\ \res K, \qquad \Gamma_{K\langle a \rangle}\ =\ \Gamma \oplus \Lambda va,\qquad \Psi_{K\langle a \rangle}\ =\ \Psi \cup \{va^\dagger\},\qquad va^\dagger\ >\ \Psi.
\]
\end{lemma}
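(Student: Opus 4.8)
The plan is to build $K\langle a\rangle$ by hand, placing $a$ in the correct cut of $K$, to verify the pre-$H_T$-field axioms through Lemma~\ref{lem:simprehext}, to read off the asymptotic data from the factorization $(ga^\lambda)^\dagger=g^\dagger+\lambda a^\dagger$, and to obtain the embedding property by showing that any candidate integral in an $H_T$-asymptotic extension realizes over $K$ the same cut as $a$.

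\smallskip\noindent
\emph{Setup and construction.} First I would record two consequences of $vs=\beta$ being a gap: since $\beta\notin(\Gamma^{\neq})'$ we have $\Gamma^{<\beta}=(\Gamma^<)'$ and $\Gamma^{>\beta}=(\Gamma^>)'$, and $K$ is ungrounded so $\Psi\subseteq(\Gamma^<)'$; and for every $f\in\cO$ with $f'\neq0$ one has $v(f')\geq\beta$, strictly if $f\prec1$. (If $f\prec1$ then $v(f')\in(\Gamma^>)'$; if $f\asymp1$ then $v(f')=v(f^\dagger)>\Psi$ by (PH2), and no element of $(\Gamma^<)'$ exceeds all of $\Psi$, since $\gamma'<\psi(\gamma)$ for $\gamma<0$, whence $v(f')\notin(\Gamma^<)'=\Gamma^{<\beta}$.) Replacing $s$ by $-s$, and $a$ by $-a$ at the very end, I may assume $s<0$. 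Then I take $K\langle a\rangle$ to be the simple $\cL$-extension of $K$ in which $a$ realizes the cut $\{y\in K:y\leq0\}$, i.e.\ $0<a<K^{>0}$, equipped with the convex hull of $\cO$ as its $T$-convex valuation ring. By Fact~\ref{fact:tworings} (cf.\ Lemma~\ref{lem:uniqueS}) this makes $K\langle a\rangle$ a $\TO$-extension in which $va\notin\Gamma$ realizes the cut $\Gamma^{\leq0}$, so $|\lambda va|<\Gamma^{>0}$ for all $\lambda\in\Lambda$; by the Wilkie inequality $\res K\langle a\rangle=\res K$ and $\Gamma_{K\langle a\rangle}=\Gamma\oplus\Lambda va$. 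Finally I extend $\der$ to a $T$-derivation with $a'=s$ (Fact~\ref{fact:transext}); then $a^\dagger=s/a<0$ with $v(a^\dagger)=\beta-va\notin\Gamma$. Since $0<va<\Gamma^{>0}$ and $\beta>\Psi$, we get $va^\dagger=\beta-va>\Psi$; and for $g\in K^\times$ and $\lambda\in\Lambda$, $\lambda\neq0$, the summands of $(ga^\lambda)^\dagger=g^\dagger+\lambda a^\dagger$ have valuations $v(g^\dagger)\in\Gamma$ and $\beta-va\notin\Gamma$, so there is no cancellation and $v\big((ga^\lambda)^\dagger\big)=\min\big(v(g^\dagger),\,\beta-va\big)$, equal to $\psi(vg)\in\Psi$ if $g\not\asymp1$ and to $\beta-va$ if $g\asymp1$ (using $v(g^\dagger)=v(g')\geq\beta$ there). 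Hence $\Psi_{K\langle a\rangle}=\Psi\cup\{\beta-va\}$ with maximum $\beta-va=va^\dagger>\Psi$, so $K\langle a\rangle$ is grounded and the displayed identities hold.

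\smallskip\noindent
\emph{$K\langle a\rangle$ is a pre-$H_T$-field.} I would invoke Lemma~\ref{lem:simprehext}. As $|\lambda va|<\Gamma^{>0}$, the hypothesis $ga^\lambda\succ1$ forces either $g\succ1$ (any $\lambda$) or $g\asymp1$ with $\lambda<0$. Condition (i): in the first case $(ga^\lambda)^\dagger\sim g^\dagger>0$ by (PH1); in the second $(ga^\lambda)^\dagger\sim\lambda a^\dagger>0$, as $\lambda<0$ and $a^\dagger<0$. Condition (ii): for $f\in K$ with $f\preceq1$ and $f'\neq0$ we have $v(f')\geq\beta>\beta-va\geq v\big((ga^\lambda)^\dagger\big)$, so $(ga^\lambda)^\dagger\succ f'$ (and this is trivial if $f'=0$). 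Condition (iii) is the substantive point: given $\cL(K)$-definable $F\colon K\to K$ with $F(a)\prec1$ and $F(a)\notin K$, write $F(Y)=G(Y,c)$ with $G$ an $\cL(\emptyset)$-definable function and $c$ an $\cL(\emptyset)$-independent tuple from $K$, and, replacing each component $c_i\succ1$ by $c_i\inv$ and absorbing the change into $G$ (which does not alter $F(a)'$), arrange that $c$ is a tuple from $\cO$. Since $G$ is $\cL(\emptyset)$-definable and $\der$ is a $T$-derivation, $F(a)'=\nabla G(a,c)\cdot(s,c')$; here $v(s)=\beta$ and $v(c_i')\geq\beta$ for each $i$, so it suffices to prove $\nabla G(a,c)\cdot d\prec a\inv$ for an arbitrary tuple $d$, for then $v\big(F(a)'\big)>\beta-va=v(a^\dagger)$, i.e.\ $F(a)'\prec a^\dagger$, and thus $F(a)'\prec(ga^\lambda)^\dagger$ since $v\big((ga^\lambda)^\dagger\big)\leq\beta-va$. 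The bound $\nabla G(a,c)\cdot d\prec a\inv$ is the content of Corollary~\ref{cor:wholethingsmall}: the $\partial/\partial Y_0$ entry is handled directly by Proposition~\ref{prop:smallderivsim} applied to $y\mapsto G(y,c)$ (legitimate because $va\notin\Gamma$, so $a\not\sim f$ for all $f\in K$), giving $\partial G/\partial Y_0(a,c)\preceq a\inv F(a)\prec a\inv$, while the remaining entries are controlled by passing to the trivially valued $\cL$-substructures generated by the other components of $c$, as in the proof of that corollary. With all three conditions verified, $K\langle a\rangle$ is a pre-$H_T$-field, and an $H_T$-field whenever $K$ is (again by Lemma~\ref{lem:simprehext}). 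I expect this verification of condition~(iii) --- especially arranging the parameters of $F$ so that the trivially valued hypothesis of Corollary~\ref{cor:wholethingsmall} genuinely applies --- to be the main obstacle; everything else is bookkeeping in the asymptotic couple.

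\smallskip\noindent
\emph{Embedding and uniqueness.} Let $M$ be an $H_T$-asymptotic field extension of $K$ with $s\in\der\smallo_M$. If $b_1,b_2\in\smallo_M$ and $b_1'=b_2'=s$, then $b_1-b_2\in C_M$ with $b_1-b_2\prec1$, forcing $b_1=b_2$ since $C_M^\times\subseteq\cO_M^\times$; call this unique element $b$, and note $b>0$ because (HA1) applied to $b\inv\succ1$ gives $b^\dagger<0$, so the sign of $b$ is opposite to that of $s$. For $y\in K$ with $y\prec1$, $y\neq0$, we have $v\big((b-y)'\big)=v(s-y')=\beta$ since $vy'\in(\Gamma^>)'$; were $v(b-y)\in\Gamma$ we would get $\beta\in(\Gamma^>)'$, contradicting that $\beta$ is a gap, so $v(b-y)\notin\Gamma$. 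Consequently, for all such $y$ one has $v(b-y)=vb$ and $b-y\succ y$, so $0<b<K^{>0}$: $b$ realizes over $K$ the same cut as $a$. This gives an $\cL(K)$-isomorphism of $K\langle a\rangle$ onto $K\langle b\rangle\subseteq M$ sending $a$ to $b$; it is $T$-differential because $a'=s=b'$ (Fact~\ref{fact:transext}), and it respects $\cO$ because --- just as for $a$ --- the image under any $\cL(K)$-definable function of the ``$0^+$-cut'' realized by $b$ lies on one side of an element of $K\cup\{\pm\infty\}$ (by o-minimality), so no element of $K\langle b\rangle$ realizes the cut $\cO^\downarrow$, and hence $\cO_M\cap K\langle b\rangle$ is the convex hull of $\cO$ by Fact~\ref{fact:tworings}. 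Uniqueness of the $\LdO(K)$-embedding is then immediate, as $a$ must be sent to an element of $\smallo_M$ with derivative $s$, i.e.\ to $b$.
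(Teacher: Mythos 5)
Your overall strategy (place $a$ in the right cut, verify the axioms via Lemma~\ref{lem:simprehext}, read off $\Psi_{K\langle a\rangle}$ from $(ga^\lambda)^\dagger=g^\dagger+\lambda a^\dagger$, match cuts for the embedding) is the paper's, but there are two genuine problems. First, you misidentify the cut of $a$. You put $a$ in the cut $\{y\in K:y\leq 0\}$, i.e.\ $0<a<K^{>0}$, and assert this makes $va$ realize the cut $\Gamma^{\leq 0}$. These are incompatible: if $0<a<K^{>0}$ then also $a<\epsilon y$ for every $y,\epsilon\in K^{>0}$, so $a\prec y$ for all $y\in K^{\times}$ and $va>\Gamma$ — not $0<va<\Gamma^{>}$. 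With $va>\Gamma$ one gets $va^\dagger=\beta-va<\Gamma$, so $va^\dagger<\Psi$ and the displayed conclusions (groundedness, $\Psi_{K\langle a\rangle}=\Psi\cup\{va^\dagger\}$ with $va^\dagger>\Psi$) fail, as does the embedding. The correct cut, per Lemma~\ref{lem:uniqueS}, is $\{y:y\leq 0\text{ or }0<y\prec 1\}$: $a$ is an infinitesimal lying \emph{above} every positive infinitesimal of $K$. Your own embedding computation exposes the slip: from $v(b-y)\notin\Gamma$ you correctly deduce $b-y\succ y$ and hence $b\succ y$, i.e.\ $b>y$ for every positive $y\prec 1$ in $K$ — which contradicts your stated conclusion $0<b<K^{>0}$. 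Since all your subsequent valuation computations use $0<va<\Gamma^{>}$, the fix is to correct the cut description of $a$ and of $b$ throughout; as written, the construction and the cut-matching step are inconsistent with each other.

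Second, your verification of condition (iii) does not go through when $\cO\neq K$. Corollary~\ref{cor:wholethingsmall} is stated and proved under the hypothesis $\cO=K$, and its proof uses trivial valuation essentially: to bound $\partial G/\partial Y_i(a,c)$ for $i\geq 1$ it applies Proposition~\ref{prop:smallderivsim} with $c_1$ in place of $a$ over $E\langle a\rangle$ where $E=\dclL(c_2,\ldots,c_n)$, and the hypothesis ``$c_1\not\sim f$ for all $f$ in the base'' holds only because $E\langle a\rangle$ is trivially valued. For nontrivially valued $K$ this fails (e.g.\ $c_1$ may be $\sim$ an element of $E$), so ``passing to the trivially valued substructures generated by the other components'' is not available, and the bound $\partial G/\partial Y_i(a,c)\preceq c_i\inv G(a,c)$ has no justification. (Also, ``$\nabla G(a,c)\cdot d\prec a\inv$ for an arbitrary tuple $d$'' cannot be literally meant; you need the entries of $d$ bounded, after factoring $s$ out of $(s,c')$.) The paper handles $\cO\neq K$ by a different route: it bounds $F'(a)$ directly by Proposition~\ref{prop:smallderivsim} (legitimate since $va\notin\Gamma$), and bounds the parameter part $F^{[\der]}(a)$ by $\LO$-elementarity, transferring to points $y\in K$ where $F(y)'\prec y^\dagger$ by (PH2) and $y'\prec s$ because $vs$ is a gap, whence $F^{[\der]}(y)=F(y)'-F'(y)y'\prec y^\dagger\prec s/y$. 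Some argument of this kind is needed; the reduction to Corollary~\ref{cor:wholethingsmall} covers only the trivially valued case.
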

\begin{proof}
By replacing $s$ with $-s$ if need be, we arrange that $s<0$. Let $K\langle a \rangle$ be a simple $\TO$-extension of $K$ where $a> 0$ and $0<va<\Gamma^>$. The Wilkie inequality gives $\Gamma_{K\langle a \rangle}=\Gamma \oplus \Lambda va$ and $\res K\langle a \rangle =\res K$. Using Fact~\ref{fact:transext}, we equip $K\langle a \rangle$ with the unique $T$-derivation that extends the derivation on $K$ and satisfies $a' = s$. We need to show that $K\langle a \rangle$ is a pre-$H_T$-field extension of $K$. Let $g \in K^\times$ and $\lambda \in \Lambda$ with $ga^\lambda \succ 1$. By Lemma~\ref{lem:simprehext}, it suffices to verify the following:
\begin{enumerate}
\item[(i)] $(ga^\lambda)^\dagger >0$;
\item[(ii)] $(ga^\lambda)^\dagger \succ f'$ for all $f \in K$ with $f \preceq 1$;
\item[(iii)] $(ga^\lambda)^\dagger \succ F(a)'$ for all $\cL(K)$-definable functions $F\colon K\to K$ with $F(a) \prec 1$ and $F(a) \not\in K$.
\end{enumerate}
Since $v(ga^\lambda) = vg+\lambda va$ is assumed to be negative and since $0<va<\Gamma^>$, it must be that either $g \succ 1$ or $g \asymp 1$ and $\lambda< 0$. If $g \succ 1$, then $vg^\dagger \in (\Gamma^<)'$ and $va^\dagger = vs-va > (\Gamma^<)'$, so 
\[
(ga^\lambda)^\dagger\ =\ g^\dagger+\lambda a^\dagger\ \sim\ g^\dagger\ >\ 0.
\]
On the other hand, if $g\asymp 1$ and $\lambda > 0$, then $g^\dagger\asymp g' \preceq s\prec s/a = a^\dagger$. This gives
\[
(ga^\lambda)^\dagger\ =\ g^\dagger+\lambda a^\dagger\ \sim\ \lambda a^\dagger,
\]
since $\lambda \asymp 1$. Since $a^\dagger <0$, we have $\lambda a^\dagger>0$. This takes care of (i) and also tells us that $(ga^\lambda)^\dagger\succeq a^\dagger$. This can be used to quickly take care of (ii): if $f \in \cO$, then $f'\preceq s\prec a^\dagger\preceq (ga^\lambda)^\dagger$.

Now we turn to (iii). Let $F\colon K\to K$ with $F(a) \prec 1$ and $F(a) \not\in K$. We need to show that $F(a)' \prec a^\dagger = s/a$. We consider two cases. First, suppose $\cO = K$. Take an $\cL(\emptyset)$-definable function $G\colon K^{1+n}\to K$ and an $\cL(\emptyset)$-independent tuple $b = (b_1,\ldots,b_n)\in K^n$ with $F(a) = G(a,b)$. Then 
\[
F(a)'\ =\ G(a,b)' \ =\ \nabla G(a,b)\cdot (s,b_1',\ldots,b_n'),
\]
so by applying Corollary~\ref{cor:wholethingsmall} with $(s,b_1',\ldots,b_n')\in K^{1+n}$ in place of $d$, we get $F(a)' \prec a\inv$. Since $s\asymp 1$, this gives $F(a)' \prec s/a$, as desired. Now suppose $\cO \neq K$. We need to show that $F^{[\der]}(a)+F'(a)s \prec s/a$. Proposition~\ref{prop:smallderivsim} gives $F'(a)\preceq a\inv F(a) \prec a\inv$, so $F'(a)s \prec s/a$ and it remains to show that $F^{[\der]}(a) \prec s/a$. Since $K\langle a \rangle$ is an elementary $\TO$-extension of $K$, it suffices to show that for each $\LO(K)$-definable set $A \subseteq K$ with $a \in A^{K\langle a \rangle}$, there is $y \in A$ with $F^{[\der]}(y) \prec s/y$. Let $A$ be such a set and, by shrinking $A$ if need be, assume that $F$ is $\cC^1$ on $A$ and that $y,F(y) \prec 1$ for all $y \in A$. Since $F'(a)\prec a\inv$, we can use $\LO$-elementarity to take $y \in A$ with $F'(y) \prec y\inv$. Multiplying by $y'$ gives $F'(y)y' \prec y^\dagger$ for this $y$. Since $F(y) \prec 1$ and $K$ is a pre-$H_T$-field, we have $F(y)' \prec y^\dagger$. Thus,
\[
F^{[\der]}(y) \ =\ F(y)' - F'(y)y' \ \prec \ y^\dagger.
\]
Since $y \prec 1$ and $vs$ is a gap in $K$, we have $y' \prec s$, so $F^{[\der]}(y)\prec y^\dagger \prec s/y$, as desired.

Finally, let $M$ be an $H_T$-asymptotic field extension of $K$ and let $b \in \smallo_M$ with $b' = s$. Then $b^\dagger = s/b$ must be negative by (HA1), so $b$ is positive since $s$ is negative. Moreover, $vb$ must realize the cut $\Gamma^{\leq}$ since $vs \in (\Gamma_M^>)'$ and $vs<(\Gamma^>)'$. Lemma~\ref{lem:uniqueS} gives a unique $\LO(K)$-embedding $\imath \colon K\langle a \rangle \to M$ sending $a$ to $b$ and Fact~\ref{fact:transext} tells us that $\imath$ is an $\LdO(K)$-embedding. Let $\jmath\colon K\langle a\rangle \to M$ be an arbitrary $\LdO(K)$-embedding. Then $\jmath(a) - b \in C_M$ since $\jmath(a)' = s = b'$. Since $\jmath(a),\, b \prec 1$, we see that $\jmath(a) = b$. This shows that $\jmath = \imath$, so $\imath$ is unique.
\end{proof}

\noindent
If we further assume that $K$ is an $H_T$-field, then one can find an $H_T$-field extension of $K$ with an infinite integral for a gap in $K$. 

\begin{lemma}\label{lem:gapgoesdown}
Let $K$ be an $H_T$-field, let $s \in K$, and suppose $vs$ is a gap in $K$. Then $K$ has an $H_T$-field extension $K\langle a \rangle$ with $a\succ 1$ and $a' = s$ such that for any $H_T$-asymptotic field extension $M$ of $K$ and $b \in M$ with $b \succ 1$ and $b' = s$, there is a unique $\LdO(K)$-embedding $K\langle a \rangle \to M$ sending $a$ to $b$. The $H_T$-field $K\langle a \rangle$ is grounded with 
\[
\res K\langle a \rangle\ =\ \res K, \qquad \Gamma_{K\langle a \rangle}\ =\ \Gamma \oplus \Lambda va,\qquad \Psi_{K\langle a \rangle}\ =\ \Psi \cup \{va^\dagger\},\qquad va^\dagger\ >\ \Psi.
\]
\end{lemma}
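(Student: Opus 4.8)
The plan is to mirror the proof of Lemma~\ref{lem:gapgoesup}, but now adjoining an \emph{infinite} integral of $s$. After replacing $s$ by $-s$ I may assume $s>0$. Apply Lemma~\ref{lem:uniqueS} to the cut $S\coloneqq\Gamma^{<0}$ to get a simple $\TO$-extension $K\langle a\rangle$ with $a>0$, with $va$ realizing $S$, and with $\cO_{K\langle a\rangle}$ the convex hull of $\cO$; thus $va\notin\Gamma$, $a\succ 1$, and $a<d$ for every $d\in K$ with $d>\cO$. Since $va\notin\Gamma$, the Wilkie inequality gives $\res K\langle a\rangle=\res K$ and $\Gamma_{K\langle a\rangle}=\Gamma\oplus\Lambda va$. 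Using Fact~\ref{fact:transext}, equip $K\langle a\rangle$ with the unique $T$-derivation extending that of $K$ with $a'=s$; then $v(a^\dagger)=vs-va$. Because $-va$ is a positive infinitesimal and $vs$ is a gap (so $\Gamma\setminus(\Gamma^{\neq})'=\{vs\}$, whence $(\Gamma^<)'=\Gamma^{<vs}$, $(\Gamma^>)'=\Gamma^{>vs}$, and $\Psi\subseteq\Gamma^{<vs}$), the element $vs-va$ lies just above $vs$ and above all of $\Psi$.

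Next I would verify, via Lemma~\ref{lem:simprehext}, that $K\langle a\rangle$ is a pre-$H_T$-field. Fix $g\in K^\times$, $\lambda\in\Lambda$ with $ga^\lambda\succ 1$; since $v(ga^\lambda)<0$ and $-va$ is infinitesimal, either $g\succ 1$ (then $(ga^\lambda)^\dagger\asymp g^\dagger$, as $v(\lambda a^\dagger)=vs-va>\Psi\ni v(g^\dagger)$) or $g\asymp 1$ and $\lambda>0$ (then $g^\dagger\asymp g'\prec s$ and $v(g^\dagger)\in(\Gamma^>)'$ lies above $vs-va$, so $(ga^\lambda)^\dagger\asymp a^\dagger$). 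Conditions (i) and (ii) then follow routinely as in Lemma~\ref{lem:gapgoesup}, using also that $v(f')\in(\Gamma^>)'$ lies above $vs-va$ for $f\in K$ with $f\preceq 1$. For (iii), let $F\colon K\to K$ be $\cL(K)$-definable with $F(a)\prec 1$, $F(a)\notin K$; since the least value attained by $(ga^\lambda)^\dagger$ is $\asymp a^\dagger$ (take $g=1,\lambda=1$), it suffices to show $F(a)'\prec a^\dagger$. Writing $F(a)'=F^{[\der]}(a)+F'(a)s$ with $F^{[\der]}$ as in Fact~\ref{fact:basicTderivation2}, Proposition~\ref{prop:smallderivsim} gives $F'(a)\preceq a\inv F(a)\prec a\inv$, hence $F'(a)s\prec s/a=a^\dagger$, reducing everything to $F^{[\der]}(a)\prec a^\dagger$; when $\cO=K$ this comes from Corollary~\ref{cor:wholethingsmall} (write $F(a)=G(a,b)$ for $\cL(\emptyset)$-definable $G$ and $\cL(\emptyset)$-independent $b$), and when $\cO\neq K$ from an $\LO$-elementarity argument, shrinking a given $\LO(K)$-definable $A\ni a$ so that $F$ is $\cC^1$ and $y\succ 1$, $F(y)\prec 1$, $F'(y)\preceq y\inv F(y)$ hold on $A$. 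Granting this, $K\langle a\rangle$ is an $H_T$-field by the last clause of Lemma~\ref{lem:simprehext} (or Corollary~\ref{cor:HTres}, since $\res K\langle a\rangle=\res K$), and computing $v(h^\dagger)$ for $h\succ 1$ in $K\langle a\rangle$ — which by asymptoticity reduces to the values $v((ga^\lambda)^\dagger)$ above — yields $\Psi_{K\langle a\rangle}=\Psi\cup\{va^\dagger\}$ with $va^\dagger=\max\Psi_{K\langle a\rangle}>\Psi$, so $K\langle a\rangle$ is grounded with the stated data.

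Finally, for the embedding property, let $M\supseteq K$ be $H_T$-asymptotic and $b\in M$ with $b\succ 1$, $b'=s$. Applying (HA2) with $g=b$ gives $f'\prec b^\dagger$ for all $f\in\smallo_K$; since $\{v(f'):0\neq f\in\smallo_K\}=(\Gamma^>)'=\Gamma^{>vs}$ (the last equality because $vs$ is a gap), this forces $v(b^\dagger)=vs-vb<\Gamma^{>vs}$, i.e. $-vb<\Gamma^{>0}$, so $vb\notin\Gamma$ and $vb$ realizes the cut $\Gamma^{<0}$. Hence $b$ realizes the same cut over $K$ as $a$, so Lemma~\ref{lem:uniqueS} provides an $\LO(K)$-embedding $K\langle a\rangle\to M$ with $a\mapsto b$, which is an $\LdO(K)$-embedding by Fact~\ref{fact:transext}, and the unique one sending $a$ to $b$ since $K\langle a\rangle=\dclL(K,a)$. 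I expect the main obstacle to be the estimate $F^{[\der]}(a)\prec a^\dagger$ in condition (iii): unlike in Lemma~\ref{lem:gapgoesup}, where $y\prec 1$ makes the crude bound $F^{[\der]}(y)\prec y^\dagger\prec s/y$ suffice, here $y\succ 1$ gives $y^\dagger\succ s/y$, so one must exploit the sharper bound $F'(y)\preceq y\inv F(y)$ (rather than merely $F'(y)\prec y\inv$) and choose $y$ close enough to $a$ for the cancellation in $F^{[\der]}(y)=F(y)'-F'(y)y'$ to become visible.
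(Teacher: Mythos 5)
Your setup (the cut $\Gamma^{<}<va<0$, the Wilkie inequality, Fact~\ref{fact:transext}, reduction to Lemma~\ref{lem:simprehext}), your treatment of conditions (i) and (ii), and your embedding argument all track the paper's proof; the only small omissions there are that one should note $b>0$ (from (HA1): $b^\dagger=s/b>0$ and $s>0$) and that the claims ``$v(g')\in(\Gamma^>)'$ for $g\asymp 1$'' and ``$v(f')\in(\Gamma^>)'$ for $f\preceq 1$'' are justified by writing $g=c+\epsilon$ with $c\in C$ and $\epsilon\prec 1$, which is exactly where the hypothesis $\cO=C+\smallo$ enters. The genuine gap is the step you yourself flag as the main obstacle: the estimate $F^{[\der]}(a)\prec s/a$ in condition (iii) is asserted but not proved. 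Your proposed route --- shrink a neighborhood so that $F'(y)\preceq y\inv F(y)$ holds and ``choose $y$ close enough to $a$ for the cancellation to become visible'' --- does not yield a pointwise bound at generic $y\in K$: for $y\succ 1$ one only gets $F'(y)y'\preceq F(y)y^\dagger$, whose value $vF(y)+\psi(vy)$ satisfies $\psi(vy)\in\Psi<vs$ and need not exceed $v(s/y^2)=vs-2vy$, while $v(F(y)')=(vF(y))'$ can be arbitrarily close to $vs$ from above when $vF(y)$ is small; so neither summand of $F^{[\der]}(y)=F(y)'-F'(y)y'$ is $\prec s/y^2$ in general, and nothing forces the difference to be. (Also, your appeal to Corollary~\ref{cor:wholethingsmall} for the case $\cO=K$ is inapplicable as stated, since that corollary assumes $a\prec 1$; that case is degenerate anyway, as $\cO=K$ forces $K=C$ and $F^{[\der]}\equiv 0$.)

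The paper's resolution is to evaluate at a \emph{constant}. Since $K$ is an $H_T$-field, $\cO=C+\smallo$, so $C^>$ is cofinal in $\cO$; and since $a$ realizes the cut $\cO^\downarrow$, every $\cL(K)$-definable interval $I$ with $a\in I^{K\langle a\rangle}$ contains some $c\in C^>$ (after shrinking $I$ so that $F$ is $\cC^1$ and $|F(y)|<1$ on $I$). At such a $c$ the problematic term vanishes identically: $F^{[\der]}(c)=F(c)'-F'(c)c'=F(c)'$, and since $F(c)\in\cO=C+\smallo$ and $vs$ is a gap, $F(c)'\in\der\smallo$, so $v(F(c)')\in(\Gamma^>)'>vs$ and hence $|F^{[\der]}(c)|<s/c^2$ because $c^2\asymp 1$. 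By o-minimality of $\cL$ this gives $|F^{[\der]}(a)|<s/a^2\prec s/a$. Note that this is precisely where the full $H_T$-field hypothesis (rather than pre-$H_T$) is indispensable: as the remark following Lemma~\ref{lem:maxgoesdown} points out, a pre-$H_T$-field with a fake gap admits no such extension at all, so no argument avoiding the constants can succeed.
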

\begin{proof}
We may assume that $s>0$. Let $K\langle a \rangle$ be a simple $\TO$-extension of $K$ where $a>0$ and $\Gamma^<<va<0$, so $\Gamma_{K\langle a \rangle}=\Gamma \oplus \Lambda va$ and $\res K\langle a \rangle =\res K$ by the Wilkie inequality. Using Fact~\ref{fact:transext}, we equip $K\langle a \rangle$ with the unique $T$-derivation that extends the derivation on $K$ and satisfies $a' = s$. To see that $K\langle a \rangle$ is an $H_T$-field extension of $K$, let $g \in K^\times$ and $\lambda \in \Lambda$ with $ga^\lambda \succ 1$. By Lemma~\ref{lem:simprehext}, it suffices to verify the following:
\begin{enumerate}
\item[(i)] $(ga^\lambda)^\dagger >0$;
\item[(ii)] $(ga^\lambda)^\dagger \succ f'$ for all $f \in K$ with $f \preceq 1$;
\item[(iii)] $(ga^\lambda)^\dagger \succ F(a)'$ for all $\cL(K)$-definable functions $F\colon K\to K$ with $F(a) \prec 1$ and $F(a)\not\in K$.
\end{enumerate}
Proving (i) is similar to the proof of Lemma~\ref{lem:gapgoesup}. This time, either $g \succ 1$ or $g \asymp 1$ and $\lambda> 0$. If $g \succ 1$, then $g^\dagger\succ a^\dagger$, so $(ga^\lambda)^\dagger\sim g^\dagger > 0$. Suppose $g \asymp 1$. We need to show that $g^\dagger \prec a^\dagger$. Using that $K$ is an $H_T$-field, take $c \in C$ with $g- c \in \smallo$. Then $g^\dagger \asymp g' = (g-c)' \in \der\smallo$, so $v(g^\dagger) \in (\Gamma^>)'$. Since $va^\dagger <(\Gamma^>)'$, we have $(ga^\lambda)^\dagger \sim \lambda a^\dagger > 0$. This takes care of (i) and tells us that $(ga^\lambda)^\dagger\succeq a^\dagger$. For (ii), let $f \in \cO$ and take $c \in C$ with $f-c \in \smallo$. Then $f' =(f-c)'$, so $v(f') \in (\Gamma^>)'>va^\dagger\geq v(ga^\lambda)^\dagger$.

Now we turn to (iii). Let $F\colon K\to K$ with $F(a) \prec 1$ and $F(a) \not\in K$. As in the proof of Lemma~\ref{lem:gapgoesup}, we need to show that $F(a)' =F^{[\der]}(a)+F'(a)s \prec s/a$. Proposition~\ref{prop:smallderivsim} gives $F'(a)\preceq a\inv F(a) \prec a\inv$, so $F'(a)s \prec s/a$ and it remains to show that $F^{[\der]}(a) \prec s/a$. We claim that $|F^{[\der]}(a)| < s/a^2\prec s/a$. Since $F^{[\der]}$ is $\cL(K)$-definable, it suffices to show that for each interval $I \subseteq K$ with $a \in I^{K\langle a \rangle}$, there is $y \in I$ with $|F^{[\der]}(y)| < s/y^2$. Let $I$ be such an interval and, by shrinking $I$ if need be, assume that $F$ is $\cC^1$ on $I$ and that $|F(y)|<1$ for all $y \in I$. Since $K$ is an $H_T$-field and $a$ realizes the cut $\cO^\downarrow$, the interval $I$ contains a constant $c\in C^>$. Since $|F(c)|<1$ and $s$ is a gap in $K$, we have
\[
F(c)' \ =\ F^{[\der]}(c)+ F'(c)c' \ =\ F^{[\der]}(c)\ \prec\ s.
\]
Since $c^2 \asymp 1$, we have $c^2F^{[\der]}(c) \prec s$, which yields $|F^{[\der]}(c)|< s/c^2$, as desired.

Finally, let $M$ be an $H_T$-asymptotic field extension of $K$ and let $b \in M$ with $b\succ 1$ and $b' = s$. Then $b^\dagger = s/b$ must be positive, so $b$ is positive since $s$ is positive. Since $vs \in (\Gamma_M^<)'$ and $vs>(\Gamma^<)'$, we see that $vb$ must realize the cut $\Gamma^<$. Lemma~\ref{lem:uniqueS} gives a unique $\LO(K)$-embedding $\imath \colon K\langle a \rangle \to M$ sending $a$ to $b$, and this is even an $\LdO(K)$-embedding by Fact~\ref{fact:transext}.
\end{proof}

\subsection{The $H_T$-field hull}
We now show that the pre-$H_T$-field $K$ has a minimal $H_T$-field extension. We say that $\beta \in \Gamma$ is a \textbf{fake gap in $K$} if $\beta$ is a gap in $K$ and $\beta = v(b')$ for some $b \in K$. Then necessarily $b \asymp 1$, for otherwise $\beta \in (\Gamma^{\neq})'$. Likewise, $b \not\sim c$ for any $c \in C$, for otherwise $b' = (b-c)'$ and $\beta \in (\Gamma^>)'$. Thus, no $H_T$-field has a fake gap. Of course, if $K$ is grounded or has asymptotic integration, then $K$ does not have a fake gap. Suppose $K$ does not have a fake gap and let $M$ be an immediate pre-$H_T$-field extension of $K$. We claim that $M$ does not have a fake gap. Let $b \in M$ with $b \asymp 1$ and take $a \in K$ with $b-a\prec 1$. Then $v(b-a)' \in (\Gamma^>)'$. As $K$ has no fake gap, we also have $v(a') \in (\Gamma^>)'$, so
\[
v(b') \ =\ v\big((b-a)' + a'\big)\ \geq\ \min\big\{v(b-a)',v(a')\big\} \in (\Gamma^>)'.
\]

\begin{theorem}\label{thm:HThull}
$K$ has an $H_T$-field extension $H_T(K)$ such that for any $H_T$-field extension $M$ of $K$, there is a unique $\LdO(K)$-embedding $H_T(K)\to M$. For $L\coloneqq H_T(K)$, we have 
\[
L\ =\ K\langle C_L\rangle,\qquad \res L = \res K.
\]
\end{theorem}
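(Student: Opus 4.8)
The plan is to build $H_T(K)$ by adjoining new constants to $K$, one at a time, until the constant field becomes a lift of the residue field; the resulting $\LdO$-structure will then be a pre-$H_T$-field with $\cO=C+\smallo$, hence an $H_T$-field by Lemma~\ref{lem:equivHpreH}. The observation driving everything is the rigidity of $H_T$-fields: if $M$ is any $H_T$-field then $\cO_M=C_M+\smallo_M$ with $C_M$ a lift of $\res M$, so $C_M\cap\smallo_M=\{0\}$; consequently, for $u\in\cO_M$ there is at most one constant of $M$ with the same residue as $u$, and if such a constant $\gamma$ exists, then $\gamma-u$ is the \emph{unique} element of $\smallo_M$ with derivative $-u'$. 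So the extensions to perform are exactly those adjoining such an integral, and nothing more.

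Fix a $\dclL$-basis $(\bar{a}_i)_{i<\mu}$ of $\res K$ over $\bar C$ (the image of $C$ in $\res K$) and pick $a_i\in\cO$ with residue $\bar a_i$, so $a_i\asymp 1$. I would build an increasing chain $K=K_0\subseteq K_1\subseteq\cdots$ of pre-$H_T$-fields, each with residue field $\res K$, taking unions at limit stages, with $K_{i+1}=K_i\langle c_i\rangle$ where $c_i\coloneqq a_i+\epsilon_i$ is a constant with residue $\bar a_i$. For the successor step, note that $\overline{C_{K_i}}=\dclL(\bar C\cup\{\bar a_j:j<i\})$ (since $\bar c_j=\bar a_j$), so $\bar a_i\notin\overline{C_{K_i}}$ as $(\bar a_i)$ is $\dclL(\bar C)$-independent; hence $-a_i'$ has no integral in $\smallo_{K_i}$, since any $\delta\in\smallo_{K_i}$ with $\delta'=-a_i'$ would make $\delta+a_i$ a constant of $K_i$ of residue $\bar a_i$. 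Since $a_i\asymp 1$, axiom (PH2) applied with $f=a_i$ gives $v(a_i')>\Psi_{K_i}$; as $a_i\notin C_{K_i}$ we have $a_i'\neq 0$, and since $\gamma'<\psi(\gamma)$ for $\gamma\in\Gamma_{K_i}^{<}$ we also get $v(a_i')>(\Gamma_{K_i}^{<})'$, so (given the description of the asymptotic couple behind Fact~\ref{fact:trich}) $v(a_i')$ is either a gap of $K_i$ or lies in $(\Gamma_{K_i}^{>})'$. In the first case apply Lemma~\ref{lem:gapgoesup}, in the second Corollary~\ref{cor:smallint}, with $s=-a_i'$: this produces a pre-$H_T$-field extension $K_i\langle\epsilon_i\rangle$ with $\epsilon_i\prec 1$, $\epsilon_i'=-a_i'$, $\res K_i\langle\epsilon_i\rangle=\res K_i$, together with the embedding property that every $H_T$-asymptotic field extension $M$ of $K_i$ with $-a_i'\in\der\smallo_M$ receives a unique $\LdO(K_i)$-embedding from $K_i\langle\epsilon_i\rangle$. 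Putting $c_i\coloneqq a_i+\epsilon_i$ gives a constant of residue $\bar a_i$, and $K_{i+1}\coloneqq K_i\langle c_i\rangle=K_i\langle\epsilon_i\rangle$ is the next link.

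Let $L\coloneqq\bigcup_{i<\mu}K_i=K\langle(c_i)_{i<\mu}\rangle$; this is a pre-$H_T$-field with $\res L=\res K$. Its constant field $C_L$ contains every $c_i$, so $\overline{C_L}\supseteq\dclL(\bar C\cup\{\bar a_i\})=\res K=\res L$; since $C_L\cap\smallo_L=\{0\}$ (a nonzero constant cannot lie in $\smallo_L$, by (PH1)), $C_L$ is a lift of $\res L$, and therefore $\cO_L=C_L+\smallo_L$ and $L$ is an $H_T$-field by Lemma~\ref{lem:equivHpreH}. Moreover $L=\dclL(K\cup\{c_i\})=\dclL(K\cup C_L)=K\langle C_L\rangle$. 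For the universal property, let $M$ be an $H_T$-field extension of $K$; I would build an $\LdO(K)$-embedding $\iota\colon L\to M$ along the chain, starting from $\id_K$. Given an $\LdO(K_i)$-embedding $\iota_i\colon K_i\to M$, the fact that $C_M$ is a lift of $\res M\supseteq\res K\ni\bar a_i$ yields a unique $\gamma_i\in C_M$ of residue $\bar a_i$, and then $\gamma_i-a_i\in\smallo_M$ satisfies $(\gamma_i-a_i)'=-a_i'$; transporting along $\iota_i$ makes $M$ an $H_T$-asymptotic field extension of $K_i$ with $-a_i'\in\der\smallo_M$, so the embedding property above yields a unique $\LdO(K_i)$-embedding $\iota_{i+1}\colon K_{i+1}\to M$, which necessarily sends $\epsilon_i$ to the unique element of $\smallo_M$ with derivative $-a_i'$, namely $\gamma_i-a_i$, and hence $c_i\mapsto\gamma_i$. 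Unions at limit stages assemble the $\iota_i$ into $\iota$, and uniqueness at each step over $K_i$ gives uniqueness of $\iota$ over $K$ by induction along the chain.

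The step I expect to be the main obstacle is the successor step of the construction: one must recognise that the right move is to adjoin an integral $\epsilon_i\prec 1$ of $-a_i'$, verify that this is legitimate (using that $-a_i'$ has no integral in $\smallo_{K_i}$ and that $v(a_i')$ avoids $(\Gamma_{K_i}^{<})'$), and handle the gap and non-gap cases separately through Lemma~\ref{lem:gapgoesup} and Corollary~\ref{cor:smallint}. That same $\epsilon_i$ is precisely the datum forced, uniquely, into every $H_T$-field extension of $K$, which is what makes the universal property come out. The remark recorded just before the theorem — that no $H_T$-field has a fake gap, and that ``having no fake gap'' is inherited by immediate extensions — is the conceptual backdrop explaining why the gap case of the construction does not obstruct the argument.
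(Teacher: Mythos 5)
Your construction is essentially the paper's: both build $H_T(K)$ by repeatedly adjoining an infinitesimal integral of $b'$ for a unit $b\in\cO$ that is not congruent to a constant, invoking Lemma~\ref{lem:gapgoesup} when $v(b')$ is a (fake) gap and Corollary~\ref{cor:smallint} otherwise, and both derive the universal property from the embedding clauses of those two results via the observation that in any $H_T$-field extension $M$ one has $b'=(b-c)'\in\der\smallo_M$ for the constant $c\in C_M$ with $b\sim c$. The only substantive difference is organizational: you fix a $\dclL$-basis of $\res K$ over $\overline{C}$ at the outset and process its elements in order, whereas the paper at each stage picks a fresh witness $b\in\cO\setminus(C+\smallo)$ to the failure of (H2).

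That organizational choice is where the one genuine gap in your write-up sits. You assert $\overline{C_{K_i}}=\dclL\big(\overline{C}\cup\{\bar a_j:j<i\}\big)$ and deduce $\bar a_i\notin\overline{C_{K_i}}$, which is what guarantees the hypothesis $-a_i'\notin\der\smallo_{K_i}$ of Corollary~\ref{cor:smallint}. The inclusion $\supseteq$ is clear, but $\subseteq$ is not: you have no a priori control over how large the constant field of $K_i=K\langle \epsilon_j:j<i\rangle$ is, and nothing you have proved rules out a constant of the form $F(\epsilon_{j_1},\ldots,\epsilon_{j_k})$ whose residue lies outside $\dclL\big(\overline{C}\cup\{\bar a_j:j<i\}\big)$. (At the top of the tower you correctly pin down $C_L$ via maximality of lifts, but that argument is not available mid-construction.) The gap is harmless: if $\bar a_i\in\overline{C_{K_i}}$ you simply skip that stage, since a constant with the desired residue already exists, and the final union still satisfies $\overline{C_L}\supseteq\dclL\big(\overline{C}\cup\{\bar a_i:i<\mu\}\big)=\res K$, which is all your lift argument uses. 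The paper's formulation sidesteps the issue entirely because its witness $b$ is chosen, at each stage, precisely so that no constant has residue $\bar b$.
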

\begin{proof}
We first construct a pre-$H_T$-field extension $K_0$ of $K$ which does not have a fake gap as follows: if $K$ does not have a fake gap, then let $K_0\coloneqq K$. Suppose that $K$ has a fake gap $\beta = v(b')$, and apply Lemma~\ref{lem:gapgoesup} with $s= b'$ to get a pre-$H_T$-field extension $K\langle a \rangle$ of $K$ with $a\prec 1$ and $a' = b'$. Then $K\langle a \rangle$ does not have a fake gap as it is grounded, and we set $K_0\coloneqq K\langle a \rangle$. We claim that $K_0 = K\langle C_{K_0}\rangle$, that $\res K_0 = \res K$, and that for any $H_T$-field extension $M$ of $K$, there is a unique $\LdO(K)$-embedding $K_0\to M$. This is all trivial if $K_0 = K$, so we assume that $K_0 \neq K$ and we let $a,b$ be as above. Then $K_0 = K\langle b-a\rangle$ and $b-a\in C_{K_0}$, so $K_0 = K\langle C_{K_0}\rangle$, and Lemma~\ref{lem:gapgoesup} gives $\res K_0 = \res K$. For the embedding property, take $c \in C_M$ with $b\sim c$. Then $b' = (b-c)' \in \der\smallo_M$, so Lemma~\ref{lem:gapgoesup} gives a unique $\LdO(K)$-embedding $K_0\to M$.

Suppose $K_0$ is not an $H_T$-field, so there is $b \in \cO_{K_0}$ with $b\not\in C_{K_0}+ \smallo_{K_0}$. Then $b' \not\in \der\smallo_{K_0}$, for otherwise we would have $b -\epsilon \in C_{K_0}$ for some $\epsilon \in \smallo_{K_0}$. Since $v(b')$ is not a fake gap, we have $v(b') \in (\Gamma_{K_0}^>)'$. Corollary~\ref{cor:smallint} gives an immediate pre-$H_T$-field extension $K^*\coloneqq K_0\langle a \rangle$ of $K_0$ with $a\prec 1$ and $a' = b'$. Given an $H_T$-field extension $M$ of $K_0$, take $c \in C_M$ with $b\sim c$. Then $b' = (b-c)' \in \der\smallo_M$, so Corollary~\ref{cor:smallint} gives a unique $\LdO(K_0)$-embedding $K^* \to M$. Note that $K^* = K_0\langle b-a\rangle$ and $b-a \in C_{K^*}$, so $K^* = K_0 \langle C_{K^*}\rangle = K\langle C_{K^*}\rangle$. As $K^*$ is an immediate extension of $K_0$, there is no fake gap in $K^*$. By iterating this process, we build an immediate $H_T$-field extension $L$ of $K_0$ such that for any $H_T$-field extension $M$ of $K_0$, there is a unique $\LdO(K_0)$-embedding $L\to M$. Using also the embedding property for $K_0$ over $K$, we see that for any $H_T$-field extension $M$ of $K$, there is a unique $\LdO(K)$-embedding $L\to M$. Moreover, $\res L = \res K_0 = \res K$ and $L = K\langle C_L\rangle$ by construction. Let $H_T(K)$ be this extension $L$.
\end{proof}

\noindent
The universal property in Theorem~\ref{thm:HThull} determines $H_T(K)$ uniquely up to unique $\LdO(K)$-isomorphism. We call $H_T(K)$ the \textbf{$H_T$-field hull of $K$}. If $K$ does not have a fake gap, then $H_T(K)$ is an immediate extension of $K$; in particular, $\Gamma_{H_T(K)} = \Gamma$. If $\beta$ is a fake gap in $K$, then $\Gamma_{H_T(K)} = \Gamma \oplus \Lambda va$ for $a \in H_T(K)$ with $0<va<\Gamma^>$ and $v(a') = \beta$. The following consequence of Theorem~\ref{thm:HThull} is not used anywhere, but it may be worth noting.

\begin{corollary}
The following are equivalent:
\begin{enumerate}
\item every spherically complete immediate $H_T$-asymptotic field extension of $K$ is an $H_T$-field;
\item $K$ has a spherically complete immediate $H_T$-field extension;
\item $K$ does not have a fake gap.
\end{enumerate}
\end{corollary}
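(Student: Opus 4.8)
The plan is to prove the cycle of implications $(1)\Rightarrow(2)\Rightarrow(3)\Rightarrow(1)$. The substantive input is already available: Theorem~\ref{thm:asymp} (equivalently Corollary~\ref{cor:asympHT}) produces spherically complete immediate $H_T$-asymptotic extensions, and Theorem~\ref{thm:HThull} together with the remarks surrounding it control fake gaps under immediate extensions and pin down the structure of the $H_T$-field hull. So the argument should be short, amounting to assembling these ingredients.

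For $(1)\Rightarrow(2)$: since $K$ is a pre-$H_T$-field it is $H_T$-asymptotic, so Corollary~\ref{cor:asympHT} gives a spherically complete immediate $H_T$-asymptotic field extension $M$ of $K$; by $(1)$, $M$ is an $H_T$-field, and this $M$ witnesses $(2)$. For $(2)\Rightarrow(3)$ I would argue the contrapositive: suppose $K$ has a fake gap $\beta = v(b')$ with $b \in K$, and let $M$ be any immediate $H_T$-asymptotic field extension of $K$. Since $M$ is immediate we have $\Gamma_M = \Gamma$, and because the values $v(f^\dagger)$ and $v(f')$ depend only on $vf$, the maps $\psi_M$ and $\psi$ agree on $\Gamma^{\neq} = \Gamma_M^{\neq}$; hence $\Psi_M = \Psi$ and $(\Gamma_M^{\neq})' = (\Gamma^{\neq})'$, so $\beta$ is still a gap in $M$. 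As $b \in K \subseteq M$, $\beta$ is then a fake gap in $M$, so $M$ is not an $H_T$-field, since no $H_T$-field has a fake gap. Thus $K$ has no immediate $H_T$-field extension whatsoever, in particular no spherically complete one, and $(2)$ fails.

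For $(3)\Rightarrow(1)$, the main step: let $M$ be a spherically complete immediate $H_T$-asymptotic field extension of $K$. By Lemma~\ref{lem:asympisHT}, $M$ is a pre-$H_T$-field, and by the computation preceding Theorem~\ref{thm:HThull} --- an immediate pre-$H_T$-field extension of a pre-$H_T$-field with no fake gap again has no fake gap --- the field $M$ has no fake gap. Then, by the description of the $H_T$-field hull following Theorem~\ref{thm:HThull} (applied with $M$ in the role of $K$), $H_T(M)$ is an \emph{immediate} extension of $M$. But the underlying valued field of $M$ is spherically complete, hence admits no proper immediate valued-field extension, so $H_T(M) = M$ as valued fields and therefore as $\LdO$-structures; in particular $M$ is an $H_T$-field, which is exactly $(1)$. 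I expect no genuine obstacle here: the only point needing care is the bookkeeping that ``immediate'' is genuinely preserved at each step and that the fake-gap discussion around Theorem~\ref{thm:HThull} applies verbatim with $M$ in place of $K$; no new computation is required beyond what is already in the excerpt.
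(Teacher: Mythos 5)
Your proposal is correct, and two of the three implications ($(1)\Rightarrow(2)$ and $(3)\Rightarrow(1)$) are exactly the paper's argument: produce a spherically complete immediate $H_T$-asymptotic extension via Theorem~\ref{thm:asymp}/Corollary~\ref{cor:asympHT}, and, assuming no fake gap, observe that $H_T(M)$ is an immediate extension of the spherically complete $M$, hence equal to $M$. The one place you diverge is $(2)\Rightarrow(3)$. The paper goes through the universal property of the $H_T$-field hull: given a spherically complete immediate $H_T$-field extension $M$ of $K$, it embeds $H_T(K)$ into $M$ over $K$, concludes that $H_T(K)$ is an immediate extension of $K$, and then invokes the remark that $\Gamma_{H_T(K)}=\Gamma\oplus\Lambda va\neq\Gamma$ whenever $K$ has a fake gap. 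You instead argue the contrapositive directly at the level of asymptotic couples: an immediate extension does not change $\Gamma$ or $\psi$, so a fake gap $\beta=v(b')$ of $K$ remains a gap in any immediate $H_T$-asymptotic extension $M$ and is still of the form $v(b')$ with $b\in K\subseteq M$, whence $M$ cannot be an $H_T$-field. Your route is more elementary (it needs only the observation that no $H_T$-field has a fake gap, not the existence or universal property of the hull) and yields the slightly stronger conclusion that a $K$ with a fake gap has no immediate $H_T$-field extension at all, spherically complete or not; the paper's route is shorter to state given that Theorem~\ref{thm:HThull} and the surrounding remarks are already in hand. Both are complete proofs.
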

\begin{proof}
By Theorem~\ref{thm:asymp}, we know that $K$ has a spherically complete immediate $H_T$-asymptotic field extension, so (1) implies (2). Suppose (2) holds and let $M$ be a spherically complete immediate $H_T$-field extension of $K$. By the universal property of the $H_T$-field hull, there is a unique $\LdO(K)$-embedding $H_T(K) \to M$. Then $H_T(K)$ is an immediate extension of $K$, so $\Gamma_{H_T(K)} = \Gamma$ and $K$ does not have a fake gap by the remarks preceding this corollary. Finally, suppose (3) holds and let $M$ be a spherically complete immediate $H_T$-asymptotic field extension of $K$. Then $M$ is a pre-$H_T$-field by Lemma~\ref{lem:asympisHT}, and the remarks before Theorem~\ref{thm:HThull} tell us that $M$ does not have a fake gap. Thus, $H_T(M)$ is an immediate extension of $M$, so $M = H_T(M)$, as $M$ has no proper immediate extensions.
\end{proof}

\subsection{$\upo$-free extensions of grounded pre-$H_T$-fields}
In this subsection, we show that each grounded pre-$H_T$-field $K$ has a canonical ungrounded $\upo$-free extension, denoted $K_{\upo}$. First, we show how to extend a grounded pre-$H_T$-field by an integral for the maximum of the set $\Psi$. 

\begin{lemma}\label{lem:maxgoesdown}
Let $s \in K$ and suppose $vs = \max \Psi$. Then $K$ has a pre-$H_T$-field extension $K\langle a \rangle$ with $a' = s$ such that for any pre-$H_T$-field extension $M$ of $K$ and $b \in M$ with $b' = s$, there is a unique $\LdO(K)$-embedding $K\langle a \rangle \to M$ sending $a$ to $b$. The pre-$H_T$-field $K\langle a \rangle$ is grounded with
\[
\res K\langle a \rangle\ =\ \res K, \qquad \Gamma_{K\langle a \rangle}\ =\ \Gamma \oplus \Lambda va,\qquad \Psi_{K\langle a \rangle}\ =\ \Psi \cup \{va^\dagger\},\qquad va^\dagger\ >\ \Psi.
\]
\end{lemma}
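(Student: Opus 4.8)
The plan is to mimic the proofs of Lemmas~\ref{lem:gapgoesup} and~\ref{lem:gapgoesdown}, using Lemma~\ref{lem:simprehext} as the central tool. Since $vs=\max\Psi$, there is $g_0\in K^\times$ with $g_0\succ 1$ and $vg_0^\dagger=vs$. First I would normalize: multiplying $s$ by a suitable element of $K^\times$ (which only rescales the eventual $a$, and hence does not affect the structural conclusions) we may arrange $s\asymp g_0^\dagger$ for the given generator $g_0$, and by replacing $s$ with $-s$ we may assume $s<0$, so that an exponential-integral-type sign computation works out. Now let $K\langle a\rangle$ be a simple $\TO$-extension of $K$ with $a>0$ and $va$ realizing the appropriate cut in $\Gamma$ so that $va$ sits just below $vg_0$ in the relevant sense — concretely, I want $va^\dagger>\Psi$ while $a^\dagger\prec g^\dagger$ for all $g\in K$ with $g\succ 1$; since $vs=\max\Psi$ and $va^\dagger = vs - va$, this forces $va$ to realize the cut determined by $\{\,\gamma\in\Gamma: \gamma< 0\,\}$ relative to $\Psi$, i.e.\ $va$ should be a nonzero infinitesimal relative to $\Gamma$ but positioned so that $vs - va$ exceeds $\Psi$. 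The Wilkie inequality (Fact~\ref{fact:wilkieineq}) then gives $\Gamma_{K\langle a\rangle}=\Gamma\oplus\Lambda va$ and $\res K\langle a\rangle=\res K$. Equip $K\langle a\rangle$ with the unique $T$-derivation extending that of $K$ with $a'=s$, via Fact~\ref{fact:transext}.

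The main work is verifying hypotheses (i), (ii), (iii) of Lemma~\ref{lem:simprehext} for an arbitrary $g\in K^\times$, $\lambda\in\Lambda$ with $ga^\lambda\succ1$. As in Lemma~\ref{lem:gapgoesup}, the sign condition $v(ga^\lambda)<0$ together with the placement of $va$ splits into two cases: either $g\succ1$, in which case $vg^\dagger\in(\Gamma^<)'$ while $va^\dagger>\Psi\supseteq$ everything relevant, giving $(ga^\lambda)^\dagger\sim g^\dagger>0$; or $g\asymp1$ (so $g^\dagger\asymp g'$ with $vg'\in(\Gamma^{>})'$, hence $g^\dagger\prec a^\dagger$) and $\lambda$ has the sign forcing $ga^\lambda\succ1$, giving $(ga^\lambda)^\dagger\sim\lambda a^\dagger>0$ since $a^\dagger<0$. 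Either way $(ga^\lambda)^\dagger\succeq a^\dagger$, which immediately yields (ii): for $f\preceq1$ in $K$, $vf'\in(\Gamma^{>})'$ by (PH2)/asymptoticity (here using $vs=\max\Psi$, so $(\Gamma^{>})'$ begins just above $vs$, matching $va^\dagger$), hence $f'\preceq s/a \asymp a^\dagger \preceq (ga^\lambda)^\dagger$. Wait — this bound needs care: I must check $f'\prec a^\dagger$, not merely $\preceq$; this follows because $vf' > vs = va^\dagger + va > va^\dagger$, as $va>0$. For (iii), given an $\cL(K)$-definable $F$ with $F(a)\prec1$, $F(a)\notin K$, I apply Proposition~\ref{prop:smallderivsim} to get $F'(a)\preceq a\inv F(a)\prec a\inv$, so $F'(a)s\prec s/a=a^\dagger$; it remains to bound $F^{[\der]}(a)$, which I handle exactly as in Lemma~\ref{lem:gapgoesup}, splitting on whether $\cO=K$ (use Corollary~\ref{cor:wholethingsmall}) or $\cO\neq K$ (use $\LO$-elementarity of $K\langle a\rangle$ over $K$, pass to a definable $y$ with $F'(y)\prec y\inv$, multiply by $y'$, invoke that $K$ is a pre-$H_T$-field, and finally use that $vy'>vs$ since $y\prec1$ and $vs=\max\Psi$ forces $vy'\in(\Gamma^{>})'>vs$). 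Lemma~\ref{lem:simprehext} then delivers that $K\langle a\rangle$ is a pre-$H_T$-field.

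For the structural claims: $\res K\langle a\rangle=\res K$ and $\Gamma_{K\langle a\rangle}=\Gamma\oplus\Lambda va$ are from the Wilkie inequality as noted. For $\Psi_{K\langle a\rangle}=\Psi\cup\{va^\dagger\}$ with $va^\dagger>\Psi$: any nonzero element of $\Gamma_{K\langle a\rangle}$ has the form $\gamma+\lambda va$ with $\gamma\in\Gamma$, $\lambda\in\Lambda$, and a short computation as in the case analysis above shows $\psi_{K\langle a\rangle}(\gamma+\lambda va)$ equals either an element of $\Psi$ (when $\lambda=0$, or when $g^\dagger$ dominates) or $va^\dagger$ (when $\lambda\neq0$ and $\lambda a^\dagger$ dominates); and $va^\dagger = vs - va > \max\Psi$ since $vs=\max\Psi$ and $va>0$. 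Hence $\Psi_{K\langle a\rangle}$ has maximum $va^\dagger$ and $K\langle a\rangle$ is grounded. Finally, the embedding property: given a pre-$H_T$-field extension $M$ of $K$ and $b\in M$ with $b'=s$, then $b^\dagger=s/b$; the sign of $b$ is forced (since $b^\dagger<0$ by (PH1) applied to $|b|$ if $b\succ1$, or a similar constraint, and $s<0$), and $vb$ must realize the cut that $va$ realizes because $vs=vb^\dagger+vb$ with $vs=\max\Psi$ pins down $vb$. Lemma~\ref{lem:uniqueS} then gives a unique $\LO(K)$-embedding $K\langle a\rangle\to M$ sending $a\mapsto b$, which is an $\LdO(K)$-embedding by Fact~\ref{fact:transext}; uniqueness among $\LdO(K)$-embeddings follows as in Lemma~\ref{lem:gapgoesup} — any such $\jmath$ has $\jmath(a)-b\in C_M$, and the positioning of $va\notin\Gamma$ forces $\jmath(a)=b$.

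The step I expect to be the main obstacle is pinning down the precise cut that $va$ should realize and checking the sign/valuation bookkeeping in hypothesis (iii) and in the $\Psi_{K\langle a\rangle}$ computation — in particular making sure that $va^\dagger$ genuinely exceeds all of $\Psi$ (and not merely $\max\Psi$) and that the strict inequalities $f'\prec a^\dagger$, $F^{[\der]}(y)\prec s/y$ hold with the correct strictness, which is where the hypothesis $vs=\max\Psi$ (as opposed to $vs$ being a gap) does real work.
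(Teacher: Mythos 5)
Your construction goes in the wrong direction at the very first step, and the resulting gap is exactly the one the paper flags as unavoidable. Since $vs=\max\Psi$, axiom (PH2) forces any integral $b$ of $s$ in any pre-$H_T$-field extension to satisfy $b\succ 1$: if $b\preceq 1$ then $s=b'\prec g^\dagger$ for all $g\succ 1$, i.e.\ $vs>\Psi$, a contradiction. So $a$ must be infinite, with $va$ realizing the cut $\Gamma^<$, and (after arranging $s>0$) one has $a>0$ and $a^\dagger=s/a>0$. Your setup --- $s<0$, $a>0$, ``$va$ a nonzero infinitesimal relative to $\Gamma$'', $a^\dagger<0$ --- follows the template of Lemma~\ref{lem:gapgoesup} (the $a\prec 1$ case) and is inconsistent with (PH1)/(PH2) here. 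This is not just a sign slip: it propagates into your verification of hypothesis~(iii) of Lemma~\ref{lem:simprehext}. You propose to bound $F^{[\der]}(a)$ ``exactly as in Lemma~\ref{lem:gapgoesup}'', ending with ``$vy'>vs$ since $y\prec 1$.'' But with $a\succ 1$ realizing the cut $\cO^\downarrow$, the test points $y\in K$ in a definable set containing $a$ are $\asymp 1$ or $\succ 1$, never $\prec 1$; and the target bound is now $F^{[\der]}(a)\prec s/a$ where $v(s/a)=vs-va$ lies strictly \emph{above} $vs$, so knowing $vF^{[\der]}(y)>\Psi$ (all that (PH2) gives for $y\preceq 1$) is not enough. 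The proof of Lemma~\ref{lem:gapgoesdown} handles this step by finding a \emph{constant} $c$ in the interval $I$, which uses (H2), i.e.\ that $C$ is cofinal in $\cO$ --- false for a general pre-$H_T$-field. The paper's remark after the lemma states explicitly that no direct argument is known.

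The paper's actual route sidesteps all of this: it first passes to the $H_T$-field hull $L=H_T(K)$ from Theorem~\ref{thm:HThull} (an immediate extension, so $\Psi_L=\Psi$ and $vs=\max\Psi_L$), runs the proof of Lemma~\ref{lem:gapgoesdown} over $L$ --- where the constant-field trick is available --- to get $L\langle a\rangle$, and then observes that $K\langle a\rangle\subseteq L\langle a\rangle$ is a pre-$H_T$-field because it is an $\LdO$-substructure of an $H_T$-field; the structural data descend because $L$ is immediate over $K$. Your embedding argument and the computation of $\Psi_{K\langle a\rangle}$ are essentially recoverable once the cut and signs are fixed, but the core verification that $K\langle a\rangle$ is a pre-$H_T$-field cannot be completed along the lines you describe. (Also, the preliminary ``normalization'' by multiplying $s$ by an element of $K^\times$ is not legitimate in general --- integrals do not transform that way --- though here it is vacuous since $vs=vg_0^\dagger$ already gives $s\asymp g_0^\dagger$.)
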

\begin{proof}
Let $L = H_T(K)$, so $L$ is an immediate extension of $K$ and $vs = \max \Psi_L$, since $K$ is grounded. Then $L$ is an $H_T$-field, and a proof identical to the proof of Lemma~\ref{lem:gapgoesdown} tells us that $L$ has a grounded $H_T$-field extension $L\langle a \rangle$ with $a \succ 1$ and $a' = s$, where
\[
\res L\langle a \rangle\ =\ \res L, \qquad \Gamma_{L\langle a \rangle}\ =\ \Gamma_L \oplus \Lambda va,\qquad \Psi_{L\langle a \rangle}\ =\ \Psi_L \cup \{va^\dagger\},\qquad va^\dagger\ >\ \Psi_L.
\]
Then $K\langle a \rangle$, being an $\LdO$-substructure of the $H_T$-field $L\langle a \rangle$, is a pre-$H_T$-field. Since $L$ is an immediate extension of $K$, we have $\res L = \res K$, $\Gamma_L = \Gamma$, and $\Psi_L = \Psi$, so 
\[
\res K\langle a \rangle\ =\ \res K, \qquad \Gamma_{K\langle a \rangle}\ =\ \Gamma \oplus \Lambda va,\qquad \Psi_{K\langle a \rangle}\ =\ \Psi \cup \{va^\dagger\},\qquad va^\dagger\ >\ \Psi.
\]
Since $a \succ 1$, we get that $va$ realizes the cut $\Gamma^<$ and $a^\dagger = s/a$ is positive. Let $M$ be a pre-$H_T$-field extension of $K$ and let $b \in M$ with $b' = s$. Then $b \succ 1$ by (PH2), so $vb$ must realize the cut $\Gamma^<$ and $b^\dagger = s/b$ must be positive. Lemma~\ref{lem:uniqueS} gives a unique $\LO(K)$-embedding $\imath \colon K\langle a \rangle \to M$ sending $a$ to $b$, and this is even an $\LdO(K)$-embedding by Fact~\ref{fact:transext}.
\end{proof}

\begin{remark}
Our passage though the $H_T$-field hull of $K$ is admittedly circuitous, but we don't have a better argument that $K\langle a \rangle$ is a pre-$H_T$-field than the one given above. 

One may wonder whether the assumption that $K$ is an $H_T$-field can be relaxed in Lemma~\ref{lem:gapgoesdown}. That is, if $vs$ is a gap in the pre-$H_T$-field $K$ with $s \in K$, then does $K$ have a pre-$H_T$-field extension $K\langle a \rangle$ with $a \succ 1$ and $a' = s$? Arguing as in the proof of Lemma~\ref{lem:maxgoesdown}, we see that this is true so long as $vs$ is not a fake gap in $K$. If $vs$ is a fake gap in $K$, then $s$ can not have an infinite integral, so there is no such extension.

The version of Lemma~\ref{lem:maxgoesdown} in the author's thesis~\cite[Lemma 7.34]{Ka21} contains a slight error. There, it was claimed that $K\langle a \rangle$ embeds into any $H_T$-asymptotic field extension of $K$. It is only true that $K\langle a \rangle$ embeds into any pre-$H_T$-field extension of $K$.
\end{remark}

\noindent
We now turn to the construction of $K_{\upo}$. 

\begin{corollary}\label{cor:omegaconstruction}
Let $K$ be a grounded pre-$H_T$-field. Then $K$ has an ungrounded $\upo$-free (hence, $\upl$-free) pre-$H_T$-field extension $K_{\upo}$ with $\res K_{\upo} = \res K$ which embeds over $K$ into any pre-$H_T$-field extension of $K$ which is closed under taking $\d$-logarithms. 
\end{corollary}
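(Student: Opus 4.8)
The plan is to obtain $K_{\upo}$ as the union of an $\omega$-chain of grounded pre-$H_T$-fields, each extending the previous by a $\d$-logarithm via Lemma~\ref{lem:maxgoesdown}, and then to establish $\upo$-freeness of the union by an explicit computation with the resulting logarithmic sequence. Since $K$ is grounded, $\Psi$ has a largest element; replacing $g$ by $g\inv$ if necessary, write this as $vg^\dagger$ with $g\in K$, $g\succ 1$, and set $K_0\coloneqq K$, $s_0\coloneqq g^\dagger$. Recursively, given a grounded pre-$H_T$-field $K_n$ with $\res K_n=\res K$ and $s_n\in K_n$ with $vs_n=\max\Psi_{K_n}$, apply Lemma~\ref{lem:maxgoesdown} to get a grounded pre-$H_T$-field extension $K_{n+1}\coloneqq K_n\langle a_n\rangle$ with $a_n'=s_n$, $\res K_{n+1}=\res K$, $\Gamma_{K_{n+1}}=\Gamma_{K_n}\oplus\Lambda va_n$, $\Psi_{K_{n+1}}=\Psi_{K_n}\cup\{va_n^\dagger\}$ and $va_n^\dagger>\Psi_{K_n}$; then $\max\Psi_{K_{n+1}}=va_n^\dagger$, so set $s_{n+1}\coloneqq a_n^\dagger$. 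Thus $a_0$ is a $\d$-logarithm of $g$ and each $a_{n+1}$ a $\d$-logarithm of $a_n$. Put $K_{\upo}\coloneqq\bigcup_n K_n$. Since each $K_n$ is an elementary $\cL$-substructure of $K_{n+1}$ and the properties ``convex'', ``closed under $\cL(\emptyset)$-definable continuous functions'', ``$T$-derivation'', (PH1) and (PH2) pass to unions of chains, $K_\upo$ is a pre-$H_T$-field; from $\cO_{K_\upo}=\bigcup_n\cO_{K_n}$ and $\smallo_{K_\upo}=\bigcup_n\smallo_{K_n}$ we get $\res K_\upo=\res K$. Finally $\Psi_{K_\upo}=\bigcup_n\Psi_{K_n}$ and $(\max\Psi_{K_n})_n$ is strictly increasing, so $\Psi_{K_\upo}$ has no largest element and $K_\upo$ is ungrounded.

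For the embedding property, let $M$ be a pre-$H_T$-field extension of $K$ closed under taking $\d$-logarithms. Build $\LdO(K)$-embeddings $\iota_n\colon K_n\to M$ with $\iota_{n+1}|_{K_n}=\iota_n$: let $\iota_0$ be the inclusion, and given $\iota_n$, the image under $\iota_n$ of $g$ (if $n=0$) or of $a_{n-1}$ (if $n\geq 1$) is a nonzero element of $M$, hence has a $\d$-logarithm $b_n\in M$ with $b_n'=\iota_n(s_n)$, so Lemma~\ref{lem:maxgoesdown}, applied with $K_n$ in place of $K$ and $M$ regarded as an $\LdO(K_n)$-extension via $\iota_n$, yields a unique $\LdO(K_n)$-embedding $\iota_{n+1}\colon K_{n+1}\to M$ sending $a_n$ to $b_n$. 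Then $\bigcup_n\iota_n\colon K_\upo\to M$ is the desired $\LdO(K)$-embedding.

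It remains to show $K_\upo$ is $\upo$-free. The sequence $(a_n)$ is a logarithmic sequence in $K_\upo$: $a_{n+1}\succ 1$ with $va_{n+1}=\chi(va_n)$ by Lemma~\ref{lem:logvals}, one checks $a_n\succ a_{n+1}$ from Fact~\ref{fact:asymsim} together with $\Gamma_{K_{n+2}}=\Gamma_{K_{n+1}}\oplus\Lambda va_{n+1}$, and $(va_n)_n$ is cofinal in $\Gamma^<_{K_\upo}$ since any $\delta\in\Gamma^<_{K_\upo}$ lies in some $\Gamma_{K_n}$, whence $\delta<va_n$ as $va_n$ realizes the cut $\Gamma^<_{K_n}$. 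Let $(\upl_n)$, $(\upo_n)$ be the associated $\upl$- and $\upo$-sequences, so $\upl_n=-a_n^{\dagger\dagger}$ and $\upo_n=-(2\upl_n'+\upl_n^2)$. From $a_{n+1}'=a_n^\dagger$ we get $a_{n+1}^\dagger=a_n^\dagger/a_{n+1}$, hence $a_{n+1}^{\dagger\dagger}=a_n^{\dagger\dagger}-a_{n+1}^\dagger$, giving $\upl_{n+1}-\upl_n=a_{n+1}^\dagger$; feeding this into $\upo_{n+1}-\upo_n=-2(\upl_{n+1}-\upl_n)'-(\upl_{n+1}-\upl_n)(\upl_{n+1}+\upl_n)$ and simplifying yields the identity
\[
\upo_{n+1}-\upo_n\ =\ (a_{n+1}^\dagger)^2.
\]
Now $va_{n+1}^\dagger=vs_{n+1}-va_{n+1}=\max\Psi_{K_{n+1}}-va_{n+1}$, so, using $\Gamma_{K_{n+2}}=\Gamma_{K_{n+1}}\oplus\Lambda va_{n+1}$, the element $2va_{n+1}^\dagger$ has nonzero component in $\Lambda va_{n+1}$; thus $v(\upo_{n+1}-\upo_n)=2va_{n+1}^\dagger\notin\Gamma_{K_{n+1}}$. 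These valuations strictly increase with $n$, so $(\upo_n)$ is a pc-sequence; if it had a pseudolimit $\upo\in K_\upo$, say $\upo\in K_N$, then for all large $n\geq N$ we would have $\upo,\upo_n\in K_{n+1}$ and $v(\upo-\upo_n)=v(\upo_{n+1}-\upo_n)=2va_{n+1}^\dagger$, impossible since the left side lies in $\Gamma_{K_{n+1}}$. Hence $(\upo_n)$ has no pseudolimit in $K_\upo$. Since any two $\upo$-sequences in $K_\upo$ are equivalent as pc-sequences — their underlying $\upl$-sequences being equivalent by~\cite[Proposition 11.5.3]{ADH17}, and $\upl_\rho\leadsto\upl$ forcing the associated $\upo$-sequence to pseudoconverge to $-(2\upl'+\upl^2)$ — it follows that $K_\upo$ is $\upo$-free, hence $\upl$-free.

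The construction and its basic properties are routine given Lemma~\ref{lem:maxgoesdown}; the substantive point, and the main obstacle, is the $\upo$-freeness — specifically, recognizing the $\d$-logarithm tower $(a_n)$ as a logarithmic sequence, deriving the clean identity $\upo_{n+1}-\upo_n=(a_{n+1}^\dagger)^2$, and noting that $2va_{n+1}^\dagger$ escapes $\Gamma_{K_{n+1}}$, which is exactly what forces the $\upo$-sequence to diverge in the union.
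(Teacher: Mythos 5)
Your construction and embedding argument coincide with the paper's: iterate Lemma~\ref{lem:maxgoesdown} to adjoin $\d$-logarithms $a_0, a_1, \ldots$ of $g, a_0, \ldots$, take the union, and use the embedding property of Lemma~\ref{lem:maxgoesdown} repeatedly to map into any extension closed under taking $\d$-logarithms. Where you genuinely diverge is the verification of $\upo$-freeness: the paper simply cites \cite[Corollary~11.7.15]{ADH17} (an increasing union of grounded $H$-asymptotic fields which is ungrounded is $\upo$-free), whereas you unwind that result in this special case, recognizing $(a_n)$ as a logarithmic sequence, deriving $\upo_{n+1}-\upo_n=(a_{n+1}^\dagger)^2$, and observing that $2va_{n+1}^\dagger$ has nonzero $\Lambda va_{n+1}$-component and hence escapes $\Gamma_{K_{n+1}}$, so the $\upo$-sequence cannot pseudoconverge inside the union. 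Your identity and the valuation computation are correct (I checked them), and the payoff is a self-contained argument that makes visible exactly why groundedness at each finite stage forces divergence; the cost is length and one soft spot: your final step needs that \emph{any two} $\upo$-sequences in $K_\upo$ have the same pseudolimits, and your parenthetical justification (equivalence of the underlying $\upl$-sequences plus the behaviour when a $\upl$-sequence converges) does not actually prove this. That equivalence of $\upo$-sequences is itself a fact from \cite[Section~11.7]{ADH17} and should be cited as such (or one can sidestep it by invoking the first-order characterization of $\upo$-freeness in \cite[Corollary~11.7.8]{ADH17}, as the paper does in Lemma~\ref{lem:unionupl}); with that citation supplied, your proof is complete.
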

\begin{proof}
Let $s \in K$ with $vs^\dagger = \max\Psi$. Using Lemma~\ref{lem:maxgoesdown}, take a pre-$H_T$-field extension $K\langle a \rangle$ where $a' = s^\dagger$. We have
\[
\res K\langle a \rangle\ =\ \res K, \qquad \Gamma_{K\langle a \rangle}\ =\ \Gamma \oplus \Lambda va,\qquad \Psi_{K\langle a \rangle}\ =\ \Psi \cup \{va^\dagger\},\qquad va^\dagger\ >\ \Psi.
\]
Repeating this process, we construct for each $n$ a pre-$H_T$-field extension $K_n$ of $K$ with 
\[
K_0\ =\ K,\qquad K_{n+1}\ =\ K_n\langle a_n\rangle,\qquad a_0\ =\ a,\qquad a_{n+1}'\ =\ a_n^\dagger.
\]
Set $K_{\upo} \coloneqq \bigcup_n K_n$. Then $\res K_{\upo} = \res K$ and
\[
\Gamma_{K_{\upo}}\ =\ \Gamma \oplus \bigoplus_n\Lambda va_n,\qquad \Psi_{K_{\upo}}\ =\ \Psi \cup \{va_0^\dagger,va_1^\dagger,\ldots\},\qquad \Psi\ <\ va_0^\dagger\ <\ va_1^\dagger \ <\ \cdots.
\]
Moreover, $K_{\upo}$ is $\upo$-free by~\cite[Corollary 11.7.15]{ADH17}, since $K_{\upo}$ is ungrounded and each $K_n$ is grounded. Let $M$ be a pre-$H_T$-field extension of $K$ which is closed under taking $\d$-logarithms. Then there are elements $b_0,b_1,\ldots \in M$ with $b_0' = s^\dagger$ and $b_{n+1}' = b_n^\dagger$ for each $n$. Repeated use of the embedding property in Lemma~\ref{lem:maxgoesdown} allows us construct an $\LdO(K)$-embedding $K_{\upo}\to M$ which sends $a_n$ to $b_n$ for each $n$.
\end{proof}

\subsection{Adjoining exponential integrals}
\begin{lemma}\label{lem:smallexpint}
Let $s \in K$ with $vs \in (\Gamma^>)'$ and suppose that $s \neq y^\dagger$ for all $y \in K^\times$. Then $K$ has an immediate pre-$H_T$-field extension $K\langle a \rangle$ with $a\sim 1$ and $a^\dagger = s$ such that for any $H_T$-asymptotic field extension $M$ of $K$ with $s \in (1+\smallo_M)^\dagger$, there is a unique $\LdO(K)$-embedding $K\langle a \rangle\to M$. If $K$ is ungrounded and $\upl$-free, then so is $K\langle a \rangle$.
\end{lemma}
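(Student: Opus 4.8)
The plan is to follow the template of Corollary~\ref{cor:smallint}, replacing the role played there by $\der\smallo$ with the set $(1+\smallo)^\dagger=\{(1+\epsilon)^\dagger:\epsilon\in\smallo\}$, and then feeding a suitable pc-sequence into Proposition~\ref{prop:HT-simimm} with the $\cL(K)$-definable function $G(Y)=sY$. We may assume $K$ is nontrivially valued, since otherwise $(\Gamma^>)'=\0$ and there is no such $s$.

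The first and main step is to show that $S\coloneqq v\big(s-(1+\smallo)^\dagger\big)$ is a subset of $(\Gamma^>)'$ with no largest element. Note $(\Gamma^>)'$ is upward closed, being the complement of the downward closed set $\Gamma(\der)=\Gamma\setminus(\Gamma^>)'$ from Lemma~\ref{lem:GammaandPsi}; since $v\big((1+\epsilon)^\dagger\big)=v(\epsilon')\in(\Gamma^>)'$ for $\epsilon\in\smallo$, and $s\neq(1+\epsilon)^\dagger$ by hypothesis, we get $v\big(s-(1+\epsilon)^\dagger\big)\in(\Gamma^>)'$, so indeed $S\subseteq(\Gamma^>)'$ and $s\notin(1+\smallo)^\dagger$. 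For "no largest element", fix $y=1+\epsilon\in 1+\smallo$, pick $\gamma\in\Gamma^>$ with $\gamma'=v(s-y^\dagger)$ and $z\in K^>$ with $vz=\gamma$ (so $z\prec 1$, $v(z')=\gamma'$), and write $s-y^\dagger=uz'$ with $u\in\cO^\times$. Applying (PH2) to $g=z\inv\succ 1$ and $f=u\in\cO$ gives $v(u')>v(z^\dagger)=\psi(\gamma)$, hence $u'z\prec uz'\asymp z'$ and $(uz)'=u'z+uz'\sim uz'$. Then $w\coloneqq 1+uz\in 1+\smallo$, $yw\in 1+\smallo$, and $(yw)^\dagger=y^\dagger+w^\dagger$ where $w^\dagger=(uz)'/(1+uz)\sim uz'=s-y^\dagger$; therefore $s-(yw)^\dagger=(s-y^\dagger)-w^\dagger\prec s-y^\dagger$, so $v\big(s-(yw)^\dagger\big)>v(s-y^\dagger)$. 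This is where the hypotheses $s\neq y^\dagger$ and (PH2) are essential, and I expect it to be the trickiest part.

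Next, choose a well-indexed sequence $(a_\rho)$ in $1+\smallo$ with $v(s-a_\rho^\dagger)$ strictly increasing and cofinal in $S$. I claim $(a_\rho)$ is a divergent pc-sequence with $a_\rho'-sa_\rho\leadsto 0$. For $\tau>\sigma>\rho$ large, writing $a_\tau/a_\sigma=1+\delta$ with $\delta=(a_\tau-a_\sigma)/a_\sigma\asymp a_\tau-a_\sigma\prec 1$ gives $v(a_\tau^\dagger-a_\sigma^\dagger)=v\big((a_\tau/a_\sigma)^\dagger\big)=v(\delta')$, while on the other hand $v(a_\tau^\dagger-a_\sigma^\dagger)=v(a_\sigma^\dagger-s)$ since $v(a_\tau^\dagger-s)>v(a_\sigma^\dagger-s)$; comparing the analogous computation for $\sigma,\rho$, Fact~\ref{fact:asymsim} (applied to $\delta\prec 1\not\asymp1$) forces $a_\tau-a_\sigma\prec a_\sigma-a_\rho$. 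Since $a_\rho'-sa_\rho=a_\rho(a_\rho^\dagger-s)$ has strictly increasing valuation $v(a_\rho^\dagger-s)$, we get $a_\rho'-sa_\rho\leadsto 0$. For divergence: if $a_\rho\leadsto b\in K$ then $b\sim 1$ (as $a_\rho\sim 1$), so $b^\dagger\in(1+\smallo)^\dagger$; the same computation gives $v(b^\dagger-a_\rho^\dagger)=v\big(b-a_\rho\big)'=v(a_{\rho+1}-a_\rho)'=v(a_\rho^\dagger-s)$ for all large $\rho$, whence $v(s-b^\dagger)=\max S$, contradicting the previous step. Now Proposition~\ref{prop:HT-simimm} with $G(Y)=sY$ produces an immediate $H_T$-asymptotic extension $K\langle a\rangle$ with $a_\rho\leadsto a$ and $a'=sa$, i.e.\ $a^\dagger=s$; since $a_\rho\sim 1$ we have $a\sim 1$, and then $K\langle a\rangle$ is a pre-$H_T$-field by Lemma~\ref{lem:asympisHT}.

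For the embedding property, let $M$ be an $H_T$-asymptotic extension of $K$ with $b\in 1+\smallo_M$ and $b^\dagger=s$. Comparing $a_\sigma/a_\rho$ and $b/a_\rho$ (both $\sim 1$) via $(a_\sigma/a_\rho)^\dagger\sim s-a_\rho^\dagger=(b/a_\rho)^\dagger$ and Fact~\ref{fact:asymsim} shows $a_\rho\leadsto b$; since also $b'=sb=G(b)$, Proposition~\ref{prop:HT-simimm} gives a unique $\LdO(K)$-embedding $K\langle a\rangle\to M$ sending $a$ to $b$. Uniqueness without prescribing the image of $a$: any $\LdO(K)$-embedding $\jmath$ has $\jmath(a)^\dagger=s=b^\dagger$ and $\jmath(a)\sim 1\sim b$, so $\jmath(a)/b\in C_M^\times$ and $\jmath(a)/b-1\in C_M\cap\smallo_M=\{0\}$ (as $C_M$ is a field contained in $\cO_M$), whence $\jmath(a)=b$. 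Finally, for $\upl$-freeness: $v(a-K)=v\big((a-1)-K\big)$, and this set has the yardstick property by the exponential-integral analog of \cite[Lemma 8.5]{Ge17C} (noting $(a-1)'=sa\asymp s$ with $vs\in(\Gamma^>)'$), so if $K$ is ungrounded and $\upl$-free then so is $K\langle a\rangle$ by Lemma~\ref{lem:yardstickpreserve}.
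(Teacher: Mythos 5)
Your proof is correct and follows essentially the same route as the paper: the paper forms the same set $S = v\big(s-(1+\smallo)^\dagger\big)$, takes the same pc-sequence, and feeds it to Proposition~\ref{prop:HT-simimm} with $G(Y)=sY$, but delegates the verifications that $S$ has no largest element, that $(a_\rho)$ is a divergent pc-sequence, and that $a_\rho\leadsto b$ to the proof of Lemma~10.4.3 in \cite{ADH17}, whereas you carry these computations out explicitly (and correctly). The only cosmetic difference is at the end: the yardstick property of $v(a-K)$ is exactly \cite[Lemma~7.6]{Ge17C}, which is the ``exponential-integral analog'' you invoke.
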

\begin{proof}
Let $S\coloneqq v\big(s-(1+\smallo)^\dagger\big)\subseteq (\Gamma^>)'$. By the proof of~\cite[Lemma 10.4.3]{ADH17}, $S$ has no largest element. Let $(a_\rho)$ be a well-indexed sequence in $1+\smallo$ such that $v(s-a_\rho^\dagger)$ is strictly increasing in $S$ as a function of $\rho$. Then $(a_\rho)$ is a divergent pc-sequence in $K$, again by the proof of~\cite[Lemma 10.4.3]{ADH17}. We apply Proposition~\ref{prop:HT-simimm} with $G(Y) = sY$ to get an immediate pre-$H_T$-field extension $K\langle a \rangle$ of $K$ with $a_\rho \leadsto a$ and $a' = sa$. Note that $a \sim 1$, since each $a_\rho \sim 1$. Let $M$ be an $H_T$-asymptotic field extension of $K$ and let $b \in M$ with $b \sim 1$ and $b^\dagger = s$. Then $a_\rho^\dagger\leadsto b^\dagger$, and so $a_\rho \leadsto b$ by the proof of~\cite[Lemma 10.4.3]{ADH17}. Proposition~\ref{prop:HT-simimm} gives an $\LdO(K)$-embedding $\imath \colon K\langle a \rangle \to M$ that sends $a$ to $b$. For uniqueness, let $\jmath\colon K\langle a\rangle \to M$ be an arbitrary $\LdO(K)$-embedding. Then $\jmath(a)/b \in C_M^\times$ since $\jmath(a)^\dagger = s = b^\dagger$. Since $\jmath(a)\sim 1 \sim b$, we see that $\jmath(a)= b$, so $\jmath = \imath$. By~\cite[Lemma 7.6]{Ge17C}, the set $v(a-K)$ has the yardstick property, so if $K$ is ungrounded and $\upl$-free, then $K\langle a \rangle$ is as well by Lemma~\ref{lem:yardstickpreserve}.
\end{proof}

\begin{lemma}\label{lem:bigexpint}
Let $s\in K$ with $v\big(s-(K^\times)^\dagger\big)\subseteq \Psi^\downarrow$. Then $K$ has a pre-$H_T$-field extension $K\langle a \rangle$ with $a>0$ and $a^\dagger = s$ such that for any pre-$H_T$-field extension $M$ of $K$ and $b \in M^>$ with $b^\dagger= s$, there is a unique $\LdO(K)$-embedding $K\langle a \rangle\to M$ sending $a$ to $b$. Moreover, the extension $K\langle a \rangle$ has the following properties:
\begin{enumerate}
\item $va \not\in \Gamma$ and $\Gamma_{K\langle a \rangle}=\Gamma \oplus \Lambda va$;
\item $\res K\langle a \rangle = \res K$;
\item $\Psi$ is cofinal in $\Psi_{K\langle a \rangle}$;
\item a gap in $K$ remains a gap in $K\langle a \rangle$;
\item if $K$ is ungrounded and $\upl$-free, then so is $K\langle a \rangle$.
\end{enumerate}
\end{lemma}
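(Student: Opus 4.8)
The plan is to realize $K\langle a\rangle$ as a simple $\TO$-extension of $K$ in which $va$ realizes a prescribed cut in $\Gamma$, extend the derivation by Fact~\ref{fact:transext}, and then verify that the result is a pre-$H_T$-field via Lemma~\ref{lem:simprehext}. First note that the hypothesis forces $s\notin(K^\times)^\dagger$ (else $v\big(s-(K^\times)^\dagger\big)\ni\infty\notin\Psi^\downarrow$) and $\cO\neq K$ (else $\Psi^\downarrow=\0$, yet $v(s-0)$ lies in $v\big(s-(K^\times)^\dagger\big)$). For $y\in K^>$, the element $(y\inv a)^\dagger$ should equal $s-y^\dagger$, whose sign governs, via (PH1), the comparison of $va$ with $vy$; this suggests setting
\[
S\ \coloneqq\ \{vy:y\in K^>,\ y^\dagger>s\}.
\]
Using (PH1) one checks that $S$ is a downward closed cut in $\Gamma$ (if $vy_1<vy_2$ and $y_2^\dagger>s$ then $y_1/y_2\succ 1$, so $(y_1/y_2)^\dagger>0$ and $y_1^\dagger>y_2^\dagger>s$), using also $s\notin(K^\times)^\dagger$ and $v\big(s-(K^\times)^\dagger\big)\subseteq\Psi^\downarrow$ to see that "$y^\dagger>s$" depends only on $vy$. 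Let $K\langle a\rangle$ be the simple $\TO$-extension supplied by Lemma~\ref{lem:uniqueS} with $a>0$ and $va$ realizing $S$, and equip it with the unique $T$-derivation extending $\der$ with $a'=sa$ (Fact~\ref{fact:transext}). Since $va\notin\Gamma$ and $\rkL(K\langle a\rangle\mid K)=1$, the Wilkie inequality (Fact~\ref{fact:wilkieineq}) gives $\res K\langle a\rangle=\res K$ and $\Gamma_{K\langle a\rangle}=\Gamma\oplus\Lambda va$, i.e.\ (2) and (1); moreover $\cO\neq K$ makes $K\langle a\rangle$ an \emph{elementary} $\TO$-extension of $K$.

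To see that $K\langle a\rangle$ is a pre-$H_T$-field I apply Lemma~\ref{lem:simprehext}. Fix $g\in K^\times$ and $\lambda\in\Lambda$ with $ga^\lambda\succ 1$; if $\lambda=0$ this is inherited from $K$, so assume $\lambda\neq 0$ and set $z\coloneqq g^{-1/\lambda}\in K^>$ (available as $T$ is power bounded), so $(ga^\lambda)^\dagger=g^\dagger+\lambda s=\lambda(s-z^\dagger)$ and $v(ga^\lambda)=\lambda(va-vz)<0$. The inequality $v(ga^\lambda)<0$ together with the definition of $S$ pins down $va$ against $vz$, hence the sign of $s-z^\dagger$, giving $(ga^\lambda)^\dagger>0$, which is (i). The crucial point for (ii) and (iii) is that $v\big((ga^\lambda)^\dagger\big)=v(s-z^\dagger)\in\Psi^\downarrow$ and is finite; since $v(f')>\Psi$ for $f\in K$ with $f\preceq 1$, $f'\neq 0$, we get (ii). Condition (iii) is the main work: writing $F(a)'=F^{[\der]}(a)+F'(a)sa$ with $F^{[\der]}$ as in Fact~\ref{fact:basicTderivation2}, Proposition~\ref{prop:smallderivsim} (applicable since $va\notin\Gamma$ forces $a\not\sim f$ for all $f\in K$) bounds $F'(a)\preceq a\inv F(a)$, and the term $F^{[\der]}(a)$ is controlled by an $\LO$-elementarity argument together with the o-minimal mean value theorem and (PH2) of $K$, exactly as in the proof of Lemma~\ref{lem:gapgoesup}; the outcome is $F(a)'\prec\phi$ for every $\phi\in K^\times$ with $v\phi\in\Psi$, and since $\Psi_{K\langle a\rangle}\subseteq\Psi^\downarrow$ this yields $(ga^\lambda)^\dagger\succ F(a)'$. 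Along the way the computation $\Psi_{K\langle a\rangle}=\Psi\cup\{v(y^\dagger+\lambda s):y\in K^\times,\ \lambda\in\Lambda^{\neq}\}\subseteq\Psi^\downarrow$ (again via $z=y^{-1/\lambda}$, noting $y^\dagger+\lambda s\neq 0$ as $s\notin(K^\times)^\dagger$) establishes (3), as $\Psi\subseteq\Psi_{K\langle a\rangle}\subseteq\Psi^\downarrow$.

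For the embedding property, let $M$ be a pre-$H_T$-field extension of $K$ and $b\in M^>$ with $b^\dagger=s$. The same sign analysis in $M$ shows $vb\notin\Gamma$: if $b\asymp y_0\in K^>$ then $u\coloneqq b/y_0\asymp 1$ has $u^\dagger=s-y_0^\dagger\in K$, so either $u\in C_M$ (forcing $s=y_0^\dagger\in(K^\times)^\dagger$) or, by (PH2) of $M$, $v(u^\dagger)>\Psi$, contradicting $v(u^\dagger)\in\Psi^\downarrow$. Then for $y\in K^>$, (PH1) of $M$ shows $b$ compares with $y$ exactly as $a$ does, so $b$ realizes the cut $S$ over $K$, giving a unique $\cL(K)$-embedding $K\langle a\rangle\to M$ with $a\mapsto b$; it is an $\LO(K)$-embedding by Lemma~\ref{lem:uniqueS} (the valuation $M$ induces on the image is forced to be the convex hull of $\cO$, since $vb$ realizes $S$), and an $\LdO(K)$-embedding by Fact~\ref{fact:transext} since the derivation on $K\langle a\rangle$ is the unique one with $a^\dagger=s=b^\dagger$; uniqueness is immediate as $K\langle a\rangle=\dclL(K\cup\{a\})$.

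Finally (4) and (5). If $\beta\in\Gamma$ is a gap in $K$ then $\beta>\Psi$, hence $\beta>\Psi^\downarrow\supseteq\Psi_{K\langle a\rangle}$; also $\beta\notin(\Gamma_{K\langle a\rangle}^{\neq})'$, for any $\gamma\in\Gamma_{K\langle a\rangle}^{\neq}$ with $\gamma'=\beta$ lies outside $\Gamma$ (else $\beta\in(\Gamma^{\neq})'$), so $\gamma=\delta+\lambda va$ with $\lambda\neq 0$ and $\psi(\gamma)=v(y^\dagger+\lambda s)\in\Gamma$, whence $\gamma=\beta-\psi(\gamma)\in\Gamma$, a contradiction. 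Since $\Psi_{K\langle a\rangle}$ has no largest element, $K\langle a\rangle$ is not grounded, so the trichotomy (Fact~\ref{fact:trich}) forces $\beta$ to be a gap in $K\langle a\rangle$. For (5), if $K$ is ungrounded and $\upl$-free then $K$ has asymptotic integration (Fact~\ref{fact:upl->asymp}) and no $\upl$-sequence pseudoconverges in $K$, so in particular $\upl_\rho\not\leadsto -s$; by Lemma~\ref{lem:uplgapcreator}, $va$ is therefore not a gap in $K\langle a\rangle$, and combining this with (3) and the asymptotic integration of $K$ shows $K\langle a\rangle$ again has asymptotic integration. The $\upl$-freeness of $K\langle a\rangle$ then follows from the yardstick-property analysis of exponential extensions in~\cite{Ge17C}, as in the proof of Lemma~\ref{lem:smallexpint}. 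The step I expect to be the main obstacle is condition (iii) of Lemma~\ref{lem:simprehext}: establishing $v(F(a)')>\Psi$ for an arbitrary $\cL(K)$-definable $F$ with $F(a)\prec 1$, which cannot be obtained by a direct valuation estimate on $F^{[\der]}(a)$ and instead requires the elementarity-plus-mean-value argument borrowed from Lemma~\ref{lem:gapgoesup}.
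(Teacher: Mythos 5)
Your overall architecture matches the paper's: define the cut $S=\{vy: y^\dagger>s\}$, realize it by $a$, extend $\der$ with $a^\dagger=s$, get (1) and (2) from the Wilkie inequality, verify the pre-$H_T$-field axioms through Lemma~\ref{lem:simprehext}, and obtain the embedding from Lemma~\ref{lem:uniqueS} and Fact~\ref{fact:transext}. But the step you yourself flag as the main obstacle --- condition (iii) of Lemma~\ref{lem:simprehext} --- is exactly where your plan breaks down, and the fix is not the argument of Lemma~\ref{lem:gapgoesup}. That argument bounds $F^{[\der]}(a)$ and $F'(a)a'$ \emph{separately}, and it works there only because $vs$ is a gap, so that $y'\prec s$ for $y\prec 1$. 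Here the target is $v\big(F(a)'\big)>\Psi$, and a term-by-term estimate cannot reach it: Proposition~\ref{prop:smallderivsim} only gives $F'(a)as\preceq F(a)s\prec s$, and $vs=v(s-1^\dagger)\in\Psi^\downarrow$ may lie below much of $\Psi$, so $F'(a)as$ need not be $\prec h^\dagger$ for every $h\succ 1$ in $K$; similarly $F^{[\der]}(y)=F(y)'-F'(y)y'$ is only seen to be $\preceq y^\dagger$, whose value lies \emph{in} $\Psi$ rather than above it. The two terms must be estimated jointly. The paper does this by contradiction: if $F(a)'=F^{[\der]}(a)+F'(a)as\succeq h^\dagger$ (without loss of generality positive), then on a definable interval $I$ containing $a$ one has $F^{[\der]}(y)+F'(y)ys>uh^\dagger$ while $F(y)'=F^{[\der]}(y)+F'(y)y'\prec h^\dagger$ by (PH2) in $K$, so the difference $F'(y)y(s-y^\dagger)$ is positive, hence of constant sign, on $I$; after arranging that $F'$ and $y$ have constant sign, $s-y^\dagger$ has constant sign on $I$, contradicting the fact that $y^\dagger-s$ changes sign exactly as $y$ crosses the cut realized by $a$. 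This sign-change-across-the-cut idea is the essential content of the lemma and is absent from your proposal.

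There is a second, smaller gap in (5). The yardstick machinery (Lemma~\ref{lem:yardstickpreserve}, as used in Lemma~\ref{lem:smallexpint}) applies only to \emph{immediate} simple extensions, whereas here $va\notin\Gamma$, so $K\langle a\rangle$ is not immediate over $K$ and that route is unavailable. The correct argument runs in the opposite direction: if $\upl_\rho\leadsto\upl\in K\langle a\rangle$, then $\upl\notin K$ by $\upl$-freeness of $K$, so $K\langle\upl\rangle=K\langle a\rangle$; but $K\langle\upl\rangle$ is an immediate extension of $K$ by Lemma~\ref{lem:immediateext2}, contradicting $va\notin\Gamma$. (One must first check, as the paper does, that $\Gamma^<$ is cofinal in $\Gamma^<_{K\langle a\rangle}$, so that $(\upl_\rho)$ remains a $\upl$-sequence in $K\langle a\rangle$; this follows from (3).) Your treatment of (4), by contrast, is fine and in fact a little more direct than the paper's.
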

\begin{proof}
Suppose that $b$ is an element in a pre-$H_T$-field extension $M$ of $K$ with $b> 0$ and $b^\dagger = s$. Then $vb \not\in \Gamma$; otherwise there is $f \in K$ and $u \in \cO_M^\times$ with $b/f = u$, so $s -f^\dagger= u^\dagger \asymp u'$ and $v(s-f^\dagger)> \Psi$, a contradiction. Let $y \in K^\times$ with $y \prec b$. Then $y/b\prec 1$ so $y^\dagger <b^\dagger = s$. Likewise if $y \in K^\times$ with $y \succ b$, then $y^\dagger > s$. Thus, $vb$ realizes the cut
\[
S\ \coloneqq \ \{vy:y^\dagger>s\}\ \subseteq \Gamma.
\]
Let $K\langle a \rangle$ be a simple $\TO$-extension of $K$ where $a> 0$ and $va$ realizes the cut $S$.
The Wilkie inequality gives $\Gamma_{K\langle a \rangle}=\Gamma \oplus \Lambda va$ and $\res K\langle a \rangle =\res K$. Using Fact~\ref{fact:transext}, we equip $K\langle a \rangle$ with the unique $T$-derivation which extends the derivation on $K$ and satisfies $a^\dagger = s$. If we can show that $K\langle a \rangle$ is a pre-$H_T$-field, then the embedding property of $K\langle a \rangle$ follows from Fact~\ref{fact:transext}, Lemma~\ref{lem:uniqueS} and the discussion above.

To see that $K\langle a \rangle$ is a pre-$H_T$-field, let $g \in K^\times$ and $\lambda \in \Lambda$ with $ga^\lambda \succ 1$. By Lemma~\ref{lem:simprehext}, it suffices to verify the following:
\begin{enumerate}
\item[(i)] $(ga^\lambda)^\dagger >0$;
\item[(ii)] $(ga^\lambda)^\dagger \succ f'$ for all $f \in K$ with $f \preceq 1$;
\item[(iii)] $(ga^\lambda)^\dagger \succ F(a)'$ for all $\cL(K)$-definable functions $F\colon K\to K$ with $F(a) \prec 1$ and $F(a) \not\in K$.
\end{enumerate}
First we deal with (i). If $\lambda = 0$, then $g \succ 1$ and $(ga^\lambda)^\dagger = g^\dagger > 0$. If $\lambda > 0$, then since $ga^\lambda \succ 1$ we have $a\succ g^{-\lambda\inv}$, so $s> (g^{-\lambda\inv})^\dagger = - \lambda\inv g^\dagger$. This gives $\lambda\inv g^\dagger+s > 0$, so
\[
(ga^\lambda)^\dagger \ =\ g^\dagger + \lambda s\ =\ \lambda(\lambda\inv g^\dagger+s)\ >\ 0.
\]
On the other hand, if $\lambda < 0$, then $a \prec g^{-\lambda\inv}$, so $\lambda\inv g^\dagger+s <0$ and again, 
\[
(ga^\lambda)^\dagger\ =\ \lambda(\lambda\inv g^\dagger+s)\ >\ 0.
\]
Note that since 
\[
(ga^\lambda)^\dagger\ =\ \lambda(\lambda\inv g^\dagger+s)\ =\ \lambda\big(s-(g^{-\lambda\inv})^\dagger\big)\ \asymp\ s-(g^{-\lambda\inv})^\dagger,
\]
we have $v(ga^\lambda)^\dagger\in \Psi^\downarrow$. Thus, for (ii) it suffices to note that $h^\dagger \succ f'$ for all $f,h \in K$, since $K$ is a pre-$H_T$-field. Likewise, for (iii), it suffices to show that $h^\dagger \succ F(a)'$ for all $h \in K$ with $h \succ 1$ and all $\cL(K)$-definable functions $F\colon K\to K$ with $F(a) \prec 1$ and $F(a) \not\in K$. Suppose toward contradiction that there are $F$, $h$ for which this does not hold, so $F(a) \prec 1$ but 
\[
F(a)'\ =\ F^{[\der]}(a) + F'(a)as\ \succeq\ h^\dagger.
\]
By replacing $F$ with $-F$ if necessary, we may assume that $F(a)' > 0$. Since $\res K\langle a \rangle =\res K$, we have $F^{[\der]}(a) + F'(a) as > uh^\dagger>0$ for some $u \in K$ with $u\asymp 1$. Take an interval $I \subseteq K^>$ with $a \in I^{K\langle a \rangle}$ such that
\[
|F(y)|\ <\ 1,\qquad F^{[\der]}(y) + F'(y) ys \ >\ uh^\dagger
\]
for all $y \in I$. For $y \in I$, we have $F(y) \preceq 1$, so $F(y)' = F^{[\der]}(y)+ F'(y)y' \prec h^\dagger$ since $K$ is a pre-$H_T$-field. This gives
\[
\big(F^{[\der]}(y) + F'(y) ys\big)-\big( F^{[\der]}(y)+ F'(y)y'\big)\ =\ F'(y)y(s-y^\dagger) \ > \ \frac{1}{2}uh^\dagger \ > \ 0.
\]
In particular, the function $F'(y)y(s-y^\dagger)$ has constant sign on $I$. By shrinking $I$, we may assume that $F'(y)$ and $y$ have constant sign on $I$, so $s-y^\dagger$ has constant sign on $I$ as well. This is a contradiction: if $y \in I$ is greater than $a$, then $y \succ a$ and $y^\dagger > s$ and if $y \in I$ is less than $a$, then $y \prec a$ and $y ^\dagger < s$.

Now that we know that $K\langle a \rangle$ is a pre-$H_T$-field extension of $K$ with the required embedding property, all that remains is to check that $K\langle a \rangle$ satisfies properties (1)--(5). We have already verified properties (1) and (2). For (3), let $h \in K\langle a \rangle$ with $h \succ 1$. Then $h \asymp ga^\lambda$ for some $g\in K$ and some $\lambda \in \Lambda$ by (1), so $v(h^\dagger )=v(ga^\lambda)^\dagger$, since $K\langle a \rangle$ is $H_T$-asymptotic. We have already shown that $v(ga^\lambda)^\dagger\in \Psi^\downarrow$, so $v(h^\dagger )\in \Psi^\downarrow$ as well. As for (4), let $\beta$ be a gap in $K$. Then $\beta >\Psi$, so $\beta >\Psi_{K\langle a \rangle}$ since $\Psi$ is cofinal in $\Psi_{K\langle a \rangle}$. Suppose toward contradiction that $\beta$ is not a gap in $K\langle a \rangle$, so $\beta = \alpha'$ for some $\alpha \in \Gamma_{K\langle a \rangle}^>$. The universal property in Lemma~\ref{lem:gapgoesup} gives $\max \Psi_{K\langle a \rangle} = \alpha^\dagger >\Psi$, contradicting that $\Psi$ is cofinal in $\Psi_{K\langle a \rangle}$.

Finally, suppose $K$ is ungrounded and let $(\ell_\rho)$ be a logarithmic sequence in $K$ with corresponding $\upl$-sequence $(\upl_\rho)$. Then $K{\langle a \rangle}$ is ungrounded since $\Psi$ is cofinal in $\Psi_{K\langle a \rangle}$. It follows that $\Gamma^<$ is cofinal in $\Gamma_{K\langle a \rangle}^<$. To see this, let $f \in K\langle a \rangle$ with $f \succ 1$ and suppose toward contradiction that $f \prec g$ for all $g \in K$ with $g \succ 1$. Then $f^\dagger \preceq g^\dagger$ for such $g$, so $v(f^\dagger)> \Psi$ since $K$ is ungrounded, contradicting (3). Therefore, $(\ell_\rho)$ remains a logarithmic sequence in $K\langle a \rangle$ and $(\upl_\rho)$ remains a $\upl$-sequence in $K\langle a \rangle$. Suppose toward contradiction that $K$ is $\upl$-free and that $\upl_\rho\leadsto \upl\in K\langle a \rangle$. Then $\upl\not\in K$, so $a \in K\langle \upl\rangle$. This is a contradiction, as $K\langle \upl\rangle$ is an immediate $\TO$-extension of $K$ by Lemma~\ref{lem:immediateext2} and $va \not\in \Gamma$. This proves (5).
\end{proof}

\noindent
We can use Lemma~\ref{lem:bigexpint} along with Lemma~\ref{lem:uplgapcreator} and Corollary~\ref{cor:asympHT} to prove the following extension result for pre-$H_T$-fields.

\begin{proposition}\label{prop:upofreeext}
$K$ has an ungrounded $\upo$-free $H_T$-field extension.
\end{proposition}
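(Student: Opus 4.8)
The plan is to reduce everything to the grounded case, where Corollary~\ref{cor:omegaconstruction} already produces the desired extension. First I would replace $K$ by its $H_T$-field hull $H_T(K)$ from Theorem~\ref{thm:HThull}; since any $H_T$-field extension of $H_T(K)$ is also one of $K$, I may assume $K$ is an $H_T$-field. Next I would observe that it suffices to produce an $H_T$-field extension of $K$ that is \emph{grounded or has a gap}: if such an extension has a gap, Lemma~\ref{lem:gapgoesdown} yields a further grounded $H_T$-field extension; and if $K'$ is any grounded $H_T$-field extension of $K$, then $K'_{\upo}$ from Corollary~\ref{cor:omegaconstruction} is an ungrounded $\upo$-free pre-$H_T$-field extension with $\res K'_{\upo} = \res K'$, hence an $H_T$-field by Corollary~\ref{cor:HTres} --- exactly what is wanted. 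By the trichotomy of Fact~\ref{fact:trich}, the only remaining case is that $K$ has asymptotic integration; note this forces $K$ to be ungrounded and nontrivially valued, since a trivially valued field has the gap $0$.

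So suppose $K$ has asymptotic integration. Using Corollary~\ref{cor:asympHT} I would pass to a spherically complete immediate $H_T$-field extension $\hat K$ of $K$; it is again nontrivially valued, ungrounded, and asymptotically integrated, so it carries a $\upl$-sequence $(\upl_\rho)$, and by spherical completeness this pc-sequence has a pseudolimit $\upl \in \hat K$. Put $s := -\upl$. The crucial claim is that $s$ satisfies the hypothesis $v\big(s-(\hat K^\times)^\dagger\big)\subseteq\Psi_{\hat K}^{\downarrow}$ of Lemma~\ref{lem:bigexpint}. To see this, suppose instead that $v(\upl+y^\dagger)>\Psi_{\hat K}$ for some $y\in\hat K^\times$; using the standard fact that $v(\upl-\upl_\rho)$ eventually lies in $\Psi_{\hat K}$ for a $\upl$-sequence, one deduces $\upl_\rho\leadsto -y^\dagger$, and then Lemma~\ref{lem:uplgapcreator}, applied inside $\hat K$ with $f=y$, would make $vy$ a gap in $\hat K$ --- contradicting asymptotic integration. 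Granted the claim, Lemma~\ref{lem:bigexpint} produces a pre-$H_T$-field extension $\hat K\langle f\rangle$ with $f>0$ and $f^\dagger = s = -\upl$, and then Lemma~\ref{lem:uplgapcreator} (now with $M=\hat K\langle f\rangle$ and $-s=\upl$, so $\upl_\rho\leadsto -s$) tells us that $vf$ is a gap in $\hat K\langle f\rangle$.

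Finally, $\hat K\langle f\rangle$ is a pre-$H_T$-field with a gap. Taking its $H_T$-field hull $L := H_T(\hat K\langle f\rangle)$ via Theorem~\ref{thm:HThull}, I would invoke the construction of the hull: $L$ is either an immediate extension of $\hat K\langle f\rangle$, in which case it still has the gap $vf$, or --- precisely when $vf$ is a fake gap that gets filled --- it is grounded. In neither case does $L$ acquire asymptotic integration, so $L$ is an $H_T$-field extension of $K$ that is grounded or has a gap, and the reduction of the first paragraph finishes the argument. The process clearly terminates, since after this single gap-creating step we never return to the asymptotically integrated case.

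I expect the main obstacle to be the asymptotic-integration case, specifically the verification that $s=-\upl$ meets the hypothesis of Lemma~\ref{lem:bigexpint}. This is exactly the point of routing through the spherically complete extension $\hat K$: it turns a possibly ``$\upl$-free but not $\upo$-free'' field $K$ into a ``not $\upl$-free'' field $\hat K$, at which stage Lemma~\ref{lem:uplgapcreator} can be used both to rule out the obstruction and, afterwards, to certify that a genuine gap has been created.
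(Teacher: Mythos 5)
Your argument is correct and follows essentially the same route as the paper: pass to the $H_T$-field hull, then run the chain (asymptotic integration $\to$ gap $\to$ grounded $\to$ ungrounded $\upo$-free) using Corollary~\ref{cor:asympHT}, Lemma~\ref{lem:bigexpint}, Lemma~\ref{lem:uplgapcreator}, the gap-integration lemmas, and Corollary~\ref{cor:omegaconstruction}. The only real difference is that your ``crucial claim'' $v\big(-\upl-(\hat{K}^\times)^\dagger\big)\subseteq \Psi_{\hat{K}}^{\downarrow}$ is exactly Fact~\ref{fact:uplminusdaggers} and can be cited directly, so your detour through Lemma~\ref{lem:uplgapcreator} (and the final passage through the hull of $\hat{K}\langle f\rangle$, which is already an $H_T$-field by Corollary~\ref{cor:HTres}) is unnecessary.
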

\begin{proof}
By passing to the $H_T$-field hull of $K$, we may assume that $K$ is an $H_T$-field. First, we will show that every $H_T$-field with asymptotic integration has an $H_T$-field extension with a gap. Then, we will show that every $H_T$-field with a gap has a grounded $H_T$-field extension. Finally, we will show that every grounded $H_T$-field has an ungrounded $\upo$-free $H_T$-field extension.

For the first part, suppose $K$ has asymptotic integration. As having asymptotic integration is a property of the asymptotic couple of $K$, every immediate extension of $K$ also has asymptotic integration. By applying Corollary~\ref{cor:asympHT} we can pass to a spherically complete extension of $K$, so we may assume that $K$ is spherically complete. Let $(\upl_\rho)$ be a $\upl$-sequence in $K$, so $(\upl_\rho)$ has a pseudolimit $\upl \in K$ by spherical completeness. The set $v\big(\upl+(K^\times)^\dagger\big)$ is a cofinal subset of $\Psi^\downarrow$ by Fact~\ref{fact:uplminusdaggers}, so Lemma~\ref{lem:bigexpint} gives an $H_T$-field extension $K\langle a \rangle$ of $K$ with $a^\dagger = -\upl$. This extension has a gap, namely $va$, by Lemma~\ref{lem:uplgapcreator}.

Now, assume that $K$ has a gap $\beta \in \Gamma$. Take $s \in K$ with $vs = \beta$ and use Lemma~\ref{lem:gapgoesup} to get a grounded $H_T$-field extension $K\langle a \rangle$ of $K$ with $a' = s$. Finally, if $K$ is grounded, apply Corollary~\ref{cor:omegaconstruction} to get an ungrounded $\upo$-free $H_T$-field extension $K_{\upo}$ of $K$.
\end{proof}

\subsection{Constant field extensions}
Recall that the constant field $C$ is naturally a model of $T$. In the following proposition, we show that if $K$ is an $H_T$-field, then we may take extensions of $K$ by constants corresponding to $T$-extensions of $C$. 

\begin{proposition}
\label{prop:constext}
Let $K$ be an $H_T$-field and let $E$ be a $T$-extension of $C$. Then there is an $H_T$-field extension $L$ of $K$ where $C_L$ is $\cL(C)$-isomorphic to $E$ such that for any $H_T$-field extension $M$ of $K$ and any $\cL(C)$-embedding $\imath\colon C_L\to C_M$, there is a unique $\LdO(K)$-embedding $L\to M$ extending $\imath$.
\end{proposition}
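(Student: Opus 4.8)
The plan is to prove the result first when $E$ is a \emph{simple} $T$-extension of $C$, and then bootstrap to the general case by transfinite induction along a $\dclL$-basis of $E$ over $C$. If $\cO = K$ then $K = C$ and $\der = 0$, and one simply takes $L := E$ with zero derivation and $\cO_L := L$; so assume throughout that $\cO\neq K$. For the inductive step, fix a $\dclL$-basis $(a_i)_{i<\kappa}$ of $E$ over $C$, put $E_i := C\langle a_j:j<i\rangle$, so that each $E_{i+1}$ is a simple extension of $E_i$, and build a tower $K = L_0\subseteq L_1\subseteq\cdots$ of $H_T$-fields in which $L_{i+1}$ is the simple constant-field extension of $L_i$ along $E_i\subseteq E_{i+1}$ — transported through the isomorphism $C_{L_i}\cong_C E_i$ produced so far — taking unions at limit stages. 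An increasing union of $H_T$-fields is again an $H_T$-field, so $L := L_\kappa$ is an $H_T$-field with $C_L\cong_C E$ and $L = K\langle C_L\rangle$; its universal property is patched together step-by-step from the universal properties of the simple extensions (the required embeddings compose because at each stage $C_{L_{i+1}}$ is generated over $C_{L_i}$ by the new constant).

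So assume $E = C\langle a\rangle$ with $a\notin C$. Since $C$ is an elementary $\cL$-substructure of $K$, the $\cL$-type of $a$ over $C$ is just the cut of $a$ in $C$; transporting this cut through the residue isomorphism $C\xrightarrow{\sim}\res K$, I would let $L := K\langle b\rangle$ be the simple $\TO$-extension of $K$ in which $b\asymp 1$, $\overline b\notin\res K$, and $\overline b$ realizes the transported cut over $\res K$, with $\cO_L$ the $T$-convex valuation ring over $\cO$ making $b$ a unit — this exists and is unique by Fact~\ref{fact:tworings} and~\cite{DL95}. By the Wilkie inequality $\Gamma_L = \Gamma$, and by~\cite[Lemma 5.1]{DL95} the subfield $C\langle b\rangle$ is a lift of $\res L$, so $\res L = \res K\langle\overline b\rangle$; in particular $C\langle b\rangle\cong_C\res L\cong_C E$. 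Using Fact~\ref{fact:transext} I would extend $\der$ to a $T$-derivation on $L$ with $b' = 0$, so that every element of $C\langle b\rangle$ is a constant. Note that $b\not\sim f$ for all $f\in K$ (else $b - f\prec b\asymp 1$ would force $\overline b = \overline f\in\res K$), and that $L$ is an elementary $\LO$-extension of $K$ because $\cO\neq K$.

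The heart of the argument — and the step I expect to be the main obstacle — is the estimate that $\epsilon'\prec g^\dagger$ for every $\epsilon\in\smallo_L$ and every $g\in K$ with $g\succ 1$. Granting it, $L$ is a pre-$H_T$-field: rerunning the proof of Lemma~\ref{lem:simprehext}, any $h\in L$ with $h\succ 1$ can (using $\Gamma_L = \Gamma$ and that $C\langle b\rangle$ lifts $\res L$) be written $h\sim gu_0$ with $g\in K^\times$, $g\succ 1$, and $u_0$ a constant with $u_0\asymp 1$, hence $h = gu_0(1+\epsilon)$ with $\epsilon\in\smallo_L$ and $h^\dagger = g^\dagger + \epsilon'/(1+\epsilon)\sim g^\dagger$; now (PH1) follows from (PH1) for $K$, and (PH2) follows since every $f\in\cO_L = C\langle b\rangle + \smallo_L$ satisfies $f' = \epsilon'$ for some $\epsilon\in\smallo_L$. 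Being pre-$H_T$, $L$ is $H_T$-asymptotic, so $C_L\subseteq\cO_L$ and $C_L\cap\smallo_L = \{0\}$ by (HA1); since $C\langle b\rangle\subseteq C_L$ already lifts $\res L$, this forces $C_L = C\langle b\rangle$, whence $\cO_L = C_L + \smallo_L$ and $L$ is an $H_T$-field by Lemma~\ref{lem:equivHpreH}, with $C_L\cong_C E$. To prove the estimate I would write $\epsilon = G(b)$ with $G\colon K\to K$ an $\cL(K)$-definable function; since $b' = 0$, Fact~\ref{fact:basicTderivation2} gives $\epsilon' = G^{[\der]}(b)$. Shrinking to an $\LO(K)$-definable convex set $I$ with $b\in I^L$ on which $G$ is $\cC^1$ and $G(I)\subseteq\smallo$, every $c\in C\cap I$ is a constant with $G(c)\in\smallo$, so $G^{[\der]}(c) = G(c)'\prec g^\dagger$ because $K$ is a pre-$H_T$-field. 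Finally, the $\LO(K)$-definable set $\{y\in I: G^{[\der]}(y)\not\prec g^\dagger\}$ is a finite union of convex sets, each disjoint from $C\cap I$; using that $\res K$ is densely ordered and $C$ is a lift, each such convex piece lies in a single coset $c_V + \smallo$ with $\overline{c_V}\in\res K$, so its natural extension to $L$ lies in $c_V + \smallo_L$; as $\overline b\notin\res K$, the point $b$ avoids all these pieces, i.e.\ $G^{[\der]}(b)\prec g^\dagger$.

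It remains to record the universal property of $L = K\langle b\rangle$ in the simple case. Given an $H_T$-field extension $M$ of $K$ and an $\cL(C)$-embedding $\imath\colon C_L\to C_M$, note that $C_M$ lifts $\res M$ (as $M$ is an $H_T$-field), so $\imath$ induces an $\cL(\res K)$-embedding $\res L\to\res M$; hence $b^* := \imath(b)$ is a constant with $b^*\asymp 1$, $\overline{b^*}\notin\res K$, and $\overline{b^*}$ realizing the same cut over $\res K$ as $\overline b$. Consequently $b^*$ realizes over $K$ the same cut as $b$ and sits in the matching valuation ring, so there is an $\LO(K)$-embedding $L\to M$ with $b\mapsto b^*$; it is an $\LdO(K)$-embedding by Fact~\ref{fact:transext} since $b' = 0 = (b^*)'$, it extends $\imath$ because $L = K\langle C_L\rangle$, and it is unique for the same reason. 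Assembling these over the tower, as sketched in the first paragraph, completes the proof.
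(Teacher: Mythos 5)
Your proposal is correct and follows essentially the same route as the paper: the same simple extension $K\langle b\rangle$ with $\bar b$ realizing the transported cut over $\res K$, the larger $T$-convex valuation ring, the derivation extended by $b'=0$, and the same cut-transfer argument for the embedding property. The only difference is cosmetic: the paper verifies (H1) directly by evaluating $F^{[\der]}$ at constants in a definable neighbourhood of $b$, whereas you verify the pre-$H_T$ estimate $\epsilon'\prec g^\dagger$ by the same evaluate-at-constants device (plus weak o-minimality to trap the exceptional set in cosets of $\smallo$) and then invoke Lemma~\ref{lem:equivHpreH}.
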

\begin{proof}
It suffices to consider the case $E = C\langle f \rangle$ where $f \not\in C$. Let $L = K\langle a\rangle$ be a simple $T$-extension of $K$ where $a$ realizes the cut
\[
(C^{<f}+\smallo)^\downarrow \ =\ \{y \in K: y<\cO\}\cup \{c+\epsilon :c\in C^{<f}\text{ and }\epsilon \in \smallo\}.
\]
We expand $L$ to an $\LO$-structure by letting
\[
\cO_L\ \coloneqq \ \big\{y\in L:|y|<d\text{ for all }d \in K\text{ with } d >\cO\big\}.
\]
This expansion of $L$ is a $\TO$-extension of $K$ by Fact~\ref{fact:tworings}. Note that $a \in \cO_L$ and $\bar{a} \not\in \res K$, so the Wilkie inequality gives $\Gamma_L = \Gamma$. Using Fact~\ref{fact:transext}, we extend $\der$ uniquely to a $T$-derivation on $L$ with $a' = 0$. 

We claim that $L$ is an $H_T$-field extension of $K$. We may assume that $\cO \neq K$, since otherwise, $K$ and $L$ both have trivial valuation and derivation, so $L$ is trivially an $H_T$-field extension of $K$. To see that $L$ satisfies (H1), let $F\colon K\to K$ be an $\cL(K)$-definable function with $F(a) >\cO_L$. We need to show 
\[
F(a)' \ =\ F^{[\der]}(a)+ F'(a)a' \ =\ F^{[\der]}(a)\ >\ 0.
\]
As $F^{[\der]}$ is $\cL(K)$-definable, it suffices to show that for any subinterval $I \subseteq K$ with $a \in I^L$, there is $y \in I$ with $F^{[\der]}(y)>0$. Let $I$ be such a subinterval. Using that $\Gamma_L = \Gamma$ and that $\Gamma^>$ has no least element, take $d \in K$ with $F(a)> d>\cO_L$. By shrinking $I$, we arrange that $F(y)> d$ for all $y \in I$. Since $\bar{a} \in \overbar{I}^{\res L}$, we see that $\overbar{I}$ must be infinite. Thus, $I \cap C$ is infinite, so take $c \in I\cap C$. As $c \in C$, we have $F(c)' = F^{[\der]}(c)$. As $c \in I$, we have $F(c)>d>\cO$, so $F(c)' = F^{[\der]}(c)>0$, as desired. Now, let us show that (H2) holds. By (H1), we have $C_L \subseteq \cO_L$. Clearly, $C_L$ contains $C\langle a \rangle$. Since $C\langle a \rangle$ is a lift of $\res L$, it is maximal among the elementary $\cL$-substructures of $L$ contained in $\cO_L$, so $C\langle a \rangle = C_L$ and $\cO_L = C_L + \smallo_L$; see~\cite[Remark 2.11 and Theorem 2.12]{DL95}. This completes the proof that $L$ is an $H_T$-field. It also tells us that $C_L$ is $\cL(C)$-isomorphic to $E$.

Given an $H_T$-field extension $M$ of $K$ and an $\cL(C)$-embedding $\imath\colon C_L\to M$, there is at most one possible $\LdO(K)$-embedding $\jmath\colon L \to M$ which extends $\imath$, namely the one which sends $a$ to $\imath(a)$. Let us show that this is actually an $\LdO(K)$-embedding. By assumption, $a$ and $\imath(a)$ realize the same cut over $C$. Since $a-c\not\in \smallo_L$ and $\imath(a)-c\not\in \smallo_M$ for all $c \in C$, this assumption gives that $a$ and $\imath(a)$ realize the same cut over $\cO$. As $a \in \cO_L$ and $\imath(a) \in \cO_M$, we see that $a$ and $\imath(a)$ realize the same cut over $K$, so $\jmath$ is an $\cL(K)$-embedding. Fact~\ref{fact:transext} ensures that $\jmath$ is an $\Ld(K)$-embedding. To see that $\jmath$ is an $\LdO(K)$-embedding, let $f \in K\langle a\rangle$. If $f \in \cO_L$, then $|f|<c$ for some $c\in C_L$, so $|\jmath(f)|<\imath(c) \in C_M$, which gives $\jmath(f) \in \cO_M$. Conversely, if $f \not\in \cO_L$, then $|f|>d$ for some $d \in K$ with $d>\cO$, so $|\jmath(f)|> d$, which gives $\jmath(f)\not\in \cO_M$.
\end{proof}

\section{Liouville closed $H_T$-fields}\label{sec:Liouville}
\noindent
In this section, we assume that $K$ is an $H_T$-field. For now, we drop the assumption that $T$ is power bounded (though we will re-introduce this assumption at the beginning of Subsection~\ref{subsec:towers}). Recall that $K$ is \emph{Liouville closed} if for each $y \in K$, there is $f\in K$ and $g \in K^\times$ with $f'=g^\dagger = y$.

\begin{definition}
A \textbf{$T$-Liouville extension} of $K$ is an $H_T$-field extension $L$ of $K$ where
\begin{enumerate}
\item $C_L = C$, and 
\item each $a \in L$ is contained in an $H_T$-subfield $K\langle t_1,\ldots,t_n\rangle\subseteq L$ where for $i=1,\ldots,n$, either $t_i' \in K\langle t_1,\ldots,t_{i-1}\rangle$ or $t_i\neq 0$ and $t_i^\dagger \in K\langle t_1,\ldots,t_{i-1}\rangle$.
\end{enumerate}
\end{definition}

\noindent
Below we list some easily verified facts about $T$-Liouville extensions of $K$.
\begin{fact}\label{fact:Lexts}\
\begin{enumerate}
\item If $L$ is a $T$-Liouville extension of $K$ and $M$ is a $T$-Liouville extension of $L$, then $M$ is a $T$-Liouville extension of $K$.
\item If $M$ is a $T$-Liouville extension of $K$ and $L$ is an $H_T$-field extension of $K$ contained in $M$, then $M$ is a $T$-Liouville extension of $L$.
\item If $(L_i)_{i\in I}$ is an increasing chain of $T$-Liouville extensions of $K$, then the union $\bigcup_{i\in I}L_i$ is a $T$-Liouville extension of $K$.
\item Every $T$-Liouville extension of $K$ has the same cardinality as $K$.
\end{enumerate}
\end{fact}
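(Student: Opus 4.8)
The plan is to isolate one small closure lemma about $H_T$-fields and then verify the four items by pushing the definition around; the only step that is not bookkeeping is~(4).

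\textbf{Key lemma.} If $M$ is an $H_T$-field and $R$ is an $\LdO$-substructure of $M$ with $C_R=C_M$, then $R$ is an $H_T$-field. Indeed $R$ is then a pre-$H_T$-field, and for $b\in\cO_R\subseteq\cO_M$ we write $b=c+\epsilon$ with $c\in C_M=C_R\subseteq R$ and $\epsilon\in\smallo_M$ by~(H2) for $M$; then $\epsilon=b-c\in R\cap\smallo_M=\smallo_R$, so $\cO_R=C_R+\smallo_R$ and~(H2) holds for $R$. Alongside this I record two routine observations. First, if $N$ is an $H_T$-field and $t_1,\dots,t_n$ lie in an $H_T$-field extension of $N$ with each $t_i$ an integral or exponential integral of an element of $N\langle t_1,\dots,t_{i-1}\rangle$, then $N\langle t_1,\dots,t_n\rangle$ is closed under $\der$: by the identity $F(u)'=F^{[\der]}(u)+\nabla F(u)\cdot u'$ of Fact~\ref{fact:basicTderivation2}, each $t_i'$ (equal to the relevant element of $N\langle t_1,\dots,t_{i-1}\rangle$, or to $t_i$ times such an element) already lies in $N\langle t_1,\dots,t_n\rangle$, so $\der$ maps this structure into itself. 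Second, concatenating two such finite sequences (listing one after the other) yields a sequence still enjoying the tower property, since the predecessors of an entry in the concatenation include its original predecessors. Combined with the first observation and the key lemma, the $\cL$-definable closure of $K$ together with the entries of several tower-subfields over $K$ inside a fixed $H_T$-field is again a tower-subfield over $K$ — its constant field is forced to equal $C$, since $C$ sits between $K$ and it and inside the ambient constant field.

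For~(1), given $a\in M$, use that $M$ is a $T$-Liouville extension of $L$ to place $a$ in an $H_T$-subfield $L\langle s_1,\dots,s_m\rangle\subseteq M$ with the tower property over $L$. The element $a$, and the elements of $L\langle s_1,\dots,s_{j-1}\rangle$ of which the $s_j$ are (exponential) integrals, are $\cL(\emptyset)$-definable from finitely many parameters from $L$; gather all these parameters into one finite tuple $\boldsymbol{\ell}$. Since $L$ is a $T$-Liouville extension of $K$, the observations above put $\boldsymbol{\ell}$ inside a single tower-subfield $Q=K\langle u_1,\dots,u_N\rangle\subseteq L$ over $K$. Then $R:=K\langle u_1,\dots,u_N,s_1,\dots,s_m\rangle\subseteq M$ contains $a$, is closed under $\der$, has the tower property over $K$ (the relations defining the $s_j$, whose parameters now lie in $Q$, still hold with predecessors read off in the combined sequence), and has $C_R=C_M=C$ because $C\subseteq K\subseteq R$; so $R$ is an $H_T$-subfield of $M$ by the key lemma. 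Parts~(2) and~(3) are the same bookkeeping: for~(2), $C_L=C_M$ is immediate from $C\subseteq L$, and for $a$ in an $H_T$-subfield $K\langle t_1,\dots,t_n\rangle\subseteq M$ over $K$ one checks that $L\langle t_1,\dots,t_n\rangle$ has the tower property over $L$, is closed under $\der$, and has constant field $C_M$, hence is an $H_T$-subfield over $L$; for~(3), the union $L=\bigcup_i L_i$ is an $H_T$-field (model completeness of $T$ makes the $L_i$ an elementary $\cL$-chain; $\cO_L=\bigcup_i\cO_{L_i}$ is $T$-convex; $C_L=\bigcup_i C_{L_i}=C$, and $\cO_L=C+\smallo_L$ since each $L_i$ satisfies~(H2)), and an $H_T$-subfield of some $L_i$ is still one inside $L$.

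For~(4), $|L|\geq|K|$ is trivial, so the content is $|L|\leq|K|$. For $a\in L$ let its \emph{height} be the least $n$ with $a\in\dcl_\cL(K\cup\{t_1,\dots,t_n\})$ for some $t_1,\dots,t_n$ with the tower property (not insisting that the subfield be an $H_T$-subfield); by the definition of $T$-Liouville extension this is finite, so $L=\bigcup_{n<\omega}L^{(n)}$ with $L^{(0)}=K$. I show $|L^{(n)}|=|K|$ by induction (recall $K$ is infinite). An element of $L^{(n+1)}$ lies in $\dcl_\cL(K\cup\{u_1,\dots,u_k,t\})$ for some finite tuple $(u_i)$ from $L^{(n)}$ and some $t\in L$ which is an integral or exponential integral of an element $y$ of $K\langle u_1,\dots,u_k\rangle$; there are at most $|L^{(n)}|$ such tuples, for each at most $|K|$ choices of $y$, and for each $y$ at most $|C_L|\leq|K|$ integrals of $y$ in $L$ and at most $|C_L^\times|\leq|K|$ exponential integrals of $y$ in $L$ (any two differ by an element of $C_L$, respectively by a factor in $C_L^\times$, and $C_L=C$), while each $\dcl_\cL(K\cup\{u_1,\dots,u_k,t\})$ has cardinality $|K|$; so $|L^{(n+1)}|\leq|L^{(n)}|\cdot|K|=|K|$, and hence $|L|\leq\aleph_0\cdot|K|=|K|$. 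This last item is the only place needing an idea beyond the key lemma: one exploits the ``over $K$'' phrasing to stratify $L$ by finite Liouville height, and the constraint $C_L=C$ from the definition is precisely what keeps the number of integrals and exponential integrals of a fixed element bounded by $|K|$. For the other three the only subtlety is checking that the intermediate structures really are $H_T$-subfields, not merely pre-$H_T$-subfields.
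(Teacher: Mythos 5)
The paper states this as a ``Fact'' with no proof (``easily verified''), so there is nothing to compare against; your write-up is a correct verification. The two real points of substance are exactly the ones you isolate: (a) an $\LdO$-substructure $R$ of an $H_T$-field $M$ with $C_R=C_M$ is again an $H_T$-field (via $\smallo_R=\smallo_M\cap R$ and Lemma~\ref{lem:equivHpreH}), which together with concatenation of towers handles (1)--(3); and (b) for (4), the stratification by height combined with the observation that the integrals (resp.\ exponential integrals) of a fixed element form a single coset of $C_L=C$ (resp.\ $C^\times$), which is precisely where the constant-field clause of the definition earns its keep. The only step you assert without comment is $|\dclL(K\cup\{u_1,\dots,u_k,t\})|=|K|$; this is the standard cardinality bound for definable closure in this literature (and is equally implicit in the paper's own unproved statement and in its appeal to Zorn's lemma for maximal $T$-Liouville towers), so it is not a gap relative to the paper's standards, though a one-line remark that the number of germs of $\cL(K)$-definable unary functions at a cut is at most $|K|$ would make the count self-contained.
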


\begin{lemma}
\label{lem:closedimpliesmaximal}
Suppose $K$ is Liouville closed. Then $K$ has no proper $T$-Liouville extensions.
\end{lemma}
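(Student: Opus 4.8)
The plan is to show that any $T$-Liouville extension $L$ of $K$ must coincide with $K$, by proving that every element of $L$ already lies in $K$. Fix $a \in L$. By the definition of a $T$-Liouville extension, $a$ is contained in some $H_T$-subfield $K\langle t_1,\ldots,t_n\rangle\subseteq L$ such that for each $i \in \{1,\ldots,n\}$, either $t_i' \in K\langle t_1,\ldots,t_{i-1}\rangle$, or $t_i \neq 0$ and $t_i^\dagger \in K\langle t_1,\ldots,t_{i-1}\rangle$. The core claim, to be proved by induction on $i$, is that $K\langle t_1,\ldots,t_i\rangle = K$ for every $i \leq n$. Granting this for $i = n$ yields $a \in K$, and since $a \in L$ was arbitrary, $L = K$.

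The base case $i = 0$ is immediate. For the inductive step, assume $K\langle t_1,\ldots,t_{i-1}\rangle = K$, so in particular $t_1,\ldots,t_{i-1} \in K$ and thus $K\langle t_1,\ldots,t_i\rangle = K\langle t_i\rangle$. If $t_i' = s$ for some $s \in K$, then, as $K$ is Liouville closed, there is $f \in K$ with $f' = s$; then $(t_i - f)' = 0$, so $t_i - f$ lies in $\ker(\der_L) = C_L = C \subseteq K$, using that $L$ is a $T$-Liouville extension and hence $C_L = C$. Therefore $t_i \in K$. If instead $t_i \neq 0$ and $t_i^\dagger = s$ for some $s \in K$, then Liouville closedness gives $g \in K^\times$ with $g^\dagger = s$; then $(t_i/g)^\dagger = t_i^\dagger - g^\dagger = 0$, so the nonzero element $t_i/g$ lies in $C_L^\times = C^\times \subseteq K^\times$, whence $t_i = (t_i/g)\cdot g \in K$ again. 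In either case $K\langle t_i\rangle = K$, completing the induction.

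I expect no genuine obstacle here: the argument is a direct unwinding of the definitions of \emph{Liouville closed} and \emph{$T$-Liouville extension}. The only points worth recording are that the derivation on $L$ extends that of $K$ (since $L$ is an $\LdO$-extension of $K$), and that $C_L = C$ is built into the definition of a $T$-Liouville extension, so that any element of $L$ which the tower forces to be a constant is in fact an element of $K$. No power-boundedness hypothesis on $T$ is used.
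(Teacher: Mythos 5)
Your proof is correct and follows essentially the same route as the paper's: induction on the tower, using Liouville closedness of $K$ to produce an integral $f$ (resp.\ exponential integral $g$) in $K$ and then observing that $t_i - f \in C_L = C$ (resp.\ $t_i/g \in C_L^\times = C^\times$), so each $t_i$ lies in $K$. No issues.
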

\begin{proof}
Let $L$ be a $T$-Liouville extension of $K$, and let $a \in L$. We will show that $a \in K$. By definition, $a$ is contained in an $H_T$-subfield $K\langle t_1,\ldots,t_n\rangle\subseteq L$ where for $i =1,\ldots,n$, either $t_i' \in K\langle t_1,\ldots,t_{i-1}\rangle$ or $t_i^\dagger \in K\langle t_1,\ldots,t_{i-1}\rangle$. We show by induction that $K\langle t_1,\ldots,t_i\rangle=K$ for each $i \leq n$. Fix $i\leq n$ and suppose that $K\langle t_1,\ldots,t_{i-1}\rangle = K$. If $t_i' \in K$, then since $K$ is Liouville closed, there is $f \in K$ with $f' = t_i'$. Then $t_i =f + c$ for some $c \in C_L = C$, so $t_i \in K$ as well. Likewise, if $t_i^\dagger \in K$, then there is $g \in K^\times$ with $g^\dagger = t_i^\dagger$, so $t_i =cg$ for some $c \in C_L^\times = C^\times$, again giving $t_i \in K$.
\end{proof}

\begin{definition}
A \textbf{$T$-Liouville closure} of $K$ is a $T$-Liouville extension of $K$ which is Liouville closed.
\end{definition}

\begin{corollary}
\label{cor:charofTLclosure1}
Let $L$ be a Liouville closed $H_T$-field extension of $K$. If $L$ is a $T$-Liouville closure of $K$, then $L$ has no proper Liouville closed $H_T$-subfields which contain $K$. If $L$ has no proper Liouville closed $H_T$-subfields which contain $K$ and $C_L = C$, then $L$ is a $T$-Liouville closure of $K$. 
\end{corollary}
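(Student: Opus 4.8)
The corollary is an equivalence (under its standing hypothesis that $L$ is a Liouville closed $H_T$-field extension of $K$), and I would prove the two implications separately, the first being a formality and the second requiring the construction of a minimal Liouville closed subfield from below. For the implication ``$T$-Liouville closure $\Rightarrow$ no proper Liouville closed $H_T$-subfield over $K$'': given a Liouville closed $H_T$-subfield $N$ of $L$ with $K\subseteq N$, note that $L$ is a $T$-Liouville extension of $K$ and $N$ is an $H_T$-field extension of $K$ contained in $L$, so Fact~\ref{fact:Lexts}(2) makes $L$ a $T$-Liouville extension of $N$; since $N$ is Liouville closed, Lemma~\ref{lem:closedimpliesmaximal} forces $L=N$, so no proper such $N$ exists.

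For the converse, assume $C_L=C$ and that $L$ has no proper Liouville closed $H_T$-subfield containing $K$; the task is to realize $L$ itself as a $T$-Liouville extension of $K$. The first step is a simple but crucial observation: \emph{any $\LdO$-substructure $P$ of $L$ with $C\subseteq P$ is automatically an $H_T$-subfield of $L$}. Indeed $P$ is a pre-$H_T$-field (being an $\LdO$-substructure of the pre-$H_T$-field $L$), and for $a\in\cO\cap P$ one writes $a=c+\epsilon$ with $c\in C_L=C\subseteq P$ and $\epsilon\in\smallo$, whence $\epsilon=a-c\in P$ and $\cO_P\subseteq C_P+\smallo_P$, i.e.\ (H2) holds; then $P$ is an $H_T$-field by Lemma~\ref{lem:equivHpreH}. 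A consequence: if $N$ is an $H_T$-subfield of $L$ containing $K$ and $f\in L$ satisfies either $f'\in N$, or $f\neq 0$ and $f^\dagger\in N$, then $N\langle f\rangle$ is closed under $\der$ (by Fact~\ref{fact:transext}, since $f'\in N\langle f\rangle$ in both cases), hence an $\LdO$-substructure of $L$ containing $C$, hence an $H_T$-subfield of $L$.

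With this in hand, I would let $L_0$ be the union of all $K\langle t_1,\dots,t_n\rangle$ with $t_i\in L$ such that for each $i$ either $t_i'\in K\langle t_1,\dots,t_{i-1}\rangle$ or $t_i\neq 0$ and $t_i^\dagger\in K\langle t_1,\dots,t_{i-1}\rangle$. By the previous step and induction on $n$, each such tower is an $H_T$-subfield of $L$; since the concatenation of two such towers is again a tower of this form, $L_0$ is an $\LdO$-substructure of $L$ containing $K$, hence an $H_T$-subfield of $L$ with $C_{L_0}=C$, and a $T$-Liouville extension of $K$ by construction. Moreover $L_0$ is Liouville closed: for $a\in L_0$, fix a tower $K\langle t_1,\dots,t_n\rangle\ni a$; Liouville closedness of $L$ supplies $f\in L$ and $g\in L^\times$ with $f'=g^\dagger=a$, and $K\langle t_1,\dots,t_n,f\rangle$, $K\langle t_1,\dots,t_n,g\rangle$ are again towers of the required form, so $f,g\in L_0$. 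Thus $L_0$ is a Liouville closed $H_T$-subfield of $L$ containing $K$; by hypothesis $L_0=L$, so $L$ is a $T$-Liouville extension of $K$, and being Liouville closed, a $T$-Liouville closure of $K$.

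The only delicate point is the boxed observation — equivalently, checking that the towers one builds, their extensions by integrals and exponential integrals, and their union all satisfy (H2). This is precisely where the hypothesis $C_L=C$ is used: it rules out the appearance of new constants, which is the only mechanism by which (H2) could break in these subfields. Everything else is routine bookkeeping about concatenating Liouville towers and about $\dclL$-closure.
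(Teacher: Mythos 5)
Your proof is correct and follows essentially the same route as the paper: the first implication is the identical combination of Fact~\ref{fact:Lexts} and Lemma~\ref{lem:closedimpliesmaximal}, and for the converse you build the largest $T$-Liouville extension of $K$ inside $L$ and show it is Liouville closed, exactly as the paper does (the paper gets it via Zorn's lemma rather than as an explicit union of towers, but the substance — that $C_L=C$ forces the adjoined integrals and exponential integrals to generate $H_T$-subfields — is the same). Your explicit verification that towers with $C\subseteq P$ satisfy (H2) via Lemma~\ref{lem:equivHpreH} is a point the paper leaves implicit, and it is carried out correctly.
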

\begin{proof}
Suppose that $L$ is a $T$-Liouville closure of $K$ and let $M$ be a Liouville closed $H_T$-subfield of $L$ containing $K$. Then $L$ is an $T$-Liouville extension of $M$ by Fact~\ref{fact:Lexts}, so $M= L$ by Lemma~\ref{lem:closedimpliesmaximal}. Now suppose that $C_L = C$ and that $L$ is not a $T$-Liouville closure of $K$. We will find a proper Liouville closed subfield of $L$ containing $K$. Let $M$ be a maximal $T$-Liouville extension of $K$ contained in $L$ (we know that $M$ exists by Fact~\ref{fact:Lexts} and Zorn's lemma). We claim that $M$ is Liouville closed. Let $s \in M$ and take $a \in L$ and $b \in L^\times$ with $a' =b^\dagger= s$. Since $C_{M \langle a \rangle},C_{M \langle b \rangle} \subseteq C_L = C$, we see that $M \langle a \rangle$ and $M\langle b\rangle$ are both $T$-Liouville extensions of $M$, and therefore equal to $M$ by maximality and Fact~\ref{fact:Lexts}. Thus, every element of $M$ has an integral in $M$ and an exponential integral in $M^\times$.
\end{proof}

\subsection{$T$-Liouville towers}\label{subsec:towers}
\begin{assumption}
For the remainder of this section, we assume that $T$ is power bounded.
\end{assumption}

\begin{definition}
A \textbf{$T$-Liouville tower on $K$} is a strictly increasing chain $(K_\mu)_{\mu \leq \nu}$ of $H_T$-fields such that:
\begin{enumerate}
\item $K_0 = K$;
\item if $\mu\leq \nu$ is an infinite limit ordinal, then $K_\mu = \bigcup_{\eta<\mu}K_\eta$;
\item if $\mu<\nu$, then $K_{\mu+1} = K_\mu\langle a_\mu \rangle$ with $a_\mu \not\in K_\mu$ and one of the following holds:
\begin{enumerate}
\item $a_\mu' = s_\mu \in K_\mu$ with $a_\mu \prec 1$ and $vs_\mu$ is a gap in $K_\mu$;
\item $a_\mu' = s_\mu \in K_\mu$ with $a_\mu \succ 1$ and $vs_\mu$ is a gap in $K_\mu$;
\item $a_\mu' = s_\mu \in K_\mu$ with $vs_\mu = \max\Psi_{K_\mu}$;
\item $a_\mu' = s_\mu \in K_\mu$ with $a_\mu \prec 1$, $vs_\mu \in (\Gamma^>_{K_\mu})'$, and $s_\mu \not\in \der \smallo_{K_\mu}$;
\item $a_\mu' = s_\mu \in K_\mu$ with $v(s_\mu-\der K_\mu)\subseteq (\Gamma^<_{K_\mu})'$;
\item $a_\mu^\dagger = s_\mu \in K_\mu$ with $a_\mu \sim 1$, $v s_\mu\in (\Gamma^>_{K_\mu})'$, and $s_\mu \neq y^\dagger$ for all $y \in K_\mu^\times$;
\item $a_\mu^\dagger = s_\mu \in K_\mu$ with $a_\mu >0$ and $v\big(s_\mu-(K_\mu^\times)^\dagger\big)\subseteq \Psi_{K_\mu}^\downarrow$.
\end{enumerate}
\end{enumerate}
The $H_T$-field $K_\nu$ is called the \textbf{top of the tower $(K_\mu)_{\mu \leq \nu}$}.
\end{definition}

\noindent
Let $(K_\mu)_{\mu \leq \nu}$ be a $T$-Liouville tower on $K$. Note that (a), (b), (c), (f), and (g) correspond to Lemmas~\ref{lem:gapgoesup},~\ref{lem:gapgoesdown},~\ref{lem:maxgoesdown},~\ref{lem:smallexpint}, and~\ref{lem:bigexpint} and that (d) and (e) correspond to Corollaries~\ref{cor:smallint} and~\ref{cor:bigint}, respectively. In each of these extensions, we have $\res K_{\mu+1} = \res K_\mu$, so $C_{K_{\mu+1}} = C_{K_\mu}$ by Corollary~\ref{cor:HTres}. Thus, $K_{\mu+1}$ is a $T$-Liouville extension of $K_\mu$ for each $\mu\leq \nu$. Using also Fact~\ref{fact:Lexts}, we see that each $K_\mu$ is a $T$-Liouville extension of $K$. Since each $T$-Liouville extension of $K$ has the same cardinality as $K$, maximal $T$-Liouville towers on $K$ exist by Zorn's lemma.

\begin{lemma}
\label{lem:toptclosure}
Let $L$ be the top of a maximal $T$-Liouville tower on $K$. Then $L$ is Liouville closed and, therefore, $L$ is a $T$-Liouville closure of $K$.
\end{lemma}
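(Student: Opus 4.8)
The plan is to show that $L$, being the top of a \emph{maximal} $T$-Liouville tower, must already be Liouville closed; the ``therefore'' clause then follows immediately, since we observed above that $L$ is a $T$-Liouville extension of $K$, and a Liouville closed $T$-Liouville extension is by definition a $T$-Liouville closure. So suppose toward a contradiction that $L$ is \emph{not} Liouville closed. Then either there is some $s \in L$ with no integral in $L$, or some $s \in L$ with no exponential integral in $L^\times$. In each case I want to produce one more extension $L\langle a\rangle$ of the type allowed in clauses (a)--(g) of the definition of a $T$-Liouville tower, thereby extending the tower by one step and contradicting maximality.

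\textbf{Case 1: $s \in L$ has no integral in $L$.} Here I invoke the trichotomy for $H_T$-asymptotic fields (Fact~\ref{fact:trich}): $L$ either has asymptotic integration, has a gap, or is grounded. If $L$ has asymptotic integration then $\Gamma_L = (\Gamma_L^{\neq})'$, and I split on whether $vs \in (\Gamma_L^>)'$ or $vs \in (\Gamma_L^<)'$. In the first subcase, $s \notin \der\smallo_L$ (else $s$ has an integral $\prec 1$ in $L$), so clause (d) via Corollary~\ref{cor:smallint} applies; in the second subcase I need $v(s - \der L) \subseteq (\Gamma_L^<)'$, which holds because $v(\der L) \subseteq (\Gamma_L^{\neq})'$ and the set $(\Gamma_L^<)'$ is downward closed among values realized, so clause (e) via Corollary~\ref{cor:bigint} applies — in both subcases $s$ has no integral in $L$ exactly says the relevant pc-sequence is divergent. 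If $L$ has a gap $\beta$, then $\Gamma_L \setminus (\Gamma_L^{\neq})' = \{\beta\}$; if $vs \neq \beta$ I am back in the asymptotic-integration subcases, and if $vs = \beta$ then since $L$ is an $H_T$-field it has no fake gap, so Lemma~\ref{lem:gapgoesup} (clause (a)) or Lemma~\ref{lem:gapgoesdown} (clause (b)) produces a proper extension. If $L$ is grounded, say $vs = \max\Psi_L$, then clause (c) via Lemma~\ref{lem:maxgoesdown} applies; if $vs \neq \max\Psi_L$ I am again in a handled subcase.

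\textbf{Case 2: $s \in L$ has no exponential integral in $L^\times$.} Equivalently $s \ne y^\dagger$ for all $y \in L^\times$. I compare $vs$ with $\Psi_L$. If $v(s - (L^\times)^\dagger) \subseteq \Psi_L^\downarrow$, then clause (g) via Lemma~\ref{lem:bigexpint} applies. Otherwise there is $y \in L^\times$ with $v(s - y^\dagger) > \Psi_L$, i.e.\ $v(s - y^\dagger) \in (\Gamma_L^>)'$ — here I use that $\Gamma_L \setminus (\Gamma_L^{\neq})'$ is either empty or a single gap $\beta > \Psi_L$, and in the gap case $vs = \beta$ would force $s$ itself to be, up to an element of $(L^\times)^\dagger$, something with no exponential integral, which I rule out by replacing $s$ with $s - y^\dagger$. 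Setting $\tilde s := s - y^\dagger$, I have $v\tilde s \in (\Gamma_L^>)'$ and $\tilde s \ne z^\dagger$ for all $z \in L^\times$; moreover $\tilde s \ne z^\dagger$ for $z \in 1+\smallo_L$ in particular, so clause (f) via Lemma~\ref{lem:smallexpint} produces $L\langle a\rangle$ with $a^\dagger = \tilde s$, and then $ya$ is an exponential integral of $s$ in a proper extension — more cleanly, just observe that having an exponential integral for $s$ in $L$ is equivalent to having one for $\tilde s$, so the $(f)$-step already contradicts maximality.

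\textbf{The main obstacle} I expect is the careful bookkeeping in Case 2 around the distinction between $v(s-(L^\times)^\dagger)\subseteq\Psi_L^\downarrow$ and the existence of a $y$ with $v(s-y^\dagger)$ above $\Psi_L$: one must check these two cases are exhaustive, which amounts to checking that if $v(s-y^\dagger)\notin\Psi_L^\downarrow$ for some $y$ then it lies strictly above all of $\Psi_L$, using that $\Psi_L$ has no elements of the ``wrong side'' — this is where $L$ being an $H_T$-field (hence having no fake gap, and having $\Psi_L < (\Gamma_L^>)'$) does the work. A secondary point requiring attention is verifying in each subcase of Case 1 that ``$s$ has no integral in $L$'' really does translate into the divergence hypothesis needed by Corollary~\ref{cor:smallint}, Corollary~\ref{cor:bigint}, or the relevant gap lemma — but since each of those results is quoted above with exactly the hypothesis ``$s\notin\der\smallo_L$'' or ``$v(s-\der L)\subseteq(\Gamma_L^<)'$'' etc., this is a short unwinding of definitions rather than new work.
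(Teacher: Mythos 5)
Your overall strategy --- run the paper's argument in contrapositive form, showing that failure of Liouville closedness would let the maximal tower be extended by a step of type (a)--(g) --- is the same argument the paper gives directly (the paper first deduces asymptotic integration from the impossibility of steps (a), (b), (c), then for each $s$ manufactures an integral from the impossibility of (e) and (d), and an exponential integral from the impossibility of (g) and (f)). Your Case 2 and the gap/grounded bookkeeping are fine, if somewhat more roundabout than simply observing at the outset that maximality forces $L$ to have no gap and to be ungrounded.

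There is, however, a genuine gap in Case 1, in the subcase $vs \in (\Gamma_L^<)'$. You claim that $v(s-\der L)\subseteq(\Gamma_L^<)'$ ``holds because $v(\der L)\subseteq(\Gamma_L^{\neq})'$ and $(\Gamma_L^<)'$ is downward closed.'' This implication is false: the location of $vs$ does not control the location of $v(s-y')$ when $vs=v(y')$. Concretely, take $y\succ 1$ and $t\in L$ with $vt\in(\Gamma_L^>)'$ but $t\notin\der\smallo_L$, and set $s\coloneqq y'+t$; then $vs=v(y')\in(\Gamma_L^<)'$ and $s$ has no integral in $L$, yet $v(s-y')=vt\in(\Gamma_L^>)'$, so the hypothesis of Corollary~\ref{cor:bigint} fails and clause (e) is not available for $s$. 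Meanwhile clause (d) is not available for $s$ either, since $vs\notin(\Gamma_L^>)'$. So your two subcases, as parametrized by where $vs$ lies, are not exhaustive of the ways an integral can fail to exist. The correct dichotomy --- the one the paper uses --- is on whether $v(s-\der L)\subseteq(\Gamma_L^<)'$: if so, clause (e) applies to $s$; if not, pick $y$ with $v(s-y')>(\Gamma_L^<)'$, note that asymptotic integration puts $v(s-y')\in(\Gamma_L^>)'$, check $s-y'\notin\der\smallo_L$ (else $s$ integrates), and apply clause (d) to $s-y'$. This is exactly the ``replace $s$ by $s-y^\dagger$'' reduction you carry out correctly in Case 2; the same reduction is needed in Case 1 and is missing. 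With that repair the proof goes through and coincides with the paper's.
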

\begin{proof}
By (a) and (b), $L$ does not have a gap, and by (c), $L$ is not grounded, so $L$ has asymptotic integration by Fact~\ref{fact:trich}. Let $s \in L$. We will show that $s$ has an integral in $L$ and an exponential integral in $L^\times$. We have $v(s- \der L) \not\subseteq(\Gamma^<_L)'$ by (e), so there is $y \in L$ with $v(s-y')> (\Gamma^<_L)'$. Since $L$ has asymptotic integration, we have $v(s-y') \in (\Gamma^>_L)'$ so by (d), there is $f \in \smallo_L$ with $f' = s-y'$. Then $s = (f+y)'$. Likewise, by (g) there is $b \in L^\times$ with $v(s- b^\dagger) > \Psi_L^\downarrow$. Asymptotic integration gives $v(s- b^\dagger) \in (\Gamma^>_L)'$, and we may take $g \in L^\times$ with $g \sim 1$ and $g^\dagger = s-b^\dagger$ by (f). Then $s= (bg)^\dagger$.
\end{proof}

\noindent
Lemma~\ref{lem:toptclosure} gives the existence of $T$-Liouville closures under our standing assumption that $T$ is power bounded. The rest of the section is focused on uniqueness.

\begin{lemma}
\label{lem:relativemaxismax}
Let $L$ be a Liouville closed $H_T$-field extension of $K$ and let $(K_\mu)_{\mu\leq \nu}$ be a $T$-Liouville tower on $K$. Suppose that $(K_\mu)_{\mu\leq \nu}$ is a tower in $L$, that is, each $K_\mu$ is an $H_T$-subfield of $L$. Suppose also that $(K_\mu)_{\mu\leq \nu}$ cannot be extended to a $T$-Liouville tower $(K_\mu)_{\mu\leq \nu+1}$ in $L$. Then $(K_\mu)_{\mu\leq \nu}$ is a maximal $T$-Liouville tower on $K$. 
\end{lemma}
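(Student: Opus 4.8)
The plan is to argue contrapositively. Suppose that $(K_\mu)_{\mu\leq\nu}$ is \emph{not} a maximal $T$-Liouville tower on $K$; then it can be extended by one step, so there are $a_\nu\notin K_\nu$ and $s_\nu\in K_\nu$ such that $K_\nu\langle a_\nu\rangle$ is an $H_T$-field and one of the conditions (a)--(g) holds. I will produce an element $b\in L$ such that $K_\nu\langle b\rangle$, with the valuation ring induced from $L$, is an $H_T$-subfield of $L$ for which $(K_\mu)_{\mu\leq\nu}$ followed by $K_\nu\langle b\rangle$ is a $T$-Liouville tower of length $\nu+1$ in $L$; this contradicts the hypothesis. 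The key observation is that every condition in (a)--(g) refers only to $K_\nu$, and that Lemmas~\ref{lem:gapgoesup},~\ref{lem:gapgoesdown},~\ref{lem:maxgoesdown},~\ref{lem:smallexpint} and~\ref{lem:bigexpint}, together with Corollaries~\ref{cor:smallint} and~\ref{cor:bigint}, each carry a universal embedding property: $K_\nu\langle a_\nu\rangle$ embeds over $K_\nu$ into any (pre-$H_T$- or $H_T$-asymptotic) field extension of $K_\nu$ that contains a suitable integral or exponential integral of $s_\nu$. Since $L\supseteq K_\nu$ is Liouville closed, it supplies such an element, and the image of the embedding is the desired $K_\nu\langle b\rangle$; as this image is $\LdO(K_\nu)$-isomorphic to $K_\nu\langle a_\nu\rangle$ it is again an $H_T$-field, and $a_\nu\notin K_\nu$ forces $b\notin K_\nu$, so the chain grows strictly.

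To carry this out I would split into the seven cases, noting first that (a) and (b) merge, both saying that $vs_\nu$ is a gap in $K_\nu$. If $vs_\nu$ is a gap in $K_\nu$, then $L$ contains an integral $f$ of $s_\nu$; using $\cO_L=C_L+\smallo_L$ and $s_\nu\neq0$, either $L$ contains an integral of $s_\nu$ that is $\prec1$ (extend via (a), using Lemma~\ref{lem:gapgoesup}), or every integral of $s_\nu$ in $L$ is $\succ1$ (extend via (b), using Lemma~\ref{lem:gapgoesdown}). If $vs_\nu=\max\Psi_{K_\nu}$ (case (c)), take any integral $b$ of $s_\nu$ in $L$ and embed via Lemma~\ref{lem:maxgoesdown}. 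If $v(s_\nu-\der K_\nu)\subseteq(\Gamma^<_{K_\nu})'$ (case (e)), an integral of $s_\nu$ in $L$ suffices, via Corollary~\ref{cor:bigint}. If $vs_\nu\in(\Gamma^>_{K_\nu})'$ and $s_\nu\notin\der\smallo_{K_\nu}$ (case (d)), note that $vs_\nu=\gamma'$ for some $\gamma\in\Gamma^>_{K_\nu}$; since the asymptotic couple of $L$ restricts to that of $K_\nu$ we get $vs_\nu\in(\Gamma^>_L)'$, and because $(\Gamma^<_L)'<(\Gamma^>_L)'$ this forces every integral of $s_\nu$ in $L$ into $\cO_L$, hence $s_\nu\in\der\smallo_L$, and we embed via Corollary~\ref{cor:smallint}. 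If $vs_\nu\in(\Gamma^>_{K_\nu})'$ and $s_\nu$ has no exponential integral in $K_\nu^\times$ (case (f)), then $L^\times$ contains an exponential integral $b$ of $s_\nu$; since $vs_\nu\in(\Gamma^>_L)'$ while $\Psi_L<(\Gamma^>_L)'$, we cannot have $b\not\asymp1$, so $b\asymp1$, and dividing $b$ by its constant part gives an exponential integral of $s_\nu$ in $1+\smallo_L$, whence we embed via Lemma~\ref{lem:smallexpint}. If $v\big(s_\nu-(K_\nu^\times)^\dagger\big)\subseteq\Psi_{K_\nu}^\downarrow$ (case (g)), then $L^\times$ contains an exponential integral of $s_\nu$, which after a sign change is positive, and we embed via Lemma~\ref{lem:bigexpint}. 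In every case the resulting one-step extension lies in $L$, contradicting the hypothesis, so $(K_\mu)_{\mu\leq\nu}$ is maximal.

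I expect the main obstacle to be the ``size-class'' bookkeeping in cases (a)/(b), (d) and (f): one must locate the integral or exponential integral furnished by Liouville closedness of $L$ relative to the valuation, so that the correct one of the seven extension lemmas applies. This uses nothing beyond facts already recorded for $H_T$-asymptotic fields --- that $(\Gamma^<)'<(\Gamma^>)'$ and $\Psi<(\Gamma^>)'$, that the asymptotic couple of $L$ restricts to that of $K_\nu$, and that $\cO_L=C_L+\smallo_L$ --- so this should be routine rather than conceptual; the remaining cases (c), (e), (g) are direct invocations of the relevant embedding properties.
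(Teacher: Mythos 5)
Your proposal is correct and follows essentially the same route as the paper: both arguments realize any would-be next tower step inside $L$ by combining the Liouville closedness of $L$ with the embedding properties of Lemmas~\ref{lem:gapgoesup},~\ref{lem:gapgoesdown},~\ref{lem:maxgoesdown},~\ref{lem:smallexpint}, and~\ref{lem:bigexpint} and Corollaries~\ref{cor:smallint} and~\ref{cor:bigint}, together with exactly the valuation-theoretic bookkeeping you describe (subtracting a constant to make an integral $\not\asymp 1$, dividing an exponential integral by its constant part, etc.). The only cosmetic difference is that the paper packages the case analysis as a proof that $K_\nu$ is Liouville closed and then invokes Lemma~\ref{lem:closedimpliesmaximal}, whereas you contrapose directly on the definition of a maximal tower.
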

\begin{proof}
By Fact~\ref{fact:Lexts} and Lemma~\ref{lem:closedimpliesmaximal}, it suffices to show that $K_\nu$ is Liouville closed. If $vs$ is a gap in $K_\nu$ for some $s \in K_\nu$, then $L$ contains an element $a$ with $a' = s$. By subtracting a constant from $a$, we may assume that $a \not\asymp1$. By Lemma~\ref{lem:gapgoesup} (if $a\prec 1$) or Lemma~\ref{lem:gapgoesdown} (if $a\succ 1$), we see that $K_\nu\langle a \rangle \subseteq L$ is a $T$-Liouville extension of $K_\nu$, contradicting the maximality of $(K_\mu)_{\mu\leq \nu}$ in $L$. Thus, $K_\nu$ has no gap and likewise, Lemma~\ref{lem:maxgoesdown} shows that $K_\nu$ is ungrounded, so $K_\nu$ has asymptotic integration by Fact~\ref{fact:trich}. 

Fix $s \in K_\nu$. If $v(s-\der K_\nu) \subseteq (\Gamma_{K_\nu}^<)'$, then $K\langle f\rangle$ is a $T$-Liouville extension of $K_\nu$ contained in $L$ for any $f \in L$ with $f' = s$ by Corollary~\ref{cor:bigint}, contradicting the maximality of $(K_\mu)_{\mu\leq \nu}$ in $L$. Therefore, we may take $y \in K_\nu$ with $v(s-y')>(\Gamma_{K_\nu}^<)'$. As $K_\nu$ has asymptotic integration, we have $v(s-y')\in(\Gamma_{K_\nu}^>)'$. If $s-y' \not\in \der\smallo_{K_\nu}$, then $K\langle g\rangle$ is a $T$-Liouville extension of $K_\nu$ contained in $L$ for any $g \in \smallo_L$ with $g' = s-y'$ by Corollary~\ref{cor:smallint}, again contradicting the maximality of $(K_\mu)_{\mu\leq \nu}$ in $L$. Thus, $s-y' \in \der\smallo_{K_\nu}$, so $s\in \der K_\nu$. A similar argument, using Lemmas~\ref{lem:smallexpint} and~\ref{lem:bigexpint} shows that $s$ has an exponential integral in $K_\nu^\times$.
\end{proof}

\noindent
Lemmas~\ref{lem:toptclosure} and~\ref{lem:relativemaxismax} can be used to remove the assumption ``$C_L = C$'' from Corollary~\ref{cor:charofTLclosure1} under our current assumption of power boundedness.

\begin{corollary}
\label{cor:charofTLclosure}
Let $L$ be a Liouville closed $H_T$-field extension of $K$. Then $L$ is a $T$-Liouville closure of $K$ if and only if $L$ has no proper Liouville closed $H_T$-subfields which contain $K$.
\end{corollary}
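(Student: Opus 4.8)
The forward implication is immediate: it is precisely the first assertion of Corollary~\ref{cor:charofTLclosure1}, which holds with no assumption on $C_L$. So the content of this corollary is the reverse implication, and the plan is to produce, entirely inside $L$, a $T$-Liouville closure of $K$ and then use the hypothesis to identify it with $L$.

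To do this, I would run the construction of a maximal $T$-Liouville tower, but keeping every field in sight inside $L$. Consider the $T$-Liouville towers $(K_\mu)_{\mu\leq\nu}$ on $K$ with the additional property that each $K_\mu$ is an $H_T$-subfield of $L$; note that this poset is nonempty, since the trivial tower $(K)$ qualifies. As observed just before Lemma~\ref{lem:toptclosure}, each $K_\mu$ occurring in such a tower is a $T$-Liouville extension of $K$, hence has cardinality $|K|$ by Fact~\ref{fact:Lexts}(4); since the chain $(K_\mu)$ is strictly increasing, the ordinal $\nu$ is bounded, and unions of towers-in-$L$ are again towers-in-$L$, so Zorn's lemma produces such a tower $(K_\mu)_{\mu\leq\nu}$ that cannot be extended to a $T$-Liouville tower $(K_\mu)_{\mu\leq\nu+1}$ inside $L$. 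By Lemma~\ref{lem:relativemaxismax}, $(K_\mu)_{\mu\leq\nu}$ is then a maximal $T$-Liouville tower on $K$, and Lemma~\ref{lem:toptclosure} shows that its top $K_\nu$ is a $T$-Liouville closure of $K$. In particular $K_\nu$ is Liouville closed, and since $K_\nu$ is a $T$-Liouville extension of $K$ we have $C_{K_\nu}=C$. Thus $K_\nu$ is a Liouville closed $H_T$-subfield of $L$ containing $K$; by the hypothesis that $L$ has no \emph{proper} such subfield, $K_\nu=L$, and therefore $L$ is a $T$-Liouville closure of $K$, as desired.

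There is no genuine obstacle here beyond organizing the reduction; the one point that needs a little care is the existence of the maximal tower inside $L$, which — exactly as in the construction of maximal $T$-Liouville towers on $K$ — relies on the cardinality bound $|K_\mu|=|K|$ to guarantee that the ordinals indexing these towers are bounded, so that Zorn's lemma applies. Everything else is bookkeeping with Lemmas~\ref{lem:relativemaxismax} and~\ref{lem:toptclosure} and the transitivity/union properties in Fact~\ref{fact:Lexts}.
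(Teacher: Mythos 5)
Your proof is correct and follows essentially the same route as the paper: both directions rest on Corollary~\ref{cor:charofTLclosure1} for the forward implication and, for the converse, on building a maximal $T$-Liouville tower on $K$ inside $L$ and invoking Lemmas~\ref{lem:relativemaxismax} and~\ref{lem:toptclosure} to see that its top is a $T$-Liouville closure of $K$ sitting inside $L$ as a Liouville closed $H_T$-subfield. The only (immaterial) difference is that the paper phrases the converse contrapositively, while you argue it directly and spell out the Zorn's-lemma/cardinality point explicitly.
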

\begin{proof}
One implication holds by Corollary~\ref{cor:charofTLclosure1}. For the other, suppose that $L$ is not a $T$-Liouville closure of $K$ and let $(K_\mu)_{\mu\leq \nu}$ be a maximal $T$-Liouville tower on $K$ in $L$. Then $K_\nu$ is a $T$-Liouville closure of $K$ by Lemmas~\ref{lem:toptclosure} and~\ref{lem:relativemaxismax}. In particular, $K_\nu$ is a proper Liouville closed $H_T$-subfield of $L$ containing $K$. 
\end{proof}

\subsection{$\upl$-freeness and the uniqueness of $T$-Liouville closures}
Whether $K$ has a unique $T$-Liouville closure up to $\LdO(K)$-isomorphism is closely tied to the existence of gaps, which is in turn related to $\upl$-freeness.

\begin{lemma}
\label{lem:nogap}
Let $(K_\mu)_{\mu\leq \nu}$ be a $T$-Liouville tower on $K$ and suppose $K_\mu$ does not have a gap for all $\mu< \nu$. Then $K_\nu$ embeds over $K$ into any Liouville closed $H_T$-field extension of $K$.
\end{lemma}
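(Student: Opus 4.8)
The plan is to build, by transfinite recursion on $\mu \le \nu$, a coherent family of $\LdO(K)$-embeddings $\imath_\mu \colon K_\mu \to L$ into an arbitrary Liouville closed $H_T$-field extension $L$ of $K$, with $\imath_\eta \subseteq \imath_\mu$ whenever $\eta \le \mu$; then $\imath_\nu$ is the required embedding. Take $\imath_0$ to be the inclusion $K = K_0 \hookrightarrow L$, and at an infinite limit ordinal $\mu$ set $\imath_\mu \coloneqq \bigcup_{\eta<\mu}\imath_\eta$, which is an $\LdO(K)$-embedding $K_\mu = \bigcup_{\eta<\mu}K_\eta \to L$ as a union of a chain of such. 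All the work is in the successor step: assume $\mu < \nu$ and $\imath_\mu$ is given. Identifying $K_\mu$ with $\imath_\mu(K_\mu)$, regard $L$ as an $H_T$-field extension of $K_\mu$. Since $\mu < \nu$, the field $K_\mu$ has no gap, so clauses (a) and (b) of the definition of a $T$-Liouville tower are impossible, and $K_{\mu+1} = K_\mu\langle a_\mu\rangle$ arises via one of the clauses (c)--(g); by the remark following that definition, $K_{\mu+1}$ is then exactly the extension furnished by Lemma~\ref{lem:maxgoesdown}, Corollary~\ref{cor:smallint}, Corollary~\ref{cor:bigint}, Lemma~\ref{lem:smallexpint}, or Lemma~\ref{lem:bigexpint} (respectively), each of which carries an embedding property into suitable extensions of $K_\mu$. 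It therefore suffices, in each case, to verify that $L$ satisfies the relevant hypothesis; the resulting $\LdO(K_\mu)$-embedding $K_{\mu+1}\to L$ extends $\imath_\mu$ by construction, and we let $\imath_{\mu+1}$ be it. (Uniqueness in these embedding properties is not needed, since coherence is built into the recursion.)

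For clauses (c), (e), (g) nothing is needed beyond Liouville closedness of $L$: these ask, respectively, that $s_\mu$ have an integral in $L$, an integral in $L$, and a positive exponential integral in $L$ (given any exponential integral $b \in L^\times$ of $s_\mu$, the element $|b| > 0$ works since $(|b|)^\dagger = b^\dagger$), and all of these exist because $L$ is closed under integration and under exponential integration. Clauses (d) and (f) are the crux. For them I would first record the following about any Liouville closed $H_T$-field $L$ and any $s \in L$ with $vs \in (\Gamma_L^{>})'$: (i) $s \in \der\smallo_L$, and (ii) $s \in (1+\smallo_L)^\dagger$. For (i), take $b \in L$ with $b' = s$; if $b \succ 1$ then $b \not\asymp 1$ and $vb \in \Gamma_L^{<}$, so $v(b') = (vb)' \in (\Gamma_L^{<})'$, which is disjoint from $(\Gamma_L^{>})'$ (as $(\Gamma_L^{<})' < (\Gamma_L^{>})'$) --- a contradiction; hence $b \preceq 1$, and writing $b = c + \epsilon$ with $c \in C_L$, $\epsilon \in \smallo_L$ via (H2) gives $\epsilon' = b' = s$. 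For (ii), take $b \in L^\times$ with $b^\dagger = s$; if $b \not\asymp 1$ then $vs = v(b^\dagger) \in \Psi_L < (\Gamma_L^{>})'$, again impossible; hence $b \asymp 1$, and writing $b = c + \epsilon$ via (H2), with $c \neq 0$ since $\bar b = \bar c \neq 0$, the element $c\inv b = 1 + c\inv\epsilon \in 1+\smallo_L$ satisfies $(c\inv b)^\dagger = s$.

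Finally, since the map $\psi_L$ restricts to $\psi_{K_\mu}$ on $\Gamma_{K_\mu}^{\neq}$, we have $(\Gamma_{K_\mu}^{>})' \subseteq (\Gamma_L^{>})'$. So in clause (d), where $vs_\mu \in (\Gamma_{K_\mu}^{>})'$, fact (i) gives $s_\mu \in \der\smallo_L$ --- the hypothesis needed to apply the embedding property of Corollary~\ref{cor:smallint} with $M = L$ --- and in clause (f), where again $vs_\mu \in (\Gamma_{K_\mu}^{>})'$, fact (ii) gives $s_\mu \in (1+\smallo_L)^\dagger$, the hypothesis needed for the embedding property of Lemma~\ref{lem:smallexpint} with $M = L$. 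This closes the successor step and hence the recursion, and $\imath_\nu \colon K_\nu \to L$ is the desired $\LdO(K)$-embedding. The one genuinely nontrivial point is the verification in clauses (d) and (f) that the integral, resp.\ exponential integral, supplied by Liouville closedness of $L$ can be taken in $\smallo_L$, resp.\ in $1+\smallo_L$; this is exactly where the hypotheses $vs_\mu \in (\Gamma^{>})'$ and axiom (H2) for $L$ enter.
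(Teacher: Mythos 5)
Your proof is correct and follows essentially the same route as the paper: transfinite recursion on $\mu$, with the successor step handled by the embedding properties of Lemma~\ref{lem:maxgoesdown}, Corollaries~\ref{cor:smallint} and~\ref{cor:bigint}, and Lemmas~\ref{lem:smallexpint} and~\ref{lem:bigexpint}. The only difference is that you explicitly verify the hypotheses $s_\mu\in\der\smallo_L$ and $s_\mu\in(1+\smallo_L)^\dagger$ in cases (d) and (f) using $(\Gamma_L^<)'<(\Gamma_L^>)'$, $\Psi_L<(\Gamma_L^>)'$, and (H2); these checks are correct and are left implicit in the paper's proof.
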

\begin{proof}
Let $M$ be a Liouville closed $H_T$-field extension of $K$. We will construct an increasing chain of $\LdO(K)$-embeddings $(\imath_\mu\colon K_\mu\to M)_{\mu\leq\nu}$. Let $\imath_0\colon K_0\to M$ be the identity on $K$, and take increasing unions at limits. For successors, fix $\mu<\nu$ and let $\imath_\mu\colon K_\mu\to M$ be an $\LdO(K)$-embedding. Since $K_\mu$ has no gap, $K_{\mu+1}$ is an extension of type (c), (d), (e), (f), or (g). The embedding properties in Lemmas~\ref{lem:maxgoesdown},~\ref{lem:smallexpint} and~\ref{lem:bigexpint} and Corollaries~\ref{cor:smallint} and~\ref{cor:bigint} give an $\LdO(K)$-embedding $\imath_{\mu+1}\colon K_{\mu+1}\to M$ extending $\imath_\mu$.
\end{proof}

\begin{proposition}\label{prop:lambdafreeclosure}
Suppose $K$ is ungrounded and $\upl$-free. Then $K$ has a $T$-Liouville closure $L$ which embeds over $K$ into any Liouville closed $H_T$-field extension of $K$. Any $T$-Liouville closure of $K$ is $\LdO(K)$-isomorphic to $L$.
\end{proposition}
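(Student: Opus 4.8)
The plan is to realize $L$ as the top $K_\nu$ of a \emph{maximal} $T$-Liouville tower $(K_\mu)_{\mu\le\nu}$ on $K$; such a tower exists by Zorn's lemma, as noted after the definition of $T$-Liouville tower, and by Lemma~\ref{lem:toptclosure} its top is automatically a $T$-Liouville closure of $K$. So the two things that remain are the embedding property and uniqueness. For the embedding property I would invoke Lemma~\ref{lem:nogap}, which applies the moment one knows that no $K_\mu$ has a gap; for uniqueness I would combine the embedding property with Corollary~\ref{cor:charofTLclosure1}.

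The heart of the matter---and the step I expect to be the only real obstacle---is showing that along a maximal $T$-Liouville tower starting from an ungrounded $\upl$-free field, \emph{every} $K_\mu$ is again ungrounded and $\upl$-free, and hence, by Fact~\ref{fact:upl->asymp} together with the trichotomy Fact~\ref{fact:trich}, has asymptotic integration and in particular no gap. I would prove this by transfinite induction on $\mu\le\nu$. The base case $\mu=0$ is the hypothesis on $K$. At a limit $\mu$, the field $K_\mu=\bigcup_{\eta<\mu}K_\eta$ is $\upl$-free by Lemma~\ref{lem:unionupl}, and $\upl$-freeness forces asymptotic integration, hence ungroundedness. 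For the successor step passing from $K_\mu$ to $K_{\mu+1}$: since $K_\mu$ has asymptotic integration it has neither a gap nor a maximal element of $\Psi_{K_\mu}$, so the step cannot be of type (a), (b), or (c), and must therefore be of type (d), (e), (f), or (g); in each of these four cases the corresponding result---Corollary~\ref{cor:smallint}, Corollary~\ref{cor:bigint}, Lemma~\ref{lem:smallexpint}, and Lemma~\ref{lem:bigexpint}(5) respectively---guarantees that $K_{\mu+1}$ is again ungrounded and $\upl$-free. This closes the induction.

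Granting the claim, no $K_\mu$ has a gap, so Lemma~\ref{lem:nogap} yields that $L=K_\nu$ embeds over $K$ into any Liouville closed $H_T$-field extension of $K$. For the final assertion, let $L'$ be an arbitrary $T$-Liouville closure of $K$. The embedding property just established gives an $\LdO(K)$-embedding $L\to L'$, whose image is a Liouville closed $H_T$-subfield of $L'$ containing $K$; since $L'$ is a $T$-Liouville closure of $K$, it has no proper such subfield by Corollary~\ref{cor:charofTLclosure1}. Hence the embedding is onto, i.e.\ an $\LdO(K)$-isomorphism, so every $T$-Liouville closure of $K$ is $\LdO(K)$-isomorphic to $L$. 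Everything outside the transfinite induction of the second paragraph is bookkeeping with results already established in Sections~\ref{sec:HT} and~\ref{sec:Liouville}.
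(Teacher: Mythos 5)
Your proposal is correct and follows essentially the same route as the paper: a transfinite induction along a maximal $T$-Liouville tower showing each $K_\mu$ stays ungrounded and $\upl$-free (so only steps of type (d)--(g) can occur), then Lemma~\ref{lem:nogap} for the embedding property and Corollary~\ref{cor:charofTLclosure1} for uniqueness. No issues.
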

\begin{proof}
Let $(K_{\mu})_{\mu\leq \nu}$ be a maximal $T$-Liouville tower on $K$. We will prove by induction on $\mu\leq \nu$ that each $K_\mu$ is ungrounded and $\upl$-free. This holds when $\mu = 0$ by assumption and if $\mu \leq \nu$ is an infinite limit ordinal, then this follows from Lemma~\ref{lem:unionupl}. Let $\mu < \nu$ and suppose that $K_\mu$ is ungrounded and $\upl$-free. Then $K_\mu$ has no gap, so $K_{\mu+1}$ must be an extension of type (d), (e), (f), or (g). Then $K_{\mu+1}$ is ungrounded and $\upl$-free by Lemmas~\ref{lem:smallexpint} and~\ref{lem:bigexpint} and Corollaries~\ref{cor:smallint} and~\ref{cor:bigint}. Set $L\coloneqq K_\nu$, so $L$ is a $T$-Liouville closure of $K$, and let $M$ be a Liouville closed $H_T$-field extension of $K$. By Lemma~\ref{lem:nogap}, there is an $\LdO(K)$-embedding $\imath\colon L \to M$. Moreover, if $M$ is a $T$-Liouville closure of $K$, then $\imath(L)$ is a Liouville closed $H_T$-subfield of $M$ containing $K$, so $\imath(L)=M$ by Corollary~\ref{cor:charofTLclosure1}. Thus, $L$ is unique up to $\LdO(K)$-isomorphism.
\end{proof}

\begin{proposition}\label{prop:groundedclosure}
Suppose $K$ is grounded. Then $K$ has a $T$-Liouville closure $L$ which embeds over $K$ into any Liouville closed $H_T$-field extension of $K$. Any $T$-Liouville closure of $K$ is $\LdO(K)$-isomorphic to $L$.
\end{proposition}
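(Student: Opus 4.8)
The plan is to reduce the grounded case to the $\upl$-free case already treated in Proposition~\ref{prop:lambdafreeclosure}, using the canonical extension $K_{\upo}$. First I would apply Corollary~\ref{cor:omegaconstruction} to obtain the ungrounded $\upo$-free (hence $\upl$-free) pre-$H_T$-field extension $K_{\upo}$ of $K$ with $\res K_{\upo}=\res K$. Since $K$ is an $H_T$-field and $K_{\upo}$ is a pre-$H_T$-field (hence $H_T$-asymptotic) extension with the same residue field, Corollary~\ref{cor:HTres} shows that $K_{\upo}$ is an $H_T$-field with $C_{K_{\upo}}=C$. Inspecting the construction in Corollary~\ref{cor:omegaconstruction}, we have $K_{\upo}=\bigcup_n K_n$ with $K_{n+1}=K_n\langle a_n\rangle$ and $a_n'\in K_n$, and every intermediate residue field (hence every intermediate constant field, again by Corollary~\ref{cor:HTres}) equals that of $K$; thus each element of $K_{\upo}$ lies in some $K\langle a_0,\ldots,a_{n-1}\rangle$ with $a_i'\in K\langle a_0,\ldots,a_{i-1}\rangle$, so $K_{\upo}$ is a $T$-Liouville extension of $K$.

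Next I would apply Proposition~\ref{prop:lambdafreeclosure} to the ungrounded $\upl$-free $H_T$-field $K_{\upo}$ to get a $T$-Liouville closure $L$ of $K_{\upo}$ which embeds over $K_{\upo}$ into any Liouville closed $H_T$-field extension of $K_{\upo}$. Since $L$ is a $T$-Liouville extension of $K_{\upo}$ and $K_{\upo}$ is a $T$-Liouville extension of $K$, Fact~\ref{fact:Lexts} gives that $L$ is a $T$-Liouville extension of $K$; as $L$ is Liouville closed, $L$ is a $T$-Liouville closure of $K$.

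For the embedding property, let $M$ be a Liouville closed $H_T$-field extension of $K$. Then $M$ is closed under integration, hence closed under taking $\d$-logarithms (as noted in Subsection~\ref{subsec:Tderiv}), so Corollary~\ref{cor:omegaconstruction} yields an $\LdO(K)$-embedding $K_{\upo}\to M$. Identifying $K_{\upo}$ with its image, $M$ becomes a Liouville closed $H_T$-field extension of $K_{\upo}$, so Proposition~\ref{prop:lambdafreeclosure} provides an $\LdO(K_{\upo})$-embedding $L\to M$, which is in particular an $\LdO(K)$-embedding.

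For uniqueness, let $M$ be any $T$-Liouville closure of $K$; then $C_M=C$ and $M$ is a Liouville closed $H_T$-field extension of $K$, so by the previous paragraph there is an $\LdO(K)$-embedding $\imath\colon L\to M$. The image $\imath(L)$ is a Liouville closed $H_T$-subfield of $M$ containing $K$, and since $M$ is a $T$-Liouville closure of $K$ it has no proper such subfield by Corollary~\ref{cor:charofTLclosure1}; hence $\imath(L)=M$ and $\imath$ is an $\LdO(K)$-isomorphism. The only point requiring care is the bookkeeping of constant fields through the construction of $K_{\upo}$ and $L$ (to see that these are genuine $T$-Liouville extensions of $K$) together with the observation that every Liouville closed extension is closed under taking $\d$-logarithms, so that the embedding property of $K_{\upo}$ applies; both are immediate from the cited results.
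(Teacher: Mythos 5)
Your proposal is correct and follows essentially the same route as the paper: pass to the ungrounded $\upl$-free extension $K_{\upo}$ from Corollary~\ref{cor:omegaconstruction}, note it is a $T$-Liouville extension (and an $H_T$-field via Corollary~\ref{cor:HTres}), apply Proposition~\ref{prop:lambdafreeclosure}, and use the $\d$-logarithm closure of Liouville closed extensions plus Corollary~\ref{cor:charofTLclosure1} for the embedding and uniqueness claims. The extra bookkeeping you supply on constant fields through the construction of $K_{\upo}$ is exactly the justification the paper leaves implicit.
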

\begin{proof}
Let $K_{\upo}$ be as in Corollary~\ref{cor:omegaconstruction}. Then $K_{\upo}$ is the union of an increasing chain of $T$-Liouville extensions of $K$, so $K_{\upo}$ is an $T$-Liouville extension of $K$ by Fact~\ref{fact:Lexts}. Moreover, $K_{\upo}$ is ungrounded and $\upl$-free, so $K_{\upo}$ has a $T$-Liouville closure $L$ which embeds over $K_\upo$ into any Liouville closed $H_T$-field extension of $K_\upo$ by Proposition~\ref{prop:lambdafreeclosure}. Then $L$ is a $T$-Liouville closure of $K$ as well, since $K_{\upo}$ is a $T$-Liouville extension of $K$. Let $M$ be a Liouville closed $H_T$-field extension of $K$. Then $M$ is closed under taking $\d$-logarithms, so Corollary~\ref{cor:omegaconstruction} gives an $\LdO(K)$-embedding $K_\upo\to M$ which further extends to an $\LdO(K)$-embedding $L\to M$. As in Proposition~\ref{prop:lambdafreeclosure}, uniqueness follows from this embedding property and Corollary~\ref{cor:charofTLclosure1}.
\end{proof}

\subsection{Gaps and the nonuniqueness of $T$-Liouville closures}
If $K$ has a gap $vs$, then we have a choice to make. Either we can adjoin an integral $a$ of $s$ with $a\prec 1$, as is done in Lemma~\ref{lem:gapgoesup}, or we can adjoin an integral $b$ of $s$ with $b \succ 1$, as in Lemma~\ref{lem:gapgoesdown}. This ``fork in the road'' prevents $K$ from having a unique $T$-Liouville closure, but as we will see below, this is really the only obstruction to uniqueness.

\begin{proposition}
\label{prop:gapclosures}
Let $\beta \in \Gamma$ be a gap in $K$. Then $K$ has $T$-Liouville closures $L_1$ and $L_2$ with $\beta \in (\Gamma_{L_1}^>)'$ and $\beta \in (\Gamma_{L_2}^<)'$. Let $M$ be a Liouville closed $H_T$-field extension of $K$. If $\beta \in (\Gamma_M^>)'$, then there is an $\LdO(K)$-embedding $L_1\to M$. Likewise, if $\beta \in (\Gamma_M^<)'$, then there is an $\LdO(K)$-embedding $L_2\to M$. Any $T$-Liouville closure of $K$ is $\LdO(K)$-isomorphic to either $L_1$ or $L_2$. 
\end{proposition}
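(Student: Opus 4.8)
The plan is to mimic the strategy used in Propositions~\ref{prop:lambdafreeclosure} and~\ref{prop:groundedclosure}: build $L_1$ and $L_2$ as tops of carefully chosen maximal $T$-Liouville towers, get the embedding properties from the universal properties of the tower steps, and get uniqueness from Corollary~\ref{cor:charofTLclosure1}. The new feature is that the very first step of the tower is forced to be a gap-filling step of type (a) or type (b), and this is exactly where the two closures diverge.

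First I would construct $L_1$. Take $s \in K$ with $vs = \beta$; replacing $s$ by $-s$ if necessary this causes no issue. Apply Lemma~\ref{lem:gapgoesup} to get $K_1 \coloneqq K\langle a_1\rangle$ with $a_1 \prec 1$ and $a_1' = s$; by that lemma $K_1$ is a grounded pre-$H_T$-field, and since $\res K_1 = \res K$ it is in fact a grounded $H_T$-field with $C_{K_1} = C$ by Corollary~\ref{cor:HTres}, hence a $T$-Liouville extension of $K$. Moreover $\Gamma_{K_1} = \Gamma \oplus \Lambda v a_1$ with $v a_1^\dagger > \Psi$, and since $a_1' = s$ we have $v a_1 = vs - v a_1^\dagger$, so $\beta = vs = v a_1 + v a_1^\dagger = (v a_1)' \in (\Gamma_{K_1}^>)'$ — the gap has been filled "from above". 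Now extend $(K, K_1)$ to a maximal $T$-Liouville tower $(K_\mu)_{\mu \le \nu}$ on $K$ (Zorn), and set $L_1 \coloneqq K_\nu$, which is a $T$-Liouville closure of $K$ by Lemmas~\ref{lem:toptclosure}. Since $\Gamma_{K_1} \subseteq \Gamma_{L_1}$ and the gap at $\beta$ has already become an element of $(\Gamma^>)'$ in $K_1$ (this property persists in extensions, as $(\Gamma_{K_1}^>)' \subseteq (\Gamma_{L_1}^>)'$ follows from asymptoticity), we get $\beta \in (\Gamma_{L_1}^>)'$. The construction of $L_2$ is identical but uses Lemma~\ref{lem:gapgoesdown} (which requires $K$ to be an $H_T$-field — it is) to produce $K_1 = K\langle a_1\rangle$ with $a_1 \succ 1$, $a_1' = s$, $\beta = (v a_1)' \in (\Gamma_{K_1}^<)'$, then extends to a maximal tower with top $L_2$.

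For the embedding properties, suppose $M$ is a Liouville closed $H_T$-field extension of $K$ with $\beta \in (\Gamma_M^>)'$. Then there is $b \in M$ with $b \succ 1$ and $b^\dagger$ of valuation... more precisely, $\beta \in (\Gamma_M^>)'$ means $\beta = \gamma'$ for some $\gamma \in \Gamma_M^>$, and taking $b \in M$ with $vb = -\gamma < 0$, i.e.\ $b \succ 1$, we have $v(b^{-1})' = \beta$; adjusting by a unit we may assume $v(b^{-1})' = \beta = vs$, and in fact — arguing as in the proof of Lemma~\ref{lem:gapgoesup} — $M$ contains an element $a$ with $a \prec 1$ and $a' = s$ (use that $s \in \der\smallo_M$: since $\beta = vs \in (\Gamma_M^>)'$ and $M$ is Liouville closed there is $y \in M$ with $y' = s$, and then $y \not\asymp 1$ by the gap condition on $M$; if $y \prec 1$ we are done, if $y \succ 1$ then... in fact the hypothesis $\beta \in (\Gamma_M^>)'$ forces $y \prec 1$). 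By the universal property in Lemma~\ref{lem:gapgoesup}, this gives an $\LdO(K)$-embedding $K_1 \to M$, and then the embedding properties of each tower step (Lemmas~\ref{lem:maxgoesdown},~\ref{lem:smallexpint},~\ref{lem:bigexpint}, Corollaries~\ref{cor:smallint},~\ref{cor:bigint}, and for gaps occurring in $K_\mu$ for $\mu \ge 1$, Lemmas~\ref{lem:gapgoesup} and~\ref{lem:gapgoesdown}) let us extend step by step along the tower to an $\LdO(K)$-embedding $L_1 \to M$; at limit stages take unions. The symmetric argument, using Lemma~\ref{lem:gapgoesdown}, handles the case $\beta \in (\Gamma_M^<)'$ and $L_2$.

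Finally, for uniqueness: let $L$ be any $T$-Liouville closure of $K$. Since $L$ is Liouville closed and has no gap (by Lemma~\ref{lem:toptclosure} applied to any maximal tower whose top is $L$ — or directly: a Liouville closed $H_T$-field has asymptotic integration, as every element has an integral so $\Gamma_L = (\Gamma_L^{\neq})'$), the element $\beta$ of $\Gamma \subseteq \Gamma_L$ lies in $(\Gamma_L^{\neq})' = (\Gamma_L^<)' \sqcup (\Gamma_L^>)'$, so exactly one of $\beta \in (\Gamma_L^>)'$ or $\beta \in (\Gamma_L^<)'$ holds. In the first case the embedding property just proved gives an $\LdO(K)$-embedding $\imath\colon L_1 \to L$; then $\imath(L_1)$ is a Liouville closed $H_T$-subfield of $L$ containing $K$, so $\imath(L_1) = L$ by Corollary~\ref{cor:charofTLclosure1}, whence $L \cong_{\LdO(K)} L_1$. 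In the second case symmetrically $L \cong_{\LdO(K)} L_2$. The main obstacle I anticipate is the bookkeeping in the embedding argument — specifically, verifying carefully that the hypothesis $\beta \in (\Gamma_M^>)'$ (resp.\ $(\Gamma_M^<)'$) really does pin down whether the integral of $s$ available in $M$ is infinitesimal or infinite, so that the correct one of Lemma~\ref{lem:gapgoesup} / Lemma~\ref{lem:gapgoesdown} applies at the first step; everything downstream of that first step is routine given the universal properties already established.
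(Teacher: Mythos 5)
Your overall architecture---fill the gap both ways via Lemmas~\ref{lem:gapgoesup} and~\ref{lem:gapgoesdown}, close up to a Liouville closed extension, and get uniqueness from Corollary~\ref{cor:charofTLclosure1}---matches the paper, and your analysis of the first step is correct: $\beta \in (\Gamma_M^>)'$ does force $s \in \der\smallo_M$ (any integral $y$ of $s$ in $M$ cannot be infinite, and if $y \asymp 1$ one subtracts a constant using (H2)), so the universal property of Lemma~\ref{lem:gapgoesup} applies. The genuine gap is in the passage from $K_1$ to $L_1$. You take an arbitrary maximal $T$-Liouville tower extending $(K,K_1)$ and claim the embedding into $M$ extends ``step by step,'' listing Lemmas~\ref{lem:gapgoesup} and~\ref{lem:gapgoesdown} to handle ``gaps occurring in $K_\mu$ for $\mu\ge 1$.'' But those two embedding properties are conditional: a type (a) step at stage $\mu$ embeds into $M$ only when $s_\mu \in \der\smallo_M$, and a type (b) step only when $M$ contains an infinite integral of $s_\mu$; the hypothesis $\beta\in(\Gamma_M^>)'$ says nothing about these later gaps. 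If a gap arises at some stage $\mu\ge 1$ and the tower fills it the wrong way relative to $M$, the inductive extension halts, so the embedding property of $L_1$ (and with it the final uniqueness clause) is not established. You must rule out the appearance of any further gap after stage $1$, and your proposal contains no argument for this.

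This is exactly what the paper's citation of Proposition~\ref{prop:groundedclosure} supplies: since $K_1$ (resp.\ $K_2$) is grounded, it has a $T$-Liouville closure that embeds over $K_1$ into \emph{every} Liouville closed $H_T$-field extension of $K_1$. The proof of that proposition is the missing ingredient---one first passes to the ungrounded, $\upo$-free (hence $\upl$-free) extension $(K_1)_{\upo}$ of Corollary~\ref{cor:omegaconstruction}, and then Proposition~\ref{prop:lambdafreeclosure} shows that every stage of a maximal tower on a $\upl$-free field remains ungrounded and $\upl$-free, hence gap-free, so only steps with unconditional embedding properties occur and Lemma~\ref{lem:nogap} applies. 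If you prefer to keep your tower-from-$K_1$ phrasing, you would instead need to prove by induction (using Lemma~\ref{lem:unionupl}, the fact that an ungrounded union of grounded fields is $\upo$-free, and the preservation statements in Corollaries~\ref{cor:smallint},~\ref{cor:bigint} and Lemmas~\ref{lem:smallexpint},~\ref{lem:bigexpint}) that each stage of your tower is either grounded or ungrounded and $\upl$-free; as written, that justification is absent and the argument does not go through.
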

\begin{proof}
Let $s \in K$ with $vs = \beta$. Let $K_1\coloneqq K\langle a \rangle$ be the $H_T$-field extension of $K$ given in Lemma~\ref{lem:gapgoesup}, so $a \prec 1$ and $a' = s$, and let $K_2\coloneqq K\langle b \rangle$ be the $H_T$-field extension of $K$ given in Lemma~\ref{lem:gapgoesdown}, so $b \succ 1$ and $b' = s$. Then $K_1$ is grounded, so it has a $T$-Liouville closure $L_1$ which embeds over $K_1$ into any Liouville closed $H_T$-field extension of $K_1$ by Proposition~\ref{prop:groundedclosure}. Likewise, $K_2$ has a $T$-Liouville closure $L_2$ which embeds over $K_2$ into any Liouville closed $H_T$-field extension of $K_2$. Now let $M$ be a Liouville closed $H_T$-field extension of $K$. If $\beta \in (\Gamma_M^>)'$, then the embedding property in Lemma~\ref{lem:gapgoesup} gives an $\LdO(K)$-embedding $K_1\to M$, which in turn extends to an $\LdO(K)$-embedding $L_1\to M$. If $\beta \in (\Gamma_M^<)'$, then using the embedding property in Lemma~\ref{lem:gapgoesdown} instead, we get an $\LdO(K)$-embedding $L_2\to M$. If $M$ is a $T$-Liouville closure of $K$, then $M$ is $\LdO(K)$-isomorphic to either $L_1$ or $L_2$ by Corollary~\ref{cor:charofTLclosure1} since $M$ contains the $\LdO(K)$-isomorphic image of either $L_1$ or $L_2$ as a Liouville closed $H_T$-subfield.
\end{proof}

\noindent
We can use Lemma~\ref{lem:uplgapcreator} to show that $H_T$-fields with asymptotic integration which are not $\upl$-free also have two distinct $T$-Liouville closures.

\begin{proposition}
\label{prop:uplclosures}
Suppose that $K$ has asymptotic integration and is not $\upl$-free. Then $K$ has $T$-Liouville closures $L_1$ and $L_2$ which are not $\LdO(K)$-isomorphic. If $M$ is a Liouville closed $H_T$-field extension of $K$, then there is an $\LdO(K)$-embedding of either $L_1$ or $L_2$ into $M$. Any $T$-Liouville closure of $K$ is $\LdO(K)$-isomorphic to either $L_1$ or $L_2$. 
\end{proposition}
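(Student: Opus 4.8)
The plan is to reduce the statement to Proposition~\ref{prop:gapclosures} by first adjoining to $K$ an exponential integral that manufactures a gap. Since $K$ has asymptotic integration it is ungrounded with $\cO\neq K$ by Fact~\ref{fact:trich}, so $\upl$-sequences are defined, and as $K$ is not $\upl$-free we may fix a $\upl$-sequence $(\upl_\rho)$ in $K$ together with a pseudolimit $\upl\in K$. Set $s\coloneqq -\upl$. Because $(K^\times)^\dagger$ is a subgroup of $(K,+)$, we have $v\bigl(s-(K^\times)^\dagger\bigr)=v\bigl(\upl+(K^\times)^\dagger\bigr)$, which by Fact~\ref{fact:uplminusdaggers} is a cofinal subset of $\Psi^\downarrow$; in particular $v\bigl(s-(K^\times)^\dagger\bigr)\subseteq\Psi^\downarrow$, so Lemma~\ref{lem:bigexpint} applies. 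It yields a pre-$H_T$-field extension $K\langle f\rangle$ with $f>0$, $f^\dagger=s=-\upl$, and the stated universal embedding property; by Lemma~\ref{lem:bigexpint}(2) and Corollary~\ref{cor:HTres} this $K\langle f\rangle$ is in fact an $H_T$-field with $C_{K\langle f\rangle}=C$, hence a $T$-Liouville extension of $K$. Since $\upl_\rho\leadsto -s=\upl$, Lemma~\ref{lem:uplgapcreator} shows that $\beta\coloneqq vf$ is a gap in $K\langle f\rangle$; moreover $\beta\notin\Gamma$ by Lemma~\ref{lem:bigexpint}(1).

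Next I would apply Proposition~\ref{prop:gapclosures} to the $H_T$-field $K\langle f\rangle$ and its gap $\beta$, obtaining $T$-Liouville closures $L_1,L_2$ of $K\langle f\rangle$ with $\beta\in(\Gamma_{L_1}^>)'$ and $\beta\in(\Gamma_{L_2}^<)'$; by Fact~\ref{fact:Lexts}(1) these are also $T$-Liouville closures of $K$. For the embedding assertion, let $M$ be a Liouville closed $H_T$-field extension of $K$ and pick $g\in M^>$ with $g^\dagger=-\upl$ (possible since $M$ is closed under exponential integration); the embedding property of Lemma~\ref{lem:bigexpint} gives an $\LdO(K)$-embedding $\jmath\colon K\langle f\rangle\to M$ with $\jmath(f)=g$, and we regard $M$ as an extension of $K\langle f\rangle$ through $\jmath$. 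Since $M$ is Liouville closed it has asymptotic integration, so $vg=\jmath_\Gamma(\beta)$ belongs to $(\Gamma_M^{\neq})'=(\Gamma_M^<)'\cup(\Gamma_M^>)'$, a disjoint union; the embedding clauses of Proposition~\ref{prop:gapclosures} then produce an $\LdO(K)$-embedding of $L_1$ into $M$ (if $vg\in(\Gamma_M^>)'$) or of $L_2$ into $M$ (if $vg\in(\Gamma_M^<)'$). Taking $M$ to be an arbitrary $T$-Liouville closure of $K$ and invoking Corollary~\ref{cor:charofTLclosure1}, the image of the embedded $L_i$ must be all of $M$, so every $T$-Liouville closure of $K$ is $\LdO(K)$-isomorphic to $L_1$ or to $L_2$.

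The remaining point, and the one I expect to be the real work, is that $L_1$ and $L_2$ are not $\LdO(K)$-isomorphic. Suppose $\phi\colon L_1\to L_2$ were an $\LdO(K)$-isomorphism. Both $L_1$ and $L_2$ contain the distinguished generator $f$ of $K\langle f\rangle$, with value $\beta$. Since $\phi$ fixes $f^\dagger=-\upl\in K$ and commutes with $\der$, the element $\phi(f)^\dagger$ equals $-\upl=f^\dagger$, so $\phi(f)/f$ lies in $\ker(\dagger)=C_{L_2}^\times=C^\times$; as $\phi$ preserves order and $f>0$ in both fields, $\phi(f)=cf$ with $c\in C^>$, whence $v\bigl(\phi(f)\bigr)=vf=\beta$. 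On the other hand the induced map $\phi_\Gamma\colon\Gamma_{L_1}\to\Gamma_{L_2}$ is an isomorphism of ordered abelian groups commuting with $\psi$ (because $\phi$ commutes with $\der$ and the valuation), so it carries $(\Gamma_{L_1}^>)'$ onto $(\Gamma_{L_2}^>)'$ and sends $\beta=v_{L_1}(f)$ to $v_{L_2}(\phi(f))=\beta$; thus $\beta\in(\Gamma_{L_2}^>)'$, contradicting $\beta\in(\Gamma_{L_2}^<)'$ together with the inequality $(\Gamma_{L_2}^<)'<(\Gamma_{L_2}^>)'$. The conceptual content is that, although $\phi$ need not fix $f$, it fixes $f$ up to a positive constant factor — because $f$ is determined, up to $C^\times$, as a positive exponential integral of the $K$-element $-\upl$, and the constant field does not grow — so the value $\beta$ is $\phi$-invariant, and then the asymptotic-couple datum distinguishing ``$\beta=\gamma'$ for some $\gamma>0$'' from ``$\beta=\gamma'$ for some $\gamma<0$'' separates $L_1$ from $L_2$.
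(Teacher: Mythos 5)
Your proposal is correct and follows the paper's route almost exactly: adjoin an exponential integral $f$ of $-\upl$ via Lemma~\ref{lem:bigexpint}, use Lemma~\ref{lem:uplgapcreator} to see that $vf$ is a gap in $K\langle f\rangle$, and then invoke Proposition~\ref{prop:gapclosures} for existence, the embedding property, and the classification up to isomorphism. The one place you diverge is the non-isomorphism of $L_1$ and $L_2$. The paper picks integrals $b_1\in L_1$, $b_2\in L_2$ of $a=f$ with $b_1\prec 1$ and $b_2\succ 1$, and derives a contradiction from $\imath(b_1)=c_1b_2+c_2$ with $c_1,c_2\in C_{L_2}$; you instead show that any $\LdO(K)$-map $\phi\colon L_1\to L_2$ satisfies $\phi(f)=cf$ with $c\in C^>$ (since $\phi(f)^\dagger=f^\dagger=-\upl$ and $C_{L_2}=C$), so the induced map on value groups fixes $\beta=vf$ and carries $(\Gamma_{L_1}^>)'$ into $(\Gamma_{L_2}^>)'$, contradicting $\beta\in(\Gamma_{L_2}^<)'$ and the disjointness $(\Gamma^<)'<(\Gamma^>)'$. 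Both arguments are valid and both actually rule out embeddings, not just isomorphisms; yours works at the level of the asymptotic couple, while the paper's stays at the level of elements, and they encode the same obstruction (whether the gap $vf$ is integrated from above or from below).
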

\begin{proof}
Let $(\upl_\rho)$ be a $\upl$-sequence in $K$ with pseudolimit $\upl \in K$, so $v\big(\upl+(K^\times)^\dagger\big)$ is a cofinal subset of $\Psi^\downarrow$ by Fact~\ref{fact:uplminusdaggers}. Lemma~\ref{lem:bigexpint} gives an $H_T$-field extension $K\langle a \rangle$ of $K$ with $a>0$ and $a^\dagger = -\upl$. By Lemma~\ref{lem:uplgapcreator}, $va$ is a gap in $K\langle a \rangle$. By Proposition~\ref{prop:gapclosures}, $K\langle a \rangle$ has $T$-Liouville closures $L_1$ and $L_2$ with $va \in (\Gamma_{L_1}^>)'$ and $va \in (\Gamma_{L_2}^<)'$, one of which embeds over $K\langle a \rangle$ into any Liouville closed $H_T$-field extension of $K\langle a \rangle$. We claim that there is no $\LdO(K)$-embedding $L_1\to L_2$; in particular, $L_1$ and $L_2$ are nonisomorphic over $K$. To see this, take $b_1 \in L_1$ and $b_2\in L_2$ with $b_1 \prec 1$, $b_2 \succ 1$, and $b_1'=b_2' = a$. Then $(b_1')^\dagger = (b_2')^\dagger = a^\dagger = -\upl$. Suppose toward contradiction that $\imath\colon L_1\to L_2$ is an $\LdO(K)$-embedding. Then $\imath(b_1')^\dagger = (b_2')^\dagger$ so $\imath(b_1) = c_1b_2+c_2$ for some $c_1,c_2 \in C_{L_2}$ with $c_1 \neq 0$. Since $b_2 \succ 1$, this gives $\imath(b_1)\succ 1$, contradicting that $b_1\prec 1$.

Let $M$ be a Liouville closed $H_T$-field extension of $K$. Lemma~\ref{lem:bigexpint} gives an $\LdO(K)$-embedding $K\langle a \rangle\to M$ and Proposition~\ref{prop:gapclosures} allows us to extend this embedding to an $\LdO(K)$-embedding of either $L_1$ or $L_2$ into $M$. As in Proposition~\ref{prop:gapclosures}, we may use Corollary~\ref{cor:charofTLclosure1} to see that any $T$-Liouville closure of $K$ is $\LdO(K)$-isomorphic to either $L_1$ or $L_2$.
\end{proof}

\noindent
Putting together the above propositions, we can now precisely state our main theorem on the existence and uniqueness of $T$-Liouville closures.

\begin{theorem}
\label{thm:oneortwotlclosures}
If $K$ is grounded or if $K$ is ungrounded and $\upl$-free, then $K$ has exactly one $T$-Liouville closure up to $\LdO(K)$-isomorphism. If $K$ is ungrounded and not $\upl$-free, then $K$ has exactly two $T$-Liouville closures up to $\LdO(K)$-isomorphism. For any Liouville closed $H_T$-field extension $M$ of $K$, there is an $\LdO(K)$-embedding of some $T$-Liouville closure of $K$ into $M$.
\end{theorem}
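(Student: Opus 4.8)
The plan is to deduce the theorem from the four closure propositions just proved---Propositions~\ref{prop:lambdafreeclosure},~\ref{prop:groundedclosure},~\ref{prop:gapclosures}, and~\ref{prop:uplclosures}---by sorting $K$ into the appropriate case using the trichotomy of Fact~\ref{fact:trich} (asymptotic integration vs.\ gap vs.\ grounded) together with the implication ``$\upl$-free $\Rightarrow$ asymptotic integration'' from Fact~\ref{fact:upl->asymp}. Existence of at least one $T$-Liouville closure is already guaranteed by Lemma~\ref{lem:toptclosure}, so the real content is the count (one versus two) and the universal embedding clause. Note that the hypotheses partition all possibilities: either $K$ is grounded, or $K$ is ungrounded and $\upl$-free, or $K$ is ungrounded and not $\upl$-free.

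First I would dispose of the two ``exactly one'' cases. If $K$ is grounded, Proposition~\ref{prop:groundedclosure} directly supplies a $T$-Liouville closure that embeds over $K$ into any Liouville closed $H_T$-field extension of $K$ and asserts that every $T$-Liouville closure is $\LdO(K)$-isomorphic to it. If $K$ is ungrounded and $\upl$-free, then $K$ has asymptotic integration by Fact~\ref{fact:upl->asymp}, and Proposition~\ref{prop:lambdafreeclosure} gives the same conclusion. In both cases the embedding clause of the theorem is immediate. Next I would handle the remaining case, $K$ ungrounded and not $\upl$-free. By Fact~\ref{fact:trich}, $K$ is not grounded, so either $K$ has asymptotic integration or $K$ has a gap. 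If $K$ has asymptotic integration, Proposition~\ref{prop:uplclosures} provides $T$-Liouville closures $L_1,L_2$ that are not $\LdO(K)$-isomorphic, states that every $T$-Liouville closure is $\LdO(K)$-isomorphic to $L_1$ or $L_2$, and gives that one of $L_1,L_2$ embeds over $K$ into any Liouville closed extension $M$; that is precisely what is needed. If instead $K$ has a gap $\beta\in\Gamma$, Proposition~\ref{prop:gapclosures} yields $T$-Liouville closures $L_1,L_2$ with $\beta\in(\Gamma_{L_1}^>)'$ and $\beta\in(\Gamma_{L_2}^<)'$, with every $T$-Liouville closure $\LdO(K)$-isomorphic to one of them and with one of them embedding over $K$ into any Liouville closed $M$ (according to whether $\beta\in(\Gamma_M^>)'$ or $\beta\in(\Gamma_M^<)'$; since $M$ is Liouville closed it has asymptotic integration, so one of these alternatives holds).

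To turn ``at least two'' into ``exactly two'' in the gap case I would argue that $L_1$ and $L_2$ are not $\LdO(K)$-isomorphic: both are Liouville closed, hence have asymptotic integration, so $\beta$ lies in $(\Gamma_{L_i}^{\neq})'=(\Gamma_{L_i}^<)'\cup(\Gamma_{L_i}^>)'$; since $(\Gamma^<)'$ and $(\Gamma^>)'$ are disjoint and any $\LdO(K)$-isomorphism fixes $\beta$ and preserves these sets, an isomorphism $L_1\to L_2$ would force $\beta$ into both, a contradiction. This is the only step requiring an actual argument rather than a citation; everything else is bookkeeping across the three mutually exclusive, exhaustive cases, and I anticipate no serious obstacle, as the substantive work was carried out in the preceding propositions.
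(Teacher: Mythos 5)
Your proposal is correct and follows essentially the same route as the paper, which simply assembles Theorem~\ref{thm:oneortwotlclosures} from Propositions~\ref{prop:lambdafreeclosure},~\ref{prop:groundedclosure},~\ref{prop:gapclosures}, and~\ref{prop:uplclosures} via the trichotomy of Fact~\ref{fact:trich}. Your added argument that the two closures in the gap case are non-isomorphic (using that $(\Gamma_{L_i}^<)'$ and $(\Gamma_{L_i}^>)'$ are disjoint and preserved by any $\LdO(K)$-isomorphism fixing $\beta$) is a correct and worthwhile detail that the paper leaves implicit in Proposition~\ref{prop:gapclosures}.
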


\section{Logarithmic $H_T$-fields and logarithmic pre-$H_T$-fields}\label{sec:logHT}
\noindent
In this section, we assume that $T$ is power bounded with field of exponents $\Lambda$. We further assume that $T$ defines a restricted exponential function $\e$ and that $\Lambda$ is cofinal in the prime model of $T$. We let $\ln$ denote the compositional inverse of $\e$.

\medskip\noindent
Recall from Subsection~\ref{subsec:expsandlogs} the language $\cL_{\log} = \cL \cup \{\log\}$ and the $\cL_{\log}$-theory $T^{\e}$, which extends $T$ by axioms stating that $\log$ is a surjective logarithm. In this section and the next, we study pre-$H_{T^{\e}}$-fields, $H_{T^{\e}}$-fields, and their extensions. From a valuation-theoretic perspective, it is inconvenient to work with pre-$H_{T^{\e}}$-fields directly, so we instead work with a broader class of $\LdO_{\log}$-structures, called \emph{logarithmic pre-$H_T$-fields}, where the logarithm is not assumed to be surjective.

\begin{definition}
Let $K$ be a pre-$H_T$-field and let $\log$ be a logarithm on $K$. We say that $K$ is a \textbf{logarithmic pre-$H_T$-field} if
\begin{enumerate}
\item[(LH1)] $\log(a)' = a^\dagger$ for $a \in K^>$, and
\item[(LH2)] $\cO\subseteq \log(K^>)$. 
\end{enumerate}
We say that $K$ is a \textbf{logarithmic $H_T$-field} if $K$ is a logarithmic pre-$H_T$-field which is also an $H_T$-field. 
\end{definition}

\noindent
Clearly, every pre-$H_{T^{\e}}$-field is a logarithmic pre-$H_T$-field and, likewise, every $H_{T^{\e}}$-field is a logarithmic $H_T$-field. After proving some basic results, we will show in Proposition~\ref{prop:surjectiveisexp} below that a logarithmic (pre)-$H_T$-field is a (pre)-$H_{T^{\e}}$-field if and only if the logarithm is surjective. 

\begin{assumption}
For the remainder of this section, let $K = (K,\log,\cO,\der)$ be a logarithmic pre-$H_T$-field.
\end{assumption}

\noindent
For $a \in K^>$, axiom (LH1) tells us that $\log a$ is a $\d$-logarithm of $a$. Thus, we have the following consequence of Corollary~\ref{cor:expungrounded}:

\begin{corollary}\label{cor:ungrounded}
Any logarithmic pre-$H_T$-field is ungrounded.
\end{corollary}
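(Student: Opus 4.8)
The plan is to reduce Corollary~\ref{cor:ungrounded} directly to Corollary~\ref{cor:expungrounded}. Recall that Corollary~\ref{cor:expungrounded} states that a pre-$H_T$-field in which every element of $K^>$ has a $\d$-logarithm is ungrounded. So the only thing I need to observe is that a logarithmic pre-$H_T$-field $K$, by its very definition, supplies such $\d$-logarithms: axiom (LH1) says precisely that $\log(a)' = a^\dagger$ for every $a \in K^>$, which is the defining condition for $\log a$ to be a $\d$-logarithm of $a$ (in the sense of Subsection~\ref{subsec:Tderiv}, where a $\d$-logarithm of $s \neq 0$ is any $f$ with $f' = s^\dagger$). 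Thus every element of $K^>$ has a $\d$-logarithm in $K$, namely its image under $\log$.

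Concretely, the proof would read: Let $K$ be a logarithmic pre-$H_T$-field. For any $a \in K^>$, axiom (LH1) gives $\log(a)' = a^\dagger$, so $\log a \in K$ is a $\d$-logarithm of $a$. Hence every element of $K^>$ has a $\d$-logarithm in $K$, and Corollary~\ref{cor:expungrounded} applies (note $K$ is a pre-$H_T$-field by hypothesis), yielding that $K$ is ungrounded. This is essentially a one-line argument, so there is no real obstacle; the only subtlety to double-check is the bookkeeping that the relevant hypotheses of Corollary~\ref{cor:expungrounded} are in force — i.e. that a logarithmic pre-$H_T$-field is in particular a pre-$H_T$-field and that $T$ is power bounded — both of which hold under the standing assumptions of this section.

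One might also remark, as a sanity check, that this is consistent with the sentence in the paper immediately following the statement of Theorem~\ref{thm:logoneortwotlclosures}: the ``grounded'' case cannot occur for logarithmic $H_T$-fields, which is exactly what Corollary~\ref{cor:ungrounded} asserts at the level of pre-$H_T$-fields. Since no logarithmic pre-$H_T$-field is grounded, the trichotomy of Fact~\ref{fact:trich} tells us each such field either has a gap or has asymptotic integration, which is why the uniqueness dichotomy in Theorem~\ref{thm:logoneortwotlclosures} is stated purely in terms of $\upl$-freeness rather than also mentioning groundedness.

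Given the triviality, I would simply present the proof inline rather than building up any machinery:
\begin{proof}
Let $a \in K^>$. By axiom (LH1), $\log(a)' = a^\dagger$, so $\log a$ is a $\d$-logarithm of $a$ in $K$. Thus every element of $K^>$ has a $\d$-logarithm in $K$, and since $K$ is a pre-$H_T$-field, Corollary~\ref{cor:expungrounded} gives that $K$ is ungrounded.
\end{proof}
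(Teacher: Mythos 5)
Your proof is correct and is essentially identical to the paper's: the paper likewise observes that (LH1) makes $\log a$ a $\d$-logarithm of $a$ for each $a \in K^>$ and then cites Corollary~\ref{cor:expungrounded}. (The only cosmetic difference is that Corollary~\ref{cor:expungrounded} does not actually require power boundedness, so that part of your hypothesis-checking is unnecessary, though harmless.)
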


\noindent
We now investigate the induced logarithms on the residue field and the constant field of $K$.

\begin{lemma}\label{lem:inducedlogres}
The logarithm on $K$ induces a well-defined logarithm on $\res K$ (also denoted by $\log$). With this induced logarithm, $\res K$ is a model of $T^{\e}$.
\end{lemma}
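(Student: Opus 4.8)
The plan is to define $\log$ on $\res K$ by $\log(\bar a) \coloneqq \overline{\log a}$ for $a \in \cO^\times$, and then verify that this is well-defined, lands in $\res K$, and satisfies axioms (L1)--(L4) together with surjectivity. The first task is well-definedness and that the value lies in $\res K$. For $a \in \cO^\times$ we have $a \asymp 1$, so by Lemma~\ref{lem:logvals}(ii), any $\d$-logarithm of $a$ is $\preceq 1$; since $\log a$ is such a $\d$-logarithm by (LH1), we get $\log a \in \cO$, so $\overline{\log a}$ makes sense in $\res K$. For well-definedness, suppose $a, b \in \cO^\times$ with $\bar a = \bar b$, i.e. $a/b \in 1 + \smallo$. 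Then $\log a - \log b = \log(a/b)$ is a $\d$-logarithm of $a/b \asymp 1$, hence $\preceq 1$; in fact we need $\prec 1$, which follows because $v\big((\log(a/b))'\big) = v\big((a/b)^\dagger\big) = v\big((a/b)'\big) > (\Gamma^<)'$ and, since $a/b - 1 \in \smallo$, we also have $v\big((a/b - 1)'\big) > (\Gamma^<)'$ large enough that the $\d$-logarithm is actually infinitesimal. So $\log a \sim \log b$ modulo $\smallo$ and $\overline{\log a} = \overline{\log b}$.

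Next I would check axioms (L1)--(L4) for the induced map. Axiom (L1) (ordered group embedding $\res K^> \to \res K$): the multiplicativity $\log(\bar a \bar b) = \log \bar a + \log \bar b$ is immediate from (L1) on $K$ and the fact that the residue map is a ring homomorphism; order-preservation and injectivity follow because $\log$ on $K$ is strictly increasing on $K^>$ and, crucially, $\res K$ inherits the residue-field $T$-model structure from~\cite[Remark 2.16]{DL95}, so the restricted exponential $\e$ on $\res K$ is the residue of $\e$ on $K$; injectivity of $\log$ on $\res K^>$ then reduces to the observation that if $\overline{\log a} = 0$ then $\log a \in \smallo$, whence $a = \exp(\log a) \in \e(\smallo) \cdot$(stuff) forces $\bar a = 1$. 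For (L2), note that $\e$ on $\res K$ is the residue of $\e$ on $K$ and $\ln$ on $\res K$ is the residue of $\ln$ on $K$; so for $\bar a$ with $\e(-1) \leq \bar a \leq \e(1)$ (in $\res K$), lift to $a \in \cO^\times$ with $\e(-1) \leq a \leq \e(1)$ in $K$ (possible after adjusting $a$ by an element of $1 + \smallo$), apply (L2) on $K$ to get $\log a = \ln a$, then take residues. Axioms (L3) and (L4) transfer similarly by lifting: for (L4), $\log(\bar a^\lambda) = \overline{\log(a^\lambda)} = \overline{\lambda \log a} = \lambda \overline{\log a}$, using that the power function $x \mapsto x^\lambda$ on $\res K$ is the residue of the one on $K$; for (L3), the inequality $\bar a \geq \lambda \log \bar a$ for $\bar a \geq \lambda^2$ follows by choosing a lift $a \geq \lambda^2$ in $\cO$ and applying (L3) in $K$, then passing to residues (inequalities are preserved under the residue map since it is order-compatible). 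Finally, surjectivity of the induced logarithm follows from (LH2): if $\bar c \in \res K$ is arbitrary, lift to $c \in \cO$; by (LH2), $c = \log d$ for some $d \in K^>$, and one checks $d \in \cO^\times$ (since $\log d = c \preceq 1$ forces $d \asymp 1$ by Lemma~\ref{lem:logvals}), so $\log \bar d = \bar c$.

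Once (L1)--(L4) and surjectivity are established, $\res K$ with this induced logarithm is a model of $T^{\e}$ by definition of $T^{\e}$ (which is just $T$ plus axioms (L1)--(L4) plus surjectivity), using that $\res K$ is already a model of $T$. I expect the main obstacle to be the careful bookkeeping in well-definedness --- specifically, pinning down that $\log(a/b) \in \smallo$ (strictly, not merely $\preceq 1$) when $a/b \in 1 + \smallo$. This needs the asymptotic relation $v\big((a/b - 1)'\big) > (\Gamma^<)'$ from the pre-$H_T$-field axiom (PH2) applied with $f = a/b - 1 \in \smallo$ and an appropriate $g \succ 1$, combined with the identification of $(\log(a/b))'$ with $(a/b)^\dagger \asymp (a/b - 1)'$ via Fact~\ref{fact:asymsim}; then since $\Gamma^< $ has no largest element and the valuation of the derivative is $> (\Gamma^<)'$, the element $\log(a/b)$ itself is forced into $\smallo$ rather than just $\cO$. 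The secondary subtlety is confirming that all the auxiliary structure on $\res K$ --- the restricted exponential $\e$, the logarithm $\ln$, the power functions $x \mapsto x^\lambda$ --- is genuinely the residue of the corresponding structure on $K$; this is where~\cite[Remark 2.16]{DL95} (that the residue field is naturally a $T$-model via the residue map) does the work, since $\e$, $\ln$, and the power maps are $\cL(\emptyset)$-definable and hence commute with the residue map on their domains of definition within $\cO$.
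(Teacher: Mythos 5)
There is a genuine gap at the crux of your well-definedness argument. You claim that because $v\big(\log(a/b)'\big)=v\big((a/b)'\big)>(\Gamma^<)'$, the element $\log(a/b)$ is "forced into $\smallo$ rather than just $\cO$." That inference is invalid: knowing $v(f')>(\Gamma^<)'$ only rules out $f\succ 1$ (this is exactly the content of Lemma~\ref{lem:logvals}(ii), which gives $f\preceq 1$), and no amount of information about $v(f')$ alone can distinguish $f\prec 1$ from $f\asymp 1$. Concretely, $f=c+\delta$ with $c$ a nonzero constant and $\delta\in\smallo^{\neq}$ has $v(f')=v(\delta')\in(\Gamma^>)'$, yet $f\not\prec 1$. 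So your derivation-theoretic route to $\log(1+\smallo)\subseteq\smallo$ does not close, and (PH2) plus Fact~\ref{fact:asymsim} cannot rescue it.

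The correct argument — and the one the paper uses — is not differential but o-minimal/valuation-theoretic: for $\epsilon\in\smallo$ one has $1+\epsilon\in[\e(-1),\e(1)]$, so axiom (L2) gives $\log(1+\epsilon)=\ln(1+\epsilon)$, and $\ln(1+\smallo)\subseteq\smallo$ because $\ln$ is an $\cL(\emptyset)$-definable function continuous at $1$ with $\ln 1=0$ and $\cO$ is $T$-convex (this is~\cite[Lemma 1.13]{DL95}). You actually have this tool in hand — your final paragraph notes that $\ln$ commutes with the residue map — but you deploy it only for verifying (L2) on $\res K$, not at the critical well-definedness step where it is indispensable. The remainder of your proposal (the axiom checks by lifting, injectivity via (L2), and surjectivity via (LH2) together with Lemma~\ref{lem:logvals}) matches what the paper does or leaves as routine, and is fine once the well-definedness step is repaired.
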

\begin{proof}
Let $a \in \cO$ with $\bar{a} >0$. Then $a \asymp 1$, so Lemma~\ref{lem:logvals} gives $\log a \in \cO$. Since $\cO$ is $T$-convex and $\ln$ is $\cL(\emptyset)$-definable and continuous at $1$, we have $\ln(1+\smallo) \subseteq \smallo$ by~\cite[Lemma 1.13]{DL95}. Since $a+\smallo = a(1+\smallo)$, axioms (L1) and (L2) give
\[
\log(a+\smallo)\ =\ \log a + \log(1+\smallo)\ =\ \log a+\ln(1+\smallo)\ \subseteq \ \log a + \smallo.
\]
Thus, $\log$ induces a well-defined map on $\res K$. It is routine to verify that this induced map is a logarithm on $\res K$, and it remains to show that this induced logarithm is surjective. To see this, let $a \in \cO$ and, using (LH2), take $b \in K^>$ with $\log b = a$. Since $\log b \in \cO$ is a $\d$-logarithm of $b$, Lemma~\ref{lem:logvals} tells us that $b \asymp 1$. Then $\bar{b}>0$ and $\log \bar{b} = \bar{a}$. 
\end{proof}

\begin{lemma}\label{lem:inducedlogC}
The restriction of the logarithm on $K$ to $C$ is a logarithm on $C$. With this restricted logarithm, $C$ is a model of $T^{\e}$. 
\end{lemma}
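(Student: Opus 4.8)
The plan is to show three things: that $\log$ restricted to $C^{>}$ maps into $C$, that this restriction satisfies the logarithm axioms (L1)--(L4) for $C$, and that it is surjective onto $C$. For the first point, if $a \in C^{>}$ then (LH1) gives $\log(a)' = a^{\dagger} = a'/a = 0$ since $a' = 0$, so $\log a \in C$; thus $\log|_{C}$ is a well-defined map $C^{>} \to C$, identically zero on $C^{\le}$ in agreement with the $\cL_{\log}$-structure on $C$.

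Next I would verify (L1)--(L4) by transfer from $K$. Since $C$ is an elementary $\cL$-substructure of $K$ (by \cite[Lemma 2.3]{FK19}), the constants $\e(-1), \e(1)$, the restricted logarithm $\ln$, and the power functions $x \mapsto x^{\lambda}$ for $\lambda \in \Lambda$ on $C$ are all restrictions of their counterparts on $K$, and the field of exponents of $C$ is again $\Lambda$. Axiom (L1) holds because $\log|_{C}$ is the restriction of the ordered group embedding $\log\colon K^{>} \to K$, whose image on $C^{>}$ lies in $C$ by the previous step. Axioms (L2), (L3), and (L4) are universal assertions (the last really an axiom scheme indexed by $\Lambda$) about elements of $C$ and $\Lambda$ involving only $\log$ together with $\cL(\emptyset)$-definable operations under which $C$ is closed; since each such assertion holds in $K$, and the values occurring in it computed in $C$ agree with those computed in $K$, each holds in $C$ as well.

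For surjectivity, let $c \in C$. Every $H_T$-asymptotic field has its constant field contained in its valuation ring, so $c \in \cO$, and (LH2) yields $d \in K^{>}$ with $\log d = c$. Then (LH1) gives $d^{\dagger} = (\log d)' = c' = 0$, so $d' = 0$, i.e. $d \in C^{>}$, and $c = \log d \in \log(C^{>})$. Hence $\log|_{C}$ is surjective onto $C$, which together with the preceding paragraphs shows that $C$, equipped with $\log|_{C}$, is a model of $T^{\e}$. I do not anticipate a real obstacle here: the only things to be careful about are that the restricted-exponential and power-function data pass from $K$ down to $C$ (handled by elementarity of $C$ as an $\cL$-substructure) and the observation $C \subseteq \cO$ needed to apply (LH2). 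Alternatively one could invoke Lemma~\ref{lem:logvals} in place of the direct derivative computations, but it is not needed.
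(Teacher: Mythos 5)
Your proof is correct and follows essentially the same route as the paper: use (LH1) to see $\log(C^>)\subseteq C$, observe that the logarithm axioms are inherited by restriction (the paper treats this as immediate, while you spell out the transfer via elementarity of $C$ as an $\cL$-substructure), and get surjectivity from $C\subseteq \cO\subseteq \log(K^>)$ together with $f^\dagger = (\log f)' = 0$.
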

\begin{proof}
For $c \in C^>$, we have $\log(c)' = c^\dagger = 0$, so $\log c \in C$ as well. Thus, $\log(C^>) \subseteq C$, and it follows immediately that $\log|_C$ is a logarithm on $C$. To see that $\log(C^>) = C$, let $a \in C$ and, using that $C \subseteq \cO \subseteq \log(K^>)$, take $f \in K^>$ with $\log f =a$. Then $f^\dagger = a' = 0$, so $f \in C^>$. 
\end{proof}

\noindent
The natural $\cL$-embedding $C \to \res K$ is even an $\cL_{\log}$-embedding, where $\res K$ and $C$ are equipped with the logarithms from Lemmas~\ref{lem:inducedlogres} and~\ref{lem:inducedlogC}, respectively. Now, we can say more about the relationship between logarithmic pre-$H_T$-fields and pre-$H_{T^{\e}}$-fields.

\begin{proposition}\label{prop:surjectiveisexp}
The logarithmic pre-$H_T$-field $K$ is a pre-$H_{T^{\e}}$-field if and only if $\log(K^>) = K$. Moreover, if $K$ is a logarithmic $H_T$-field, then $K$ is an $H_{T^{\e}}$-field if and only if $\log(K^>) = K$.
 \end{proposition}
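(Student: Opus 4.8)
The plan is to unwind the definitions on both sides. Recall that $T^{\e}$ extends $T$ by the axioms (L1)--(L4) together with surjectivity of $\log$, so a model of $T^{\e}$ is precisely a model of $T$ equipped with a surjective logarithm. Hence a pre-$H_{T^{\e}}$-field is exactly a pre-$H_T$-field $K$ which also carries a logarithm $\log$ making $K$ a model of $T^{\e}$, and such that the additional structure is compatible with the derivation in the sense demanded by the definition of a pre-$H_T$-field over the larger language $\LdO_{\log}$. The content of the proposition is that, for a logarithmic pre-$H_T$-field, this ``$T^{\e}$-model'' requirement reduces to the single condition $\log(K^>) = K$.

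First I would handle the forward direction, which is essentially immediate: if $K$ is a pre-$H_{T^{\e}}$-field then $\log$ is by definition surjective. For the converse, suppose $\log(K^>) = K$. Since $K$ is a logarithmic pre-$H_T$-field, the logarithm already satisfies (L1)--(L4) by definition, and surjectivity is now assumed, so $K$ is a model of $T^{\e}$ as an $\cL_{\log}$-structure. It then remains to check that the $T^{\e}$-derivation $\der$ on $K$ really is a $T^{\e}$-derivation, i.e. that $\der$ is compatible with every $\cL_{\log}(\emptyset)$-definable $\cC^1$-function with open domain. By Corollary~\ref{cor:expterms}, every such function is built piecewise from $\cL(\emptyset)$-definable functions, $\log$, and $\exp$. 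Compatibility with the $\cL(\emptyset)$-definable pieces holds because $\der$ is a $T$-derivation; compatibility with $\log$ on $K^>$ is exactly axiom (LH1), namely $\log(a)' = a^\dagger = a'/a$, which says $\der$ is compatible with $\log$ since $\log$ has derivative $y\inv$; and compatibility with $\exp$ follows by the chain rule (or by differentiating $\log(\exp y) = y$), using that $\exp$ is differentiable and equal to its own derivative where defined. Compatibility is preserved under composition, so $\der$ is compatible with all the generating functions and hence with all $\cL_{\log}(\emptyset)$-definable $\cC^1$-functions; thus $\der$ is a $T^{\e}$-derivation. The pre-$H$-field axioms (PH1), (PH2) and, in the $H_T$-field case, (H1), (H2), only refer to the ordered valued differential field structure, which is unchanged, so they continue to hold. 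This shows $K$ is a pre-$H_{T^{\e}}$-field, and likewise an $H_{T^{\e}}$-field when $K$ is a logarithmic $H_T$-field.

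The ``moreover'' clause for logarithmic $H_T$-fields is handled by the same argument verbatim, since the only extra axioms (H1), (H2) are already assumed and involve no new function symbols. I would present the two cases together, remarking that the $H_T$-field case adds nothing beyond what is shown for pre-$H_T$-fields.

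The main obstacle is the verification that $\der$ is a $T^{\e}$-derivation, and specifically that compatibility with $\exp$ follows from (LH1). This is where the identity $\log(a)' = a^\dagger$ is doing real work: it pins down the derivative of $\log$ to be the ``correct'' one, $y \mapsto y\inv$, which is what lets the chain-rule computation for $\exp = \log\inv$ go through. Everything else is bookkeeping: unpacking that a model of $T^{\e}$ over $\cL_{\log}$ is a model of $T$ with a surjective logarithm, invoking Corollary~\ref{cor:expterms} to reduce to the generators, and noting that the order-valuation-derivation axioms are insensitive to the extra language.
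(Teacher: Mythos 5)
Your argument for the derivation is exactly the paper's: reduce via Corollary~\ref{cor:expterms} to compatibility with $\cL(\emptyset)$-definable functions, $\log$, and $\exp$, get the first two from the $T$-derivation axiom and (LH1), and get $\exp$ by differentiating $u = \log(\exp u)$. The forward direction and the remark that (PH1), (PH2), (H1), (H2) are insensitive to the language expansion are also fine.

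However, there is a genuine gap: being a pre-$H_{T^{\e}}$-field means being a model of $(T^{\e})^{\cO,\der}$ satisfying (PH1)--(PH2), and this requires not only that $\der$ be a $T^{\e}$-derivation but also that $\cO$ be a \emph{$T^{\e}$-convex} valuation ring, i.e.\ closed under all $\cL_{\log}(\emptyset)$-definable continuous functions $K \to K$ --- in particular under the now-total function $\exp$. You never verify this, and it is not a formal consequence of $T$-convexity: one must rule out, say, an element $a \in \cO$ with $\exp a \succ 1$. It does follow from the logarithmic pre-$H_T$-field axioms, but only by using (LH2) together with Lemma~\ref{lem:logvals} (if $b = \exp a \not\asymp 1$ then $a = \log b$ is a $\d$-logarithm of $b$ and hence $a \succ 1$, contradicting $a \in \cO$). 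The paper packages this step as Lemma~\ref{lem:inducedlogres}: the logarithm induces a surjective logarithm on $\res K$ making it a model of $T^{\e}$, from which $T^{\e}$-convexity of $\cO$ follows. Your proof needs this (or an equivalent direct closure argument) inserted before you can conclude that $K$ is a pre-$H_{T^{\e}}$-field; the rest stands as written.
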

\begin{proof}
One direction is immediate. For the other, suppose that $\log(K^>) = K$. Then the $\cL_{\log}$-reduct of $K$ is a model of $T^{\e}$. By Lemma~\ref{lem:inducedlogres}, the residue field $\res K$ with the induced logarithm models $T^{\e}$ as well. It follows that $\cO$ is $T^{\e}$-convex, and it remains to show that $\der$ is a $T^{\e}$-derivation. By Corollary~\ref{cor:expterms}, any $\cL_{\log}(\emptyset)$-definable function on $K$ is given piecewise by a composition of $\cL(\emptyset)$-definable functions, $\log$, and $\exp$. By~\cite[Lemma 2.6]{FK19}, $\der$ is compatible with a composition of functions so long as it is compatible with each constituent function. Since $\der$ is compatible with $\log$ and with all $\cL(\emptyset)$-definable $\cC^1$-functions by assumption, it remains to show that $\der$ is compatible with $\exp$. For $u \in K$, we have $u = \log (\exp u)$, so taking derivatives gives
\[
u'\ =\ \log (\exp u)'\ =\ (\exp u)\inv \exp(u)'.
\]
Thus, $ \exp (u)' = \exp(u) u'$, as desired. 
\end{proof}

\noindent
Next, we provide a short test for checking whether an $\LdO$-embedding of logarithmic pre-$H_T$-fields is also an $\LdO_{\log}$-embedding, along with a longer proposition on extending the logarithm on $K$ to certain pre-$H_T$-field extensions of $K$.

\begin{lemma}\label{lem:logclosedtest}
Let $M$ be a logarithmic pre-$H_T$-field and let $\imath\colon K \to M$ be an $\LdO$-embedding. Let $f,g\in K^>$ with $f \sim g$. If $\imath(\log g) = \log_M\imath (g)$, then $\imath (\log f)= \log_M\imath(f)$. 
\end{lemma}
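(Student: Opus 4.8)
The plan is to reduce the problem from $f$ to $g$ by writing $f = g\cdot u$ with $u \sim 1$, so that $u \in \cO^\times$, and then using the additivity of $\log$ (axiom (L1)) to split $\log f = \log g + \log u$. Since $\imath$ is an $\LdO$-embedding and $g$ is already handled by hypothesis, the only new content is to show $\imath(\log u) = \log_M \imath(u)$ for $u \asymp 1$ with residue $\bar u > 0$; more precisely $u \sim 1$ forces $\bar u = 1$. So the crux is: an $\LdO$-embedding automatically respects the logarithm on elements that are $\sim 1$.

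First I would set $u \coloneqq f/g \in K^>$, noting $u \sim 1$ since $f \sim g$, and record $\log f = \log g + \log u$ by (L1), hence $\imath(\log f) = \imath(\log g) + \imath(\log u)$. Applying the hypothesis $\imath(\log g) = \log_M \imath(g)$ and the target's instance of (L1), $\log_M \imath(f) = \log_M \imath(g) + \log_M \imath(u)$, it suffices to prove $\imath(\log u) = \log_M \imath(u)$. Now $u \sim 1$ means $u - 1 \prec 1$, i.e. $u - 1 \in \smallo$. By Lemma~\ref{lem:logvals}(ii), $\log u \preceq 1$, and in fact since the induced logarithm on $\res K$ is a logarithm (Lemma~\ref{lem:inducedlogres}) and $\bar u = 1$, we get $\overline{\log u} = \log \bar u = \log 1 = 0$, so $\log u \in \smallo$ as well. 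The key move is then to invoke axiom (L2): if $\e(-1) \leq u \leq \e(1)$, then $\log u = \ln u$, where $\ln$ is the $\cL(\emptyset)$-definable (partial) function from Subsection~\ref{subsec:expsandlogs}. Since $u - 1 \in \smallo$ and $\e(\pm 1) - 1$ is a unit (by Lemma~\ref{lem:leqyminus1}, $\e(1) > 2$ and $\e(-1) < 1$), we indeed have $\e(-1) < u < \e(1)$, so $\log u = \ln u$.

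Now the argument closes using only that $\imath$ is an $\cL$-embedding: $\imath(u) - 1 = \imath(u - 1) \in \smallo_M$ because $\imath$ preserves the valuation ring and hence its maximal ideal (an $\cL$-embedding of $T$-convex valued fields preserves $\cO$, and $\smallo$ is $\cL$-definably $\{x : x \in \cO,\ x^{-1} \notin \cO\} \cup \{0\}$), so likewise $\e(-1) < \imath(u) < \e(1)$ in $M$, whence $\log_M \imath(u) = \ln \imath(u)$ by (L2) in $M$. Since $\ln$ is $\cL(\emptyset)$-definable and $\imath$ is an $\cL$-embedding, $\imath(\ln u) = \ln \imath(u)$. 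Combining, $\imath(\log u) = \imath(\ln u) = \ln \imath(u) = \log_M \imath(u)$, which is what we needed. The main (and really the only) obstacle is verifying that $u = f/g$ lands strictly inside the interval $[\e(-1), \e(1)]$ so that (L2) applies — this is where $f \sim g$ (rather than merely $f \asymp g$) is used, together with the quantitative bound $\e(1) > 2$ from Lemma~\ref{lem:leqyminus1}; everything else is bookkeeping with (L1) and the fact that $\cL$-embeddings commute with $\cL(\emptyset)$-definable functions.
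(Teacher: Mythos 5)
Your proof is correct and follows essentially the same route as the paper: write $f = g\cdot u$ with $u = 1+\epsilon$, $\epsilon \in \smallo$, split via (L1), use (L2) to identify $\log u$ with the $\cL(\emptyset)$-definable $\ln u$, and conclude since $\cL$-embeddings commute with $\cL(\emptyset)$-definable functions. The extra verifications you supply (that $u$ lies strictly between $\e(-1)$ and $\e(1)$, using $\e(1)>2$) are correct and merely make explicit what the paper leaves implicit.
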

\begin{proof}
Take $\epsilon \in \smallo$ such that $f = g(1+\epsilon)$. Then 
\[
\log f \ =\ \log g + \log(1+\epsilon) \ =\ \log g + \ln(1+\epsilon).
\]
As $\imath$ is an $\cL$-embedding, we have $\imath \big(\ln(1+\epsilon)\big)=\ln\imath(1+\epsilon)$. We have $\imath(\log g) = \log_M\imath(g)$ by assumption, so $\imath(\log f) = \log_M\imath(f)$.
\end{proof}

\begin{proposition}\label{prop:logclosedtest2}
Let $L$ be a pre-$H_T$-field extension of $K$ with $\res L = \res K$. Let $(a_i)_{i \in I}$ be a family of elements in $L^>$ with $a_i \not\asymp 1$ for each $i$ such that 
\[
\Gamma_L\ =\ \Gamma \oplus \bigoplus_{i \in I}\Lambda v a_i,
\]
and let $(b_i)_{i \in I}$ be a family of elements in $L$ such that $b_i' = a_i^\dagger$ for each $i \in I$. Then there is a unique logarithm on $L$ extending the logarithm on $K$ such that $\log_L a_i = b_i$ for each $i \in I$. With this logarithm, $L$ is a logarithmic pre-$H_T$-field extension of $K$. If $M$ is also a logarithmic pre-$H_T$-field extension of $K$ and if $\imath\colon L \to M$ is an $\LdO(K)$-embedding, then $\imath$ is an $\LdO_{\log}(K)$-embedding if and only if $\imath(b_i) = \log_M \imath(a_i)$ for each $i \in I$.
\end{proposition}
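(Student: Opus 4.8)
The plan is to construct $\log_L$ from a normal form for elements of $L^>$. Using $\Gamma_L=\Gamma\oplus\bigoplus_i\Lambda va_i$ together with $\res L=\res K$, every $f\in L^>$ can be written as $f=h\prod_i a_i^{\lambda_i}(1+\epsilon)$ with $h\in K^>$, with $\lambda_i\in\Lambda$ almost all zero, and with $\epsilon\in\smallo_L$: decompose $vf=\gamma+\sum_i\lambda_i va_i$ with $\gamma\in\Gamma$, choose $h_0\in K^>$ with $vh_0=\gamma$, observe that $f/(h_0\prod_i a_i^{\lambda_i})\asymp 1$ has positive residue lying in $\res K$, and multiply $h_0$ by a matching positive unit of $K$. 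Then I would set
\[
\log_L f\ \coloneqq\ \log_K h + \sum_i\lambda_i b_i + \ln(1+\epsilon).
\]
For well-definedness, the $\lambda_i$ are forced by directness of the sum, and any two admissible data $(h,\epsilon)$, $(h',\epsilon')$ satisfy $h/h'\in 1+\smallo_K$, so the two candidate values agree by axiom (L2) for $K$ together with the additivity $\ln(xy)=\ln x+\ln y$ valid for $x,y$ near $1$ (a direct consequence of the restricted functional equation for $\e$ and injectivity of $\e$ on $[-1,1]$). The same normal form yields uniqueness: any logarithm on $L$ extending $\log_K$ must send $a_i^{\lambda_i}\mapsto\lambda_i\log_L a_i$ by (L4) and $1+\epsilon\mapsto\ln(1+\epsilon)$ by (L2), so it is determined on $L^>$ once $\log_L a_i=b_i$ is imposed.

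Next I would check that $\log_L$ is a logarithm on $L$ satisfying (LH1) and (LH2). Axiom (LH1), $(\log_L f)'=f^\dagger$, is a one-line derivative computation on the normal form: $(\log_K h)'=h^\dagger$ by (LH1) for $K$, $b_i'=a_i^\dagger$ by hypothesis, $(a_i^{\lambda_i})^\dagger=\lambda_i a_i^\dagger$, and $(\ln(1+\epsilon))'=(1+\epsilon)^\dagger$ because the $T$-derivation is compatible with $\ln$ restricted to the open interval $(\e(-1),\e(1))$; these sum to $f^\dagger$. For (L1), the homomorphism property is immediate from the normal form (again using $\ln$-additivity near $1$), and strict monotonicity is checked by cases on $vf$: if $f\succ 1$ then $\log_L f$ is a $\d$-logarithm of $f$, hence $\log_L f\succ 1$ by Lemma~\ref{lem:logvals}, and (PH1) then forces $\log_L f>0$; if $f\asymp 1$ one reduces modulo $\smallo_L$, noting $\res K\models T^{\e}$ by Lemma~\ref{lem:inducedlogres} and that $\log_L$ induces the logarithm of $\res K$, so strict monotonicity there gives it for $\log_L$. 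Axiom (L2) admits a clean proof: for $y\in[\e(-1),\e(1)]$, either $y\in K$ (where (L2) for $K$ applies directly) or $y$ is strictly between the endpoints, in which case both $\log_L y$ and $\ln y$ are $\d$-logarithms of $y$ in $L$, so they differ by an element of $C_L$; their residues agree because (L2) holds in $\res K$, and $C_L\cap\smallo_L=\{0\}$ since $C_L^\times\subseteq\cO_L^\times$ in any pre-$H_T$-field; hence they are equal. Axiom (L4) reduces to (L4) for $K$ together with the near-$0$ identity $\e(u)^\lambda=\e(\lambda u)$ (proved by the standard ODE-uniqueness argument). Finally (LH2): given $f\in\cO_L$, choose $c\in\cO_K$ with $f-c\in\smallo_L$; then $c\in\log_K(K^>)\subseteq\log_L(L^>)$ by (LH2) for $K$, and $f\in\log_L(L^>)$ by Lemma~\ref{lem:closeislog}. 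This shows $L$, with $\log_L$, is a logarithmic pre-$H_T$-field extension of $K$.

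The embedding clause is then routine. The forward direction is immediate from the definition of an $\LdO_{\log}(K)$-embedding. For the converse, assume $\imath(b_i)=\log_M\imath(a_i)$ for all $i$ and apply $\imath$ to $f=h\prod_i a_i^{\lambda_i}(1+\epsilon)$: as $\imath$ is an $\cL$-embedding it commutes with the power functions and with $\ln$, and as it is an $\LO$-embedding $\imath(\epsilon)\in\smallo_M$; expanding $\log_M\imath(f)$ via (L1), (L4), (L2) for $\log_M$, using that $\imath$ fixes $K$ and $\log_M$ extends $\log_K$, gives
\[
\log_M\imath(f)\ =\ \imath(\log_K h)+\sum_i\lambda_i\imath(b_i)+\imath(\ln(1+\epsilon))\ =\ \imath(\log_L f),
\]
and together with the convention that $\log$ is identically zero on non-positive elements this shows $\imath$ is an $\LdO_{\log}(K)$-embedding.

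The main obstacle is axiom (L3) (the borderline cases of (L2) being a milder instance of the same difficulty): one must establish $a\geq\lambda\log_L a$ for all $\lambda\in\Lambda^{>1}$ and all $a\geq\lambda^2$, across three regimes. When $a\succ 1$ it follows from $\log_L a\prec a$, since $v\log_L a=\chi(va)$ strictly increases the valuation. When $a\asymp 1$ with $\bar a\neq\lambda^2$, reducing modulo $\smallo_L$ and applying the strict form of (L3) in $\res K$ (Lemma~\ref{lem:powerstrictlog}) gives a strict inequality. The delicate regime is $a\asymp 1$ with $\bar a=\lambda^2$, where the residue test is inconclusive and one must argue directly: writing $a=\lambda^2(1+\delta\lambda^{-2})$ with $\delta\in\smallo_L$, $\delta\geq 0$, one combines (L3) for $K$ applied to $\lambda^2$ with the inequality $\ln(1+x)\leq x$ from Lemma~\ref{lem:leqyminus1} and $\lambda>1$ to conclude $a-\lambda\log_L a\geq 0$.
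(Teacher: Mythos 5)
Your construction is essentially the paper's: the same normal form $f = g(1+\epsilon)\prod_i a_i^{\lambda_i}$ (justified the same way from the value group decomposition and $\res L = \res K$), the same formula for $\log_L$, and the same case analyses for (LH1), (L3), (L4), (LH2), uniqueness, and the embedding clause, including the same treatment of the delicate $f\sim\lambda^2$ subcase of (L3). Your handling of (L2) is a mild variant worth noting: the paper computes $\log_L f = \ln g + \ln(1+\epsilon) = \ln f$ directly from the functional equation of $\e$, whereas you observe that $\log_L y$ and $\ln y$ are both $\d$-logarithms of $y$ on the open interval, hence differ by a constant, which is infinitesimal (residues agree) and therefore zero since $C_L^\times\subseteq\cO_L^\times$; both routes work, and yours cleanly isolates the endpoint cases by putting them in $K$.

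The one step that fails as written is the strict monotonicity part of (L1) when $f\asymp 1$: reducing modulo $\smallo_L$ and invoking strict monotonicity of the induced logarithm on $\res K$ only settles the case $\bar f\neq 1$. If $f>1$ but $f\sim 1$, then $\log_{\res K}\bar f = 0$ and the residue test is silent, so you must argue directly that $\log_L f = \ln(1+\epsilon)>0$ for positive $\epsilon\in\smallo_L$ (immediate from Lemma~\ref{lem:leqyminus1} or from monotonicity of $\ln$); this is exactly the kind of borderline case you correctly isolate for (L2) and (L3), so the omission is easily repaired, but it should be stated.
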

\begin{proof}
Let $f \in L^>$. Our assumption on $\Gamma_L$ and $\res L$ gives
\[
f\ =\ g(1+\epsilon)\prod_{i \in I}a_i^{\lambda_i}
\]
for some $g \in K^>$, some $\epsilon \in \smallo_L$, and some family $(\lambda_i)_{i \in I}$ of exponents in $\Lambda$ where only finitely many $\lambda_i$ are nonzero. Set
\[
\log_L f\ \coloneqq \ \log g + \ln(1+\epsilon) + \sum_{i \in I}\lambda_ib_i.
\]
It is routine to show that this does not depend on the choice of $g$. Before we show that $\log_L$ is a logarithm on $L$, we first note that
\[
(\log_L f)'\ =\ (\log g)' + \ln(1+\epsilon)' + \sum_{i \in I}\lambda_ib_i'\ =\ g^\dagger + (1+\epsilon)^\dagger + \sum_{i \in I}\lambda_ia_i^\dagger \ =\ f^\dagger,
\]
so (LH1) holds. We now turn to verifying (L1)--(L4). A straightforward computation gives $\log_L(f_1f_2) = \log_L f_1 +\log_L f_2$ for all $f_1,f_2 \in L^>$, so $\log_L$ is a group homomorphism. To see that $\log_L$ is even an ordered group embedding, we assume that $f> 1$, and we need to show that $\log_L f>0$. If $f\succ 1$, then $(\log_Lf)' = f^\dagger >0$ by (PH1). Since $\log_L f$ is a $\d$-logarithm of $f$, Lemma~\ref{lem:logvals} gives $v(\log_L f) = \chi(vf) < 0$. Thus, $(\log_L f)^\dagger > 0$ as well, so $\log_L f = (\log_L f)' /(\log_L f)^\dagger >0$. Now, assume that $f \asymp 1$, so each $\lambda_i = 0$ and $f = g(1+\epsilon)$. If $f \not\sim 1$, then $\bar{g}=\bar{f} >1$, so $\log \bar{g} > 0$ by Lemma~\ref{lem:inducedlogres}. Thus, $\log g> \smallo_L$, and since $\ln(1+\epsilon) \in \smallo_L$, we have
\[
\log_L f \ =\ \log g+ \ln(1+\epsilon)\ \sim\ \log g\ >\ 0.
\]
If $f \sim 1$, then we may assume that $g = 1$, so $\epsilon > 0$ and $\log_L f = \ln (1+\epsilon) > 0$. 

For (L2), we assume that $\e(-1)\leq f \leq \e(1)$, and we need to show that $\log_L f = \ln f$. Our assumption on $f$ gives that each $\lambda_i = 0$ and $\e(-1)\leq g \leq \e(1)$. Then
\[
\log_L f \ =\ \log g + \ln(1+ \epsilon) \ =\ \ln g + \ln(1+\epsilon) \ =\ \ln f.
\]

For (L3), let $\lambda \in \Lambda$ with $\lambda > 1$, and assume that $f \geq \lambda^2$. We need to show that $f\geq \lambda \log_L f$. If $f\succ 1$, then $v(\log_L f) = \chi(vf) > vf$ by~\cite[Lemma 9.2.18]{ADH17}, so $f\succ \log_L f$. In particular, $f\geq \lambda\log_L f$. Thus, we may assume that $f \asymp 1$, so each $\lambda_i = 0$ and $f = g(1+\epsilon)$, where $g \asymp 1$. Lemma~\ref{lem:leqyminus1} gives $\epsilon = \e(\ln(1+\epsilon)) - 1\geq \ln(1+\epsilon)$, so 
\begin{equation}\label{eq:logepsilon}
\lambda \log_L f\ =\ \lambda \log g+ \lambda\ln(1+\epsilon) \ \leq \ \lambda \log g + \lambda\epsilon.
\end{equation}
If $f \sim \lambda^2$, then we may arrange that $g = \lambda^2$ and $\epsilon \geq 0$. Thus, $\lambda \log g = \lambda \log \lambda^2 \leq \lambda^2$ and $\lambda\epsilon \leq \lambda^2\epsilon$. Combined with (\ref{eq:logepsilon}), this gives $\lambda \log_L f \leq \lambda^2+\lambda^2\epsilon = \lambda^2(1+\epsilon) = f$, as desired. If $f \not\sim \lambda^2$, then $\bar{g}=\bar{f}> \lambda^2$, so Lemmas~\ref{lem:powerstrictlog} and~\ref{lem:inducedlogres} give $\bar{g}> \lambda\log \bar{g}$. Thus, $g+ (g-\lambda)\epsilon > \lambda\log g$ since $(g-\lambda)\epsilon \prec 1$. Again, (\ref{eq:logepsilon}) gives
\[
\lambda \log_L f\ <\ g+ (g-\lambda)\epsilon+ \lambda\epsilon\ =\ g(1 +\epsilon) \ =\ f.
\]

For (L4) let $\rho \in \Lambda$. Then $f^\rho = g^\rho(1+\epsilon)^\rho\prod_{i \in I}a_i^{\rho\lambda_i}$, so
\[
\log_L f^\rho\ =\ \log g^\rho + \ln\big((1+\epsilon)^\rho\big) + \sum_{i \in I}\rho\lambda_ib_i \ =\ \rho\log g + \rho\ln(1+\epsilon) + \rho\sum_{i \in I}\lambda_ib_i\ =\ \rho \log_L f,
\]
where the equality $\ln\big((1+\epsilon)^\rho\big) = \rho\ln(1+\epsilon)$ holds by~\cite[Lemma 6.4.1]{Fo10}.

Finally, for (LH2), let $a \in \cO_L$ and, using that $\res L = \res K$, take $b \in \cO$ with $a- b\prec 1$. Then $b \in \log(K^>) \subseteq \log(L^>)$, so $a \in \log(L^>)$ by Lemma~\ref{lem:closeislog}.

Now let $M$ be a logarithmic pre-$H_T$-field extension of $K$ and let $\imath\colon L \to M$ be an $\LdO(K)$-embedding. Clearly, if $\imath$ is an $\LdO_{\log}(K)$-embedding, then $\imath(b_i) = \log_M \imath(a_i)$ for each $i\in I$. For the other implication, we assume that $\imath(b_i) = \log_M \imath(a_i)$ for each $i \in I$, and we need to show that $\imath(\log_L f) = \log_M \imath(f)$, where $f$ is as above. Using Lemma~\ref{lem:logclosedtest} and the fact that $f\sim g\prod_{i \in I}a_i^{\lambda_i}$, we may assume that $f = g\prod_{i \in I}a_i^{\lambda_i}$. Since 
\[
\log_L \Big(g\prod_{i \in I}a_i^{\lambda_i}\Big) \ =\ \log_L g + \sum_{i \in I}\lambda_i\log_L (a_i)
\]
and since $g \in K$, this further reduces to showing that $\imath(\log_L a_i) = \log_M \imath(a_i)$ for each $i\in I$. This holds by our assumption, since $\log_L a_i = b_i$ for each $i$. Uniqueness of $\log_L$ follows from this embedding property by taking $\imath\colon L \to L$ to be the identity map.
\end{proof}

\noindent
For the remainder of this article, we will just write $\log$ instead of $\log_L$ when the logarithmic pre-$H_T$-field $L$ is clear from context. The conditions on $\res L$ and $\Gamma_L$ in the above lemma are always satisfied when $L$ is an immediate pre-$H_T$-field extension of $K$.

\begin{corollary}\label{cor:immlogH}
Let $L$ be an immediate pre-$H_T$-field extension of $K$. Then there is a unique logarithm on $L$ extending the logarithm on $K$, and with this logarithm, $L$ is a logarithmic pre-$H_T$-field extension of $K$. If $M$ is also a logarithmic pre-$H_T$-field extension of $K$, then any $\LdO(K)$-embedding $L\to M$ is necessarily an $\LdO_{\log}(K)$-embedding.
\end{corollary}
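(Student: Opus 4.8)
The plan is to obtain Corollary~\ref{cor:immlogH} as the special case $I = \emptyset$ of Proposition~\ref{prop:logclosedtest2}. Since $L$ is an immediate pre-$H_T$-field extension of $K$, it satisfies $\res L = \res K$ and $\Gamma_L = \Gamma$; so, taking the index set $I$ to be empty, the decomposition $\Gamma_L = \Gamma \oplus \bigoplus_{i\in I}\Lambda v a_i$ in the hypothesis of Proposition~\ref{prop:logclosedtest2} reduces to $\Gamma_L = \Gamma$, which holds, and the families $(a_i)_{i \in I}$ and $(b_i)_{i\in I}$ are both empty, so the requirements $a_i \not\asymp 1$ and $b_i' = a_i^\dagger$ are vacuous. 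Hence Proposition~\ref{prop:logclosedtest2} applies with this empty data.

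First I would read off the existence and uniqueness of the logarithm: with $I = \emptyset$ there are no constraints of the form $\log_L a_i = b_i$, so Proposition~\ref{prop:logclosedtest2} produces a unique logarithm on $L$ extending the logarithm on $K$, and asserts that with this logarithm $L$ is a logarithmic pre-$H_T$-field extension of $K$. This is exactly the first assertion of the corollary. Then, for the embedding clause, given a logarithmic pre-$H_T$-field extension $M$ of $K$ and an $\LdO(K)$-embedding $\imath\colon L\to M$, the final part of Proposition~\ref{prop:logclosedtest2} says that $\imath$ is an $\LdO_{\log}(K)$-embedding if and only if $\imath(b_i) = \log_M\imath(a_i)$ for all $i\in I$; since $I = \emptyset$, this condition holds vacuously, so every such $\imath$ is automatically an $\LdO_{\log}(K)$-embedding. (The uniqueness of the logarithm on $L$ may also be recovered from this embedding characterization by specializing to the identity embedding of $L$, but Proposition~\ref{prop:logclosedtest2} already states it outright.)

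There is no real obstacle here: the corollary is a direct specialization of Proposition~\ref{prop:logclosedtest2}, and the only point to verify is that an immediate extension supplies the hypotheses of that proposition with the empty family — which is immediate from the definition of an immediate extension (giving $\res L = \res K$ and $\Gamma_L = \Gamma$) together with the triviality of every condition indexed by the empty set. If anything needs care in the write-up, it is simply to state clearly which parts of Proposition~\ref{prop:logclosedtest2} yield which clause of the corollary, so that the reduction is transparent to the reader.
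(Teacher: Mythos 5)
Your proposal is correct and is exactly how the paper obtains this corollary: the remark immediately preceding it notes that an immediate extension satisfies $\res L = \res K$ and $\Gamma_L = \Gamma$, so Proposition~\ref{prop:logclosedtest2} applies with the empty family, and all conditions indexed by $I$ hold vacuously. Nothing further is needed.
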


\subsection{The exponential closure}
In this subsection, we prove that every logarithmic pre-$H_T$-field has a minimal logarithmic pre-$H_T$-field extension with a surjective logarithm. Below is the key Lemma.

\begin{lemma}\label{lem:addexp}
Let $f \in K\setminus \log(K^>)$. Then $K$ has a logarithmic pre-$H_T$-field extension $K\langle a \rangle$ with $a>0$ and $\log a = f$ such that for any logarithmic pre-$H_T$-field extension $M$ of $K$ with $f \in \log(M^>)$, there is a unique $\LdO_{\log}(K)$-embedding $K\langle a \rangle\to M$. Moreover, the extension $K\langle a \rangle$ has the following properties:
\begin{enumerate}
\item $va \not\in \Gamma$ and $\Gamma_{K\langle a \rangle}=\Gamma \oplus \Lambda va$;
\item $\res K\langle a \rangle = \res K$;
\item $\Psi$ is cofinal in $\Psi_{K\langle a \rangle}$;
\item a gap in $K$ remains a gap in $K\langle a \rangle$;
\item if $K$ is $\upl$-free, then so is $K\langle a \rangle$.
\end{enumerate}
\end{lemma}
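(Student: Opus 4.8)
The plan is to reduce this to an instance of Lemma~\ref{lem:bigexpint}. First I would observe that since $f \in K$ but $f \notin \log(K^>)$, I need to understand $v\big(f - (K^\times)^\dagger\big)$, and the claim is that this set is contained in $\Psi^\downarrow$. To see this, suppose $v(f - y^\dagger) > \Psi$ for some $y \in K^\times$; this would say $(f-y^\dagger)' \prec$ everything of the form $g^\dagger$ with $g \succ 1$, i.e.\ $v(f-y^\dagger)$ is past all of $\Psi = (\Gamma^<)'$ (recall $K$ is ungrounded by Corollary~\ref{cor:ungrounded}, so $\Psi \subseteq (\Gamma^<)'$). I would then use that $K$ is a logarithmic pre-$H_T$-field: since $\cO \subseteq \log(K^>)$ and $\log$ is surjective on the relevant range, combined with Lemma~\ref{lem:closeislog}, having $f - y^\dagger$ ``too small'' would force $f$ itself into $\log(K^>)$, a contradiction. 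More precisely, if $y \not\asymp 1$ then $y^\dagger = \log(|y|)'$ up to sign, so $f - \log|y|' \in \der\smallo$-ish territory would let me correct $\log|y|$ by something in $\cO \subseteq \log(K^>)$ to hit $f$; if $y \asymp 1$ then $y^\dagger \prec$ all of $\Psi$ so $v(f) > \Psi$, meaning $f \preceq 1$, hence $f \in \cO \subseteq \log(K^>)$ directly. Either way we contradict $f \notin \log(K^>)$, so indeed $v\big(f - (K^\times)^\dagger\big) \subseteq \Psi^\downarrow$.

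\textbf{Applying the earlier lemma.} With that hypothesis verified, Lemma~\ref{lem:bigexpint} (applied with $s = f$) produces a pre-$H_T$-field extension $K\langle a \rangle$ with $a > 0$ and $a^\dagger = f$, together with all of properties (1)--(5) in the present statement — these are literally properties (1)--(5) of Lemma~\ref{lem:bigexpint}, and the embedding property there says: for any pre-$H_T$-field extension $M$ of $K$ and any $b \in M^>$ with $b^\dagger = f$, there is a unique $\LdO(K)$-embedding $K\langle a \rangle \to M$ sending $a \mapsto b$. It remains to upgrade this to the logarithmic setting. I would invoke Proposition~\ref{prop:logclosedtest2} with the single-element family $a_0 := a$ (note $a \not\asymp 1$ since $va \notin \Gamma$ by (1), and $\res K\langle a\rangle = \res K$ by (2), and $\Gamma_{K\langle a\rangle} = \Gamma \oplus \Lambda va$ by (1), so all hypotheses hold) and $b_0 := f$, which satisfies $b_0' = f' $... wait — I need $b_0' = a_0^\dagger = a^\dagger = f$, so I should take $b_0$ to be an element of $K\langle a\rangle$ with $b_0' = f$. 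But $f \in K$ already has $f = a^\dagger = (\log a)'$ if we set $\log a = f$; the point is that Proposition~\ref{prop:logclosedtest2} wants $b_i \in L$ with $b_i' = a_i^\dagger$, and here $a^\dagger = f \in K \subseteq K\langle a\rangle$, so we take $b_0 := f$ and check $f' = ?$. Hmm, this is not right: $b_0' = a_0^\dagger$ means $b_0' = f$, not $b_0 = f$. So I take $b_0$ to be any integral of $f$ — but $K\langle a\rangle$ need not contain one. Let me reconsider: actually $\log a$ should satisfy $(\log a)' = a^\dagger = f$, so $b_0 = \log a$ is what we want, and the element we are \emph{constructing} is $\log a$; Proposition~\ref{prop:logclosedtest2} is exactly the mechanism that constructs the logarithm, and the relevant $b_i$ is $f$ only in the sense that $\log a := $ the new element with derivative $f$. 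Rechecking Proposition~\ref{prop:logclosedtest2}: it takes $b_i \in L$ with $b_i' = a_i^\dagger$ and sets $\log_L a_i = b_i$. Here $a_i^\dagger = a^\dagger = f$, so I need an element $b \in K\langle a\rangle$ with $b' = f$; I claim $b = f$ works only if $f' = f$, which is false in general. So this is the genuine subtlety, and I address it below.

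\textbf{Resolving the logarithm.} The resolution: in Lemma~\ref{lem:bigexpint} the derivation on $K\langle a\rangle$ is defined via Fact~\ref{fact:transext} by specifying $a^\dagger = s = f$, i.e.\ $a' = af$. So $f$ itself, viewed inside $K\langle a \rangle$, is just the element of $K$ it always was; what we want as ``$\log a$'' is $f$ \emph{as an element}, because we are declaring $\log a = f$ and we must have $(\log a)' = a^\dagger = f$, which would require $f' = f$ — this fails. The point I am missing is that $\log a$ need not lie in $K$: we may need to \emph{further} extend. But the statement claims $K\langle a \rangle$ with $\log a = f$ — so $\log a = f \in K$, and then (LH1) demands $f' = (\log a)' = a^\dagger = f$?? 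No: (LH1) says $\log(b)' = b^\dagger$; with $b = a$ this is $(\log a)' = a^\dagger = f$. Since we are setting $\log a = f$, we need $f' = f$. This cannot hold unless... unless I have the roles reversed: we want an element $a$ with $\log a = f$, equivalently $a = \exp f$, so $a^\dagger = (\exp f)^\dagger = (\exp f)'/\exp f = f' $ — no wait, $\frac{d}{dx}\exp f = (\exp f) f'$, so $a^\dagger = f'$, not $f$. So I should apply Lemma~\ref{lem:bigexpint} with $s = f'$, not $s = f$! Then the hypothesis to check is $v\big(f' - (K^\times)^\dagger\big) \subseteq \Psi^\downarrow$, and the construction gives $a$ with $a^\dagger = f'$; since also $f' = f'$ trivially, the element $f \in K$ satisfies $f' = a^\dagger$, so Proposition~\ref{prop:logclosedtest2} applies with $a_0 = a$, $b_0 = f$, yielding the unique logarithm on $K\langle a\rangle$ extending that of $K$ with $\log a = f$, making $K\langle a\rangle$ a logarithmic pre-$H_T$-field. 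The embedding property transfers: given a logarithmic pre-$H_T$-field extension $M$ with $f \in \log(M^>)$, pick $b \in M^>$ with $\log b = f$, so $b^\dagger = f'$; Lemma~\ref{lem:bigexpint} gives the unique $\LdO(K)$-embedding $K\langle a\rangle \to M$ with $a \mapsto b$, and Proposition~\ref{prop:logclosedtest2} (the ``if and only if'' clause, checking $\imath(b_0) = \log_M\imath(a_0)$, i.e.\ $f = \log_M b$, which holds by choice of $b$) upgrades it to an $\LdO_{\log}(K)$-embedding. Properties (1)--(5) come directly from Lemma~\ref{lem:bigexpint}. The main obstacle, as the above reveals, is purely bookkeeping: getting the differential relation $a^\dagger = f'$ (not $f$) correct, and verifying $v\big(f' - (K^\times)^\dagger\big) \subseteq \Psi^\downarrow$ — which follows because otherwise $f'$ would be ``too small,'' forcing (via asymptotic integration, guaranteed since logarithmic pre-$H_T$-fields with $\upl$-freeness have it, or more robustly via a direct argument with Lemma~\ref{lem:closeislog} and $\cO \subseteq \log(K^>)$) that $f$ differs from an element of $\log(K^>)$ by something in $\cO$, hence $f \in \log(K^>)$, contradicting our hypothesis.
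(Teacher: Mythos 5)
After your self-correction to $s = f'$, this is essentially the paper's proof: one verifies $v\bigl(f'-(K^\times)^\dagger\bigr)\subseteq\Psi^\downarrow$ exactly as you indicate at the end (if $v(f'-g^\dagger)=v\bigl((f-\log g)'\bigr)>\Psi$ for some $g\in K^>$, then since $\Psi$ is cofinal in $(\Gamma^<)'$ we get $f-\log g\in\cO\subseteq\log(K^>)$, hence $f\in\log(K^>)$, a contradiction), then applies Lemma~\ref{lem:bigexpint} with $s=f'$ and Proposition~\ref{prop:logclosedtest2} with $b_0=f$, which is legitimate since $f'=a^\dagger$. The one small loose end is that Lemma~\ref{lem:bigexpint} only gives uniqueness of the embedding \emph{sending $a$ to $b$}, whereas the statement asserts uniqueness outright; this is closed by noting that any $\LdO_{\log}(K)$-embedding $\imath$ must satisfy $\log_M\imath(a)=\imath(\log a)=f$, so it must send $a$ to $\exp f\in M$.
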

\begin{proof}
We claim that $v\big(f'- (K^\times)^\dagger\big) \subseteq\Psi^{\downarrow}$. Suppose not and take $g \in K^\times$ with $v(f'- g^\dagger)>\Psi$. By replacing $g$ with $-g$ if necessary, we may assume that $g>0$, so $v(f'- g^\dagger) = v(f-\log g)' > \Psi$. Since $\Psi$ is cofinal in $(\Gamma^<)'$, we have $f- \log g \in \cO\subseteq \log(K^>)$. Take $h \in K^>$ with $f - \log g = \log h$. Then $f = \log(gh)$, a contradiction.

With this claim out of the way, we apply Lemma~\ref{lem:bigexpint} with $f'$ in place of $s$ to get a pre-$H_T$-field $K\langle a \rangle$ extending $K$ with $a^\dagger = f'$ which has properties (1)--(5) above. By property (1) and Proposition~\ref{prop:logclosedtest2}, there is a unique logarithm on $K\langle a \rangle$ with $\log a = f$ making $K\langle a \rangle$ a logarithmic pre-$H_T$-field extension of $K$. Now let $M$ be a logarithmic pre-$H_T$-field extension of $K$ with $f \in \log(M^>)$ and set $b\coloneqq \exp f \in M$. Then $b^\dagger = f'$, so Lemma~\ref{lem:bigexpint} gives a unique $\LdO(K)$-embedding $K\langle a \rangle \to M$ that sends $a$ to $b$. By the uniqueness part of Proposition~\ref{prop:logclosedtest2}, this is even an $\LdO_{\log}(K)$-embedding. Since any $\LdO_{\log}(K)$-embedding $K\langle a \rangle \to M$ must send $a$ to $b = \exp f$, this embedding is unique, even without the requirement that $a$ be sent to $b$.
\end{proof}

\noindent
Theorem~\ref{thm:expclosure} below follows by iterating Lemma~\ref{lem:addexp} (we also use that an increasing union of $\upl$-free logarithmic $H_T$-fields is $\upl$-free; see Lemma~\ref{lem:unionupl}).

\begin{theorem}\label{thm:expclosure}
$K$ has a logarithmic pre-$H_T$-field extension $K^{\e}$ with a surjective logarithm such that for any logarithmic pre-$H_T$-field extension $M$ of $K$ with a surjective logarithm, there is a unique $\LdO_{\log}(K)$-embedding $K^{\e} \to M$. The extension $K^{\e}$ has the following properties:
\begin{enumerate}
\item $\res K^{\e} = \res K$;
\item $\Psi$ is cofinal in $\Psi_{K^{\e}}$;
\item a gap in $K$ remains a gap in $K^{\e}$;
\item if $K$ is $\upl$-free, then so is $K^{\e}$.
\end{enumerate}
\end{theorem}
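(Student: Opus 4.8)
The plan is to obtain $K^{\e}$ as the top of a transfinite tower built by iterating Lemma~\ref{lem:addexp}, in the spirit of the $H_T$-field hull construction in Theorem~\ref{thm:HThull}. First I would fix a cardinal bound: since each application of Lemma~\ref{lem:addexp} is a simple extension, every logarithmic pre-$H_T$-field extension of $K$ obtained by iterating it has cardinality at most $\max(|K|,|\Lambda|,\aleph_0)$, so the construction must terminate. Concretely, define a strictly increasing chain $(K_\mu)_{\mu\leq\nu}$ of logarithmic pre-$H_T$-field extensions of $K$ with $K_0=K$, taking unions at limit stages (a union of logarithmic pre-$H_T$-fields is again one, since axioms (LH1), (LH2) and (L1)--(L4) are $\forall$-type conditions), and at successor stages, if $\log(K_\mu^>)\neq K_\mu$, picking some $f\in K_\mu\setminus\log(K_\mu^>)$ and setting $K_{\mu+1}\coloneqq K_\mu\langle a_\mu\rangle$ as in Lemma~\ref{lem:addexp} with $\log a_\mu=f$; if $\log(K_\mu^>)=K_\mu$ we stop and set $\nu=\mu$. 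The resulting $K^{\e}\coloneqq K_\nu$ has surjective logarithm by construction.

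Next I would verify properties (1)--(4) by transfinite induction along the tower. For (1), Lemma~\ref{lem:addexp}(2) gives $\res K_{\mu+1}=\res K_\mu$ at successors, and the residue field is unchanged at limits; hence $\res K^{\e}=\res K$. For (2), Lemma~\ref{lem:addexp}(3) gives that $\Psi_{K_\mu}$ is cofinal in $\Psi_{K_{\mu+1}}$, and cofinality is preserved under the (possibly transfinite) union, so $\Psi$ is cofinal in $\Psi_{K^{\e}}$; one small point to check is that cofinality composes through a chain of arbitrary length, which is immediate since any element of $\Psi_{K^{\e}}$ already lies in some $\Psi_{K_\mu}$ and is thus bounded by an element of $\Psi$. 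For (3), a gap $\beta$ in $K$ remains a gap in each $K_\mu$: at successor stages this is Lemma~\ref{lem:addexp}(4), and at limit stages $\beta$ stays a gap because $\Psi_{K_\mu}$ is still cofinal in $\Psi$ (by (2)) hence $\beta>\Psi_{K_\mu}$, while $\beta<(\Gamma_{K_\mu}^>)'$ follows since any $\gamma\in\Gamma_{K_\mu}^>$ lies in some $\Gamma_{K_\eta}$ with $\eta<\mu$. For (4), if $K$ is $\upl$-free then by induction each $K_\mu$ is $\upl$-free: Lemma~\ref{lem:addexp}(5) handles successors and Lemma~\ref{lem:unionupl} handles limits.

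For the universal property, let $M$ be a logarithmic pre-$H_T$-field extension of $K$ with surjective logarithm. I would build an increasing chain of $\LdO_{\log}(K)$-embeddings $\imath_\mu\colon K_\mu\to M$: $\imath_0$ is the inclusion, unions are taken at limits, and at a successor stage, given $\imath_\mu$, the element $f_\mu\in K_\mu$ used to form $K_{\mu+1}$ maps to $\imath_\mu(f_\mu)\in M=\log(M^>)$, so by the embedding property in Lemma~\ref{lem:addexp} there is a unique $\LdO_{\log}(K_\mu)$-embedding $K_{\mu+1}\to M$ extending $\imath_\mu$; taking $\imath_{\mu+1}$ to be this map works. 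The union $\imath_\nu\colon K^{\e}\to M$ is the desired embedding. Uniqueness follows because at each successor stage the extension is unique over $K_\mu$ (again Lemma~\ref{lem:addexp}, whose embedding is unique without any constraint on where $a_\mu$ goes), so by induction the whole embedding is forced.

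The main obstacle, such as it is, is bookkeeping rather than mathematics: one must be careful that the ordinal at which the construction stabilizes is well-defined and that ``$\Psi$ cofinal in $\Psi_{K^{\e}}$'' and ``a gap remains a gap'' genuinely pass through limit stages of arbitrary length, not just through a single application of Lemma~\ref{lem:addexp}. Both reduce to the observation that every element of $K^{\e}$ (and hence of $\Gamma_{K^{\e}}$, $\Psi_{K^{\e}}$, etc.) already appears at some proper stage $K_\mu$, so statements quantifying over $K^{\e}$ can be reflected down to a stage where Lemma~\ref{lem:addexp} applies. Once that is noted, each of (1)--(4) and the universal property is a routine transfinite induction.
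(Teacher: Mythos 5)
Your proposal is correct and follows exactly the route the paper intends: the paper's entire "proof" is the remark that Theorem~\ref{thm:expclosure} follows by iterating Lemma~\ref{lem:addexp}, using Lemma~\ref{lem:unionupl} at limit stages, and your transfinite tower with the cardinality bound, the limit-stage checks for (1)--(4), and the stagewise unique embeddings is precisely the elaboration of that remark.
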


\noindent
We refer to $K^{\e}$ as the \textbf{exponential closure of $K$}. By Proposition~\ref{prop:surjectiveisexp}, the extension $K^{\e}$ is a pre-$H_{T^{\e}}$-field. If $K$ is a logarithmic $H_T$-field, then $K^{\e}$ is an $H_{T^{\e}}$-field by (1) and Corollary~\ref{cor:HTres}. The universal property in Theorem~\ref{thm:expclosure} gives that $K^{\e}$ is unique up to unique $\LdO_{\log}(K)$-isomorphism. This also gives the aforementioned minimality: if $M$ is a logarithmic pre-$H_T$-subfield of $K^{\e}$ containing $K$ with $M = \log(M^>)$, then $M = K^{\e}$. 
\subsection{Adjoining integrals and the logarithmic $H_T$-field hull}
In this subsection, we prove variants of the results in Subsection~\ref{subsec:ints}. We begin with the following immediate consequences of Corollaries~\ref{cor:smallint},~\ref{cor:bigint}, and~\ref{cor:immlogH}:

\begin{corollary}\label{cor:logsmallint}
Let $s \in K$ with $vs \in (\Gamma^>)'$ and $s \not\in \der\smallo$. Then $K$ has an immediate logarithmic pre-$H_T$-field extension $K\langle a \rangle$ with $a \prec 1$ and $a' = s$ such that for any logarithmic pre-$H_T$-field extension $M$ of $K$ with $s \in \der\smallo_M$, there is a unique $\LdO_{\log}(K)$-embedding $K\langle a \rangle \to M$. If $K$ is $\upl$-free, then so is $K\langle a \rangle$.
\end{corollary}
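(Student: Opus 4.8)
The plan is to obtain the statement by combining Corollary~\ref{cor:smallint} with Corollary~\ref{cor:immlogH}, after first observing that the hypotheses fit together cleanly. First I would forget the logarithm and apply Corollary~\ref{cor:smallint} to the underlying pre-$H_T$-field of $K$ and the element $s$: since $vs \in (\Gamma^>)'$ and $s \notin \der\smallo$, this produces an immediate pre-$H_T$-field extension $K\langle a\rangle$ of $K$ with $a \prec 1$, $a' = s$, and the universal property that any $H_T$-asymptotic field extension $M$ of $K$ with $s \in \der\smallo_M$ receives a unique $\LdO(K)$-embedding from $K\langle a\rangle$. The last clause of Corollary~\ref{cor:smallint} also gives that $K\langle a\rangle$ is $\upl$-free provided $K$ is \emph{ungrounded} and $\upl$-free; but $K$ is a logarithmic pre-$H_T$-field, hence ungrounded by Corollary~\ref{cor:ungrounded}, so the ungroundedness hypothesis is automatic and the $\upl$-freeness transfer is unconditional on that count.

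Next I would upgrade this to the logarithmic setting via Corollary~\ref{cor:immlogH}. Since $K\langle a\rangle$ is an immediate pre-$H_T$-field extension of $K$, that corollary endows $K\langle a\rangle$ with a unique logarithm extending the one on $K$, making $K\langle a\rangle$ a logarithmic pre-$H_T$-field extension of $K$, and it further tells us that for any logarithmic pre-$H_T$-field extension $M$ of $K$, every $\LdO(K)$-embedding $K\langle a\rangle \to M$ is automatically an $\LdO_{\log}(K)$-embedding.

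Finally I would assemble the embedding property. Let $M$ be a logarithmic pre-$H_T$-field extension of $K$ with $s \in \der\smallo_M$; in particular $M$ is a pre-$H_T$-field, hence $H_T$-asymptotic, so Corollary~\ref{cor:smallint} supplies a unique $\LdO(K)$-embedding $K\langle a\rangle \to M$, which by Corollary~\ref{cor:immlogH} is an $\LdO_{\log}(K)$-embedding. For uniqueness, note that any $\LdO_{\log}(K)$-embedding is a fortiori an $\LdO(K)$-embedding, so uniqueness of the latter forces uniqueness of the former. There is no real obstacle in this argument: it is essentially bookkeeping across the three cited results, and the only point worth flagging is the one noted above, that the ``ungrounded'' hypothesis needed to route $\upl$-freeness through Corollary~\ref{cor:smallint} comes for free from Corollary~\ref{cor:ungrounded}.
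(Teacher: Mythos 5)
Your proposal is correct and matches the paper's argument exactly: the paper presents Corollary~\ref{cor:logsmallint} as an immediate consequence of Corollary~\ref{cor:smallint} (for the underlying pre-$H_T$-field extension and its $\LdO(K)$-embedding property) combined with Corollary~\ref{cor:immlogH} (for the unique logarithm and the automatic upgrade of $\LdO(K)$-embeddings to $\LdO_{\log}(K)$-embeddings). Your observation that the ``ungrounded'' hypothesis in the $\upl$-freeness clause is supplied for free by Corollary~\ref{cor:ungrounded} is the right bookkeeping point and explains why that hypothesis is absent from the logarithmic statement.
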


\begin{corollary}\label{cor:logbigint}
Let $s \in K$ with $v(s-\der K)\subseteq (\Gamma^<)'$. Then $K$ has an immediate logarithmic pre-$H_T$-field extension $K\langle a \rangle$ with $a' = s$ such that for any logarithmic pre-$H_T$-field extension $M$ of $K$ and $b \in M$ with $b' = s$, there is a unique $\LdO_{\log}(K)$-embedding $K\langle a \rangle \to M$ sending $a$ to $b$. If $K$ is $\upl$-free, then so is $K\langle a \rangle$.
\end{corollary}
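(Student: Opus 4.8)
The plan is to obtain this as an immediate consequence of Corollary~\ref{cor:bigint} combined with Corollary~\ref{cor:immlogH}, in exact parallel with Corollary~\ref{cor:logsmallint}. First I would apply Corollary~\ref{cor:bigint} to the given element $s$: since $v(s-\der K)\subseteq(\Gamma^<)'$, it produces an immediate pre-$H_T$-field extension $K\langle a\rangle$ of $K$ with $a'=s$ such that for any $H_T$-asymptotic field extension $N$ of $K$ and any $b\in N$ with $b'=s$ there is a unique $\LdO(K)$-embedding $K\langle a\rangle\to N$ sending $a$ to $b$; and it also records that if $K$ is $\upl$-free then so is $K\langle a\rangle$ (note that $K$, being a logarithmic pre-$H_T$-field, is ungrounded by Corollary~\ref{cor:ungrounded}, so the hypothesis ``ungrounded and $\upl$-free'' in Corollary~\ref{cor:bigint} reduces to $\upl$-freeness).

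Next, since $K\langle a\rangle$ is an \emph{immediate} pre-$H_T$-field extension of $K$, I would invoke Corollary~\ref{cor:immlogH}: it equips $K\langle a\rangle$ with a unique logarithm extending the one on $K$, making $K\langle a\rangle$ a logarithmic pre-$H_T$-field extension of $K$, and it guarantees that every $\LdO(K)$-embedding of $K\langle a\rangle$ into another logarithmic pre-$H_T$-field is automatically an $\LdO_{\log}(K)$-embedding.

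It then remains to assemble the universal property. Let $M$ be a logarithmic pre-$H_T$-field extension of $K$ and let $b\in M$ with $b'=s$. Regarding $M$ as an $H_T$-asymptotic field extension of $K$, Corollary~\ref{cor:bigint} supplies a unique $\LdO(K)$-embedding $\imath\colon K\langle a\rangle\to M$ with $\imath(a)=b$, and by Corollary~\ref{cor:immlogH} this $\imath$ is in fact an $\LdO_{\log}(K)$-embedding. For uniqueness among $\LdO_{\log}(K)$-embeddings sending $a$ to $b$: any such map is in particular an $\LdO(K)$-embedding sending $a$ to $b$, hence coincides with $\imath$ by the uniqueness clause of Corollary~\ref{cor:bigint}. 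The preservation of $\upl$-freeness is precisely the last assertion of Corollary~\ref{cor:bigint}. I do not anticipate any real obstacle; the only point deserving a moment's attention is that the logarithm furnished by Corollary~\ref{cor:immlogH} is canonical, so that the $\LdO(K)$-universal property of Corollary~\ref{cor:bigint} upgrades cleanly to the desired $\LdO_{\log}(K)$-universal property — but this is exactly what the ``any $\LdO(K)$-embedding is an $\LdO_{\log}(K)$-embedding'' clause of Corollary~\ref{cor:immlogH} provides.
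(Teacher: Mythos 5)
Your proposal is correct and follows exactly the paper's route: the paper states Corollary~\ref{cor:logbigint} as an immediate consequence of Corollary~\ref{cor:bigint} together with Corollary~\ref{cor:immlogH}, precisely the combination you assemble (including the observation that ungroundedness is automatic via Corollary~\ref{cor:ungrounded}, so the $\upl$-freeness clause of Corollary~\ref{cor:bigint} applies directly). No gaps.
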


\noindent
Now we show how to find an integral for $s \in K$ when $vs$ is a gap. We can't use Lemma~\ref{lem:gapgoesup} directly, as the pre-$H_T$-field extension constructed in that lemma is grounded, so it does not admit a logarithm. We rectify this issue by invoking the extension in Corollary~\ref{cor:omegaconstruction}.

\begin{lemma}\label{lem:loggapgoesup}
Let $s \in K$ and suppose $vs$ is a gap in $K$. Then $K$ has a logarithmic pre-$H_T$-field extension $K\langle a \rangle_\upo$ with $a \prec 1$ and $a' = s$ such that for any logarithmic pre-$H_T$-field extension $M$ of $K$ with $s \in \der\smallo_M$, there is a unique $\LdO_{\log}(K)$-embedding $K\langle a \rangle_\upo\to M$. The logarithmic pre-$H_T$-field $K\langle a \rangle_\upo$ is $\upo$-free with $\res K\langle a \rangle_\upo = \res K$.
\end{lemma}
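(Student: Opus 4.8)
The plan is to combine Lemma~\ref{lem:gapgoesup} with the $\upo$-construction from Corollary~\ref{cor:omegaconstruction}, then equip the result with a logarithm via Proposition~\ref{prop:logclosedtest2}. First I would apply Lemma~\ref{lem:gapgoesup} with $s$ (after replacing $s$ by $-s$ if needed, so that $s < 0$ and the integral $a$ is positive) to obtain a grounded pre-$H_T$-field extension $K\langle a\rangle$ of $K$ with $a \prec 1$, $a' = s$, $\res K\langle a\rangle = \res K$, $\Gamma_{K\langle a\rangle} = \Gamma \oplus \Lambda va$, and $\Psi_{K\langle a\rangle} = \Psi \cup \{va^\dagger\}$ with $va^\dagger > \Psi$. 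Since $K\langle a\rangle$ is grounded, I would then apply Corollary~\ref{cor:omegaconstruction} to get an ungrounded $\upo$-free pre-$H_T$-field extension $K\langle a\rangle_\upo \coloneqq (K\langle a\rangle)_\upo$ with $\res K\langle a\rangle_\upo = \res K\langle a\rangle = \res K$, which embeds over $K\langle a\rangle$ into any pre-$H_T$-field extension of $K\langle a\rangle$ closed under taking $\d$-logarithms.

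Next I would put a logarithm on $K\langle a\rangle_\upo$. Recall from the proof of Corollary~\ref{cor:omegaconstruction} that $K\langle a\rangle_\upo = \bigcup_n K_n$ where $K_0 = K\langle a\rangle$, $K_{n+1} = K_n\langle a_n\rangle$, $a_{n+1}' = a_n^\dagger$, and $\Gamma_{K\langle a\rangle_\upo} = \Gamma \oplus \Lambda va \oplus \bigoplus_n \Lambda va_n$ with each $va_n \notin \Gamma_{K_n}$ and each $a_n \not\asymp 1$ (indeed $a_n \succ 1$ since $a_n^\dagger = a_{n-1}^\dagger$ has valuation $> \Psi_{K_n}$). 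I would set $b_0$ to be a $\d$-logarithm of $a$ in $K\langle a\rangle_\upo$; more precisely, since $a \prec 1$ we have $a^\dagger = s/a$ and I need an element $b_0$ with $b_0' = a^\dagger$. Such an element exists in $K\langle a\rangle_\upo$: note $va^\dagger = \max\Psi_{K\langle a\rangle}$, so by the construction in Corollary~\ref{cor:omegaconstruction} (which begins by adjoining an integral of the $\d$-logarithm of a top-$\Psi$ element) $a^\dagger$ has an integral already in $K_1$. Wait — more carefully, Corollary~\ref{cor:omegaconstruction} adjoins an integral of $s^\dagger$ where $vs^\dagger = \max\Psi$; taking $s$ there to be a suitable element with $vs^\dagger = va^\dagger$, I get $a_0$ with $a_0' = $ (that $\d$-logarithm), and since $a^\dagger$ differs from it by an element of smaller or equal valuation in $(\Gamma^<)'$, I can arrange $b_0 \in K_1$ with $b_0' = a^\dagger$ after possibly enlarging by one more step. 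I would then invoke Proposition~\ref{prop:logclosedtest2} with the family consisting of $a$ (with chosen $\d$-logarithm $b_0$) together with the $a_n$'s (with $\d$-logarithms $a_{n+1}$, using $a_{n+1}' = a_n^\dagger$), to obtain a unique logarithm on $K\langle a\rangle_\upo$ extending that on $K$ with $\log a = b_0$ and $\log a_n = a_{n+1}$, making $K\langle a\rangle_\upo$ a logarithmic pre-$H_T$-field extension of $K$. Property (LH1) and (LH2) come for free from that proposition.

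Finally, for the embedding property, let $M$ be a logarithmic pre-$H_T$-field extension of $K$ with $s \in \der\smallo_M$, and let $b \in \smallo_M$ with $b' = s$. By the embedding property in Lemma~\ref{lem:gapgoesup} there is a unique $\LdO(K)$-embedding $K\langle a\rangle \to M$ sending $a \mapsto b$. Since $M$ is a logarithmic pre-$H_T$-field, it is closed under taking $\d$-logarithms (every $f \in M^>$ has $\log f$ with $(\log f)' = f^\dagger$, and $M$ is ungrounded by Corollary~\ref{cor:ungrounded}), so Corollary~\ref{cor:omegaconstruction} extends this to an $\LdO(K)$-embedding $\imath\colon K\langle a\rangle_\upo \to M$ sending $a_n \mapsto b_n$ for the appropriate iterated $\d$-logarithms $b_n$ in $M$. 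To see $\imath$ respects $\log$, I would apply the uniqueness clause of Proposition~\ref{prop:logclosedtest2}: it suffices that $\imath(b_0) = \log_M \imath(a) = \log_M b$ and $\imath(a_{n+1}) = \log_M \imath(a_n) = \log_M b_n$ for each $n$. Both follow because $\imath(b_0)$ and $\log_M b$ are both $\d$-logarithms of $b$ in $M$ that are (by the valuation data $v b_0 = \chi(va) \prec 1$ in $K\langle a\rangle_\upo$, transported by $\imath$, versus $v\log_M b = \chi(vb)$) $\asymp$ and in fact $\sim$ each other after subtracting a constant — one has to check the two candidate $\d$-logarithms differ by an element of $C_M$, and then pin down that constant using a cut argument as in the uniqueness parts of Corollaries~\ref{cor:smallint0} and~\ref{cor:bigint0}. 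Uniqueness of the $\LdO_{\log}(K)$-embedding then follows since $\log$ forces the images of all generators, and $\upo$-freeness and $\res K\langle a\rangle_\upo = \res K$ are recorded above.

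The main obstacle I anticipate is the bookkeeping around $b_0$: ensuring a genuine $\d$-logarithm of $a$ (equivalently, an integral of $a^\dagger = s/a$) already lives inside $K\langle a\rangle_\upo$ rather than requiring a further extension, and then verifying in the embedding step that $\imath(b_0)$ matches $\log_M b$ on the nose (not just up to a constant). This forces me either to build $b_0$ explicitly from the first stage of the $\upo$-tower and carry its valuation $\chi(va)$ through, or to reorganize the construction so that the $\d$-logarithm of $a$ is literally the first generator $a_0$ adjoined. Everything else is a routine application of the cited results.
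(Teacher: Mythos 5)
Your overall route --- Lemma~\ref{lem:gapgoesup}, then Corollary~\ref{cor:omegaconstruction}, then Proposition~\ref{prop:logclosedtest2}, then the embedding properties of those same results --- is exactly the paper's. The gap is in the step you flag yourself: producing a $\d$-logarithm $b_0$ of $a$ inside $K\langle a\rangle_\upo$. Your fallback (``$a^\dagger$ differs from the adjoined integrand by an element of smaller or equal valuation in $(\Gamma^<)'$, so I can arrange $b_0\in K_1$ after possibly enlarging by one more step'') is not justified: if you start the $\upo$-tower from an arbitrary $s_0$ with $vs_0^\dagger=\max\Psi_{K\langle a\rangle}$, then $a^\dagger-s_0^\dagger=(a/s_0)^\dagger$ is an element whose valuation may lie anywhere in $\Psi_{K\langle a\rangle}^\downarrow$, and nothing guarantees it acquires an integral after one more step of the tower. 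The second option you mention at the end is the correct one, and it is what the paper does: apply Corollary~\ref{cor:omegaconstruction} with $|a|\inv$ in place of $s$. Then $a_0\coloneqq|a|\inv$ is positive infinite with $\Lambda va_0=\Lambda va$, the first new generator $a_1$ satisfies $a_1'=a_0^\dagger=-a^\dagger$, and the family $(a_n)_{n\geq 0}$ with $\log a_n\coloneqq a_{n+1}$ feeds directly into Proposition~\ref{prop:logclosedtest2}; a $\d$-logarithm of $a$ is then $-a_1$ for free, with no approximation argument.

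Your worry about matching $\imath(b_0)$ with $\log_M b$ ``on the nose'' also dissolves with this setup, and the cut argument you propose is the wrong mechanism. The embedding property of Lemma~\ref{lem:maxgoesdown}, iterated as in Corollary~\ref{cor:omegaconstruction}, lets you \emph{prescribe} the image of each new generator: having fixed $\imath\colon K\langle a\rangle\to M$ and $b_0\coloneqq\imath(a_0)=|\imath(a)|\inv$, set $b_{n+1}\coloneqq\log_M b_n$; by (LH1) this is an integral of $b_n^\dagger$, so the tower embeds with $a_{n+1}\mapsto b_{n+1}$ exactly, and Proposition~\ref{prop:logclosedtest2} then certifies both that the resulting $\LdO(K)$-embedding is an $\LdO_{\log}(K)$-embedding and that, as such, it is unique. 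No constant-pinning is required.
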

\begin{proof}
Lemma~\ref{lem:gapgoesup} provides a grounded pre-$H_T$-field extension $K\langle a \rangle$ of $K$ with $a \prec 1$, $a' = s$, and $\res K\langle a \rangle = \res K$. Applying Corollary~\ref{cor:omegaconstruction} with $|a|\inv$ in place of $s$, we further extend $K\langle a \rangle$ to an $\upo$-free pre-$H_T$-field $K\langle a \rangle_\upo$ with $\res K\langle a \rangle_\upo = \res K$. Lemma~\ref{lem:gapgoesup} and the proof of Corollary~\ref{cor:omegaconstruction} tell us that 
\[
\Gamma_{K_{\upo}}\ =\ \Gamma \oplus \bigoplus_n\Lambda va_n,\qquad \Psi_{K_{\upo}}\ =\ \Psi \cup \{va_0^\dagger,va_1^\dagger,\ldots\},\qquad \Psi\ <\ va_0^\dagger\ <\ va_1^\dagger \ <\ \cdots,
\]
where $a_0 = |a|\inv$ and $a_{n+1}'=a_n^\dagger$ for each $n$. Using the pre-$H_T$-field axioms, one can easily check that each $a_n$ is positive and infinite, so by Proposition~\ref{prop:logclosedtest2}, there is a unique logarithm on $K\langle a \rangle_\upo$ which extends the logarithm on $K$ such that $\log a_n = a_{n+1}$ for each $n$. Let $M$ be a logarithmic pre-$H_T$-field extension of $K$ with $s \in \der\smallo_M$. By Lemma~\ref{lem:gapgoesup}, there is a unique $\LdO(K)$-embedding $\imath\colon K\langle a \rangle\to M$. Let $b_0\coloneqq \imath(a_0) \in M$ and for each $n$, let $b_{n+1}\coloneqq \log b_n$. By the proof of Corollary~\ref{cor:omegaconstruction}, the embedding $\imath$ extends to an $\LdO(K)$-embedding $K\langle a \rangle_\upo\to M$ which sends $a_n$ to $b_n$ for each $n$. By Proposition~\ref{prop:logclosedtest2}, this is even an $\LdO_{\log}$-embedding and, as an $\LdO_{\log}$-embedding, it is unique.
\end{proof}

\noindent
If $K$ is a logarithmic $H_T$-field with a gap, then we can instead use Lemma~\ref{lem:gapgoesdown} and apply Corollary~\ref{cor:omegaconstruction} with $|a|$ in place of $s$ to show the following:

\begin{lemma}\label{lem:loggapgoesdown}
Let $K$ be a logarithmic $H_T$-field, let $s \in K$, and suppose $vs$ is a gap in $K$. Then $K$ has a logarithmic $H_T$-field extension $K\langle a \rangle_\upo$ with $a\succ 1$ and $a' = s$ such that for any logarithmic pre-$H_T$-field extension $M$ of $K$ and $b \in M$ with $b \succ 1$ and $b' = s$, there is a unique $\LdO_{\log}(K)$-embedding $K\langle a \rangle \to M$ sending $a$ to $b$. The logarithmic $H_T$-field $K\langle a \rangle$ is $\upo$-free with $\res K\langle a \rangle_{\upo} = \res K$.
\end{lemma}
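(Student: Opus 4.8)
The plan is to transcribe the proof of Lemma~\ref{lem:loggapgoesup}, using Lemma~\ref{lem:gapgoesdown} in place of Lemma~\ref{lem:gapgoesup}. First I would invoke Lemma~\ref{lem:gapgoesdown}, which applies since $K$ is an $H_T$-field, to obtain a grounded $H_T$-field extension $K\langle a\rangle$ of $K$ with $a\succ 1$, $a' = s$, $\res K\langle a\rangle = \res K$, $\Gamma_{K\langle a\rangle} = \Gamma\oplus\Lambda va$, $\Psi_{K\langle a\rangle} = \Psi\cup\{va^\dagger\}$, and $va^\dagger>\Psi$; in particular $v(|a|^\dagger) = va^\dagger = \max\Psi_{K\langle a\rangle}$. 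Because $K\langle a\rangle$ is grounded, I can then apply Corollary~\ref{cor:omegaconstruction} with $|a|$ in place of $s$ to extend $K\langle a\rangle$ to an ungrounded $\upo$-free pre-$H_T$-field $K\langle a\rangle_\upo$ with $\res K\langle a\rangle_\upo = \res K\langle a\rangle = \res K$. Writing $a_0 = |a|$ and $a_{n+1}' = a_n^\dagger$ for the elements produced by that construction, Lemma~\ref{lem:gapgoesdown} and the proof of Corollary~\ref{cor:omegaconstruction} give $\Gamma_{K\langle a\rangle_\upo} = \Gamma\oplus\bigoplus_n\Lambda va_n$ together with a description of $\Psi_{K\langle a\rangle_\upo}$ in which $\Psi$ is cofinal, and the pre-$H_T$-field axioms show each $a_n$ is positive and infinite. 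Since $\res K\langle a\rangle_\upo = \res K\langle a\rangle$ and $K\langle a\rangle$ is an $H_T$-field, Corollary~\ref{cor:HTres} shows that $K\langle a\rangle_\upo$ is an $H_T$-field.

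Next I would install the logarithm via Proposition~\ref{prop:logclosedtest2}, applied to the extension $K\langle a\rangle_\upo$ of $K$ with the family $(a_n)_n$ (which satisfies $a_n\not\asymp 1$ and spans $\Gamma_{K\langle a\rangle_\upo}$ over $\Gamma$) and with $b_n := a_{n+1}$, so that $b_n' = a_n^\dagger$. This yields a unique logarithm on $K\langle a\rangle_\upo$ extending the one on $K$ with $\log a_n = a_{n+1}$ for all $n$, and with it $K\langle a\rangle_\upo$ is a logarithmic pre-$H_T$-field extension of $K$, hence a logarithmic $H_T$-field. The asserted equality $\res K\langle a\rangle_\upo = \res K$ and $\upo$-freeness are immediate from the previous paragraph and Corollary~\ref{cor:omegaconstruction}.

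For the embedding property, let $M$ be a logarithmic pre-$H_T$-field extension of $K$ and $b\in M$ with $b\succ 1$ and $b'=s$. Lemma~\ref{lem:gapgoesdown} provides a unique $\LdO(K)$-embedding $\imath\colon K\langle a\rangle\to M$ sending $a$ to $b$. Setting $b_0 := \imath(a_0)$ and $b_{n+1} := \log_M b_n$ (using that $M$ carries a logarithm), the proof of Corollary~\ref{cor:omegaconstruction} extends $\imath$ to an $\LdO(K)$-embedding $K\langle a\rangle_\upo\to M$ sending each $a_n$ to $b_n$; since then $\imath(a_{n+1}) = b_{n+1} = \log_M\imath(a_n)$ for all $n$, the uniqueness clause of Proposition~\ref{prop:logclosedtest2} promotes this to an $\LdO_{\log}(K)$-embedding. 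Its uniqueness among $\LdO_{\log}(K)$-embeddings sending $a$ to $b$ follows because the relations $a_{n+1} = \log a_n$ determine the image of every $a_n$ once $a\mapsto b$ is fixed.

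I do not expect a genuine obstacle: the argument mirrors Lemma~\ref{lem:loggapgoesup}, with infinite integrals replacing infinitesimal ones. The one point that needs care — again exactly as in Lemma~\ref{lem:loggapgoesup} — is that the extension $K\langle a\rangle$ furnished by Lemma~\ref{lem:gapgoesdown} is grounded and hence admits no logarithm of its own, which is precisely why the passage through the $\upo$-construction of Corollary~\ref{cor:omegaconstruction} is needed before Proposition~\ref{prop:logclosedtest2} can be applied.
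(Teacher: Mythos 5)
Your proposal is correct and follows exactly the route the paper intends: the paper itself only sketches this lemma by saying to rerun the proof of Lemma~\ref{lem:loggapgoesup} with Lemma~\ref{lem:gapgoesdown} in place of Lemma~\ref{lem:gapgoesup} and with $|a|$ in place of $|a|\inv$ when invoking Corollary~\ref{cor:omegaconstruction}, which is what you do. Your extra observation that Corollary~\ref{cor:HTres} upgrades $K\langle a\rangle_\upo$ from a pre-$H_T$-field to an $H_T$-field is exactly the right way to account for the stronger conclusion here.
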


\noindent
The following proposition has the same proof as Theorem~\ref{thm:HThull}, except we use Lemma~\ref{lem:loggapgoesup} in place of Lemma~\ref{lem:gapgoesup}, and we use Corollary~\ref{cor:logsmallint} in place of Corollary~\ref{cor:smallint}.

\begin{proposition}\label{prop:logHThull}
$K$ has a logarithmic $H_T$-field extension $H_T^{\log}(K)$ with $\res H_T^{\log}(K) = \res K$ such that for any logarithmic $H_T$-field extension $M$ of $K$, there is a unique $\LdO_{\log}(K)$-embedding $H_T^{\log}(K)\to M$.
\end{proposition}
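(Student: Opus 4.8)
\emph{Plan.} The plan is to follow the proof of Theorem~\ref{thm:HThull} almost verbatim, with the single structural change forced by Corollary~\ref{cor:ungrounded}: since a logarithmic pre-$H_T$-field is never grounded, the step that removes a fake gap must use the $\upo$-free extension of Lemma~\ref{lem:loggapgoesup} in place of the grounded extension of Lemma~\ref{lem:gapgoesup}, and the step that repairs axiom (H2) must use Corollary~\ref{cor:logsmallint} in place of Corollary~\ref{cor:smallint} (this second substitution is automatic, since immediate extensions carry logarithms by Corollary~\ref{cor:immlogH}).

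First I would produce a logarithmic pre-$H_T$-field extension $K_0$ of $K$ that has no fake gap, satisfies $\res K_0 = \res K$, and has the property that every logarithmic $H_T$-field extension $M$ of $K$ admits a unique $\LdO_{\log}(K)$-embedding $K_0 \to M$. If $K$ has no fake gap, take $K_0 := K$. Otherwise $K$ has a fake gap $v(b')$ with necessarily $b \asymp 1$; applying Lemma~\ref{lem:loggapgoesup} with $s = b'$ produces a $\upo$-free (hence $\upl$-free) logarithmic pre-$H_T$-field extension $K_0 := K\langle a\rangle_\upo$ with $a \prec 1$, $a' = b'$, and $\res K_0 = \res K$. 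By Fact~\ref{fact:upl->asymp} this $K_0$ has asymptotic integration, so it has no gap and in particular no fake gap. For the embedding property, given a logarithmic $H_T$-field extension $M$ of $K$ we have $b \in \cO_M$ with $b \asymp 1$, so $b = c + \epsilon$ with $c \in C_M$, $c \asymp 1$, $\epsilon \in \smallo_M$; hence $b' = \epsilon' \in \der\smallo_M$, and Lemma~\ref{lem:loggapgoesup} supplies the required unique embedding.

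Next I would iterate Corollary~\ref{cor:logsmallint} over $K_0$. Suppose a logarithmic pre-$H_T$-field $K_\mu$ (obtained from $K_0$ by the iteration) has no fake gap and is not yet an $H_T$-field; since a pre-$H_T$-field always satisfies (H1), axiom (H2) must fail, so there is $b \in \cO_{K_\mu}$ with $b \notin C_{K_\mu} + \smallo_{K_\mu}$, necessarily with $b \asymp 1$, and then $b' \notin \der\smallo_{K_\mu}$ (else $b$ would differ from a constant by an infinitesimal). Because $b \asymp 1$, Fact~\ref{fact:asymsim} gives $v(b') \notin (\Gamma_{K_\mu}^<)'$; because $v(b')$ is not a gap (that would be a fake gap) and $K_\mu$ is ungrounded, Fact~\ref{fact:trich} forces $v(b') \in (\Gamma_{K_\mu}^>)'$. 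Corollary~\ref{cor:logsmallint} with $s = b'$ then gives an immediate logarithmic pre-$H_T$-field extension $K_{\mu+1} := K_\mu\langle a\rangle$ with $a \prec 1$, $a' = b'$, so that $b - a$ is a new constant, and $K_{\mu+1}$ embeds uniquely over $K_\mu$ into any logarithmic pre-$H_T$-field extension of $K_\mu$ in which $b'$ has an integral in the maximal ideal -- in particular into any logarithmic $H_T$-field extension of $K_\mu$, by the computation of the previous paragraph. Being immediate, $K_{\mu+1}$ still has no fake gap, so the step repeats; taking unions at limit stages (which are again immediate logarithmic pre-$H_T$-field extensions, with axiom (LH2) passing to unions), the iteration halts by a cardinality argument as in Theorem~\ref{thm:HThull}, at an immediate logarithmic $H_T$-field extension $L$ of $K_0$, and the inductive construction of partial embeddings shows that every logarithmic $H_T$-field extension $M$ of $K_0$ admits a unique $\LdO_{\log}(K_0)$-embedding $L \to M$.

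Finally, setting $H_T^{\log}(K) := L$, we have $\res L = \res K_0 = \res K$, and for a logarithmic $H_T$-field extension $M$ of $K$ the composite of the unique embedding $K_0 \to M$ of the first step with the unique $\LdO_{\log}(K_0)$-embedding $L \to M$ extending it is the desired $\LdO_{\log}(K)$-embedding $H_T^{\log}(K) \to M$, unique because it is unique at each stage. The genuinely new content is confined to the first substitution above; everything else -- that the unions along the chain are logarithmic pre-$H_T$-fields and that the iteration halts -- is word for word as in Theorem~\ref{thm:HThull}. The point I expect to require the most care is the embedding bookkeeping that carries the uniqueness clause through both stages.
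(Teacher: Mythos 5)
Your proposal is correct and is essentially identical to the paper's proof: the paper simply states that Proposition~\ref{prop:logHThull} has the same proof as Theorem~\ref{thm:HThull} with Lemma~\ref{lem:loggapgoesup} substituted for Lemma~\ref{lem:gapgoesup} and Corollary~\ref{cor:logsmallint} for Corollary~\ref{cor:smallint}, which is exactly the argument you carry out (and your filled-in details, e.g.\ using $\upl$-freeness rather than groundedness to rule out gaps in $K_0$, are accurate).
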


\subsection{Constant field extensions}
We end this section with a proposition on extending logarithmic $H_T$-fields by constants. 

\begin{proposition}
\label{prop:logconstext}
Let $K$ be a logarithmic $H_T$-field and let $E$ be a $T^{\e}$-extension of $C$. Then there is a logarithmic $H_T$-field extension $L$ of $K$ where $C_L$ is $\cL_{\log}(C)$-isomorphic to $E$ such that for any logarithmic $H_T$-field extension $M$ of $K$ and any $\cL_{\log}(C)$-embedding $\imath\colon C_L\to C_M$, there is a unique $\LdO_{\log}(K)$-embedding $L\to M$ extending $\imath$.
\end{proposition}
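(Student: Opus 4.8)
The strategy mirrors the proof of Proposition~\ref{prop:constext}, reducing to the case $E = C\langle f \rangle$ for a single $f \notin C$, but we must now carry the logarithm along. Since $E$ is a $T^{\e}$-extension of $C$, we actually know more: $E$ models $T^{\e}$, so $\log$ is surjective on $E^>$, which is the structural feature we will need to feed into Proposition~\ref{prop:logclosedtest2}. Because $\rkL(E \mid C)$ may be infinite, the reduction to a simple extension proceeds by a Zorn's lemma / transfinite iteration argument: it suffices to treat $E = C\langle f \rangle$ and then iterate, taking unions at limit stages (using that an increasing union of logarithmic $H_T$-fields is a logarithmic $H_T$-field, and that $\res$ and $C$ are preserved under such unions). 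So the heart of the matter is the simple case.

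\textbf{The simple case.} Given $f \in E \setminus C$, I would first apply Proposition~\ref{prop:constext} to the underlying $H_T$-field $K$ and the $T$-extension $C\langle f\rangle$ of $C$ to obtain an $H_T$-field extension $L_0$ of $K$ with $C_{L_0}$ being $\cL(C)$-isomorphic to $C\langle f \rangle$, with the universal property for $\cL(C)$-embeddings, and with $\res L_0 = \res K$ and $\Gamma_{L_0} = \Gamma$ (these last facts come out of the proof of Proposition~\ref{prop:constext}). Now I need to equip $L_0$ with a logarithm. Since $\res L_0 = \res K$ and $\Gamma_{L_0} = \Gamma$, the hypotheses of Proposition~\ref{prop:logclosedtest2} are satisfied with empty index family $I$, so there is a \emph{unique} logarithm on $L_0$ extending the logarithm on $K$, and with it $L_0$ is a logarithmic pre-$H_T$-field; since $L_0$ is an $H_T$-field, it is a logarithmic $H_T$-field. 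Set $L \coloneqq L_0$. The key remaining point is to identify $C_L$ as an $\cL_{\log}(C)$-structure: by Lemma~\ref{lem:inducedlogC}, the restriction of $\log$ to $C_L$ is a logarithm making $C_L$ a model of $T^{\e}$, and the $\cL(C)$-isomorphism $C_L \to C\langle f \rangle = E$ is automatically an $\cL_{\log}(C)$-isomorphism because $T^{\e}$-models have a unique logarithm once the $\cL$-structure is fixed and compatibility with the power functions and $\e$ is forced (this is Fact~\ref{fact:expelim}(1), completeness of $T^{\e}$, applied over $C$); alternatively one checks directly that the unique logarithm on $L_0$ restricts to $\log_C$ on $C$ and hence its further restriction to $C_L$ is the $T^{\e}$-logarithm.

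\textbf{The embedding property.} Let $M$ be a logarithmic $H_T$-field extension of $K$ and let $\imath\colon C_L \to C_M$ be an $\cL_{\log}(C)$-embedding; in particular $\imath$ is an $\cL(C)$-embedding, so the universal property from Proposition~\ref{prop:constext} yields a unique $\LdO(K)$-embedding $\jmath\colon L \to M$ extending $\imath$. It remains to verify that $\jmath$ is an $\LdO_{\log}(K)$-embedding, i.e.\ $\jmath(\log g) = \log_M \jmath(g)$ for all $g \in L^>$. Here I invoke the last assertion of Proposition~\ref{prop:logclosedtest2} with empty index family: $\jmath$ is an $\LdO_{\log}(K)$-embedding automatically, since there are no $a_i$'s to check. (Concretely: writing $g = h(1+\epsilon)$ with $h \in K^>$ and $\epsilon \in \smallo_L$ — possible since $\res L = \res K$, $\Gamma_L = \Gamma$ — one has $\log g = \log h + \ln(1+\epsilon)$, and $\jmath$ fixes $\cL$-terms over $K$, so $\jmath(\log g) = \log_M \jmath(h) + \ln\jmath(1+\epsilon) = \log_M \jmath(g)$.) Uniqueness of $\jmath$ as an $\LdO_{\log}(K)$-embedding extending $\imath$ is inherited from uniqueness as an $\LdO(K)$-embedding.

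\textbf{Main obstacle.} I expect no serious obstacle: the only subtlety is bookkeeping in the transfinite iteration — one must check at each successor step that the $T$-extension $C\langle t_1,\ldots,t_{\mu}\rangle$ of $C$ being realized is a \emph{$T^{\e}$}-extension (equivalently that the intermediate constant fields remain closed under $\log$), which follows because $E \models T^{\e}$ and $\cL_{\log}$-substructures of $T^{\e}$-models that are $\cL$-elementary need not be closed under $\log$, so one should instead choose the enumeration $(t_\mu)$ of a basis for $E$ over $C$ and only insist that the final constant field is $\cL_{\log}(C)$-isomorphic to $E$, letting the intermediate logarithms on the $L_\mu$ be the ones supplied by Proposition~\ref{prop:logclosedtest2}, with surjectivity of $\log$ on the constants recovered only in the limit. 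Alternatively, and more cleanly, one avoids the issue entirely by not iterating over a basis of $E$ but instead applying the simple case once with $C\langle f\rangle$ replaced by all of $E$ at once — the proof of Proposition~\ref{prop:constext} adapts verbatim to arbitrary (not necessarily simple) constant field extensions, realizing the cut of each basis element of $E$ over $C$ and taking $\cO_L$ as in that proof — and then the single application of Proposition~\ref{prop:logclosedtest2} with empty $I$ finishes the argument in one stroke.
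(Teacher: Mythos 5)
There is a genuine gap, and it sits at the very first step of your "simple case". You assert that the extension $L_0$ produced by Proposition~\ref{prop:constext} satisfies $\res L_0 = \res K$ and that "these facts come out of the proof" of that proposition. They do not: the proof of Proposition~\ref{prop:constext} constructs $L = K\langle a \rangle$ with $a \in \cO_L$ and $\bar a \notin \res K$, and it is precisely because the residue field \emph{grows} that the Wilkie inequality forces $\Gamma_L = \Gamma$. (This is unavoidable: in an $H_T$-field the constants form a lift of the residue field, so enlarging $C$ enlarges $\res$.) Consequently the hypothesis $\res L = \res K$ of Proposition~\ref{prop:logclosedtest2} fails, and you cannot invoke it with empty index family $I$. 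Concretely, a positive element of $L$ need not have the form $h(1+\epsilon)$ with $h \in K^>$ and $\epsilon \in \smallo_L$; the correct decomposition is $f = cg(1+\epsilon)$ with $c \in C_L^>$ a (possibly new) constant, and the value $\log c$ is genuinely new data not determined by the logarithm on $K$ together with axioms (L1)--(L4). The paper supplies this data by pulling the logarithm on $E$ back to $C_L$ along the $\cL(C)$-isomorphism and then defining $\log f \coloneqq \log g + \log c + \ln(1+\epsilon)$ by hand, verifying (L1)--(L4), (LH1), (LH2) directly.

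A symptom that something is wrong: your argument for the embedding property never uses the hypothesis that $\imath$ is an $\cL_{\log}(C)$-embedding rather than merely an $\cL(C)$-embedding, so if it worked it would prove a stronger and false statement. In the paper's proof this hypothesis is used exactly where the new constants appear: $\log\jmath(cg) = \log\imath(c) + \log g = \imath(\log c) + \log g = \jmath(\log(cg))$, the middle equality being "$\imath$ preserves $\log$"; the general case then reduces to this one via Lemma~\ref{lem:logclosedtest} since every $f \in L^>$ satisfies $f \sim cg$ for such $c, g$. Relatedly, your claim that the $\cL(C)$-isomorphism $C_L \to E$ is "automatically" an $\cL_{\log}(C)$-isomorphism by completeness of $T^{\e}$ is not justified: completeness does not imply that a model of $T$ admits at most one expansion to a model of $T^{\e}$ (Fact~\ref{fact:expelim}(2) asserts this only for the prime model), which is why the logarithm on $C_L$ must be \emph{chosen} by transport of structure from $E$. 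Your final "Main obstacle" paragraph worries about the wrong issue; the transfinite bookkeeping is routine, but the residue-field growth and the attendant choice of logarithm on the new constants is the actual content of the proposition.
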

\begin{proof}
Let $L$ be the $H_T$-field extension of $K$ given by Proposition~\ref{prop:constext}, so $C_L$ is $\cL(C)$-isomorphic to $E$. By the proof of Proposition~\ref{prop:constext}, we also have $\Gamma_L = \Gamma$. Using the $\cL(C)$-isomorphism $C_L\to E$, we pull the logarithm on $E$ back to a logarithm on $C_L$, and we view $C_L$ as a model of $T^{\e}$ with this logarithm. We need to extend this logarithm on $C_L$ to a logarithm on $L$. Let $f \in L^>$ and take $g \in K^>$ with $f \asymp g$. Take $c \in C_L^>$ with $f/g \sim c$, and take $\epsilon \in \smallo_L$ with $f = cg(1+\epsilon)$. Set
\[
\log f\ \coloneqq \ \log g + \log c + \ln(1+\epsilon),
\]
where $\log g$ is evaluated in $K$ and $\log c$ is evaluated in $C_L$. It is routine to check that this assignment doesn't depend on the choice of $g$. Checking that this map is a logarithm is also quite straightforward; for convenience, we may assume that $g = 1$ in the case that $f \asymp 1$. For (LH1), we have
\[
\log(f)'\ =\ \log(g)' + \log(c)' + \ln(1+\epsilon)' \ =\ g^\dagger + c^\dagger + (1+\epsilon)^\dagger \ =\ f^\dagger,
\]
where the second equality uses that $\log(c)' = c^\dagger = 0$ for $c \in C_L^>$. Since for each $a \in \cO_L$, there is $d \in C_L$ with $|a-d|\leq 1$ and since the logarithm on $C_L$ is surjective, (LH2) follows from Lemma~\ref{lem:closeislog}. 

Now let $M$ be a logarithmic $H_T$-field extension of $K$ and let $\imath\colon C_L\to C_M$ be an $\cL_{\log}(C)$-embedding. Using the embedding property of $L$, we uniquely extend $\imath$ to an $\LdO(K)$-embedding $\jmath\colon L \to M$. We will show that $\jmath$ is even an $\LdO_{\log}(K)$-embedding. For $c \in C_L^>$ and $g \in K^>$, we have 
\[
\log \jmath(cg)\ =\ \log \imath(c)+\log g\ =\ \imath(\log c)+\log g\ =\ \jmath\big(\log (cg)\big).
\]
Let $f \in L^>$. Since we can find $c \in C_L^>$ and $g \in K^>$ with $f \sim cg$, Lemma~\ref{lem:logclosedtest} and the above computation allow us to conclude that $\log \jmath(f) = \jmath(\log f)$.
\end{proof}

\section{Liouville closed logarithmic $H_T$-fields}\label{sec:logLiouville}
\noindent
In this section, we keep the same assumptions as in the previous section ($T$ is power bounded, $T$ defines a restricted exponential function, $\Lambda$ is cofinal in the prime model of $T$). Let $K$ be a logarithmic $H_T$-field. As in Section~\ref{sec:Liouville}, we investigate Liouville closed extensions of $K$. Now that we have a logarithm present, we can relate exponential integrals to actual exponentials:

\begin{lemma}\label{lem:Liouvilleexponentials}
The following are equivalent:
\begin{enumerate}
\item[(i)] $\log(K^>) = K$ and every element in $K$ has an integral in $K$;
\item[(ii)] $K$ is Liouville closed.
\end{enumerate}
\end{lemma}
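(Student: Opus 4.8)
The plan is to prove the equivalence by a short chain of elementary implications, using the facts already established about logarithmic $H_T$-fields, most crucially axioms (LH1), (LH2) and Corollary~\ref{cor:ungrounded} (that $K$ is ungrounded), together with the relationship between $\d$-logarithms, integrals, and exponential integrals recorded in Subsection~\ref{subsec:Tderiv}.

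First I would do the direction (ii) $\Rightarrow$ (i). Assume $K$ is Liouville closed. That every element of $K$ has an integral in $K$ is immediate from the definition of Liouville closed (indeed, closure under integration is half the definition). For surjectivity of $\log$, let $f \in K$; we must produce $g \in K^>$ with $\log g = f$. Since $K$ is closed under exponential integration, there is $g \in K^\times$ with $g^\dagger = f'$; replacing $g$ by $-g$ if necessary we may take $g > 0$. Then by (LH1), $(\log g)' = g^\dagger = f'$, so $f - \log g \in C$. Now I would use that the constant field $C$, with the logarithm restricted from $K$, is a model of $T^{\e}$ by Lemma~\ref{lem:inducedlogC}; in particular $\log\colon C^> \to C$ is surjective, so there is $c \in C^>$ with $\log c = f - \log g$. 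Then $\log(cg) = \log c + \log g = f$, and $cg \in K^>$, as desired.

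Next, (i) $\Rightarrow$ (ii). Assume $\log(K^>) = K$ and $K$ is closed under integration. Closure under integration is exactly one of the two requirements for being Liouville closed, so it remains to show $K$ is closed under exponential integration. Let $y \in K$; since $K$ is closed under integration, take $f \in K$ with $f' = y$. Then $g \coloneqq \exp f \in K^>$ is well defined because $\log$ is surjective, and by the computation in the proof of Proposition~\ref{prop:surjectiveisexp} (or directly from (LH1) applied to $g = \exp f$, using $\log g = f$), we get $g^\dagger = (\log g)' = f' = y$. Hence every element of $K$ has an exponential integral in $K^\times$, and $K$ is Liouville closed.

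I do not expect any genuine obstacle here: both implications are two or three lines of bookkeeping once one has (LH1), the surjectivity of $\log$ on $C$ from Lemma~\ref{lem:inducedlogC}, and the observation that $g^\dagger = (\log g)'$ whenever $g = \exp f$. The only point requiring a moment's care is the $C$-component in (ii) $\Rightarrow$ (i): one must not merely cancel $\log g$ off but genuinely invoke that the residual constant $f - \log g$ is itself a logarithm of something in $C^>$, which is where Lemma~\ref{lem:inducedlogC} enters. Everything else is formal.
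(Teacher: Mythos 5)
Your proof is correct and follows essentially the same route as the paper: for (i) $\Rightarrow$ (ii) both take $g = \exp f$ for an integral $f$ of the given element and compute $g^\dagger = f'$, and for (ii) $\Rightarrow$ (i) both take an exponential integral $g$ of $f'$, observe $f - \log g \in C$, and absorb that constant using the surjectivity of $\log$ on $C$ from Lemma~\ref{lem:inducedlogC}. The only cosmetic difference is your explicit remark about replacing $g$ by $-g$ to get $g>0$, which the paper leaves implicit.
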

\begin{proof}
Suppose (i) holds and let $f \in K$. We need to find an exponential integral for $f$ in $K^{\neq}$. Take $g \in K$ with $g' = f$. Then $(\exp g)^\dagger = g' = f$. Now, suppose (ii) holds and let $a \in K$. We need to show that $a \in \log(K^>)$. Take $b \in K^>$ with $b^\dagger = a'$. Then $\log(b)' = a'$ so $a-\log b\in C$. Since $C = \log(C^>)$ by Lemma~\ref{lem:inducedlogC}, we may take $c \in C^>$ with $a-\log b= \log c$. Then $a = \log(bc)$. 
\end{proof}

\begin{definition}
A \textbf{logarithmic $T$-Liouville extension} of $K$ is a logarithmic $H_T$-field extension $L$ of $K$ which is also a $T$-Liouville extension of $K$, as defined in Section~\ref{sec:Liouville}. A \textbf{logarithmic $T$-Liouville closure} of $K$ is a logarithmic $T$-Liouville extension of $K$ which is Liouville closed.
\end{definition}

\noindent
Any logarithmic $T$-Liouville extension of $K$ has the same cardinality as $K$, and an increasing union of logarithmic $T$-Liouville extensions is itself a logarithmic $T$-Liouville extension. The logarithmic $H_T$-field extension $K^{\e}$ in Theorem~\ref{thm:expclosure} is a logarithmic $T$-Liouville extension, as are the extensions considered in Corollaries~\ref{cor:logsmallint} and~\ref{cor:logbigint} and in Lemmas~\ref{lem:loggapgoesup} and~\ref{lem:loggapgoesdown}.

\begin{definition}
A \textbf{logarithmic $T$-Liouville tower on $K$} is a strictly increasing chain $(K_\mu)_{\mu \leq \nu}$ of logarithmic $H_T$-fields such that:
\begin{enumerate}
\item $K_0 = K$;
\item if $\mu\leq \nu$ is an infinite limit ordinal, then $K_\mu = \bigcup_{\eta<\mu}K_\eta$;
\item if $\mu < \nu$ and $K_\mu\neq \log(K_\mu^>)$, then $K_{\mu+1} = K_\mu^{\e}$;
\item if $\mu<\nu$ and $K_\mu= \log(K_\mu^>)$, then $K_{\mu+1}$ is one of the following extensions of $K_\mu$:
\begin{enumerate}
\item $K_{\mu+1} = K_\mu\langle a_\mu\rangle_\upo$, where $a_\mu' = s_\mu \in K_\mu$ with $a_\mu \prec 1$ and $vs_\mu$ is a gap in $K_\mu$;
\item $K_{\mu+1} = K_\mu\langle a_\mu\rangle_\upo$, where $a_\mu' = s_\mu \in K_\mu$ with $a_\mu \succ 1$ and $vs_\mu$ is a gap in $K_\mu$;
\item $K_{\mu+1} = K_\mu\langle a_\mu\rangle$, where $a_\mu' = s_\mu \in K_\mu$ with $a_\mu \prec 1$, $vs_\mu \in (\Gamma^>_{K_\mu})'$, and $s_\mu \not\in \der \smallo_{K_\mu}$;
\item $K_{\mu+1} = K_\mu\langle a_\mu\rangle$, where $a_\mu' = s_\mu \in K_\mu$ with $v(s_\mu-\der K_\mu)\subseteq (\Gamma^<_{K_\mu})'$.
\end{enumerate}
\end{enumerate}
The logarithmic $H_T$-field $K_\nu$ is called the \textbf{top of the tower $(K_\mu)_{\mu \leq \nu}$}.
\end{definition}

\noindent
The extensions in (a) and (b) correspond to Lemmas~\ref{lem:loggapgoesup} and~\ref{lem:loggapgoesdown}, respectively, and the logarithm in these extensions is the logarithm defined in those Lemmas. The extensions in (c) and (d) correspond to Corollaries~\ref{cor:logsmallint} and~\ref{cor:logbigint}. If $(K_\mu)_{\mu \leq \nu}$ is a logarithmic $T$-Liouville tower on $K$, then each $K_\mu$ is a logarithmic $T$-Liouville extension of $K$. Maximal logarithmic $T$-Liouville towers on $K$ exist by Zorn's lemma, and we briefly verify below that the analogs of Lemmas~\ref{lem:toptclosure} and~\ref{lem:relativemaxismax} hold in this setting.

\begin{lemma}
\label{lem:logtoptclosure}
Let $L$ be the top of a maximal logarithmic $T$-Liouville tower on $K$. Then $L$ is Liouville closed and, therefore, $L$ is a logarithmic $T$-Liouville closure of $K$.
\end{lemma}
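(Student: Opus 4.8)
The plan is to mimic the proof of Lemma~\ref{lem:toptclosure}, but taking care to handle the logarithm. Let $L = K_\nu$ be the top of a maximal logarithmic $T$-Liouville tower $(K_\mu)_{\mu\le\nu}$ on $K$. By Lemma~\ref{lem:Liouvilleexponentials}, it suffices to show that $L = \log(L^>)$ and that every element of $L$ has an integral in $L$. The surjectivity of the logarithm on $L$ is immediate from clause (3) of the definition of a logarithmic $T$-Liouville tower: if $L \ne \log(L^>)$, then the tower could be properly extended by $L_{\nu+1} = L^{\e}$ (which is a proper extension of $L$ by Theorem~\ref{thm:expclosure}, since $L^{\e}$ has a surjective logarithm), contradicting maximality.

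For closure under integration, I would follow the structure of Lemma~\ref{lem:toptclosure} closely. First, since $L = \log(L^>)$, every clause (4) extension in the tower is available, so by maximality $L$ has no gap (by clauses (a) and (b), invoking Lemmas~\ref{lem:loggapgoesup} and~\ref{lem:loggapgoesdown}). Wait---clause (b) of the logarithmic tower requires $K_\mu$ to be an $H_T$-field, but this is fine since each $K_\mu$ in the tower is a logarithmic $H_T$-field. Also, $L$ is ungrounded by Corollary~\ref{cor:ungrounded}, since $L$ is a logarithmic pre-$H_T$-field. Hence $L$ has asymptotic integration by Fact~\ref{fact:trich}. Now fix $s \in L$. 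By clause (d) and maximality (using Corollary~\ref{cor:logbigint}), we cannot have $v(s - \der L) \subseteq (\Gamma_L^<)'$, so there is $y \in L$ with $v(s - y') > (\Gamma_L^<)'$; asymptotic integration then forces $v(s - y') \in (\Gamma_L^>)'$. By clause (c) and maximality (using Corollary~\ref{cor:logsmallint}), we cannot have $s - y' \notin \der\smallo_L$, so $s - y' \in \der\smallo_L$, giving $f \in \smallo_L$ with $f' = s - y'$ and hence $s = (f+y)'$. Thus $L$ is closed under integration, and combined with surjectivity of $\log$ and Lemma~\ref{lem:Liouvilleexponentials}, $L$ is Liouville closed. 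Since $L$ is also a logarithmic $T$-Liouville extension of $K$, it is a logarithmic $T$-Liouville closure of $K$.

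The only subtlety---and the place where the logarithmic case genuinely differs from Lemma~\ref{lem:toptclosure}---is that the tower definition does \emph{not} include analogs of the exponential-integral extensions (types (f) and (g) from the ordinary $T$-Liouville tower) or the $\max\Psi$ extension (type (c)). This is by design: once $L = \log(L^>)$ and $L$ is closed under integration, exponential integrals come for free via Lemma~\ref{lem:Liouvilleexponentials}, and ungroundedness is automatic by Corollary~\ref{cor:ungrounded}. So I do not expect a real obstacle here; the main thing to get right is simply checking that each of the four maximality arguments (no gap, not the type-(d) situation, not the type-(c) situation) applies verbatim and that the relevant extension lemmas (Lemmas~\ref{lem:loggapgoesup},~\ref{lem:loggapgoesdown}, Corollaries~\ref{cor:logsmallint},~\ref{cor:logbigint}) indeed produce proper logarithmic $H_T$-field extensions contained in nothing---i.e., that they genuinely extend the tower. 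Since each of these lemmas produces a proper extension (the adjoined element is not in $K_\mu$) and each such extension is a logarithmic $T$-Liouville extension, the maximality contradiction goes through in each case.
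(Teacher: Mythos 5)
Your proof is correct and takes essentially the same route as the paper, which simply notes that maximality yields integrals exactly as in Lemma~\ref{lem:toptclosure} and also forces $L=\log(L^>)$, then concludes via Lemma~\ref{lem:Liouvilleexponentials}. Your additional observations (ungroundedness via Corollary~\ref{cor:ungrounded} replacing the $\max\Psi$ clause, and the absence of exponential-integral clauses being compensated by surjectivity of $\log$) are exactly the points the paper leaves implicit.
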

\begin{proof}
As in the proof of Lemma~\ref{lem:toptclosure}, maximality tells us each element in $L$ has an integral in $L$. Maximality also tells us that $L = \log(L^>)$, so $L$ is Liouville closed by Lemma~\ref{lem:Liouvilleexponentials}.
\end{proof}

\begin{lemma}
\label{lem:logrelativemaxismax}
Let $L$ be a Liouville closed logarithmic $H_T$-field extension of $K$ and let $(K_\mu)_{\mu\leq \nu}$ be a logarithmic $T$-Liouville tower on $K$. Suppose that $(K_\mu)_{\mu\leq \nu}$ is a tower in $L$ which can not be extended to a logarithmic $T$-Liouville tower $(K_\mu)_{\mu\leq \nu+1}$ in $L$. Then $(K_\mu)_{\mu\leq \nu}$ is a maximal logarithmic $T$-Liouville tower on $K$. 
\end{lemma}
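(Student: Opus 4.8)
The plan is to mirror the proof of Lemma~\ref{lem:relativemaxismax}, using Lemma~\ref{lem:Liouvilleexponentials} to reduce Liouville closedness of $K_\nu$ to being closed under integration and having a surjective logarithm. By Fact~\ref{fact:Lexts} and Lemma~\ref{lem:closedimpliesmaximal}, it suffices to show that $K_\nu$ is Liouville closed. First I would observe that, since $(K_\mu)_{\mu\le\nu}$ cannot be extended in $L$ via step~(3) of the tower definition, we must have $K_\nu = \log(K_\nu^>)$: otherwise $K_\nu^{\e}$ would be a logarithmic $H_T$-field extension of $K_\nu$ sitting inside $L$ (the exponential closure $K_\nu^{\e}$ embeds over $K_\nu$ into any logarithmic pre-$H_T$-field extension of $K_\nu$ with surjective logarithm, in particular into $L$, by Theorem~\ref{thm:expclosure} and Lemma~\ref{lem:Liouvilleexponentials}), contradicting maximality. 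Thus by Lemma~\ref{lem:Liouvilleexponentials} it remains to check that $K_\nu$ is closed under integration.

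Next I would show $K_\nu$ has no gap and is ungrounded, exactly as in Lemma~\ref{lem:relativemaxismax} but with the logarithmic variants of the relevant lemmas. If $vs$ is a gap in $K_\nu$ for some $s\in K_\nu$, then $L$ being Liouville closed yields $a\in L$ with $a'=s$; subtracting a constant we may take $a\not\asymp 1$, and then Lemma~\ref{lem:loggapgoesup} (if $a\prec 1$) or Lemma~\ref{lem:loggapgoesdown} (if $a\succ 1$) exhibits $K_\nu\langle a\rangle_\upo\subseteq L$ (respectively $K_\nu\langle a\rangle\subseteq L$) as a logarithmic $T$-Liouville extension of $K_\nu$ inside $L$ obtained via step~(4)(a) or (4)(b), contradicting maximality. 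Hence $K_\nu$ has no gap; moreover $K_\nu$ is ungrounded by Corollary~\ref{cor:ungrounded}, since $K_\nu$ is a logarithmic pre-$H_T$-field. By Fact~\ref{fact:trich}, $K_\nu$ has asymptotic integration.

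Finally, fix $s\in K_\nu$ and argue that $s\in\der K_\nu$. If $v(s-\der K_\nu)\subseteq(\Gamma_{K_\nu}^<)'$, then for any $f\in L$ with $f'=s$, Corollary~\ref{cor:logbigint} gives that $K_\nu\langle f\rangle\subseteq L$ is a logarithmic $T$-Liouville extension of $K_\nu$ obtained via step~(4)(d), contradicting maximality; so take $y\in K_\nu$ with $v(s-y')>(\Gamma_{K_\nu}^<)'$, and since $K_\nu$ has asymptotic integration, $v(s-y')\in(\Gamma_{K_\nu}^>)'$. If $s-y'\notin\der\smallo_{K_\nu}$, then Corollary~\ref{cor:logsmallint} (applied inside $L$, using any $g\in\smallo_L$ with $g'=s-y'$) furnishes a logarithmic $T$-Liouville extension $K_\nu\langle g\rangle\subseteq L$ via step~(4)(c), again contradicting maximality. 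Thus $s-y'\in\der\smallo_{K_\nu}$, so $s\in\der K_\nu$, and $K_\nu$ is closed under integration. Combined with $K_\nu=\log(K_\nu^>)$ and Lemma~\ref{lem:Liouvilleexponentials}, this shows $K_\nu$ is Liouville closed, completing the proof. The one point requiring slight care — and the closest thing to an obstacle — is the reduction at the start: one must confirm that the obstruction to extending by step~(3) really does force $K_\nu=\log(K_\nu^>)$, i.e.\ that the exponential closure $K_\nu^{\e}$ genuinely embeds over $K_\nu$ into $L$; this is immediate from the universal property in Theorem~\ref{thm:expclosure} once one notes $L$ has surjective logarithm by Lemma~\ref{lem:Liouvilleexponentials}.
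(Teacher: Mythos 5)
Your proof is correct and follows essentially the same route as the paper's: reduce to showing $K_\nu$ is Liouville closed via Fact~\ref{fact:Lexts} and Lemma~\ref{lem:closedimpliesmaximal}, get $K_\nu=\log(K_\nu^>)$ from the universal property of $K_\nu^{\e}$ together with $L=\log(L^>)$, and get closure under integration by rerunning the argument of Lemma~\ref{lem:relativemaxismax} with the logarithmic variants of the extension lemmas. The only (harmless) difference is that you spell out the integration step and invoke Corollary~\ref{cor:ungrounded} for ungroundedness, where the paper simply defers to the proof of Lemma~\ref{lem:relativemaxismax}.
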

\begin{proof}
By Fact~\ref{fact:Lexts} and Lemma~\ref{lem:closedimpliesmaximal}, it suffices to show that $K_\nu$ is Liouville closed. The proof that each element in $K_\nu$ has an integral in $K_\nu$ is essentially the same as the proof of Lemma~\ref{lem:relativemaxismax}. Since $L$ is Liouville closed, we have $L = \log(L^>)$ by Lemma~\ref{lem:Liouvilleexponentials}, so Theorem~\ref{thm:expclosure} allows us to identify $K_\nu^{\e}$ with a logarithmic $H_T$-subfield of $L$. Since $K_\nu^{\e}$ is a logarithmic $T$-Liouville extension of $K_\nu$, we have $K_\nu = K_\nu^{\e}$. Thus, $K_{\nu}$ is Liouville closed, again by Lemma~\ref{lem:Liouvilleexponentials}. 
\end{proof}

\noindent
We have the following analog of Corollary~\ref{cor:charofTLclosure}, where Lemmas~\ref{lem:logtoptclosure} and~\ref{lem:logrelativemaxismax} are used in place of Lemmas~\ref{lem:toptclosure} and~\ref{lem:relativemaxismax}.

\begin{corollary}
\label{cor:logcharofTLclosure}
Let $L$ be a Liouville closed logarithmic $H_T$-field extension of $K$. Then $L$ is a logarithmic $T$-Liouville closure of $K$ if and only if $L$ has no proper Liouville closed logarithmic $H_T$-subfields which contain $K$.
\end{corollary}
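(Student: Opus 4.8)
The plan is to mirror the proof of Corollary~\ref{cor:charofTLclosure}, since the statement of Corollary~\ref{cor:logcharofTLclosure} is the logarithmic analog and the proof structure should carry over verbatim with the obvious substitutions. First I would note that one implication is essentially formal: if $L$ is a logarithmic $T$-Liouville closure of $K$ and $M$ is a Liouville closed logarithmic $H_T$-subfield of $L$ containing $K$, then $L$ is a logarithmic $T$-Liouville extension of $M$ (by Fact~\ref{fact:Lexts}, since logarithmic $T$-Liouville extensions compose and localize just as ordinary $T$-Liouville extensions do), so $M = L$ by Lemma~\ref{lem:closedimpliesmaximal}. This gives that a logarithmic $T$-Liouville closure has no proper Liouville closed logarithmic $H_T$-subfield containing $K$. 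Here I am implicitly using that Lemma~\ref{lem:closedimpliesmaximal} applies: $L$ being Liouville closed, it has no proper $T$-Liouville extension, hence in particular no proper logarithmic one.

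For the converse, I would argue the contrapositive: suppose $L$ is a Liouville closed logarithmic $H_T$-field extension of $K$ which is \emph{not} a logarithmic $T$-Liouville closure of $K$; I want to produce a proper Liouville closed logarithmic $H_T$-subfield of $L$ containing $K$. By Zorn's lemma, let $(K_\mu)_{\mu\leq\nu}$ be a maximal logarithmic $T$-Liouville tower on $K$ that is a tower in $L$ (such towers are bounded in cardinality by $|K|$, since each $K_\mu$ is a logarithmic $T$-Liouville extension of $K$). Maximality as a tower in $L$ means it cannot be extended to a logarithmic $T$-Liouville tower $(K_\mu)_{\mu\leq\nu+1}$ in $L$, so Lemma~\ref{lem:logrelativemaxismax} applies and tells us that $(K_\mu)_{\mu\leq\nu}$ is in fact a maximal logarithmic $T$-Liouville tower on $K$; then Lemma~\ref{lem:logtoptclosure} gives that $K_\nu$ is a logarithmic $T$-Liouville closure of $K$, hence Liouville closed. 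Since $L$ is not itself a logarithmic $T$-Liouville closure of $K$ while $K_\nu$ is, we must have $K_\nu \neq L$ (a logarithmic $T$-Liouville extension of $K$ that sits inside $L$ and is all of $L$ would make $L$ one), so $K_\nu$ is the desired proper Liouville closed logarithmic $H_T$-subfield of $L$ containing $K$.

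The only point requiring a little care is the very first step of the Zorn's lemma argument in the converse: one must check that the union of a chain of logarithmic $T$-Liouville towers in $L$ is again such a tower, but this is immediate from clause (2) of the definition together with the fact that an increasing union of logarithmic $T$-Liouville extensions is a logarithmic $T$-Liouville extension (noted in the paragraph following the definition). I do not anticipate a genuine obstacle here — the substantive work has already been absorbed into Lemmas~\ref{lem:logtoptclosure} and~\ref{lem:logrelativemaxismax}, which play exactly the roles that Lemmas~\ref{lem:toptclosure} and~\ref{lem:relativemaxismax} play in the proof of Corollary~\ref{cor:charofTLclosure}. If anything, the mild subtlety is bookkeeping: making sure that ``tower in $L$'' is interpreted so that a maximal such tower genuinely triggers the hypothesis of Lemma~\ref{lem:logrelativemaxismax}, i.e.\ that maximality of the tower among towers-in-$L$ is the same as inextensibility within $L$, which it is by definition.

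\begin{proof}
One implication follows from the argument in the proof of Corollary~\ref{cor:charofTLclosure1}: if $L$ is a logarithmic $T$-Liouville closure of $K$ and $M$ is a Liouville closed logarithmic $H_T$-subfield of $L$ containing $K$, then $L$ is a logarithmic $T$-Liouville extension of $M$ by Fact~\ref{fact:Lexts}, so $M = L$ by Lemma~\ref{lem:closedimpliesmaximal}.

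For the other implication, suppose that $L$ is not a logarithmic $T$-Liouville closure of $K$. Let $(K_\mu)_{\mu\leq \nu}$ be a logarithmic $T$-Liouville tower on $K$ which is a tower in $L$ and is maximal among such towers; this exists by Zorn's lemma, using that the union of a chain of logarithmic $T$-Liouville towers in $L$ is again a logarithmic $T$-Liouville tower in $L$ and that each logarithmic $T$-Liouville extension of $K$ has the same cardinality as $K$. Then $(K_\mu)_{\mu\leq\nu}$ cannot be extended to a logarithmic $T$-Liouville tower $(K_\mu)_{\mu\leq\nu+1}$ in $L$, so Lemma~\ref{lem:logrelativemaxismax} gives that $(K_\mu)_{\mu\leq\nu}$ is a maximal logarithmic $T$-Liouville tower on $K$. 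By Lemma~\ref{lem:logtoptclosure}, $K_\nu$ is a logarithmic $T$-Liouville closure of $K$; in particular, $K_\nu$ is Liouville closed. Since $L$ is not a logarithmic $T$-Liouville closure of $K$, we have $K_\nu \neq L$, so $K_\nu$ is a proper Liouville closed logarithmic $H_T$-subfield of $L$ containing $K$.
\end{proof}
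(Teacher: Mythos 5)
Your proof is correct and follows exactly the route the paper intends: the paper states this corollary as the analog of Corollary~\ref{cor:charofTLclosure} with Lemmas~\ref{lem:logtoptclosure} and~\ref{lem:logrelativemaxismax} substituted for Lemmas~\ref{lem:toptclosure} and~\ref{lem:relativemaxismax}, which is precisely what you do. The forward implication via Fact~\ref{fact:Lexts} and Lemma~\ref{lem:closedimpliesmaximal} likewise matches the argument of Corollary~\ref{cor:charofTLclosure1}.
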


\noindent
An analog of Lemma~\ref{lem:nogap} also goes through with the obvious changes to the proof.

\begin{lemma}
\label{lem:lognogap}
Let $(K_\mu)_{\mu\leq \nu}$ be a logarithmic $T$-Liouville tower on $K$ and suppose $K_\mu$ does not have a gap for all $\mu< \nu$. Then $K_\nu$ admits an $\LdO_{\log}(K)$-embedding into any Liouville closed logarithmic $H_T$-field extension of $K$.
\end{lemma}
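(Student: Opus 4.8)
The plan is to mimic the proof of Lemma~\ref{lem:nogap}, building an increasing chain of $\LdO_{\log}(K)$-embeddings $(\imath_\mu\colon K_\mu\to M)_{\mu\leq\nu}$ into the given Liouville closed logarithmic $H_T$-field extension $M$ of $K$. We start with $\imath_0\colon K_0\to M$ the identity on $K$, and at infinite limit ordinals we take the union of the previously constructed embeddings, which is again an $\LdO_{\log}(K)$-embedding. The work is at successor steps: given an $\LdO_{\log}(K)$-embedding $\imath_\mu\colon K_\mu\to M$, we must extend it to $K_{\mu+1}$.

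At a successor step $\mu<\nu$, there are two cases according to the definition of a logarithmic $T$-Liouville tower. If $K_\mu\neq\log(K_\mu^>)$, then $K_{\mu+1}=K_\mu^{\e}$; since $M$ is Liouville closed, Lemma~\ref{lem:Liouvilleexponentials} gives $\log(M^>)=M$, so $M$ is a logarithmic pre-$H_T$-field extension of $K_\mu$ with surjective logarithm, and the universal property of the exponential closure in Theorem~\ref{thm:expclosure} yields a unique $\LdO_{\log}(K_\mu)$-embedding $K_\mu^{\e}\to M$ extending $\imath_\mu$ (after transporting along $\imath_\mu$, i.e.\ viewing $M$ as an extension of $\imath_\mu(K_\mu)$). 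If instead $K_\mu=\log(K_\mu^>)$, then by hypothesis $K_\mu$ has no gap, so $K_{\mu+1}$ cannot be of type (a) or (b); it is therefore of type (c) or (d), i.e.\ obtained by Corollary~\ref{cor:logsmallint} or Corollary~\ref{cor:logbigint}. In the type (c) case, $s_\mu\in K_\mu$ with $vs_\mu\in(\Gamma^>_{K_\mu})'$ and $s_\mu\notin\der\smallo_{K_\mu}$; since $M$ is Liouville closed it contains an integral $f$ of $s_\mu$, and because $M$ has asymptotic integration with $v(s_\mu-\der K_\mu)$ not cofinal issues ruled out, one checks $f$ can be chosen in $\smallo_M$ (subtract a constant; $vs_\mu\in(\Gamma^{>}_M)'$ forces the integral to be infinitesimal up to a constant), so $s_\mu\in\der\smallo_M$ and Corollary~\ref{cor:logsmallint} provides the unique $\LdO_{\log}(K_\mu)$-embedding $K_{\mu+1}\to M$ extending $\imath_\mu$. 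In the type (d) case, $v(s_\mu-\der K_\mu)\subseteq(\Gamma^<_{K_\mu})'$; $M$ being Liouville closed contains some $b$ with $b'=s_\mu$, and Corollary~\ref{cor:logbigint} gives the desired embedding sending $a_\mu$ to $b$.

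Passing to the union at $\mu=\nu$ produces $\imath_\nu\colon K_\nu\to M$, which is the required $\LdO_{\log}(K)$-embedding. One small bookkeeping point throughout is that the Corollaries and Theorem~\ref{thm:expclosure} are stated with universal properties over the base of the relevant extension; to invoke them at stage $\mu$ one identifies $K_\mu$ with $\imath_\mu(K_\mu)\subseteq M$ and applies the universal property relative to that image, then composes with $\imath_\mu$. The main obstacle is the type (c) case: one must argue that Liouville closedness of $M$ really does give $s_\mu\in\der\smallo_M$ and not merely $s_\mu\in\der M$. This is handled exactly as in the proof of Lemma~\ref{lem:relativemaxismax}: since $M$ has asymptotic integration (by Fact~\ref{fact:trich}, using that $M$ has no gap and is ungrounded, the latter by Corollary~\ref{cor:ungrounded}) and $vs_\mu\in(\Gamma^>_M)'$, any integral of $s_\mu$ in $M$ is $\prec 1$ after subtracting a suitable constant, so lies in $\smallo_M$. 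Everything else is a routine transcription of the proof of Lemma~\ref{lem:nogap} with $\LdO$ replaced by $\LdO_{\log}$ and the logarithmic versions of the extension lemmas.
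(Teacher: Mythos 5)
Your proof is correct and is essentially the argument the paper intends: the paper merely remarks that Lemma~\ref{lem:nogap} "goes through with the obvious changes," and your transfinite chain of embeddings, handling the exponential-closure step via Theorem~\ref{thm:expclosure} together with Lemma~\ref{lem:Liouvilleexponentials}, and the type (c)/(d) steps via Corollaries~\ref{cor:logsmallint} and~\ref{cor:logbigint}, is exactly that adaptation. Your verification that $s_\mu\in\der\smallo_M$ in the type (c) case (any integral must be $\preceq 1$ since $vs_\mu\in(\Gamma_M^>)'$ and $(\Gamma_M^<)'$ is disjoint from $(\Gamma_M^>)'$, then subtract a constant using (H2)) correctly fills in the one detail the paper leaves implicit.
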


\subsection{Uniqueness and nonuniqueness of logarithmic $T$-Liouville closures}
\begin{proposition}\label{prop:loglambdafreeclosure}
Suppose $K$ is $\upl$-free. Then $K$ has a logarithmic $T$-Liouville closure $L$ which embeds over $K$ into any Liouville closed logarithmic $H_T$-field extension of $K$. Any logarithmic $T$-Liouville closure of $K$ is $\LdO_{\log}(K)$-isomorphic to $L$.
\end{proposition}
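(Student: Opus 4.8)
The plan is to follow the proof of Proposition~\ref{prop:lambdafreeclosure} almost verbatim, replacing the unadorned $T$-Liouville towers by logarithmic ones. First I would fix a maximal logarithmic $T$-Liouville tower $(K_\mu)_{\mu\leq\nu}$ on $K$; such a tower exists by Zorn's lemma, since every logarithmic $T$-Liouville extension of $K$ has the same cardinality as $K$. Put $L\coloneqq K_\nu$. By Lemma~\ref{lem:logtoptclosure}, $L$ is already a logarithmic $T$-Liouville closure of $K$, so the content of the proposition lies in the embedding and uniqueness claims.

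The key preliminary step is to show by induction on $\mu\leq\nu$ that each $K_\mu$ is $\upl$-free. The base case $\mu=0$ is the hypothesis, and the infinite limit case follows from Lemma~\ref{lem:unionupl}. For a successor $\mu+1$, assume $K_\mu$ is $\upl$-free. By Fact~\ref{fact:upl->asymp}, $K_\mu$ has asymptotic integration, hence no gap by Fact~\ref{fact:trich}; consequently $K_{\mu+1}$ is \emph{not} an extension of type (a) or (b) in the definition of a logarithmic $T$-Liouville tower. If $K_\mu\neq\log(K_\mu^>)$, then $K_{\mu+1}=K_\mu^{\e}$ is $\upl$-free by Theorem~\ref{thm:expclosure}(4); otherwise $K_{\mu+1}$ is an extension of type (c) or (d), and is $\upl$-free by Corollary~\ref{cor:logsmallint} or Corollary~\ref{cor:logbigint}, respectively. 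In particular, no $K_\mu$ with $\mu<\nu$ has a gap.

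Now let $M$ be a Liouville closed logarithmic $H_T$-field extension of $K$. Since no $K_\mu$ with $\mu<\nu$ has a gap, Lemma~\ref{lem:lognogap} yields an $\LdO_{\log}(K)$-embedding $\imath\colon L\to M$, which establishes the embedding claim. If in addition $M$ is itself a logarithmic $T$-Liouville closure of $K$, then $\imath(L)$ is a Liouville closed logarithmic $H_T$-subfield of $M$ containing $K$, so $\imath(L)=M$ by Corollary~\ref{cor:logcharofTLclosure}; thus $\imath$ is an $\LdO_{\log}(K)$-isomorphism, and $L$ is unique up to $\LdO_{\log}(K)$-isomorphism.

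The argument is largely bookkeeping once the extension lemmas of the previous section are in place; the only points that need attention are confirming that no gap ever appears along the tower — so that Lemma~\ref{lem:lognogap} applies — and that the exponential-closure step preserves $\upl$-freeness, both of which are supplied by Fact~\ref{fact:upl->asymp}, Fact~\ref{fact:trich}, and Theorem~\ref{thm:expclosure}(4).
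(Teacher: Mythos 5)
Your proof is correct and follows essentially the same route as the paper: induct along a maximal logarithmic $T$-Liouville tower to show every stage is $\upl$-free (hence gap-free), invoke Lemma~\ref{lem:lognogap} for the embedding, and deduce uniqueness from the characterization of closures. The only cosmetic difference is that you cite Corollary~\ref{cor:logcharofTLclosure} for the final step where the paper uses Corollary~\ref{cor:charofTLclosure1}; both apply since $\imath(L)$ is a Liouville closed (logarithmic) $H_T$-subfield of $M$ containing $K$.
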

\begin{proof}
Let $(K_{\mu})_{\mu\leq \nu}$ be a maximal logarithmic $T$-Liouville tower on $K$. We will prove by induction on $\mu\leq \nu$ that each $K_\mu$ is $\upl$-free. This holds when $\mu = 0$ by assumption and if $\mu \leq \nu$ is an infinite limit ordinal, then this follows from Lemma~\ref{lem:unionupl}. Let $\mu < \nu$ and suppose that $K_\mu$ is $\upl$-free. If $K_\mu \neq \log(K_\mu^>)$, then $K_{\mu+1} = K_\mu^{\e}$ is $\upl$-free by Theorem~\ref{thm:expclosure}. Suppose that $K_\mu = \log(K_\mu^>)$. Since $K_\mu$ is $\upl$-free, it has no gap, so $K_{\mu+1}$ must be an extension of type (c) or (d). Then $K_{\mu+1}$ is $\upl$-free by Corollaries~\ref{cor:logsmallint} and~\ref{cor:logbigint}. 

Set $L\coloneqq K_\nu$, so $L$ is a logarithmic $T$-Liouville closure of $K$. Let $M$ be a Liouville closed logarithmic $H_T$-field extension of $K$. By Lemma~\ref{lem:lognogap}, there is an $\LdO_{\log}(K)$-embedding $\imath\colon L \to M$. Moreover, if $M$ is a logarithmic $T$-Liouville closure of $K$, then in particular, $M$ is a $T$-Liouville closure of $K$ and $\imath(L)$ is a Liouville closed $H_T$-subfield of $M$ containing $K$, so $\imath(L)=M$ by Corollary~\ref{cor:charofTLclosure1}. Thus, $L$ is unique up to $\LdO_{\log}(K)$-isomorphism.
\end{proof}

\noindent
If $K$ is not $\upl$-free, then $K$ has two distinct logarithmic $T$-Liouville closures. As in the case of $H_T$-fields, it is helpful to handle the case that $K$ has a gap and the case that $K$ has asymptotic integration separately.

\begin{proposition}
\label{prop:loggapclosures}
Let $\beta \in \Gamma$ be a gap in $K$. Then $K$ has logarithmic $T$-Liouville closures $L_1$ and $L_2$ with $\beta \in (\Gamma_{L_1}^>)'$ and $\beta \in (\Gamma_{L_2}^<)'$. Let $M$ be a Liouville closed logarithmic $H_T$-field extension of $K$. If $\beta \in (\Gamma_M^>)'$, then there is an $\LdO_{\log}(K)$-embedding $L_1\to M$. Likewise, if $\beta \in (\Gamma_M^<)'$, then there is an $\LdO_{\log}(K)$-embedding $L_2\to M$. Any logarithmic $T$-Liouville closure of $K$ is $\LdO_{\log}(K)$-isomorphic to either $L_1$ or $L_2$. 
\end{proposition}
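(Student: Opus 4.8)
The plan is to follow the proof of Proposition~\ref{prop:gapclosures} almost verbatim, substituting the logarithmic gap-integral lemmas of Section~\ref{sec:logHT} for their Section~\ref{sec:HT} counterparts. Fix $s\in K$ with $vs=\beta$. Lemma~\ref{lem:loggapgoesup} produces a logarithmic pre-$H_T$-field extension $K_1\coloneqq K\langle a\rangle_\upo$ with $a\prec 1$, $a'=s$, which is $\upo$-free (hence $\upl$-free) and has $\res K_1=\res K$; since $K$ is an $H_T$-field, Corollary~\ref{cor:HTres} upgrades $K_1$ to a logarithmic $H_T$-field, and it is a logarithmic $T$-Liouville extension of $K$ by the remarks preceding the definition of a logarithmic $T$-Liouville tower. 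Symmetrically, as $K$ is a logarithmic $H_T$-field, Lemma~\ref{lem:loggapgoesdown} gives a logarithmic $H_T$-field extension $K_2\coloneqq K\langle b\rangle_\upo$ with $b\succ 1$, $b'=s$, again $\upl$-free and with $\res K_2=\res K$. In $K_1$ we have $va>0$ and $v(a')=\beta$, so $\beta\in(\Gamma_{K_1}^>)'$, and likewise $\beta\in(\Gamma_{K_2}^<)'$.

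Since $K_1$ and $K_2$ are $\upl$-free, Proposition~\ref{prop:loglambdafreeclosure} furnishes logarithmic $T$-Liouville closures $L_1\supseteq K_1$ and $L_2\supseteq K_2$, each of which embeds over its base into any Liouville closed logarithmic $H_T$-field extension of that base. By transitivity of $T$-Liouville extensions (Fact~\ref{fact:Lexts}), $L_1$ and $L_2$ are logarithmic $T$-Liouville closures of $K$. As $a\in L_1$ still satisfies $a\prec 1$ and $a'=s$, we retain $\beta\in(\Gamma_{L_1}^>)'$, and similarly $\beta\in(\Gamma_{L_2}^<)'$.

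For the embedding statements, let $M$ be a Liouville closed logarithmic $H_T$-field extension of $K$. First one checks that $M$ has asymptotic integration: closure under integration is incompatible with $M$ having a gap or being grounded (as in the proof of Lemma~\ref{lem:toptclosure}), so Fact~\ref{fact:trich} applies. Suppose $\beta\in(\Gamma_M^>)'$ and pick an integral $f\in M$ of $s$. Since $v(f')=\beta\in(\Gamma_M^>)'$ and $(\Gamma_M^<)'<(\Gamma_M^>)'$ are disjoint, $f$ cannot be infinite; if $f\asymp 1$ then, writing $f=c+\epsilon$ with $c\in C_M$ and $\epsilon\in\smallo_M$ using $\cO_M=C_M+\smallo_M$, we get $\epsilon'=s$, so in every case $s\in\der\smallo_M$. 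Thus Lemma~\ref{lem:loggapgoesup} gives an $\LdO_{\log}(K)$-embedding $K_1\to M$, which extends to an $\LdO_{\log}(K)$-embedding $L_1\to M$ by the embedding property of $L_1$ over $K_1$. The case $\beta\in(\Gamma_M^<)'$ is dual: any integral $f\in M$ of $s$ must be infinite (otherwise $v(f')\in(\Gamma_M^>)'$), and Lemma~\ref{lem:loggapgoesdown} applied with $b\coloneqq f$ gives $K_2\to M$, extending to $L_2\to M$.

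For the last assertion, let $N$ be any logarithmic $T$-Liouville closure of $K$. Then $N$ has asymptotic integration, so $\beta\in(\Gamma_N^>)'$ or $\beta\in(\Gamma_N^<)'$; in the first case the previous paragraph provides an $\LdO_{\log}(K)$-embedding $\imath\colon L_1\to N$, and since $\imath(L_1)$ is a Liouville closed $H_T$-subfield of $N$ containing $K$, Corollary~\ref{cor:charofTLclosure1} forces $\imath(L_1)=N$, so $N$ is $\LdO_{\log}(K)$-isomorphic to $L_1$; the second case gives $N\cong L_2$ analogously. The only place requiring any care is the third paragraph, namely verifying that the hypotheses of Lemmas~\ref{lem:loggapgoesup} and~\ref{lem:loggapgoesdown} (``$s\in\der\smallo_M$'', resp.\ ``$b\succ1$ and $b'=s$'') are actually met in $M$; but this is exactly the bookkeeping already implicit in the proof of Proposition~\ref{prop:gapclosures}, and everything else is a direct transcription.
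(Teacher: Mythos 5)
Your proposal is correct and follows essentially the same route as the paper: build $K_1=K\langle a\rangle_\upo$ and $K_2=K\langle b\rangle_\upo$ via Lemmas~\ref{lem:loggapgoesup} and~\ref{lem:loggapgoesdown}, pass to logarithmic $T$-Liouville closures using $\upl$-freeness and Proposition~\ref{prop:loglambdafreeclosure}, and finish with the embedding properties and Corollary~\ref{cor:charofTLclosure1}. The only difference is that you explicitly verify the hypotheses of the gap-integral lemmas in $M$ (that $s\in\der\smallo_M$, resp.\ that an integral of $s$ is infinite, and that a Liouville closed extension has asymptotic integration), which the paper leaves implicit; these checks are correct.
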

\begin{proof}
Let $s \in K$ with $vs = \beta$. Let $K_1\coloneqq K\langle a \rangle_{\upo}$ be the logarithmic $H_T$-field extension of $K$ given in Lemma~\ref{lem:loggapgoesup}, so $a \prec 1$ and $a' = s$, and let $K_2\coloneqq K\langle b \rangle_{\upo}$ be the logarithmic $H_T$-field extension of $K$ given in Lemma~\ref{lem:loggapgoesdown}, so $b \succ 1$ and $b' = s$. Then $K_1$ is $\upl$-free, so it has a logarithmic $T$-Liouville closure $L_1$ which embeds over $K_1$ into any Liouville closed logarithmic $H_T$-field extension of $K_1$ by Proposition~\ref{prop:loglambdafreeclosure}. Likewise, $K_2$ has a logarithmic $T$-Liouville closure $L_2$ which embeds over $K_2$ into any Liouville closed logarithmic $H_T$-field extension of $K_2$. Now let $M$ be a Liouville closed logarithmic $H_T$-field extension of $K$. If $\beta \in (\Gamma_M^>)'$, then the embedding property in Lemma~\ref{lem:loggapgoesup} gives an $\LdO_{\log}(K)$-embedding $K_1\to M$, which in turn extends to an $\LdO_{\log}(K)$-embedding $L_1\to M$. If $\beta \in (\Gamma_M^<)'$, then using the embedding property in Lemma~\ref{lem:loggapgoesdown} instead, we get an $\LdO_{\log}(K)$-embedding $L_2\to M$. If $M$ is a logarithmic $T$-Liouville closure of $K$, then $M$ is $\LdO_{\log}(K)$-isomorphic to either $L_1$ or $L_2$ by Corollary~\ref{cor:charofTLclosure1}, since $M$ contains the $\LdO_{\log}(K)$-isomorphic image of either $L_1$ or $L_2$ as a Liouville closed logarithmic $H_T$-subfield.
\end{proof}

\begin{proposition}
\label{prop:loguplclosures}
Suppose that $K$ has asymptotic integration and is not $\upl$-free. Then $K$ has logarithmic $T$-Liouville closures $L_1$ and $L_2$ which are not $\LdO_{\log}(K)$-isomorphic. If $M$ is a Liouville closed logarithmic $H_T$-field extension of $K$, then there is an $\LdO_{\log}(K)$-embedding of either $L_1$ or $L_2$ into $M$. Any logarithmic $T$-Liouville closure of $K$ is $\LdO_{\log}(K)$-isomorphic to either $L_1$ or $L_2$. 
\end{proposition}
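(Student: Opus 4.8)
The plan is to mirror the proofs of Propositions~\ref{prop:uplclosures} and~\ref{prop:loggapclosures}: produce a logarithmic $H_T$-field extension of $K$ that has a gap, and then apply Proposition~\ref{prop:loggapclosures}. Since $K$ has asymptotic integration and is not $\upl$-free, fix a $\upl$-sequence $(\upl_\rho)$ in $K$ with a pseudolimit $\upl\in K$, so that $v\big(\upl+(K^\times)^\dagger\big)$ is a cofinal subset of $\Psi^\downarrow$ by Fact~\ref{fact:uplminusdaggers}. In the non-logarithmic setting one now applies Lemma~\ref{lem:bigexpint} directly to $s=-\upl$; the new difficulty is that the gap-creating extension must carry a logarithm, and the natural value of $\log a$ for an element $a$ with $a^\dagger=-\upl$ is an integral of $-\upl$, which need not already exist in $K$. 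Producing the gap extension is therefore the main obstacle, and I would handle it in two stages.

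First, adjoin an integral $g$ of $-\upl$: if $v(-\upl-\der K)\subseteq(\Gamma^<)'$ this is done by Corollary~\ref{cor:logbigint}; otherwise some $y\in K$ has $v(-\upl-y')\in(\Gamma^>)'$ by asymptotic integration, and then one applies Corollary~\ref{cor:logsmallint} to $-\upl-y'$ and adds $y$, unless $-\upl$ already lies in $\der K$. In every case one gets an immediate logarithmic $H_T$-field extension $K_1$ of $K$ with $g\in K_1$ and $g'=-\upl$ (here $\res K_1=\res K$, so Corollary~\ref{cor:HTres} keeps us inside the class of $H_T$-fields, and Corollary~\ref{cor:immlogH} supplies the logarithm together with its embedding property). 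Being immediate over $K$, the field $K_1$ still has asymptotic integration, and hence $g\notin\log(K_1^>)$: were $g=\log h$ with $h\in K_1^>$, then $h^\dagger=g'=-\upl$ and Lemma~\ref{lem:uplgapcreator} would make $vh$ a gap in $K_1$. Now apply Lemma~\ref{lem:addexp} with $f=g$ to get a logarithmic $H_T$-field extension $K_2:=K_1\langle a\rangle$ with $a>0$, $\log a=g$, hence $a^\dagger=g'=-\upl$, with $va\notin\Gamma$ and $\res K_2=\res K$. Since $(\upl_\rho)$ is still a $\upl$-sequence in $K_1$ with $\upl_\rho\leadsto\upl$, Lemma~\ref{lem:uplgapcreator} shows that $\beta:=va$ is a gap in $K_2$. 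Moreover $K_2$ is a logarithmic $T$-Liouville extension of $K$, being obtained by adjoining an integral and an exponential integral with $C_{K_2}=C$.

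Applying Proposition~\ref{prop:loggapclosures} to $K_2$ and the gap $\beta$ produces logarithmic $T$-Liouville closures $L_1,L_2$ of $K_2$---hence of $K$, by Fact~\ref{fact:Lexts}---with $\beta\in(\Gamma_{L_1}^>)'$ and $\beta\in(\Gamma_{L_2}^<)'$. To see that $L_1$ and $L_2$ are not $\LdO_{\log}(K)$-isomorphic, choose (via Lemmas~\ref{lem:loggapgoesup} and~\ref{lem:loggapgoesdown}) $b_1\in L_1$ with $b_1\prec1$ and $b_1'=a$, and $b_2\in L_2$ with $b_2\succ1$ and $b_2'=a$; an $\LdO_{\log}(K)$-embedding $\imath\colon L_1\to L_2$ would give $(\imath(b_1)')^\dagger=\imath\big((b_1')^\dagger\big)=\imath(-\upl)=-\upl=(b_2')^\dagger$, whence $\imath(b_1)=cb_2+d$ for some $c\in C^\times$ and $d\in C$, forcing $\imath(b_1)\succ1$ although $\imath$ must carry $b_1\in\smallo_{L_1}$ into $\smallo_{L_2}$---the same contradiction as in Proposition~\ref{prop:uplclosures}. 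Finally, given a Liouville closed logarithmic $H_T$-field extension $M$ of $K$, the surjectivity of $\log$ on $M$ (Lemma~\ref{lem:Liouvilleexponentials}) and the closure of $M$ under integration let the embedding properties of Corollaries~\ref{cor:logbigint},~\ref{cor:logsmallint} and Lemma~\ref{lem:addexp} produce an $\LdO_{\log}(K)$-embedding $K_2\to M$; since $M$ has asymptotic integration, the image of $\beta$ lies in exactly one of $(\Gamma_M^>)'$, $(\Gamma_M^<)'$, and the corresponding clause of Proposition~\ref{prop:loggapclosures} extends this to an embedding $L_1\to M$ or $L_2\to M$. In particular, any logarithmic $T$-Liouville closure $M$ of $K$ is a $T$-Liouville closure of $K$ containing an $\LdO_{\log}(K)$-isomorphic copy of $L_1$ or $L_2$ as a Liouville closed $H_T$-subfield, so $M$ is $\LdO_{\log}(K)$-isomorphic to $L_1$ or $L_2$ by Corollary~\ref{cor:charofTLclosure1}.
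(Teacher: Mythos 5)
Your proposal is correct and follows essentially the same route as the paper: first secure an integral of $-\upl$ in an immediate logarithmic extension (via Corollary~\ref{cor:logsmallint} or~\ref{cor:logbigint}), observe that this integral cannot lie in the range of $\log$, then apply Lemma~\ref{lem:addexp} to create a gap via Lemma~\ref{lem:uplgapcreator} and finish with Proposition~\ref{prop:loggapclosures}; the non-isomorphism and embedding arguments also match the paper's. The only cosmetic difference is that the paper organizes the two stages as a case split on whether $-\upl$ already has an integral in $K$, whereas you run them as a single pipeline.
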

\begin{proof}
Let $(\upl_\rho)$ be a $\upl$-sequence in $K$ with pseudolimit $\upl \in K$. First, we consider the case that $-\upl$ has an integral $f \in K$. If there were $a \in K^>$ with $\log a = f$, then we would have $a^\dagger= f' = -\upl$, contradicting that $v\big(\upl+(K^\times)^\dagger\big)$ is a cofinal subset of $\Psi^\downarrow$ by Fact~\ref{fact:uplminusdaggers}. Thus, $f$ has no exponential in $K^>$, so we use Lemma~\ref{lem:addexp} to extend $K$ to a logarithmic $H_T$-field $K\langle a \rangle$ with $\log a = f$. Then $a^\dagger = -\upl$, so $va$ is a gap in $K\langle a \rangle$ by Lemma~\ref{lem:uplgapcreator}. By Proposition~\ref{prop:loggapclosures}, $K\langle a \rangle$ has logarithmic $T$-Liouville closures $L_1$ and $L_2$ with $va \in (\Gamma_{L_1}^>)'$ and $va \in (\Gamma_{L_2}^<)'$, one of which embeds over $K\langle a \rangle$ into any Liouville closed logarithmic $H_T$-field extension of $K\langle a \rangle$. Using Lemma~\ref{lem:Liouvilleexponentials} and the universal property of Lemma~\ref{lem:addexp}, we see that either $L_1$ or $L_2$ embeds over $K$ into any Liouville closed logarithmic $H_T$-field extension of $K$.

Now, consider the case that $-\upl$ has no integral in $K$. Since $K$ has asymptotic integration, we may use either Corollary~\ref{cor:logsmallint} or Corollary~\ref{cor:logbigint} to extend $K$ to an immediate logarithmic $H_T$-field $K\langle f\rangle$ where $f' = s$. Since $K\langle f\rangle$ is an immediate extension of $K$, the $\upl$-sequence $(\upl_\rho)$ remains a $\upl$-sequence in $K\langle f\rangle$ and $K\langle f \rangle$ has asymptotic integration, so by the previous case, $K\langle f \rangle$ has logarithmic $T$-Liouville closures $L_1$ and $L_2$ with $v(\exp f) \in (\Gamma_{L_1}^>)'$ and $v(\exp f) \in (\Gamma_{L_2}^<)'$, one of which embeds over $K\langle f \rangle$ into any Liouville closed logarithmic $H_T$-field extension of $K\langle f \rangle$. The embedding properties in Corollaries~\ref{cor:logsmallint} and~\ref{cor:logbigint} ensure either $L_1$ or $L_2$ embeds over $K$ into any Liouville closed logarithmic $H_T$-field extension of $K$.

The embedding properties of $L_1$ and $L_2$, together with Corollary~\ref{cor:charofTLclosure1}, ensure that any logarithmic $T$-Liouville closure of $K$ is $\LdO_{\log}(K)$-isomorphic to either $L_1$ or $L_2$. The proof that $L_1$ and $L_2$ are not isomorphic to each other is exactly the same as in the proof of Proposition~\ref{prop:uplclosures}.
\end{proof}

\noindent
Let us combine the above propositions into one theorem.

\begin{theorem}
\label{thm:logoneortwotlclosures}
If $K$ is $\upl$-free, then $K$ has exactly one logarithmic $T$-Liouville closure up to $\LdO_{\log}(K)$-isomorphism. Otherwise, $K$ has exactly two logarithmic $T$-Liouville closures up to $\LdO_{\log}(K)$-isomorphism. For any Liouville closed logarithmic $H_T$-field extension $M$ of $K$, there is an $\LdO_{\log}(K)$-embedding of some logarithmic $T$-Liouville closure of $K$ into $M$.
\end{theorem}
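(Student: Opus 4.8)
The plan is to deduce the theorem from the three existence-and-classification results already in hand: Proposition~\ref{prop:loglambdafreeclosure} for the $\upl$-free case, Proposition~\ref{prop:loggapclosures} for the case of a gap, and Proposition~\ref{prop:loguplclosures} for the case of asymptotic integration without $\upl$-freeness. First I would set up the case split. By Corollary~\ref{cor:ungrounded}, $K$ is ungrounded, so by Fact~\ref{fact:trich} either $K$ has asymptotic integration or $K$ has a gap. By Fact~\ref{fact:upl->asymp}, a $\upl$-free $K$ has asymptotic integration, so "$K$ has a gap" forces "$K$ is not $\upl$-free". Hence there are three mutually exclusive cases: (i) $K$ is $\upl$-free (necessarily with asymptotic integration); (ii) $K$ has a gap; (iii) $K$ has asymptotic integration but is not $\upl$-free. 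Existence of a logarithmic $T$-Liouville closure is never in doubt (e.g. Lemma~\ref{lem:logtoptclosure}), so the content is the count and the embedding assertion.

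In case (i) I would simply invoke Proposition~\ref{prop:loglambdafreeclosure}: it yields a logarithmic $T$-Liouville closure $L$, the fact that $L$ is the unique one up to $\LdO_{\log}(K)$-isomorphism, and an embedding of $L$ over $K$ into every Liouville closed logarithmic $H_T$-field extension of $K$. This gives the "exactly one" conclusion together with the final embedding assertion. In case (iii) I would likewise invoke Proposition~\ref{prop:loguplclosures}, which directly produces two non-isomorphic logarithmic $T$-Liouville closures $L_1,L_2$, shows every logarithmic $T$-Liouville closure is $\LdO_{\log}(K)$-isomorphic to one of them, and embeds one of $L_1,L_2$ over $K$ into any prescribed Liouville closed logarithmic $H_T$-field extension. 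Hence the "exactly two" conclusion and the embedding assertion.

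Case (ii) needs a little extra bookkeeping. Fixing a gap $\beta \in \Gamma$ and $s \in K$ with $vs = \beta$, Proposition~\ref{prop:loggapclosures} supplies logarithmic $T$-Liouville closures $L_1$ with $\beta \in (\Gamma_{L_1}^>)'$ and $L_2$ with $\beta \in (\Gamma_{L_2}^<)'$, plus the classification of all logarithmic $T$-Liouville closures as $\LdO_{\log}(K)$-isomorphic to one of them. Two points then remain. I would check that $L_1$ and $L_2$ are not $\LdO_{\log}(K)$-isomorphic exactly as in the proof of Proposition~\ref{prop:uplclosures}: such an isomorphism $\iota$ is in particular an $\LdO(K)$-isomorphism, hence preserves the valuation; it must send the element $a \in L_1$ with $a \prec 1$ and $a' = s$ to some $x \in L_2$ with $x' = s$, and every such $x$ has the form $b+c$ with $b \succ 1$ the fixed integral of $s$ in $L_2$ and $c \in C_{L_2} = C \subseteq \cO_{L_2}$, so $x \succ 1$, contradicting $\iota(a) \prec 1$. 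Finally, for a Liouville closed logarithmic $H_T$-field extension $M$ of $K$: since $M$ is closed under integration it has asymptotic integration (an integral of $s$ in $M$, adjusted by a constant of $C$ to be $\not\asymp 1$, would otherwise place $\beta$ in $(\Gamma_M^{\neq})'$, contradicting that $\beta$ is a gap of $M$; the grounded alternative is excluded the same way), so $\beta \in (\Gamma_M^>)'$ or $\beta \in (\Gamma_M^<)'$, and Proposition~\ref{prop:loggapclosures} embeds $L_1$ or $L_2$ into $M$ accordingly.

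The only genuinely delicate point will be case (ii): one must be sure both that the two produced closures are really non-isomorphic and that every Liouville closed extension "commits to a side" of the gap. Both facts are easy but go through the underlying valued differential field structure rather than the $\LdO_{\log}$-structure directly; everything else is a direct appeal to the quoted propositions, so the write-up should be short.
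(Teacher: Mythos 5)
Your proposal is correct and takes essentially the same route as the paper, which proves the theorem simply by combining Propositions~\ref{prop:loglambdafreeclosure}, \ref{prop:loggapclosures}, and~\ref{prop:loguplclosures} via the case split you describe (ungrounded by Corollary~\ref{cor:ungrounded}, then gap versus asymptotic integration by Fact~\ref{fact:trich}, with $\upl$-freeness forcing asymptotic integration by Fact~\ref{fact:upl->asymp}). The extra bookkeeping you do in the gap case is exactly what the paper leaves implicit, and your arguments are sound; note only that the constant used to adjust the integral of $s$ in $M$ should be taken in $C_M$ rather than $C$, and that non-isomorphism of $L_1$ and $L_2$ also follows at once from $\beta\in(\Gamma_{L_1}^>)'$ versus $\beta\in(\Gamma_{L_2}^<)'$, since an $\LdO_{\log}(K)$-isomorphism fixes $\Gamma$ and $(\Gamma^<)'<(\Gamma^>)'$.
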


\subsection{An application to $\R_{\anexp}$-Hardy fields}
In this subsection, let $\cR$, $\cL_{\cR}$, and $T_{\cR}$ be as in the introduction, that is, $\cR$ is an o-minimal expansion of the real field, $T_{\cR}$ is the $\cL_{\cR}$-theory of $\cR$, $\cL_{\cR}$ is assumed to include a constant symbol for each $r \in \R$, and $T_{\cR}$ is assumed to have quantifier elimination and a universal axiomatization. Let $\cH$ be an $\cR$-Hardy field and let $[f]$ be a germ of a real-valued unary function at $+\infty$. Then $[f]$ is said to be \textbf{comparable to $\cH$} if for each $[g]\in \cH$, either $g(x)<f(x)$ eventually, or $g(x) > f(x)$ eventually, or $g(x) = f(x)$ eventually (where \emph{eventually} means \emph{for all sufficiently large $x$}). If $[f]$ is comparable to $\cH$, then set
\[
\cH\langle [f]\rangle\ \coloneqq \ \big\{t([f]): t\text{ is a unary $\cL_{\cR}(\cH)$-term}\big\}. 
\]
If, in addition to being comparable with $\cH$, the function $f$ is eventually $\cC^1$ and $[f'] \in \cH\langle [f]\rangle$, then $\cH\langle [f]\rangle$ is an $\cR$-Hardy field; see~\cite[Lemma 5.12]{DMM94} and the remarks at the end of~\cite{DMM94}. If $f' = g$ or $f = \exp(g)$ for some $[g] \in \cH$, then $[f]$ is comparable to $\cH$ by Boshernitzan~\cite[Theorem 5.3]{Bo81}; see also~\cite{Ro83}. In both cases, $f$ is eventually $\cC^1$ and $[f']\in \cH\langle [f]\rangle$, so it follows that
\begin{itemize}
\item $\cH\langle [\exp g]\rangle$ is an $\cR$-Hardy field for $[g]\in \cH$, and
\item $\cH\langle [f]\rangle$ is an $\cR$-Hardy field if $[f']\in \cH$. 
\end{itemize}

\medskip\noindent
Since any increasing union of $\cR$-Hardy fields is an $\cR$-Hardy field and since Hardy fields are bounded in size, Zorn's lemma and the remarks above give us a \emph{Liouville closed} $\cR$-Hardy field extension of $\cH$ where every germ has an integral and an exponential (thus, every germ also has a nonzero exponential integral). We denote by $\Li_{\cR}(\cH)$ the intersection of all Liouville closed $\cR$-Hardy field extensions of $\cH$. Then $\Li_{\cR}(\cH)$ is a $T_{\cR}$-Liouville closure of $\cH$ by Corollary~\ref{cor:charofTLclosure1}, since the constant field of any $\cR$-Hardy field is $\R$.

\medskip\noindent
Here is an application when $\cR = \R_{\anexp}$. The appropriate language here is $\cL_{\anexp}$, which includes a function symbol for each restricted analytic function, as well as function symbols for $\exp$ and $\log$. By~\cite[Corollary 4.6]{DMM94}, $\R_{\anexp}$ has quantifier elimination and a universal axiomatization in this language. Since each constant function is analytic, our assumptions at the beginning of the subsection hold for this expansion. For an $\R_{\anexp}$-Hardy field $\cH$, let us write $\Li_{\anexp}(\cH)$ instead of $\Li_{\R_{\anexp}}(\cH)$. Recall that $\T_{\anexp}$ is the canonical expansion of the field $\T$ of logarithmic-exponential transseries to an $H_{\anexp}$-field. The following theorem is an analog of a theorem on Hardy fields from~\cite{AD02}.

\begin{theorem}\label{thm:Ranexpembedding}
Let $\cH$ be an $\R_{\anexp}$-Hardy field and let $\imath\colon \cH \to \T_{\anexp}$ be an $\LdO_{\anexp}$-embedding. Then $\imath$ extends to an $\LdO_{\anexp}$-embedding $\Li_{\anexp}(\cH)\to \T_{\anexp}$.
\end{theorem}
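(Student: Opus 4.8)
The plan is to reduce Theorem~\ref{thm:Ranexpembedding} to Theorem~\ref{thm:logoneortwotlclosures} applied to the base theory $T = T_{\an}$ with its restricted exponential, so that $T^{\e} = T_{\anexp}$ and logarithmic $H_T$-fields in the sense of Section~\ref{sec:logHT} are exactly the relevant objects. First I would verify that $T_{\an}$ satisfies the standing hypotheses of Sections~\ref{sec:logHT}--\ref{sec:logLiouville}: $T_{\an}$ is power bounded (in fact polynomially bounded), it defines a restricted exponential $\e$ via the restricted analytic function $\exp|_{[-1,1]}$, and its field of exponents $\Q$ is cofinal in the prime model $\R$ of $T_{\an}$. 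Next I would observe that both $\cH$ (hence $\Li_{\anexp}(\cH)$) and $\T_{\anexp}$, viewed as $\LdO_{\anexp}$-structures with the trivial valuation ring in the Hardy field case and the natural valuation on $\T_{\anexp}$, are logarithmic $H_{T_{\an}}$-fields: the $H_{\anexp}$-field axioms together with surjectivity of $\log$ give (by Proposition~\ref{prop:surjectiveisexp}) that these are in fact $H_{T_{\anexp}}$-fields, and in particular logarithmic $H_{T_{\an}}$-fields with surjective logarithm, so Lemma~\ref{lem:Liouvilleexponentials} applies.

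The key step is then to identify $\Li_{\anexp}(\cH)$ with a logarithmic $T_{\an}$-Liouville closure of $\cH$. The discussion preceding the theorem already establishes, via Boshernitzan's comparability theorem~\cite{Bo81} and~\cite[Lemma 5.12]{DMM94}, that $\Li_{\anexp}(\cH)$ is a Liouville closed $\R_{\anexp}$-Hardy field, that it is a $T_{\anexp}$-Liouville extension of $\cH$ (each new germ is obtained by adjoining an integral, an exponential, or an exponential integral, and the constant field stays $\R = C_{\cH}$), and that it has no proper Liouville closed $\R_{\anexp}$-Hardy subfield containing $\cH$. I would note that a Liouville closed logarithmic $H_{T_{\an}}$-subfield of $\Li_{\anexp}(\cH)$ containing $\cH$ is in particular a Liouville closed $\R_{\anexp}$-Hardy field (it is closed under $\log$ and $\exp$ since it is Liouville closed, by Lemma~\ref{lem:Liouvilleexponentials}, and closed under all restricted analytic functions since these are $\cL_{\an}(\emptyset)$-definable and it is an $\cL$-substructure), hence equals $\Li_{\anexp}(\cH)$. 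By Corollary~\ref{cor:logcharofTLclosure}, $\Li_{\anexp}(\cH)$ is therefore a logarithmic $T_{\an}$-Liouville closure of $\cH$.

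Now the proof concludes as follows. The embedding $\imath\colon \cH \to \T_{\anexp}$ is an $\LdO_{\anexp}$-embedding, hence an $\LdO_{\log}(C_{\cH})$-compatible embedding of logarithmic $H_{T_{\an}}$-fields, and $\T_{\anexp}$ is a Liouville closed logarithmic $H_{T_{\an}}$-field extension of $\imath(\cH)$. By Theorem~\ref{thm:logoneortwotlclosures} (transported along $\imath$, so that we work over $\imath(\cH)$ rather than $\cH$), there is an $\LdO_{\log}(\imath(\cH))$-embedding of some logarithmic $T_{\an}$-Liouville closure of $\imath(\cH)$ into $\T_{\anexp}$; since $\Li_{\anexp}(\cH)$, transported by $\imath$, is such a closure, and since (by the $\upl$-freeness dichotomy, or simply by the final clause of Theorem~\ref{thm:logoneortwotlclosures}) every logarithmic $T_{\an}$-Liouville closure of $\imath(\cH)$ that embeds into $\T_{\anexp}$ is $\LdO_{\log}(\imath(\cH))$-isomorphic to the one that does embed, we obtain an $\LdO_{\log}(\cH)$-embedding $\Li_{\anexp}(\cH)\to\T_{\anexp}$ extending $\imath$; one checks it is an $\LdO_{\anexp}$-embedding because $\exp$ is $\cL_{\anexp}$-definable from $\log$ and $\LO_{\log}$-embeddings of logarithmic $H_T$-fields automatically respect $\exp$ (compare the proof of Proposition~\ref{prop:surjectiveisexp}). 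The main obstacle I anticipate is the bookkeeping in this last identification: Theorem~\ref{thm:logoneortwotlclosures} guarantees an embedding of \emph{some} closure and an isomorphism of all closures, but one must argue carefully that $\Li_{\anexp}(\cH)$ is genuinely among them and that the embedding can be chosen to extend the given $\imath$ rather than merely some abstract copy — this is handled by applying the theorem over $\imath(\cH)$ and using that $\imath$ is already an isomorphism onto its image together with the uniqueness clause. A secondary point requiring care is confirming that the valuation-theoretic structure $\cO$ on $\Li_{\anexp}(\cH)$ (trivial, since its constant field is all of $\R$ and it is Dedekind-complete-bounded... rather, since it is a Hardy field with constant field $\R$, its valuation ring is $\R + \smallo$) is exactly the one making the $\R_{\anexp}$-Hardy field an $H_{\anexp}$-field, so that the two notions of extension coincide.
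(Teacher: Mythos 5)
There is a genuine gap in the final step. Theorem~\ref{thm:logoneortwotlclosures} only guarantees that \emph{some} logarithmic $T_{\an}$-Liouville closure of $\imath(\cH)$ embeds into $\T_{\anexp}$ over $\imath(\cH)$. When $\imath(\cH)$ is $\upl$-free there is a unique closure up to isomorphism and your argument goes through; but when $\imath(\cH)$ is not $\upl$-free there are exactly two closures, and by Proposition~\ref{prop:loguplclosures} they are \emph{not} $\LdO_{\log}(\imath(\cH))$-isomorphic to each other. Your parenthetical justification (``every closure that embeds is isomorphic to the one that does embed'') is circular: what you actually need is that $\Li_{\anexp}(\cH)$, transported along $\imath$, is isomorphic over $\imath(\cH)$ to the closure that embeds into $\T_{\anexp}$ rather than to the other one, and nothing in the proposal rules out the bad alternative. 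Resolving this ``which fork'' ambiguity is precisely where extra, non-formal input about Hardy fields and transseries must enter, and the proposal never invokes any such input.

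The paper's proof supplies that input and reverses the direction of the embedding. It builds a maximal logarithmic $T_{\an}$-Liouville tower $(K_\mu)_{\mu\le\nu}$ on $K=\imath(\cH)$ \emph{inside} $\T_{\anexp}$ (after first reducing to the case $\cH\supsetneq\R_{\anexp}$ by adjoining the germ of $x$), so that the top $K_\nu$ is a logarithmic $T_{\an}$-Liouville closure of $K$ by Lemma~\ref{lem:logrelativemaxismax}. The key external fact, \cite[Lemma 6.6]{AD02}, says that none of the $K_\mu$ (being $H$-subfields of $\T$ containing $\R$) has a gap; hence Lemma~\ref{lem:lognogap} applies and $K_\nu$ embeds over $K$ into \emph{any} Liouville closed logarithmic $H_{T_{\an}}$-field extension, in particular into $\Li_{\anexp}(\cH)$ via an embedding $\jmath$ extending $\imath\inv$. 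Minimality of $\Li_{\anexp}(\cH)$ (Corollary~\ref{cor:charofTLclosure1}) forces $\jmath(K_\nu)=\Li_{\anexp}(\cH)$, and $\jmath\inv$ is the desired extension of $\imath$. So the correct strategy is not to embed $\Li_{\anexp}(\cH)$ abstractly via the existence-and-uniqueness theorem, but to realize a closure concretely inside $\T_{\anexp}$ where the no-gap property of transseries removes all choices, and then use minimality of $\Li_{\anexp}(\cH)$ to identify it with that closure. Your identification of $\Li_{\anexp}(\cH)$ as a logarithmic $T_{\an}$-Liouville closure and your verification of the standing hypotheses on $T_{\an}$ are fine (and largely already in the paper's preceding discussion); the missing ingredient is the no-gap fact for $H$-subfields of $\T$ and the direction-reversal that exploits it.
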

\begin{proof}
If $\cH = \R_{\anexp}$, then we extend $\imath$ to an $\LdO_{\anexp}$-embedding of
\[
\cH(\R_{\anexp})\ \coloneqq \ \big\{[t]:t \text{ is a unary }\cL_{\anexp}(\emptyset)\text{-term}\big\}
\]
by sending $[t] \in \cH(\R_{\anexp})$ to $t(x) \in \T_{\anexp}$ where $x \in \T_{\anexp}$ is the distinguished positive infinite element with derivative $x' = 1$. One can easily verify that this is an $\LdO_{\anexp}$-embedding. Thus, by replacing $\cH$ by $\cH(\R_{\anexp})$ if need be, we may assume that $\cH$ is a proper extension of $\R_{\anexp}$. Let $K\coloneqq \imath(\cH) \subseteq \T_{\anexp}$ and let $(K_\mu)_{\mu\leq \nu}$ be a maximal logarithmic $T_{\an}$-Liouville tower on $K$ in $\T_{\anexp}$. Lemma~\ref{lem:logrelativemaxismax} tells us that $K_{\nu}$ is a logarithmic $T_{\an}$-Liouville closure of $K$. Moreover, none of the $H_T$-fields $K_\mu$ have a gap by~\cite[Lemma 6.6]{AD02}, so Lemma~\ref{lem:lognogap} gives an $\LdO_{\anexp}$-embedding $\jmath\colon K_\nu\to \Li_{\anexp}(\cH)$ which extends $\imath\inv$. Since $\Li_{\anexp}(\cH)$ is a logarithmic $T_{\an}$-Liouville closure of $\cH$, we have $\jmath(K_\nu) = \Li_{\anexp}(\cH)$ by Corollary~\ref{cor:charofTLclosure1}. Thus, we may take $\jmath\inv\colon \Li_{\anexp}(\cH)\to \T_{\anexp}$ to be our $\LdO_{\anexp}$-embedding.
\end{proof}

\begin{remark}
Using $T_{\rexp}$, the theory of the reals with the restricted exponential function, in place of $T_{\an}$ in the proof above, one can also show that any $\LdO_{\exp}$-embedding of an $\R_{\exp}$-Hardy field $\cH\supseteq\R$ into $\T_{\exp}$ can be extended to $\Li_{\exp}(\cH)$. Here, $\T_{\exp}$ is the expansion of $\T$ by just the exponential function.

Using Lemmas~\ref{lem:relativemaxismax} and~\ref{lem:nogap} in place of Lemmas~\ref{lem:logrelativemaxismax} and~\ref{lem:lognogap} in the proof above, one can show that any $\LdO_{\an}$-embedding of an $\R_{\an}$-Hardy field $\cH$ into $\T_{\an}$ can be extended to $\Li_{\an}(\cH)$. The same holds for any polynomially bounded reduct $\cR$ of $\R_{\anexp}$, so long as $\cR$ satisfies the assumptions at the beginning of this subsection.
\end{remark}

\section{The order 1 intermediate value property}\label{sec:IVP}
\noindent
In this section, let $K$ be a pre-$H_T$-field. We do not assume that $T$ is power bounded. Our goal is to prove the following extension result: 

\begin{theorem}\label{thm:IVP}
$K$ has a pre-$H_T$-field extension $M$ with the following property: for every $\cL(M)$-definable continuous function $F\colon M\to M$ and every $b_1,b_2 \in M$ with 
\[
b_1'\ <\ F(b_1),\qquad b_2'\ >\ F(b_2),
\]
there is $a \in M$ between $b_1$ and $b_2$ with $a' = F(a)$. 
\end{theorem}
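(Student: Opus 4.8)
The plan is to obtain $M$ as the union of a transfinite chain of pre-$H_T$-field extensions of $K$, each link settling one instance of the intermediate value property. Call a triple $(F,b_1,b_2)$ an \emph{IVP-datum} over a pre-$H_T$-field $L$ if $F\colon L\to L$ is $\cL(L)$-definable and continuous, $b_1,b_2\in L$, and $b_1'<F(b_1)$ while $b_2'>F(b_2)$, and say the datum is \emph{resolved in $L$} if some $a\in L$ strictly between $b_1$ and $b_2$ satisfies $a'=F(a)$. Everything reduces to a one-step lemma: \emph{if $(F,b_1,b_2)$ is an IVP-datum over $L$ that is not resolved in $L$, then $L$ has a pre-$H_T$-field extension $L'$ containing some $a$ strictly between $b_1$ and $b_2$ with $a'=F^{L'}(a)$.} Granting this, for a given pre-$H_T$-field $L$ I would enumerate its IVP-data (of which there are at most $|L|+|\cL|+\aleph_0$), apply the one-step lemma along a chain and take unions at limit stages to obtain a pre-$H_T$-field $L^{\sharp}\supseteq L$ in which every IVP-datum over $L$ is resolved; here one uses that an $\LdO$-substructure of a pre-$H_T$-field is a pre-$H_T$-field and that an increasing union of pre-$H_T$-fields is again one (the axioms (PH1), (PH2) are universal, and $\TdO$ is preserved under unions of chains since $T$ is model complete). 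Setting $L_0\coloneqq K$, $L_{n+1}\coloneqq L_n^{\sharp}$, and $M\coloneqq\bigcup_{n<\omega}L_n$ then does it: an unresolved IVP-datum over $M$ would have all of its parameters in some $L_n$, would therefore be an unresolved IVP-datum over $L_n$ (both properties descend, using coherence of natural extensions by model completeness), and would hence be resolved already in $L_{n+1}\subseteq M$ — a contradiction.

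For the one-step lemma I would first reduce to a normal form. Replacing $(F,b_1,b_2)$ by $(-F(-\,\cdot\,),-b_2,-b_1)$ if necessary, we may assume $b_1<b_2$, and the substitution $a=b_1+t$, which replaces $F$ by $y\mapsto F(b_1+y)-b_1'$ and the interval $(b_1,b_2)$ by $(0,b_2-b_1)$, lets us assume $b_1=0$, so $F(0)>0>F(b_2)$. Now consider the downward closed set
\[
S\ \coloneqq\ \{c\in L:c\leq 0\}\ \cup\ \{c\in L:0<c\leq b_2\ \text{and}\ F(d)>d'\ \text{for all }d\in L\text{ with }0<d\leq c\},
\]
which contains $0$ and, because $F(b_2)<b_2'$, is bounded above by $b_2$. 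Using continuity of $F$ and of the derivation (the derivation is valuation-continuous by Lemma~\ref{lem:ctsderiv}, since pre-$H_T$-fields are $H_T$-asymptotic) together with the hypothesis that the datum is unresolved, one checks that the cut $(S,L\setminus S)$ is not realized in $L$; let $L\langle a\rangle$ be a simple $\cL$-extension with $S<a<L\setminus S$, so that $0<a<b_2$. Equip $L\langle a\rangle$ with the $T$-convex valuation ring $\{y:|y|<u\text{ for some }u\in\cO\}$ — a valid choice by Fact~\ref{fact:tworings} — and, since $a\notin\dclL(L)=L$, use Fact~\ref{fact:transext} to extend $\der$ to the unique $T$-derivation on $L\langle a\rangle$ with $a'=F^{L\langle a\rangle}(a)$.

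It remains to verify that $L\langle a\rangle$ with these choices is a pre-$H_T$-field, i.e.\ that (PH1) and (PH2) hold for every $g=G(a)\succ 1$ with $G$ an $\cL(L)$-definable function; this is where the real work lies. Weak o-minimality of $\TO$ (which needs no power-boundedness) reduces the problem to the study of $G$ on an open interval $I\ni a$ on which it is $\cC^1$, and Fact~\ref{fact:basicTderivation2} writes $G(a)'=G^{[\der]}(a)+G'(a)F(a)$, so that the asymptotic inequalities defining a pre-$H_T$-field in $L\langle a\rangle$ become statements about $G^{[\der]}(a)$, $G'(a)$ and $F(a)$ that one wants to transfer from the corresponding inequalities already holding in $L$, using continuity of $F$ and the defining property of $S$ to control $F(a)$ and the position of $a$ over $L$. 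The main obstacle is precisely the estimation of $G'(a)$ and $G^{[\der]}(a)$: Proposition~\ref{prop:smallderivsim}, which supplies exactly such an estimate, is unavailable here since $T$ is not assumed power bounded, so one must instead exploit the precise shape of the cut $S$ — that $F(d)-d'$ is positive for $d\in L$ just below $a$, but fails to be positive for some $d\in L$ arbitrarily close to $a$ from above — to pin down $va$ and the residue of $a$ relative to $L$, and thereby reduce to the pre-$H_T$-field inequalities valid in $L$, carried over by continuity. Once $L\langle a\rangle$ is known to be a pre-$H_T$-field, the element $a$, placed strictly between $b_1$ and $b_2$ with $a'=F(a)$, resolves the datum, which completes the one-step lemma and hence the theorem.
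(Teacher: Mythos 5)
Your global architecture --- a one-step extension lemma, iterated along a chain with unions at limits --- matches the paper (which iterates Proposition~\ref{prop:IVPstep}), and your cut is essentially the paper's $A=\{y\in I:y'<F(y)\}$ with its downward closure. But the one-step lemma is not actually proved. You correctly identify that verifying (PH1) and (PH2) for every $G(a)\succ 1$ is ``where the real work lies,'' and then propose to do it by estimating $G'(a)$ and $G^{[\der]}(a)$ after ``pinning down $va$ and the residue of $a$'' from the shape of the cut. That is precisely what is unavailable here: without power boundedness there is no Wilkie inequality to control $\Gamma_{L\langle a\rangle}$ or $\res L\langle a\rangle$, and no analog of Proposition~\ref{prop:smallderivsim} (the paper notes explicitly that this proposition fails for exponential functions). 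So the plan you sketch for the crucial verification does not go through, and this is a genuine gap rather than a deferred detail.

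The paper's way around it is a reduction you are missing. First arrange that the base is \emph{ungrounded} (automatic when $T$ defines an exponential, by Corollary~\ref{cor:expungrounded}; otherwise pass to the extension of Corollary~\ref{cor:omegaconstruction}), and equip $L\langle a\rangle$ with the \emph{larger} $T$-convex valuation ring $\cO_2=\big\{y:|y|<d\text{ for all }d\in L\text{ with }d>\cO\big\}$ rather than the convex hull $\cO_1$ you chose. Lemma~\ref{lem:phIVPext} then shows that for this choice the full pre-$H_T$-field axioms follow from the single condition ``$g'>0$ for all $g>\cO_{L\langle a\rangle}$,'' and that condition is checked by exactly the sign-change argument you already have in mind for placing $a$: if $G(a)'=G^{[\der]}(a)+G'(a)F(a)\leq 0$ with $G(a)>\cO_{L\langle a\rangle}$, one finds $d\in L$ with $\cO<d<G(a)$ (this step needs $\cO_2$, not $\cO_1$) and an $L$-interval around $a$ on which $G'(y)\big(y'-F(y)\big)>0$, forcing $y'-F(y)$ to have constant sign there --- contradicting that every such interval meets both $A$ and $I\setminus A$. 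No asymptotic bound on $G'(a)$ or $G^{[\der]}(a)$ is ever needed. Two smaller corrections: your reflection $(F,b_1,b_2)\mapsto\big({-F(-\,\cdot\,)},-b_2,-b_1\big)$ flips both the inequalities and the order, so it does \emph{not} convert the case $b_1>b_2$ into $b_1<b_2$; that case must be handled by reversing the inequality in the definition of the cut set. And one should dispose separately of the case where the cut is realized in $L$: if $A$ has a supremum $b$ in $I$, then $b'=F(b)$ by continuity and no extension is required.
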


\noindent
Before proving this theorem, we need a lemma about extensions of pre-$H_T$-fields:

\begin{lemma}\label{lem:phIVPext}
Suppose $K$ is ungrounded, let $M$ be a $\TdO$-extension of $K$, and suppose that
\[
\cO_M\ = \ \big\{y\in M:|y|<d\text{ for all }d \in K\text{ with } d >\cO\big\},
\]
and that $g'>0$ for all $g \in M$ with $g>\cO_M$. Then $M$ is a pre-$H_T$-field extension of $K$. 
\end{lemma}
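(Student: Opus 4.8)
The plan is to verify the two pre-$H_T$-field axioms (PH1) and (PH2) for $M$, using the hypothesis on $\cO_M$ together with the assumption that $g' > 0$ whenever $g > \cO_M$, and the fact that $K$ is an ungrounded pre-$H_T$-field. Note first that since $\cO_M \cap K = \cO$ (the given description of $\cO_M$ restricts to $\cO$ on $K$ by Fact~\ref{fact:tworings}), $M$ is indeed a $\TdO$-extension of $K$ as a valued differential field, and it remains only to check the pre-$H_T$-field axioms. For (PH1): given $g \in M$ with $g \succ 1$, we have $|g| > \cO_M$, so by hypothesis $|g|' > 0$; since $g^\dagger = |g|^\dagger = |g|'/|g|$ and $|g| > 0$, this gives $g^\dagger > 0$.

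The substance is in (PH2): I must show $g^\dagger \succ f'$ for all $f, g \in M$ with $g \succ 1 \succeq f$. This is exactly the argument used in the proof of Lemma~\ref{lem:equivHpreH} for the implication (3)$\Rightarrow$(1), but carried out over the \emph{external} valuation ring $\cO_M$ rather than an internal one. By replacing $g$ with $-g$ we may assume $g > 0$, so $g > \cO_M$. The one place where the proof of Lemma~\ref{lem:equivHpreH} uses the $H_T$-field hypothesis is to write $f = c + \epsilon$ with $c \in C_M$ and $\epsilon \prec 1$, so that $f'= \epsilon'$ and one can reduce to the case $f \prec 1$; here we do not have $\cO_M = C_M + \smallo_M$. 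Instead, I will argue directly. First suppose $f \prec 1$, i.e. $f \in \smallo_M$. Since $K$ is ungrounded, $\Gamma$ (hence $\Psi$) has no largest element, so I can pick $h \in K^{>}$ with $h \succ 1$ arbitrarily slowly growing, and it suffices to beat $g^\dagger$ using the ambient hypothesis. Concretely, pick any $c \in K$ with $0 < c \asymp 1$ (so $c > \smallo_M$ but $c < d$ for every $d \in K$ with $d > \cO$, hence $c \in \cO_M$ — in fact $c$ may be taken as small as we like within $\cO^\times$ if $K$ is nontrivially valued, and if $K$ is trivially valued the statement is vacuous since then $\cO_M$ forces $M$ trivially valued too). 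Then $0 < c + f, \ c - f \asymp 1$ in $M$ (as $f \prec 1$), so $g(c+f) > \cO_M$ and $g(c-f) > \cO_M$, whence by hypothesis
\[
g'(c+f) + gf' \ > \ 0, \qquad g'(c-f) - gf' \ > \ 0,
\]
that is $-g'(c+f) < gf' < g'(c-f)$. Dividing by $g > 0$ gives $-g^\dagger(c+f) < f' < g^\dagger(c-f)$, so $|f'| < g^\dagger(c + |f|) \asymp g^\dagger c$. Letting $c$ range over arbitrarily small positive elements of $\cO^\times$ (possible since $K$ is nontrivially valued — if $\cO = K$, pass instead to arbitrarily small positive elements of $K$ with the same inequalities, which still forces $f' \prec g^\dagger$ because $g^\dagger \neq 0$ can be divided out) yields $f' \prec g^\dagger$.

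It remains to handle $f \asymp 1$. Here I would pick $u \in K$ with $u \asymp 1$, e.g. by taking $u$ a suitable element of $K^\times$ with $vu = 0$; but since $\cO_M \cap K = \cO$ and residue fields may differ, I cannot assume $f \sim u$ for some $u \in K$. Instead use a direct variant: write $f = f$ and observe that for any positive $c \asymp 1$ in $K$, the element $2c$ (say) satisfies $2c - f \asymp 1 \asymp 2c + f$ and both are positive, so exactly as above $g^\dagger(2c - f) > f' > -g^\dagger(2c+f)$, giving $|f'| < g^\dagger(2c + |f|) \preceq g^\dagger$. To upgrade $\preceq$ to $\prec$, note that if $f \asymp 1$ then $f - f_0 \prec 1$ for $f_0 := $ any fixed nonzero element with $f \sim f_0$ if one exists in $K$; when no such $f_0 \in K$ exists, I instead run the argument with $c$ replaced by arbitrarily small elements to get $f' \preceq g^\dagger$ and then separately note $f' \neq g^\dagger$ is not needed — actually (PH2) only requires $g^\dagger \succ f'$, so I do need strictness. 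The clean fix: by the $f \prec 1$ case applied to $f - a$ for a suitable $a \in M$ with $a \asymp 1$, $a \prec g$ — hmm, this needs $a \in \cO_M$ with $f - a \prec 1$. So the \textbf{main obstacle} is precisely the $f \asymp 1$, $f \notin C_M + \smallo_M$ subcase: without the (H2)-type decomposition one cannot immediately reduce to $f \prec 1$. I expect this is resolved by exploiting that $\cO_M$ is \emph{external} over $K$: any $f \in \cO_M$ satisfies $|f| < d$ for all $d \in K$ with $d > \cO$, and one can choose $d \in K$ with $d \asymp 1$, $d > f$; then run the comparison argument with such $d$ in place of $c$, and use ungroundedness of $K$ (so that $\Psi$ is cofinal in $(\Gamma^<)'$ and one has enough room) together with the defining inequality for $\cO_M$ to squeeze $f' \prec g^\dagger$. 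I would write this last step carefully, modeling it on~\cite[Lemma 10.5.1]{ADH17} as invoked in Lemma~\ref{lem:equivHpreH}, adapting the role of constants to elements of $K$ that are $\asymp 1$ and using that $M$ need not be an elementary extension but the relevant inequalities $g \cdot d > \cO_M$ only require $d \in K^{>}$ with $d \succeq 1$ and $g \succ 1$.
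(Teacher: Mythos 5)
There is a genuine gap. Your proof of (PH1) is fine and agrees with the paper, but your treatment of (PH2) does not close. Two problems. First, in your $f\prec 1$ case you pick $c\in K$ with $0<c\asymp 1$ and then write the derivative of $g(c\pm f)$ as $g'(c\pm f)\pm gf'$, silently dropping the term $gc'$; that computation is only valid when $c$ is a constant, and in the present setting you have no control over $C_M$ and no way to replace $c$ by a constant (indeed the whole difficulty of the lemma is that $\cO_M$ need not equal $C_M+\smallo_M$). Second, and decisively, you yourself flag the case $f\asymp 1$, $f\notin C_M+\smallo_M$ as the ``main obstacle'' and leave it at ``I expect this is resolved by exploiting that $\cO_M$ is external over $K$ \dots I would write this last step carefully.'' That is precisely the case the lemma is needed for (in Proposition~\ref{prop:IVPstep} the new element $a$ lies in $\cO_M$ but has no reason to be congruent to a constant modulo $\smallo_M$), so the proposal does not contain a proof of (PH2).

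The paper avoids the comparison-with-constants argument of Lemma~\ref{lem:equivHpreH} entirely and instead proves the two valuation estimates $v(f')>\Psi$ for all $f\in\cO_M$ and $v(g^\dagger)\in\Psi^\downarrow$ for all $g\in M$ with $g\succ 1$; together these give $f'\prec g^\dagger$ with no case split on $f$. For the first estimate, given $\gamma\in\Psi$, ungroundedness of $K$ (i.e.\ $\Psi\subseteq(\Gamma^<)'$ with no maximum) supplies $d\in K$ with $d>\cO$ and $v(d')>\gamma$; since $d\in K$, the elements $d+f$ and $d-f$ are $>\cO_M$, so the hypothesis gives $d'+f'>0$ and $d'-f'>0$, hence $|f'|<d'$ and $v(f')>\gamma$. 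Note that here $d$ is used with its honest derivative $d'$, which is exactly what your substitute ``$c\asymp 1$'' argument was missing, and the reduction to $f\prec 1$ is never needed. For the second estimate, choose $d\in K$ with $|g|>d>\cO$; then $|g|d^{-1/2}>d^{1/2}>\cO_M$, so the hypothesis yields $\big(|g|d^{-1/2}\big)^\dagger=g^\dagger-\tfrac12 d^\dagger>0$, whence $g^\dagger>\tfrac12 d^\dagger>0$ and $v(g^\dagger)\leq v(d^\dagger)\in\Psi$. If you want to salvage your draft, this two-sided valuation squeeze is the step to carry out in place of the unfinished adaptation of~\cite[Lemma 10.5.1]{ADH17}.
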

\begin{proof}
Let $g \in M$ with $g \succ 1$. Then $|g|' >0$ by assumption, so $g^\dagger = |g|^\dagger > 0$, proving (PH1). For (PH2), let $f \in \cO_M$. We need to show that $f'\prec g^\dagger$. We will do this by showing that $v(f')>\Psi$ and that $vg^\dagger \in \Psi^\downarrow$. To see that $vf' > \Psi,$ let $\gamma \in \Psi$ and, using that $K$ is ungrounded, take $d \in K$ with $d>\cO$ and $v d'> \gamma$. Then $d+f,d-f>\cO_M$, so $d'+f',d'-f'>0$. This gives $-d'<f'<d'$, so $vf'\geq vd'> \gamma$. To see that $vg^\dagger \in \Psi^\downarrow$, take $d \in K$ with $|g|>d>\cO$. Then $|g|d^{-1/2}>d^{1/2}>\cO$ so $\big(|g|d^{-1/2}\big)^\dagger = g^\dagger - d^\dagger/2> 0$, which gives $g^\dagger > d^\dagger / 2$. As $d^\dagger > 0$, we have $vg^\dagger \leq vd^\dagger \in \Psi$, so $vg^\dagger \in \Psi^\downarrow$.
\end{proof}

\noindent
Theorem~\ref{thm:IVP} follows by iterating the following proposition; an analog of~\cite[Theorem 4.3]{AD05}. See also~\cite{vdD00}, where van den Dries proves this for $\cR$-Hardy fields. 

\begin{proposition}\label{prop:IVPstep}
Let $F\colon K\to K$ be an $\cL(K)$-definable continuous function and let $b_1,b_2 \in K$ with 
\[
b_1'\ <\ F(b_1),\qquad b_2'\ >\ F(b_2).
\]
Then there is a pre-$H_T$-field extension $M$ of $K$ and $a \in M$ between $b_1$ and $b_2$ with $a' = F(a)$. 
\end{proposition}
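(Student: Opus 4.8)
The plan is to reduce to the case of a proper pre-$H_T$-field by first handling some degenerate situations, then build the extension by choosing $a$ to realize a suitable cut and applying Fact~\ref{fact:transext} together with a test for when a simple extension is a pre-$H_T$-field. First I would dispose of the case where $a$ can be found already in $K$: if there is $a \in K$ between $b_1$ and $b_2$ with $a' = F(a)$, we are done with $M = K$. Otherwise, on the interval $I$ between $b_1$ and $b_2$ the $\cL(K)$-definable continuous function $y \mapsto y' - F(y)$ (note $y \mapsto y'$ is $\cL(K)$-definable by Fact~\ref{fact:basicTderivation2}, applied to the identity) is negative at $b_1$, positive at $b_2$, and never zero on $I \cap K$; by weak o-minimality / the intermediate value property for $\cL(K)$-definable continuous functions in $T$-models, this forces the existence of a cut $c$ in $K$, contained in $I^\downarrow$, such that $y' - F(y) < 0$ for $y \in K$ below the cut and $y' - F(y) > 0$ for $y \in K$ above it. I would also first pass to the $H_T$-field hull or at least to an ungrounded extension if needed — actually, cleaner: reduce to the case $K$ ungrounded by remarking that if $K$ is grounded we may replace $K$ by $K_\upo$ (Corollary~\ref{cor:omegaconstruction}), which is an ungrounded pre-$H_T$-field extension, and it suffices to solve the problem over $K_\upo$ (the hypotheses $b_1' < F(b_1)$, $b_2' > F(b_2)$ persist since these involve only $\cL(K)$-data).

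Next I would let $M = K\langle a \rangle$ be the simple $\TO$-extension where $a$ realizes the cut $c$ described above and $\cO_M$ is the convex hull of $\cO$ in $M$ (or, if $c$ straddles $\cO$, the ring from Fact~\ref{fact:tworings}); in the main case we arrange $\cO_M = \{y \in M : |y| < d \text{ for all } d \in K, d > \cO\}$ so that Lemma~\ref{lem:phIVPext} will apply. Using Fact~\ref{fact:transext} I would equip $M$ with the unique $T$-derivation extending $\der$ with $a' = F(a)$; this makes sense because $a$ is transcendental over $K$ in the $\dclL$-sense (since no solution lies in $K$), so $\{a\}$ is a $\dclL$-basis for $M$ over $K$. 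Now $a$ is a genuine solution: $a' = F(a)$, and $a$ lies between $b_1$ and $b_2$ by construction of the cut. It remains only to check that $(M,\cO_M,\der)$ is a pre-$H_T$-field extension of $K$, and by Lemma~\ref{lem:phIVPext} this reduces — once $K$ is ungrounded and $\cO_M$ has the stated form — to verifying that $g' > 0$ for all $g \in M$ with $g > \cO_M$. This last point is where the sign information on the cut is used: for $g = G(a) > \cO_M$ with $G$ an $\cL(K)$-definable function, one argues as in the proofs of Lemmas~\ref{lem:gapgoesup} and~\ref{lem:gapgoesdown} (shrinking an interval $J \ni a$ on which $G$ is $\cC^1$, writing $G(y)' = G^{[\der]}(y) + G'(y)y'$, and using that below/above the cut we control the sign of $y' - F(y)$, hence of $y'$ relative to $F(y)$, which in turn feeds into a sign computation for $G^{[\der]}$ and $G'$); by $\LO$-elementarity of $M$ over $K$ (Corollary 3.13 of \cite{DL95}, valid since $\cO \ne K$ in the nontrivial case) it suffices to find suitable witnesses $y \in K$ in each definable piece, and these exist precisely by the cut's defining property. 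The trivially valued case $\cO = K$ is separate and easy: then the derivation is trivial, so $F(b_i) = b_i' = 0$ is impossible given $b_1' < F(b_1)$, $b_2' > F(b_2)$ unless... in fact in that case one takes $a$ realizing the cut with trivial valuation and derivation on $M$, and again checks $g > \cO_M \Rightarrow g' > 0$ directly.

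The main obstacle I anticipate is the verification that $g' > 0$ whenever $g > \cO_M$ — that is, establishing axiom (H1)-type positivity for the new derivation on all of $M$, not just on elements of the form $G(a)$ with $G$ increasing near $a$. The delicate part is that $G^{[\der]}(a)$ and $G'(a)$ can each be large, and one must show their combination $G^{[\der]}(a) + G'(a)F(a)$ has the right sign, using only that the sign of $y' - F(y)$ is determined by which side of the cut $y$ lies on. I expect this to require an argument in the style of the $\cO = K$ subcase of Lemma~\ref{lem:gapgoesup} (invoking something like Corollary~\ref{cor:wholethingsmall} or Proposition~\ref{prop:smallderivsim} to bound $G'(a)$, together with a separate mean-value-theorem estimate for $G^{[\der]}$), and the bookkeeping of which comparisons ($\prec$, $\asymp$, sign) hold will be the technical heart of the proof. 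A secondary subtlety is making sure the cut $c$ is well-defined and that $\cO_M$ can indeed be taken of the form required by Lemma~\ref{lem:phIVPext} — one may need to treat separately the subcase where the solution cut forces $a \succ \cO$ versus $a \in \cO_M$, and in the latter subcase check positivity of $g'$ for $g > \cO_M$ still goes through.
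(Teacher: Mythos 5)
Your overall skeleton (adjoin $a$ realizing a cut between $b_1$ and $b_2$, extend $\der$ by $a'=F(a)$ via Fact~\ref{fact:transext}, take $\cO_M=\{y:|y|<d\text{ for all }d\in K,\ d>\cO\}$, and reduce via Lemma~\ref{lem:phIVPext} to showing $g'>0$ for $g>\cO_M$) is the paper's, but there are two genuine gaps. First, your construction of the cut rests on the claim that $y\mapsto y'-F(y)$ is an $\cL(K)$-definable continuous function, justified by applying Fact~\ref{fact:basicTderivation2} to the identity. That fact only yields $\operatorname{id}^{[\der]}=0$; it does not make $\der$ itself $\cL(K)$-definable, and indeed $\der$ is not definable in the o-minimal reduct (nor is $(K,\cO,\der)$ weakly o-minimal). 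Consequently there is no intermediate-value or finiteness argument available to produce a single cut with $y'<F(y)$ below it and $y'>F(y)$ above it; the set $A=\{y\in I:y'<F(y)\}$ may be badly non-convex. The paper sidesteps exactly this: it takes the \emph{downward closure} $A^\downarrow$ as the cut, never claims any sign control on the lower side, and in the end only uses that every $K$-interval $J$ with $a\in J^M$ meets both $A$ and $I\setminus A$ (the latter because everything in $K$ above $a$ lies outside $A^\downarrow$). It also handles separately, via continuity of $\der$ and $F$, the case where $A$ has a supremum in $I$, which already yields a zero in $K$.

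Second, the step you yourself flag as the ``main obstacle'' --- verifying $g'>0$ for all $g=G(a)>\cO_M$ --- is precisely the content of the proposition, and your proposed tools for it are not available: Proposition~\ref{prop:smallderivsim} and Corollary~\ref{cor:wholethingsmall} (and the $\cO=K$ subcase of Lemma~\ref{lem:gapgoesup} you cite) all live under the standing power-boundedness assumption of Section~\ref{sec:Tconvex}, whereas this proposition is explicitly proved without power boundedness; no valuation-theoretic bound on $G'(a)$ or $G^{[\der]}(a)$ is needed or used. The actual argument is an elementary sign argument: assuming $G(a)'=G^{[\der]}(a)+G'(a)F(a)\le 0$, shrink to an interval $J\ni a$ on which $G(y)>d>\cO$ and $G^{[\der]}(y)+G'(y)F(y)\le 0$; since $K$ is a pre-$H_T$-field, $G(y)'=G^{[\der]}(y)+G'(y)y'>0$ for $y\in J\cap K$, so $G'(y)\bigl(y'-F(y)\bigr)>0$ on $J$, and after arranging $G'$ to have constant sign on $J$ this forces $y'-F(y)$ to have constant sign there --- contradicting that $J$ meets both $A$ and $I\setminus A$. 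Without this (or some substitute), your proposal does not establish the pre-$H_T$-field axioms for $M$. Two smaller points: when $\cO=K$ the derivation on $K$ need not be trivial (only the converse holds), though that case is still immediate by taking $\cO_M=M$ so that (PH1) and (PH2) are vacuous; and your reduction to the ungrounded case via Corollary~\ref{cor:omegaconstruction} needs the remark that a grounded pre-$H_T$-field forces $T$ to be power bounded (otherwise $T$ defines an exponential and Corollary~\ref{cor:expungrounded} applies), since that corollary is only proved under power boundedness.
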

\begin{proof}
If $\cO = K$, then let $M \coloneqq K\langle a \rangle$ be a simple $T$-extension of $K$ where $a$ lies between $b_1$ and $b_2$. Using Fact~\ref{fact:transext}, extend the $T$-derivation on $K$ uniquely to a $T$-derivation on $M$ by so that $a' = F(a)$. Then $M$, with this derivation and the $T$-convex valuation ring $\cO_M\coloneqq M$, is the desired pre-$H_T$-field extension of $K$. Having handled this case, we assume for the remainder of the proof that $\cO \neq K$ (so $\der$ is continuous by Lemma~\ref{lem:ctsderiv}).

Next, we arrange that $K$ is ungrounded. If $T$ is not power bounded, then $T$ defines an exponential function~\cite{Mi96}, so $K$ is necessarily ungrounded by Corollary~\ref{cor:expungrounded}. If $T$ is power bounded and $K$ is grounded, then we use Corollary~\ref{cor:omegaconstruction} to replace $K$ with an ungrounded pre-$H_T$-field extension. 

Now, let us handle the case that $b_1<b_2$. Let $I$ be the interval $(b_1,b_2)$ and set
\[
A\ \coloneqq \ \big\{y \in I: y' < F(y)\big\}.
\]
Since $b_1'< F(b_1)$ and since $F$ and $\der$ are continuous, we have $y'<F(y)$ for all $y \in I$ sufficiently close to $b_1$. Thus, $A$ is nonempty. Likewise, $y'> F(y)$ for all $y \in I$ sufficiently close to $b_2$, so $A$ is not cofinal in $I$. If $A$ has a supremum $b \in I$, then $b' = F(b)$ by continuity, and we may take $M=K$. Thus, we may assume that $A$ has no supremum in $I$. Let $M\coloneqq K\langle a \rangle$ be a simple $T$-extension of $K$ where $a$ realizes the cut $A^\downarrow$. Using Fact~\ref{fact:transext}, we extend the $T$-derivation on $K$ uniquely to a $T$-derivation on $M$ by with $a' = F(a)$. We also equip $M$ with the $T$-convex valuation ring
\[
\cO_M\ \coloneqq \ \big\{y\in M:|y|<d\text{ for all }d \in K\text{ with } d >\cO\big\}.
\]
We claim that $M$ is a pre-$H_T$-field extension of $K$. By Lemma~\ref{lem:phIVPext}, it is enough to show that $g' >0$ for all $g \in M$ with $g>\cO_M$. Let $G\colon K\to K$ be an $\cL(K)$-definable function with $G(a)>\cO_M$. We may assume $G(a) \not\in K$. Suppose toward contradiction that 
\[
G(a)' \ = \ G^{[\der]}(a)+G'(a)F(a)\ \leq \ 0.
\]
Take $d \in K$ with $G(a)>d$ and take a subinterval $J \subseteq I$ with $a \in J^M$ such that $G$ is $\cC^1$ on $J$ and such that for all $y \in J$, we have
\[
G(y)\ > \ d,\qquad G^{[\der]}(y)+G'(y)F(y)\ \leq \ 0.
\]
Let $y \in J$. Since $G(y)>d>\cO$ and $K$ is a pre-$H_T$-field, we have $G(y)' = G^{[\der]}(y)+G'(y)y'>0$, so 
\[
\big(G^{[\der]}(y)+G'(y)y'\big)-\big(G^{[\der]}(y)+G'(y)F(y)\big) \ =\ G'(y)\big(y'-F(y)\big)\ > \ 0.
\]
By shrinking $J$, we may assume that $G'(y)$ has constant sign on $J$, so $y'-F(y)$ has constant sign on $J$ as well. This is a contradiction, as $J$ contains elements of $A$ as well as elements of $I\setminus A$. The case that $b_1>b_2$ is virtually identical; we instead let $I$ be the interval $(b_2,b_1)$ and let
\[
A\ \coloneqq \ \big\{y \in I: y' >F(y)\big\}.\qedhere
\]
\end{proof}

\begin{remark}
If $T$ is power bounded, then in Proposition~\ref{prop:IVPstep}, we can take $M$ to be an $H_T$-field (simply use Theorem~\ref{thm:HThull} to take the $H_T$-field hull of $M$). This can be used to strengthen Theorem~\ref{thm:IVP} accordingly. We can also take $M$ to be an $H_T$-field if $T$ is an exponential theory of the form considered in Subsection~\ref{subsec:expsandlogs} (for example, if $T = T_{\anexp}$). For this, we use Proposition~\ref{prop:logHThull} to pass to the logarithmic $H_T$-field hull, followed by Theorem~\ref{thm:expclosure} to pass to the exponential closure.
\end{remark}


\end{document}